\newcommand{\beq}{\begin{equation}}
	\newcommand{\eeq}{\end{equation}}
\newcommand{\beqs}{\begin{equation*}}
	\newcommand{\eeqs}{\end{equation*}}
\newtheorem{theo}{Theorem}[section]
\newtheorem{lemm}[theo]{Lemma}
\newtheorem{prop}[theo]{Proposition}
\newtheorem{rema}[theo]{Remark}
\numberwithin{equation}{section}
\newcommand\N{{\mathbb N}}
\newcommand\R{{\mathbb R}}
\newcommand\T{{\mathbb T}}
\newcommand\Z{{\mathbb Z}}
\newcommand\DD{{\mathbb D}}
\newcommand\PP{{\mathbb P}}
\newcommand\D{{\mathcal D}}
\newcommand\F{{\mathcal F}}
\newcommand\E{{\mathcal E}}
\newcommand\W{{\mathcal W}}
\newcommand\A{\mathcal{A}}
\newcommand\HH{{\mathcal H}}
\newcommand\x{\bm{x}}
\newcommand\up{\upsilon}
\newcommand\uu{\bm{u}}
\newcommand\vv{\bm{v}}
\newcommand\U{\bm{U}}
\newcommand \tri {\coloneqq}
\let\pa=\partial
\let\na=\nabla
\let\al=\alpha
\let\var=\varepsilon
\let\f=\frac
\let\om=\omega
\let\ep=\epsilon
\def \pt {\partial_t}
\def\dive{\mathop{\rm div}\nolimits}
\def\exp{\mathop{\rm exp}\nolimits}
\def\cF{{\mathcal F}}
\def \cR {\mathcal{R}}
\newcommand{\Rey}{\mathrm{Re}}
\begin{document}
	
	\title{Transition threshold for the Navier-Stokes-Coriolis system at high Reynolds numbers}
 
	\author{Minling Li, Changzhen Sun, Chao Wang, Dongyi Wei, Zhifei Zhang}
	\author{Minling Li \textsuperscript{1}\thanks{limling@pku.edu.cn}
		\quad Changzhen Sun \textsuperscript{2}\thanks{changzhen.sun@cnrs.fr}
		\quad Chao Wang \textsuperscript{1}\thanks{wangchao@math.pku.edu.cn} 
		\quad Dongyi Wei
\textsuperscript{1}\thanks{jnwdyi@pku.edu.cn} 
		\quad Zhifei Zhang \textsuperscript{1}\thanks{zfzhang@pku.edu.cn}\\[10pt]
		\footnotesize\textsuperscript{1}{School of Mathematical Sciences, Peking University}\\
		\footnotesize {Beijing 100871, P. R. China}\\
		\footnotesize\textsuperscript{2}{Laboratoire de Math\'ematiques de Besancon (UMR 6623),
			Universit\'e de Marie-et-Louis-Pasteur, CNRS}\\
		\footnotesize {F-25000 Besancon, France}\\
		[5pt]}

	\date{}

	\maketitle

\begin{abstract}
        The transition mechanism from laminar flow to turbulent flow is a central problem in hydrodynamic stability theory. To shed light on this transition mechanism, Trefethen et al.({\it \small Science 1993}) proposed the transition threshold problem, aiming to quantify the magnitude of perturbations required to trigger instability and determine their scaling with the Reynolds number. In this paper, we investigate the transition threshold of Couette flow for the three-dimensional incompressible Navier-Stokes-Coriolis system in the high Reynolds number regime ($\Rey\gg 1$). By exploiting the combined effects of  rotation (dispersion) and mixing mechanisms, we derive an improved stability threshold scaling in $\Rey.$\,\,
		Precisely, we show that if the initial perturbation satisfies
		$$\|\vv_{in}-(y, 0, 0)\|_{\tilde{H}(\T\times\DD)}\leq   \ep_0 \,\Rey^{-\alpha},$$ 
		with any $\alpha>\frac 23$ and $\tilde{H}=H^6\cap W^{3,1}$ for $\DD=\mathbb{R}^2$, and with any $\alpha\geq\frac 56$ and $\tilde{H}=H^6$ for $\DD=\mathbb{R}\times\mathbb{T}$,
		the corresponding solution of the Navier-Stokes-Coriolis system exists globally in time and remains asymptotically close to the Couette flow. The main analytical challenge arises from the anisotropic nature of the estimates for the zero modes and from the interactions between zero and non-zero modes, which we address using an anisotropic Sobolev space directly tailored to the zero modes. Additionally, we introduce a new dispersive structure for the zero modes and derive suitable Strichartz-type estimates. These tools enable us to exploit both the nonlinear structure and the improved dispersive behavior of certain good components of the zero modes, which play a crucial role in achieving the improved stability threshold.
        
        \vspace{1em} 
        \noindent{\it  Keywords:} geophysical fluid; Couette flow; stability threshold; mixing;  dispersion; enhanced dissipation.
        
        \noindent{\it 2020 MSC:} 35Q35, 76E07, 76U05, 76F10.
\end{abstract}
	
	\setcounter{tocdepth}{2}
	
	\maketitle

	\section{Introduction}
	
	In this paper, we consider the following Navier-Stokes-Coriolis system:
	\begin{align}\label{1-eq0}
		\left\{\begin{array}{l}
			\displaystyle
			\partial_{t}\vv + \vv\cdot \nabla_{\x_1} \vv+\beta \vec{e}_3 \times \vv-\nu \Delta_{\x_1}\vv+\nabla_{\x_1} p^N=0,
			\\[1ex]
			\displaystyle
			\dive_{\x_1} \vv=0,
		\end{array}\right.
	\end{align}
	where $t\in\R^+$, $\x_1=(x_1,y,z)\in \T\times\DD$, the domain $\DD=\R^2$ or $\R\times\T$.
	The unknown vector field $\vv=(v^1,v^2,v^3)^{T}$ describes the incompressible fluid velocity, while $p^N$ denotes the scalar pressure field.
	Our coordinate system is chosen such that the axis of rotation is  $\vec{e}_3=(0,0,1)$.
	The parameter $\beta\in\R$ characterizes the rotation rate, which reflects in the Coriolis force $\beta \vec{e}_3 \times \vv$, and $\nu=\Rey^{-1}>0$ represents the viscosity.
	The system \eqref{1-eq0} is supplemented with initial data:
	\begin{align*}
		\vv|_{t=0}=\vv_{in}.
	\end{align*}
	It is easily to verify that system \eqref{1-eq0} has the following Couette flow profile stationary solution
	\begin{align*}
		\bar{\vv}=(y,0,0)^{T},\quad \pa_{y}\bar{p}^{N}=-\beta y,
	\end{align*}
	Here, we focus on the nonlinear stability analysis around the Couette flow at high Reynolds number regime. Thus, we introduce the perturbation velocity field:
	\begin{align*}
		\uu\tri \vv-\bar{\vv},
	\end{align*}
	which leads to the following perturbation system
	\begin{align}\label{1-eq1}
		\left\{\begin{array}{l}
			\displaystyle
			\partial_{t}\uu + y\partial_{x_1}\uu+((1-\beta)u^2,\beta u^1,0)^{T}-\nu \Delta_{\x_1}\uu+\nabla_{\x_1} p^L= -\uu\cdot \nabla_{\x_1} \uu-\nabla_{\x_1} p^{NL},
			\\[1ex]
			\displaystyle
			\dive_{\x_1} \uu=0,\\
			\displaystyle \uu|_{t=0}=\uu_{in},	
		\end{array}\right.
	\end{align}
	where the linear and nonlinear pressure terms are given respectively by:
	\begin{align*}
		\Delta_{\x_1} p^L =-2\partial_{x_1} u^2 +\beta (\partial_{x_1} u^2-\partial_y u^1),\quad
		\Delta_{\x_1} p^{NL} =-\dive_{\x_1} (\uu\cdot \nabla_{\x_1} \uu).
	\end{align*}
	
	\medskip
	
	The Coriolis effect acts as a stabilizing mechanism in the fluid  systems. However, its influence can vary significantly due to changes in the parameter $\beta$. The physical quantity to quantify the effects of rotation is the so-called Bradshaw-Richardson number \cite{Bradshaw-1969,Huang-2018}:
	\begin{align*}
		B_{\beta}\tri \beta(\beta-1)\in [-\f14,+\infty),
	\end{align*}
	which is also termed the Pedley number  in some literature \cite{Leblanc-Cambon-1997,Pedley-1969}. The value of the $B_\beta$ will decide the  stability or instability of system \eqref{1-eq1}.

	\medskip

	The dynamics of solutions in system \eqref{1-eq1} differ fundamentally from those that are independent of the periodic $x_1$-variable, see Theorems \ref{theo1}-\ref{theo2} below.
	To account for this, we use the frequency decomposition, i.e., for any function $f$, 
	\begin{align*}
		\PP_0 f \tri \frac{1}{2\pi} \int_{\T}f(x_1,y,z) \,dx_1,
		\quad 
		\PP_{\not=} f\tri f-\PP_0 f,
	\end{align*}
	which we respectively refer to as the zero and non-zero frequencies of $f$.
	In the sequel, to simplify the notations, we sometimes use the $(f_0, f_{\not=})$ instead of $(\PP_{0} f,\PP_{\not=} f)$.

	Applying $\PP_0$ to system \eqref{1-eq1}, we have
	\begin{align}\label{1-eq2}
		\left\{\begin{array}{l}
			\displaystyle
			\partial_{t} u_0^1 + (1-\beta)u_0^2-\nu (\partial_y^2+\partial_z^2)u_0^1
			= -\PP_0(\uu\cdot\nabla_{\x_1} u^1),
			\\[1ex]
			\displaystyle
			\partial_{t} u_0^2 - \beta \cR_3^2	
		u_0^1-\nu (\partial_y^2+\partial_z^2)u_0^2
			= -\PP_0(\uu\cdot\nabla_{\x_1}  u^2)-\PP_0\partial_y p^{NL},
			\\[1ex]
			\displaystyle
			\partial_{t} u_0^3 + \beta \cR_2\cR_3
			u_0^1-\nu (\partial_y^2+\partial_z^2)u_0^3
			= -\PP_0(\uu\cdot\nabla_{\x_1}  u^3)-\PP_0\partial_z p^{NL},
			\\[1ex]
			\displaystyle
			\partial_y u_0^2+\partial_z u_0^3=0,
		\end{array}\right.
	\end{align}
	where $t\in \mathbb{R}^+$, $(y,z)\in\mathbb{R}^2$ or $\mathbb{R}\times \T$, the operators $\mathcal{R}_2 \tri \partial_y(-\pa_y^2-\pa_z^2)^{-\f12}$ and $\mathcal{R}_3 \tri \partial_z(-\pa_y^2-\pa_z^2)^{-\f12}$.

	\subsection{Background and known results}
	
	The stability of plane Couette flow has been studied extensively since the pioneering works of Rayleigh \cite{Rayleigh} and Kelvin \cite{Kelvin}. It is well established that plane Couette flow is linearly stable for all Reynolds numbers, as demonstrated in \cite{Drazin-Reid} and \cite{Romanov}.  However, the flow can exhibit nonlinear instability and transition to turbulence under small but finite perturbations at high Reynolds numbers—a phenomenon known as the Sommerfeld paradox. One approach to resolving this paradox involves studying the transition threshold problem, a concept first rigorously formulated by Trefethen et al. \cite{Trefethen}.  In \cite{Bedrossian-Germain-Masmoudi-2017, Bedrossian-Germain-Masmoudi-2019}, Bedrossian, Germain, and Masmoudi established a comprehensive mathematical framework for this problem, which can be stated as follows:
	
	{\it Given a norm $\|\cdot\|_{X}$, find a $\alpha_1=\alpha_1(X)$ so that
		\begin{align*}
			&\|u_0\|_{X} \leq \Rey^{-\al_1} \Rightarrow \mathrm{stability},\\
			&\|u_0\|_{X} \geq \Rey^{-\alpha_1}\Rightarrow \mathrm{instability}.
		\end{align*}
	}
	The exponent $\alpha_1$ is referred to as the transition threshold. 
	
	Over the past decade, significant progress has been made in obtaining rigorous mathematical results for this problem. 
    The threshold $\alpha_1$ in the Navier-Stokes equations exhibits a strong dependence on dimension, domain, function space of the perturbation, and boundary conditions.
    For the 2-D case on the domain $\T\times\R$, it is known that $\alpha_1\leq 0$ for the perturbations in Gevrey class \cite{Bedrossian-Masmoudi-Vicol-2016, Li-Masmoudi-Zhao} and $\alpha_1\leq \frac13$ for the perturbations in Sobolev class \cite{Bedrossian-Wang-2018, MZ, WZ-2023}. 
    On the bounded domain $\T\times [0, 1]$, the results further depend on the boundary conditions: under slip boundary conditions, $\alpha_1\leq \frac13$ for the perturbations in Sobolev class \cite{WZ-2025}, while $\alpha_1\leq 0$ for the perturbations in Gevrey class \cite{Bedrossian-He-Iyer-Wang-2024}; under no-slip boundary conditions, $\alpha_1\leq \frac12$ for the perturbations in Sobolev class \cite{CLWZ}. Additional results on the stability of other kinds of 2-D shear flow
   (such as Poiseuille flow) can be found in \cite{Coti-Elgindi-Widmayer-2020, Ding-Lin-2022, Zotto-2023, CWZ-CMP} and the references therein. 
 The 3-D case (i.e., system \eqref{1-eq1} with $\beta=0$) is considerably more challenging due to the lift-up effect. 
    For this setting, the current best-known upper bound is $\alpha_1\leq 1$, which has been established for the perturbations in Gevrey class on domain $\T\times\R\times\T$, see \cite{Bedrossian-Germain-Masmoudi-2020, Bedrossian-Germain-Masmoudi-2022}, Sobolev class on $\T\times\R\times\T$, see 
     \cite{Bedrossian-Germain-Masmoudi-2017, WZ-cpam} and $\T\times[0, 1]\times\T$ with non-slip boundary, see \cite{CWZ-MAMS}.
    Here, the results from \cite{CWZ-MAMS, WZ-cpam} confirm the transition threshold conjecture proposed by Trefethen et al. in \cite{Trefethen}.  For more results on the other model on the stability of Couette flow (for example MHD), see \cite{Liss-2020, Rao-Zhang-Zi-2025} and therein.

Determining the threshold $\alpha_1<1$ for the 3-D Navier-Stokes equations remains a challenging problem. One of the primary difficulties arises from the zero-frequency modes of the velocity field. Specifically, the linearized system for the zero modes $\uu_0$ satisfies the following heat equations:
\begin{align*}
\pa_t \uu_0-\nu\Delta \uu_0+(u_0^2, 0, 0)=0,
\end{align*}
where the term $(u_0^2, 0, 0)$ is called the lift-up term as it brings linear growth. 
To address the lift-up effect, the works of \cite{Bedrossian-Germain-Masmoudi-2017, CWZ-MAMS, WZ-cpam}  
establish the following key estimate:
\begin{align}\label{est-NS}
\|u^1_0\|_{H^2} \leq C\min\{1+t, \nu^{-1}\} \|\uu_{in}\|_{H^3},
\end{align}
which introduces either a $\nu^{-1}$ loss or a $(1+t)$ growth. This behavior significantly impacts the transition threshold $\alpha_1$, suggesting that stability for the 3-D Navier-Stokes equations requires at least $\alpha_1\geq 1$. To potentially lower the bound on $\alpha_1$, a natural starting point is the study of the equations governing the streak solutions. 

Alternatively, improving the transition threshold could involve identifying new dynamical mechanisms. A pioneering result in this direction was achieved by Coti Zelati, Del Zotto, and Widmayer \cite{Coti-Zotto, Coti-Zotto-Widmayer}, who demonstrated that the 3-D Boussinesq system 
on $\T\times\R\times\T$
exhibits a dispersive effect, capable of suppressing the lift-up effect. Their analysis establishes the following key dispersive estimate for the linear system:
\begin{align*}
\| u^2_0 \|_{L^{\infty}} +\| u_0^3-\bar{u}^3_0 \|_{L^{\infty}}+\| \theta_0-\bar{\theta}_0 \|_{L^{\infty}} \leq C (1+t)^{-\frac13} e^{-\nu t} \|(\uu_{in}, \theta_{in})\|_{W^{6,1}},
\end{align*}
where the double zero-frequency components are defined by $\bar{f}_0=\int_\T f_0\,dz$ with $f\in\{u^3, \theta\}$. 
And $f_0-\bar{f}_0$ is referred to as the simple zero-frequency components.
The additional $(1+t)^{-\frac13}$  decay rate arises from a dispersive effect, a novel phenomenon absent from the 3-D Navier-Stokes equations.
This improved decay estimate provides enhanced control over the nonlinear terms in the system for simple zero-frequency  modes. Building upon these estimates, the authors in \cite{Coti-Zotto-Widmayer} were able to establish an improved transition threshold of $\alpha_1\leq \frac{11}{12}$.


In contrast to the classical Navier-Stokes equations, the Navier-Stokes-Coriolis system \eqref{1-eq0} not only represents  a fundamental physical model,  but also exhibits stabilizing effects induced by the Coriolis force. However, the nature of these effects depends critically on the parameter $\beta$. The work of Coti Zelati, Del Zotto, and Widmayer \cite{Coti-Zotto-Widmayer-2025} provides a comprehensive analysis of the linearized system, revealing three distinct regimes: the system exhibits lift-up stability for $B_\beta=0$; exponential instabilities emerge for $B_\beta<0$; a dispersive structure develops  for $B_\beta>0$. 
Huang, Sun, and Xu \cite{Huang-Sun-Xu, Huang-Sun-Xu-1} found that the dispersive structure exhibited by the simple zero-frequency components $\uu_0$ suppresses the lift-up effect, and they established the 
enhanced decay estimate:
\begin{align*}
\| \uu_0-\bar{\uu}_0\|_{W^{2,\infty}} \leq C (1+t)^{-\frac13} e^{-\nu t} \|\uu_{in}\|_{W^{6,1}},
\end{align*}
which represents a notable improvement over the corresponding estimates for the 3-D Navier-Stokes equations (cf. \eqref{est-NS}). This dispersive behavior ensures that the zero-frequency components remain within the same order of magnitude as the initial data, thereby preventing secondary flow instabilities in system \eqref{1-eq0}. Based on this analysis, they get the transition threshold $\alpha_1\leq 1$ when $B_\beta>0$ in \cite{Huang-Sun-Xu} and $\alpha_1\leq 2$ when $\beta=1$ in \cite{Huang-Sun-Xu-1}. 
Recent advancements by Coti Zelati, Del Zotto, and Widmayer \cite{Coti-Zotto-Widmayer-2025} have further refined these thresholds: $\alpha_1\leq \frac 89$ for $B_\beta>0$ and $\alpha_1\leq \frac 56$ for $B_\beta\gtrsim\nu^{-1}$ when the initial double zero modes equal to zero.

\subsection{Main results}

For any $s\in\N^+$, we say that $f(x_1,y,z) \in \HH^s(\T\times\DD)$ whenever
\begin{align*}
	\|f\|_{\HH^s(\T\times\DD)}^2\tri \sum_{l=0}^{s}\int_{\T\times\DD}|\nabla^l f|^2\,dx_1dydz<+\infty,
\end{align*}
where $\nabla\tri (\pa_{x_1},\pa_y+t\pa_{x_1},\pa_{z})$, which refers to as the ``good derivative". We also observe that 
\begin{align}
	\PP_0\nabla^s f=\PP_0\nabla_{\x_1}^s f,\quad \|\PP_0f\|_{\HH^s(\DD)}^2=\|\PP_0f\|_{H^s(\DD)}^2.
\end{align}

Firstly, we consider the domain $\DD=\R^2$, and our main result can be stated as:
\begin{theo}[Nonlinear stability on $\T\times\R^2$]
	\label{theo1}
	Let $0<\nu\leq 1$ and $B_{\beta}>0$.
	There exists a constant $\ep_0>0$ 
	such that for any $\alpha>\f23$ and any divergence-free initial data $\uu_{in}$ satisfying
	\begin{align}\label{initial data-assume}
		\var_0\tri \|(1+\pa_z)\uu_{in}\|_{H^5(\T\times\R^2)\cap W^{2,1}(\T\times\R^2)}
		\leq \ep_0 \nu^{\alpha},
	\end{align}
	then the system \eqref{1-eq1} admits a unique global-in-time solution $\uu$ satisfying, for all $t\geq0$, 
	\begin{align*}
		\|(1+\pa_z)\uu\|_{L_t^{\infty}(\HH^4(\T\times\R^2)) }
		+	\nu^{\f12}	\|\nabla_{\x_1}(1+\pa_z)\uu\|_{L_t^2(\HH^4(\T\times\R^2))}
		+\nu^{\f16}	 \|(1+\pa_z)\uu_{\not=}\|_{L^2_t(\HH^4(\T\times\R^2))}  
		\leq C\var_0,
	\end{align*}
	where $C>0$ is a constant independent of $t$ and $\nu$.
\end{theo}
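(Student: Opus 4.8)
The proof is by a bootstrap (continuity) argument on a maximal interval of existence $[0,T)$. I would set up an energy functional $E(t)$ controlling, besides the three quantities in the conclusion --- the anisotropic norm $\|(1+\pa_z)\uu\|_{L^\infty_t\HH^4}$, the viscous dissipation $\nu^{1/2}\|\nabla_{\x_1}(1+\pa_z)\uu\|_{L^2_t\HH^4}$ and the enhanced dissipation $\nu^{1/6}\|(1+\pa_z)\uu_{\not=}\|_{L^2_t\HH^4}$ --- also the dispersive quantities of the zero modes supplied by the linear analysis, namely a weighted pointwise bound of the form $\sup_{t\ge 0}(1+t)^{1/3}\|(1+\pa_z)(\uu_0-\bar\uu_0)\|_{W^{2,\infty}}$ (and a matching space--time integrated version), together with the $L^1$-type norms needed to activate those estimates. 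One then assumes $E(t)\le C_1\var_0$ on $[0,T)$ with $C_1$ large and $\var_0=\ep_0\nu^\alpha$ small, and aims to upgrade this to $E(t)\le \tfrac12 C_1\var_0$; combined with local well-posedness and continuity in time this forces $T=+\infty$ and yields the stated bounds, which are a subset of $E$.

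The first block of estimates is purely linear, carried out in the moving frame associated with the good derivative $\nabla=(\pa_{x_1},\pa_y+t\pa_{x_1},\pa_z)$. For the non-zero frequencies, the combination of the shear transport $y\pa_{x_1}$ with the Laplacian produces enhanced dissipation on the $\nu^{-1/3}$ time scale; working with suitable unknowns adapted to the moving frame and using resolvent or time-dependent Fourier-multiplier estimates gives decay of order $e^{-c\nu^{1/3}t}$, which is exactly what the factor $\nu^{1/6}$ in the $L^2_t$ norm encodes. For the zero frequencies, I would use the constraint $\pa_y u_0^2+\pa_z u_0^3=0$ to introduce a stream function and reduce the linear part of \eqref{1-eq2} to a coupled system in $u_0^1$ and that stream function; since $B_\beta>0$, this system has a Klein--Gordon/wave-type dispersive structure in $(y,z)$ that degenerates only at $z$-frequency zero, from which one derives the $(1+t)^{-1/3}$ pointwise decay of the good components $\uu_0-\bar\uu_0$ and the corresponding Strichartz-type estimates --- but now tracked in an anisotropic space adapted to the $z$-degeneracy, which is the origin of the weight $(1+\pa_z)$ and of the extra $W^{2,1}$ integrability imposed on the data. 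The double zero mode $\bar\uu_0$ carries no dispersion, but on $\DD=\R^2$ incompressibility forces $\bar u_0^2=0$, so it obeys a merely forced heat equation and does not grow.

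The heart of the proof is the nonlinear estimate, organized by the frequency interactions in $\uu\cdot\nabla_{\x_1}\uu$ and the pressure: $(0,0)\to 0$, $(0,\not=)\to\not=$ and $(\not=,0)\to\not=$, $(\not=,\not=)\to 0$, and $(\not=,\not=)\to\not=$. The channels involving a non-zero factor are absorbed by the enhanced and viscous dissipation once the transport-structure cancellations are exploited through the good derivative. The genuinely delicate channel is $(0,0)\to 0$, the would-be streak/lift-up growth: after using incompressibility to write $\PP_0(\uu\cdot\nabla_{\x_1}\uu)$ in divergence form in $(y,z)$, the growth of $u_0^1$ is governed by a Gr\"onwall inequality whose forcing coefficient is the dispersively decaying good component, of size $(1+t)^{-1/3}\var_0$; integrating over the viscous window gives $\var_0\int_0^{\nu^{-1}}(1+s)^{-1/3}\,ds\sim\ep_0\,\nu^{\alpha-2/3}$, which is small precisely when $\alpha>\tfrac23$. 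This extracts the full $\tfrac13$ of dispersive gain from the baseline $\alpha=1$, whereas earlier works captured only part of it. I expect this $(0,0)\to 0$ interaction --- intertwined with the need to keep the anisotropic $H^5\cap W^{2,1}$ control closed under the nonlinearity, in particular its $L^1$-based component, which must keep feeding the dispersive estimates --- to be the main obstacle. The secondary difficulty is the zero/non-zero coupling, where the two time scales $\nu^{-1/3}$ and $\nu^{-1}$ meet; I would handle it by splitting the $z$-frequency into small and order-one regimes and using the $(1+\pa_z)$ weight to recover the loss in the degenerate regime.
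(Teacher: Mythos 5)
There is a genuine gap in the mechanism you propose for the threshold $\alpha>\tfrac23$. On $\T\times\R^2$ the dispersive decay rate for the zero modes is not $(1+t)^{-1/3}$; the operator $e^{t\sqrt{B_\beta}\,\mathcal{R}_3}$ on $\R^2$ yields $(1+t)^{-1/2}$ for $\uu_0$ and $(1+t)^{-1}$ for the ``good'' components $(u_0^3,\pa_y\uu_0)$ (the $(1+t)^{-1/3}$ rate is the one on $\R\times\T$, where the $z$-frequency is quantized). Your numerology $\var_0\int_0^{\nu^{-1}}(1+s)^{-1/3}\,ds\sim\ep_0\nu^{\alpha-2/3}$ therefore does not describe the $(0,0)\to 0$ channel on $\R^2$: with the correct $(1+t)^{-1/2}$ rate this gives $\sim\ep_0\nu^{\alpha-1/2}$, so that channel only forces $\alpha>\tfrac12$ and is not the bottleneck. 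Independently of the rate, your diagnosis of the hard channel is off: the lift-up growth $u_0^2$ in the $u_0^1$ equation is \emph{linear}, not $(0,0)\to 0$, and it is already tamed by the rotation ($B_\beta>0$) at the linear level; the quadratic $(0,0)\to 0$ forcing is handled in the paper via an $L^2_tL^\infty\times L^2_tH^s$ pairing of the Strichartz estimate for $\uu_0$ with the dissipation, and it is not where $2/3$ enters. (Also, the ``double zero mode $\bar\uu_0$'' you discuss is defined by averaging in $z\in\T$ and has no analogue on $\R^2$; the paper explicitly notes that decomposing into double-zero and simple-zero modes is the approach that \emph{fails} on $\R^2$.)

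The threshold $\alpha>\tfrac23$ actually arises in the channels you declared ``absorbed by enhanced and viscous dissipation.'' In the $H^4$ energy estimate for the good unknowns $(Q_{\not=},W_{\not=})$, the $(0,\not=)$ reaction terms such as $\pa_z U_0^2\,\widetilde{\nabla}U_{\not=}^3$ force one to pair $\|\pa_z U_0^2\|_{L^\infty_tH^5}$ with $\|\A(Q_{\not=},W_{\not=})\|_{L^2_tH^4}^2\lesssim\nu^{-2/3}\int_0^t\D_{\not=}$, giving $\ep_1^3\nu^{3\alpha-2/3}$; and the $(\not=,\not=)$ reaction term $\nabla^4\pa_iU_{\not=}^2\,\tilde\pa_y^2U_{\not=}^i$ cannot be absorbed by a naive derivative count (that only gives $\alpha\ge\tfrac56$) and requires the time-dependent Fourier multiplier $m_3$ and the associated $\varUpsilon$-damping in the energy functional to turn inviscid damping of $U_{\not=}^2$ into a logarithmic rather than polynomial loss. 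A matching $\nu^{-2/3}$ loss appears in the $(\not=,\not=)\to 0$ forcing of the top-order zero-mode energy. Your sketch contains none of these ingredients, and without them — in particular without the $m_3$ mechanism and the anisotropic $\|(1+\pa_z)\cdot\|_{H^5}$ propagation that allows one to use $L^\infty_t$ rather than $L^2_t$ control of $\pa_z\uu_0$ — the estimate of the non-zero-mode energy does not close below $\alpha=\tfrac56$.
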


A more detailed description of the long-time dynamics of the solution can be obtained.
\begin{theo}[Asymptotic behavior on $\T\times\R^2$]
\label{theo2}
Suppose the assumptions of Theorem \ref{theo1} hold
 and that $u_{0in}\in W^{3^{+},1}.$ Then the zero-frequency component of the solution satisfies the following dispersive estimates:
	\begin{align}
    \label{Asy-u0-1}		&\|\uu_0(t)\|_{L^\infty(\R^2)}\lesssim (1+t)^{-\f12} \|\uu_{0in}\|_{W^{2^+,1}(\R^2)}+\ep_1^2\nu^{2\al-\f12-\delta_1},
        \\
    \label{Asy-u0-2}
        &\|( u_0^3,\nabla u_0^3,\pa_y\uu_0)(t)\|_{L^\infty(\R^2)}\lesssim (1+t)^{-1} \|\uu_{0in}\|_{W^{3^+,1}(\R^2)}+\ep_1^2\nu^{2\al-\delta_1},
	\end{align}
	where $\delta_1>0$ is a small constant.
	
	For the non-zero frequencies, we have the following inviscid damping and enhanced dissipation estimates:
	\begin{align}\label{Asy-unot}
		\|(u_{\not=}^1,u_{\not=}^3)(t)\|_{L^2(\T\times\R^2)}
		+(1+t)\| u_{\not=}^2\|_{L^2(\T\times\R^2)}
		\leq C \var_0 e^{-\delta_2 \nu^{\f13}t},
	\end{align}
	where $\delta_2>0$ is a small constant.
\end{theo}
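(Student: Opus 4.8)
\medskip\noindent\emph{Sketch of the argument.} The global-in-time solution and the baseline energy bounds are already provided by Theorem~\ref{theo1}; the point here is to extract the refined long-time behaviour from them. For the zero frequencies I would argue through the Duhamel formula
\[
\uu_0(t)=e^{t\LL_0}\uu_{0in}+\int_0^t e^{(t-s)\LL_0}\,\NN_0(s)\,\dd s,
\]
where $\LL_0$ is the linear operator of \eqref{1-eq2} (Coriolis coupling together with the dissipation $\nu(\pa_y^2+\pa_z^2)$, the pressure eliminated via $\pa_y u_0^2+\pa_z u_0^3=0$) and $\NN_0=-\PP_0(\uu\cdot\nabla_{\x_1}\uu)-\PP_0\nabla_{\x_1}p^{NL}$. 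The non-zero frequencies I would treat directly on \eqref{1-eq1}, by re-running the energy estimates behind Theorem~\ref{theo1} against an exponential weight $e^{2\delta_2\nu^{1/3}t}$ and combining with the inviscid-damping mechanism of the Couette flow.

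The first ingredient is the linear dispersive decay for $e^{t\LL_0}$ on $\R^2$. Diagonalising $\LL_0$ (equivalently, introducing the $(y,z)$-stream function $u_0^2=\pa_z\phi_0$, $u_0^3=-\pa_y\phi_0$ alongside $u_0^1$) produces a dispersion relation of the form $\pm\sqrt{B_\beta}\,\xi_z/|(\xi_y,\xi_z)|$ multiplied by $t$, dressed with the dissipative factor $e^{-\nu(\xi_y^2+\xi_z^2)t}$. Since this symbol is homogeneous of degree $0$, its phase does not oscillate in the radial direction and the dispersion is effectively one-dimensional (angular); the angular stationary points lie exactly at $\xi_y=0$ and are non-degenerate, so one-dimensional stationary phase yields the generic rate $(1+t)^{-1/2}$ from $W^{2^+,1}$ data. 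For the components carrying an extra factor $\xi_y$ — precisely $u_0^3$, $\nabla u_0^3$ and $\pa_y\uu_0$ — the amplitude vanishes at these stationary points, which upgrades the stationary-phase bound to $(1+t)^{-1}$ at the cost of one further derivative, i.e.\ $W^{3^+,1}$ data. These are the Strichartz-type estimates for the zero modes alluded to in the introduction.

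Inserting these linear bounds into Duhamel, I would close a bootstrap for \eqref{Asy-u0-1}--\eqref{Asy-u0-2} by estimating $\|\NN_0(s)\|_{W^{k,1}}$ (for the relevant $k\le 3$) through the bounds of Theorem~\ref{theo1}. Split $\NN_0$ into the zero--zero interaction and the non-zero--non-zero interaction. The decisive structural fact is that zero modes are $x_1$-independent, so $\uu_0\cdot\nabla_{\x_1}=u_0^2\pa_y+u_0^3\pa_z$ always carries a ``good'' factor $u_0^2$ or $u_0^3$; pairing its dispersive decay against the $\HH^4$-control of the remaining factor, and estimating the $W^{k,1}$ product by H\"older (with the extra $\pa_z$-weight in \eqref{initial data-assume} supplying the regularity budget), the zero--zero part of the Duhamel integral is $O(\ep_1^2\,\nu^{2\al-\delta_1})$ after the time integration — a logarithmic loss from $\int_0^t(1+t-s)^{-1/2}(1+s)^{-1}\,\dd s$ being absorbed into $\nu^{-\delta_1}$. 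For the non-zero--non-zero part, enhanced dissipation $\|\uu_{\not=}(s)\|\lesssim\var_0 e^{-\delta_2\nu^{1/3}s}$ makes that contribution $O(\var_0^2\nu^{-1/3})$, again of the stated size. I expect this to be the main obstacle: keeping track of which components and derivatives of $\uu_0$ enjoy the fast $(1+t)^{-1}$ rate rather than only $(1+t)^{-1/2}$, distributing the $\pa_z$-weight correctly through the quadratic nonlinearity, and controlling the non-local pressure term $\PP_0\nabla_{\x_1}p^{NL}=-\PP_0\nabla_{\x_1}(\pa_y^2+\pa_z^2)^{-1}\dive_{\x_1}(\uu\cdot\nabla_{\x_1}\uu)$ without spoiling the dispersive gains — the time convolution closes only marginally, which is exactly why the nonlinear remainders in \eqref{Asy-u0-1}--\eqref{Asy-u0-2} are stated as small constants rather than decaying terms.

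Finally, for the non-zero modes and \eqref{Asy-unot}: the enhanced-dissipation factor $e^{-\delta_2\nu^{1/3}t}$ is obtained by upgrading the space--time bound $\nu^{1/6}\|(1+\pa_z)\uu_{\not=}\|_{L^2_t\HH^4}\le C\var_0$ of Theorem~\ref{theo1}; testing the non-zero-mode equations with the weight $e^{2\delta_2\nu^{1/3}t}$ and using the enhanced dissipation already encoded in those bounds gives a differential inequality that integrates to the claimed exponential decay, the small nonlinear terms being absorbed. The inviscid-damping factor $(1+t)^{-1}$ for $u_{\not=}^2$ is the Couette mechanism: representing $u_{\not=}^2$ from the associated transported vorticity/stream-function and using that the operator $\pa_{x_1}\bigl(\pa_{x_1}^2+(\pa_y+t\pa_{x_1})^2+\pa_z^2\bigr)^{-1}$ gains a factor $(1+t)^{-1}$ on $\mathrm{Ran}\,\PP_{\not=}$ (at the cost of one derivative) produces the extra $(1+t)^{-1}$, with the nonlinear contributions once more controlled by the Theorem~\ref{theo1} bounds; since $u^1_{\not=}$ and $u^3_{\not=}$ are not subject to such a gain, they receive only the exponential enhanced-dissipation factor, consistent with \eqref{Asy-unot}.
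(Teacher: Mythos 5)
Your overall route matches the paper's: diagonalize the zero‑mode Coriolis system via the stream function variable (equivalently the paper's $\up_0=\tfrac{\beta-1}{\sqrt{B_\beta}}|\nabla_{y,z}|^{-1}(\pa_z u_0^2-\pa_y u_0^3)$ and $\W^{\pm}=u_0^1\pm\up_0$), feed in the angular stationary‑phase dispersive estimates with the extra $\xi_y$‑factor giving the upgraded $(1+t)^{-1}$ rate, then use Duhamel and the a priori energy bounds to control the nonlinear remainder; and obtain the non‑zero‑mode estimate \eqref{Asy-unot} from enhanced dissipation together with the time gain of $|\widetilde{\nabla}|^{-1}$ acting on $\A Q_{\not=}$ (the paper builds the weight $e^{\delta_2\nu^{1/3}t}$ into $\A$ from the start rather than re‑testing with $e^{2\delta_2\nu^{1/3}t}$, but the mechanism you describe is the same).

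The one place where your sketch drifts from the paper, and where I would flag a genuine risk, is the closure of the Duhamel integral for the zero modes. You propose a time‑pointwise bootstrap — ``$\|\NN_0(s)\|$ decays like $(1+s)^{-1}$, so $\int_0^t(1+t-s)^{-1/2}(1+s)^{-1}\,ds$ is under control up to a logarithm'' — but that is both circular (the $(1+s)^{-1}$ decay of the forcing is what you are trying to prove) and, at face value, not uniform: the integral you write down is bounded by $(1+t)^{-1/2}\ln(1+t)$, not by a constant times $\nu^{-\delta_1}$, and you do not explain what cuts off the log. The paper avoids both issues by keeping the small dissipative factor $|\nu t|^{-\delta_1}$ in the linear dispersive estimate (Lemma~\ref{lemm-disp}), which makes the time kernel $(1+\tau)^{-\frac12}|\nu\tau|^{-\delta_1}\in L^2_\tau$ and $(1+\tau)^{-1}|\nu\tau|^{-\delta_1}\in L^1_\tau$ with norms $\sim\nu^{-\delta_1}$. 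One then closes the estimate directly by H\"older in time against $\|\PP_0(\uu\cdot\nabla_{\x_1}\uu)\|_{L^2_tW^{2,1}}\lesssim\ep_1^2\nu^{2\al-\frac12}$ (for \eqref{Asy-u0-1}) and $\|\PP_0(\uu\cdot\nabla_{\x_1}\uu)\|_{L^\infty_tW^{3,1}}\lesssim\ep_1^2\nu^{2\al}$ (for \eqref{Asy-u0-2}), both of which follow from Theorem~\ref{theo1} without any further bootstrap. You should replace the pointwise‑decay argument with this $L^p_t$‑in‑time Young/H\"older argument; as written, your step does not clearly close.

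Two smaller remarks. You should check that the Riesz factors $\cR_2,\cR_3$ appearing in $N^{\pm}$ (after eliminating the pressure) do not spoil the $W^{k,1}\to L^\infty$ dispersive bound: they are handled inside the frequency‑localised kernel estimate, not by boundedness on $L^1$. And for \eqref{Asy-unot}, recall that $u^2_{\not=}=\widetilde{\Delta}^{-1}Q_{\not=}$ gains $(1+t)^{-1}$ through the estimate $\||\widetilde{\nabla}|^{-1}\A Q_{\not=}\|_{L^2}\lesssim(1+t)^{-1}\|\A Q_{\not=}\|_{H^1}$ (a consequence of \eqref{est-varphi-t}); the components $u^1_{\not=},u^3_{\not=}$ only see the $e^{-\delta_2\nu^{1/3}t}$ factor, exactly as you say.
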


\begin{rema}
	In contrast to the results established in \cite{Coti-Zotto-Widmayer-2025}, our analysis achieves comparable (or improved due to better dispersion) transition thresholds without requiring further constraints on the double zero modes, which are no longer appropriate in the current setting $\mathbb{T}\times \R^2$. 
	The main observation to avoid such assumption is that we should try to propagate the regularity of zero modes (in $x_1$) $\uu_0$ directly rather than decompose zero modes as double zero  (in both $x_1$ and $z$) and simple zero (zero in $x_1$ but non-zero in $z$) modes.
	To this end, we establish energy estimates of zero modes (in $x_1$) in an anisotropic Sobolev space, whose regularity is consistent with that of non-zero modes.
	See Section \ref{sec-strategy} for more details. 
\end{rema}

	


Our method remains applicable to the case of $\DD=\R\times \T$.
More precisely, we can obtain the following two theorems.

\begin{theo}[Nonlinear stability on $\T\times\R\times\T$]
	\label{theo3}
	Let $0<\nu\leq 1$ and $B_{\beta}>0$.
	There exists a constant $\ep_0>0$ 
	such that for any $\alpha \geq \f56$ and any divergence-free initial data $\uu_{in}$ satisfying
	\begin{align}\label{initial data-assume'}
		\var_0\tri \|(1+\pa_z)\uu_{in}\|_{H^5(\mathbb{T}\times \R\times \T)}
		\leq \ep_0 \nu^{\alpha},
	\end{align}
	then the system \eqref{1-eq1} admits a unique global-in-time solution $\uu$ satisfying, for all $t\geq0$, 
	\begin{align*}
		\|(1+\pa_z)\uu\|_{L_t^{\infty}(\HH^4(\mathbb{T}\times \R\times \T)) }
		+	\nu^{\f12}	\|\nabla_{\x_1}(1+\pa_z)\uu\|_{L_t^2(\HH^4(\mathbb{T}\times \R\times \T))}
		+\nu^{\f16}	 \|(1+\pa_z)\uu_{\not=}\|_{L^2_t(\HH^4(\mathbb{T}\times \R\times \T))}  
		\leq C\var_0,
	\end{align*}
	where $C>0$ is a constant independent of $t$ and $\nu$.
\end{theo}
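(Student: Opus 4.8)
\textbf{Proof proposal for Theorem \ref{theo3}.}

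The plan is to run the same bootstrap/continuity argument as in the proof of Theorem \ref{theo1}, carried over verbatim wherever the geometry of $\DD$ plays no role, and to adjust only those estimates that used dispersive decay or $L^1$-based quantities — since on $\DD=\R\times\T$ the $z$-variable is compact, the $W^{k,1}$ norms are controlled by $H^k$ norms (so the assumption \eqref{initial data-assume'} needs no separate $W^{2,1}$ piece), and the decay mechanism for the zero modes degrades. Concretely, I would define the same energy functional $E(t)$ built from $\|(1+\pa_z)\uu\|_{\HH^4}$, the viscous term $\nu^{1/2}\|\nabla_{\x_1}(1+\pa_z)\uu\|_{L^2_t\HH^4}$, and the enhanced-dissipation term $\nu^{1/6}\|(1+\pa_z)\uu_{\not=}\|_{L^2_t\HH^4}$, set up the bootstrap hypothesis $E(t)\le 2C\var_0$ with $\var_0\le\ep_0\nu^\al$, and close it by showing $E(t)\le C\var_0 + C(\nu^\al)^{-1}\cdot(\text{nonlinear contributions})$ where every nonlinear contribution carries at least two powers of the solution's size and enough negative powers of $\nu$ are absorbed precisely when $\al\ge 5/6$.

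The heart of the matter is the zero-mode analysis. On $\T\times\R^2$ one exploits the full $(1+t)^{-1/3}$ (and, for the good components $u_0^3,\nabla u_0^3,\pa_y\uu_0$, the $(1+t)^{-1}$) dispersive decay of the Navier-Stokes-Coriolis zero-mode semigroup, which is what buys the threshold $\al>2/3$. On $\R\times\T$ the compactness in $z$ means the relevant stationary-phase integral is effectively one-dimensional (dispersion only in $y$), so one should expect the zero modes to decay at worst like $(1+t)^{-1/2}$ in the dispersive direction, with the non-dispersive (double-zero, i.e.\ $z$-independent) part behaving like the 3-D Navier-Stokes streaks and hence only enjoying the $\min\{1+t,\nu^{-1}\}$-type bound from \eqref{est-NS}. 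The strategy described in the Remark — propagating the $x_1$-zero modes $\uu_0$ directly in an anisotropic Sobolev space whose regularity matches the non-zero modes, rather than splitting into simple/double zero modes — is exactly what lets us absorb this weaker decay: the anisotropic energy estimate for $\uu_0$ couples to the non-zero modes through the transport nonlinearity, and the $\nu^{1/6}$ enhanced-dissipation gain on $\uu_{\not=}$ together with inviscid damping of $u_{\not=}^2$ compensates the loss, at the cost of raising the threshold from $2/3$ to $5/6$.

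The key steps, in order: (i) local well-posedness and the continuity/bootstrap framework (identical to the $\R^2$ case); (ii) linear estimates for the zero-mode semigroup on $\R\times\T$ — here I would Fourier-transform in $(y,z)$, diagonalize the $3\times3$ Coriolis-lift-up matrix, and extract the dispersive decay in the $y$-frequency together with the exponential $e^{-\nu|\xi|^2 t}$ damping, obtaining the anisotropic bounds needed (this is the main obstacle, since the compact $z$ kills one dispersive direction and forces a more delicate treatment of the $z$-independent sector, which must be handled by the $t$-growth/$\nu^{-1}$ dichotomy rather than by decay); (iii) nonlinear energy estimates in $\HH^4$ for $(1+\pa_z)\uu$, splitting the interactions into zero$\times$zero, zero$\times$nonzero, nonzero$\times$nonzero, and using the structure of the transport term together with divergence-free cancellations to avoid derivative loss; (iv) enhanced dissipation for $\uu_{\not=}$ via the standard resolvent/time-weighted argument, yielding the $\nu^{1/6}$ factor; (v) collecting powers of $\nu$ and verifying that the worst term scales like $\var_0^2\nu^{-1}\lesssim\ep_0^2\nu^{2\al-1}\le\ep_0\var_0$ precisely when $\al\ge5/6$, which closes the bootstrap and gives global existence with the stated bounds.
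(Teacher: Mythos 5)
You have the right top-level architecture: the same continuity/bootstrap argument with an anisotropic $\HH^4$ energy for $(1+\pa_z)\uu$, propagating the $x_1$-zero modes $\uu_0$ directly in an anisotropic $H^5$-type space rather than splitting into simple/double zero modes, and the non-zero-mode analysis for $(Q_{\not=},W_{\not=})$ carried over essentially verbatim from $\T\times\R^2$, with $\al\ge5/6$ absorbing the $\ep_1^3\nu^{3\al-2/3-\delta_1}$ contribution. That is indeed how the paper proceeds (Section~\ref{sec5:proof-TRT}). However, the zero-mode part of your sketch contains two substantive errors.

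First, your decay rates are backwards. The paper establishes $(1+t)^{-1/2}$ (and $(1+t)^{-1}$ for the good components) on $\T\times\R^2$ via two-dimensional stationary phase with a nondegenerate critical point (Lemmas~\ref{App-dis-1}--\ref{App-dis-2}); on $\T\times\R\times\T$ the rate drops to $(1+t)^{-1/3}$ (Lemma~\ref{lem:disp-T}), because for fixed $z$-frequency $l\ne0$ the one-dimensional phase $\eta\mapsto -l/\sqrt{l^2+\eta^2}$ has $\phi''(\eta)=l(l^2-2\eta^2)(l^2+\eta^2)^{-5/2}$ vanishing at $|\eta|=|l|/\sqrt2$, so the worst-case van der Corput exponent is $1/3$, not $1/2$. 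This degraded rate, converted into the $L^2$-based Strichartz bound $\|e^{\gamma t\cR_3^l+\nu t(\pa_y^2-l^2)}h^l\|_{L_t^2L_y^\infty}\lesssim\nu^{-1/3}|l|^{-1/6}\|h^l\|_{L^2_y}$ of Lemma~\ref{lem:disp-T}, is precisely the source of the $\nu^{-1/3}$ and $\nu^{-5/6}$ losses that force $\al\ge5/6$. Relatedly, your closing arithmetic $\var_0^2\nu^{-1}\lesssim\ep_0\var_0\Leftrightarrow\al\ge5/6$ is not right: $\ep_0^2\nu^{2\al-1}\le\ep_0^2\nu^\al$ needs $\al\ge1$. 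The actual constraint comes from the self-improving Step-1 bound for $X:=\sum_s(\nu^{1/3}\|\nabla^s(1+\pa_z)u_0^2\|_{L^2_tL^\infty_yL^2_z}+\nu^{5/6}\|\nabla^s(1+\pa_z)\pa_z\uu_0\|_{L^1_tL^\infty_yL^2_z})$, which reads $X\lesssim\var_0+\ep_1\nu^{\al-5/6}X+\ep_1^2\nu^{2\al-5/6}$ and closes exactly when $\al\ge5/6$.

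Second, and this is the genuine gap: you propose to control the $z$-independent (double-zero) sector of $\uu_0$ by the $\min\{1+t,\nu^{-1}\}$ dichotomy of \eqref{est-NS}. If that crude bound were actually needed anywhere, it would force $\al\ge1$ and the claimed $5/6$ would be unreachable; asserting that the anisotropic energy "absorbs" this is circular. The paper never invokes \eqref{est-NS} on $\T\times\R\times\T$. What makes the argument work is a structural cancellation your proposal does not identify: in the zero-mode transport $\uu_0\cdot\nabla\uu_0=u_0^2\pa_y\uu_0+u_0^3\pa_z\uu_0$, the factor $u_0^2$ has vanishing $z$-average (the $2$-d divergence-free condition $\pa_yu_0^2+\pa_zu_0^3=0$ at $l=0$, together with decay in $y$, forces the double-zero mode of $u_0^2$ to vanish, equivalently $u_0^2=-\tfrac{\beta}{\sqrt{B_\beta}}\cR_3\up_0$ and $\cR_3^0=0$), and $\pa_z$ annihilates $l=0$. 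Hence in every term the factor placed in $L^\infty_y$ via Strichartz in \eqref{est-F1-T} has no $l=0$ component, so only the dispersive $l\ne0$ estimates are ever needed; the double-zero streak sector enters only through the complementary factor measured in $H^5$. Without noticing this you cannot close Step~(ii) of your outline at threshold $5/6$.
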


\begin{theo}[Asymptotic behavior on $\T\times\R\times\T$]
	\label{theo4}
	Under the assumptions in Theorem \ref{theo3}, for the zero frequencies of the solution, we have the following dispersive estimate:
	\begin{align}
    \label{Asy-u0-1'}
		&\|(u^2_0,\nabla u^2_0,\pa_z\uu_0)(t)\|_{L^\infty(\R\times \T)}\lesssim (1+t)^{-\f13} \|\uu_{0in}\|_{W^{{3}^+,1}(\R\times \T)}+\ep_1^2\nu^{2\al-\f23}.
	\end{align}
	
	For the non-zero frequencies, we have the following inviscid damping and enhanced dissipation:
	\begin{align}\label{Asy-unot'}
		\|(u_{\not=}^1,u_{\not=}^3)(t)\|_{L^2(\mathbb{T}\times \R\times \T)}
		+(1+t)\|u_{\not=}^2(t)\|_{L^2(\mathbb{T}\times \R\times \T)}
		\leq C \var_0 e^{-\delta_2 \nu^{\f13}t},
	\end{align}
	where $\delta_2>0$ is a small constant.
\end{theo}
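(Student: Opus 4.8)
The plan is to run a bootstrap/continuity argument on the quantities appearing in Theorem \ref{theo3}, using the energy bounds established there as the ``master'' a~priori estimate, and then to upgrade the zero-mode control to the pointwise dispersive statement \eqref{Asy-u0-1'} and the non-zero-mode control to the exponentially-decaying statement \eqref{Asy-unot'}. Concretely, I would first recall the linear dispersive/Strichartz machinery for the zero modes advertised in the abstract: the good components of $\uu_0$ (here $u_0^2$, and the $z$-derivatives $\pa_z\uu_0$ forced by incompressibility $\pa_y u_0^2+\pa_z u_0^3=0$ on $\R\times\T$) solve a dispersive system whose semigroup obeys $\|e^{tL}P_0 f\|_{L^\infty}\lesssim (1+t)^{-1/3}e^{-\nu t}\|f\|_{W^{3^+,1}}$, mirroring the Huang--Sun--Xu estimate quoted in the introduction. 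I would apply Duhamel to the zero-mode equations \eqref{1-eq2}: the linear flow on $\uu_{0in}$ produces exactly the $(1+t)^{-1/3}\|\uu_{0in}\|_{W^{3^+,1}}$ term, and the contribution of the nonlinearity $\PP_0(\uu\cdot\nabla_{\x_1}\uu)+\PP_0\nabla p^{NL}$ is bounded by integrating the decay kernel against the nonlinear source, whose size is controlled by $\var_0\le\ep_0\nu^\alpha$ through Theorem \ref{theo3}. Splitting the source into zero$\times$zero and non-zero$\times$non-zero interactions (the zero$\times$non-zero piece vanishes under $\PP_0$), the non-zero$\times$non-zero part is absorbed using the enhanced dissipation factor $e^{-\delta_2\nu^{1/3}t}$, which after the time integral contributes a net $\nu^{-1/3}$, and the zero$\times$zero part is handled with the dispersive decay of the good components together with the anisotropic Sobolev control of the full $\uu_0$; tallying powers of $\nu$ yields the claimed $\ep_1^2\nu^{2\alpha-2/3}$ remainder, which is why $\alpha\ge\frac56$ suffices (so that $2\alpha-\frac23\ge1\ge\alpha$, keeping the perturbation subordinate).

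For the non-zero frequencies I would revisit the linearized problem for $\uu_{\not=}$ obtained from \eqref{1-eq1}, use the stream-function/vertical-velocity formulation, and invoke the inviscid damping estimate for $u_{\not=}^2$ (the extra $(1+t)^{-1}$) together with the enhanced dissipation rate $\nu^{1/3}$ for all non-zero components; these are precisely the mechanisms already encoded in the $\nu^{1/6}\|(1+\pa_z)\uu_{\not=}\|_{L^2_t\HH^4}$ term of Theorem \ref{theo3}. The pointwise-in-time decay $e^{-\delta_2\nu^{1/3}t}$ then follows by a standard argument converting the space-time (Lyapunov-functional) bound into a pointwise bound: one builds a weighted energy $E_{\not=}(t)$ equivalent to $\|(u_{\not=}^1,u_{\not=}^3)\|_{L^2}^2+(1+t)^2\|u_{\not=}^2\|_{L^2}^2$, shows $\frac{d}{dt}E_{\not=}+\delta_2\nu^{1/3}E_{\not=}\le (\text{nonlinear terms})$, and closes via Gr\"onwall after checking the nonlinear terms are time-integrable against $E_{\not=}$ using the already-established bootstrap bounds and the smallness $\var_0\le\ep_0\nu^\alpha$.

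The main obstacle I expect is the zero$\times$zero nonlinear interaction in the Duhamel formula for $\uu_0$: only the ``good'' components ($u_0^2$ and the $z$-derivatives) enjoy the $(1+t)^{-1/3}$ dispersive decay, whereas the ``bad'' component $u_0^1$ (and $\pa_y$-derivatives of $\uu_0$) does not decay pointwise at that rate, yet it appears inside the transport nonlinearity $\uu_0\cdot\nabla(\cdot)$. Overcoming this requires exploiting the nonlinear structure — noting that $u_0^1$ multiplies $\pa_{x_1}$, which annihilates zero modes, so in the $\PP_0$-projected product $u_0^1$ can only appear paired with $\pa_y$ or $\pa_z$ of another factor — and then distributing derivatives so that at least one factor carries a good ($z$- or dispersively-decaying) derivative while the other is measured in the anisotropic Sobolev space tailored to the zero modes. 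Making this bookkeeping consistent with the Strichartz exponents for the $(1+t)^{-1/3}$ kernel, and verifying that the resulting power of $\nu$ is $2\alpha-\frac23$ uniformly on $\R\times\T$ (where only a one-dimensional unbounded direction is available, so the dispersive gain is weaker than on $\R^2$), is the technical heart of the argument; the rest is a routine continuity argument bootstrapping off Theorem \ref{theo3}.
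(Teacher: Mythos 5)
Your high-level plan for \eqref{Asy-u0-1'} is in the right spirit — Duhamel on the zero modes, a $(1+t)^{-1/3}$ dispersive kernel, and energy-type control of the nonlinear source — but two things diverge from what the paper actually does and one is a genuine gap.

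First, the dispersive structure you invoke is the Huang--Sun--Xu symmetrization of $(u_0^1,u_0^2)$ (the complex-valued $W^\pm=u_0^2\pm i\sqrt{\beta/(\beta-1)}\tilde\cR_3 u_0^1$). The paper deliberately abandons that in favour of the new unknown $\up_0=\frac{\beta-1}{\sqrt{B_\beta}}|\nabla_{y,z}|^{-1}(\pa_z u_0^2-\pa_y u_0^3)$ and $\W^\pm=u_0^1\pm\up_0$ (Section \ref{sec-zeromode-1}), precisely because this gives pointwise control of all components and isolates the ``good'' factor $\pa_y$ (equivalently $\cR_2$). For Theorem \ref{theo4} the relevant linear input is Lemma \ref{lem-disp-T}, which is written in terms of $\up_0$ and the $z$-projections $\cR_3^l$; it produces exactly the $(1+t)^{-1/3}$ decay for $(u_0^2,\nabla u_0^2,\pa_z\uu_0)$ via the identities \eqref{rep-u01-w01}--\eqref{rep-u023}. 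Second, the ``technical heart'' you describe — distributing derivatives so that at least one factor lands on a good component, and carefully matching this against the Strichartz exponents — is not what the paper does here. Once Theorem \ref{theo3} is proved, the nonlinear source in Lemma \ref{lem-disp-T} is bounded crudely by Cauchy--Schwarz, $\|\PP_0(\uu\cdot\nabla_{\x_1}\uu)\|_{L^2_t W^{3,1}}\lesssim \|\uu_0\|_{L^\infty_t H^3}\|\nabla\uu_0\|_{L^2_t H^3}+\|U_{\not=}\|_{L^\infty_t H^3}\|\tilde\nabla U_{\not=}\|_{L^2_t H^3}\lesssim\ep_1^2\nu^{2\al-\f12}$, which combined with the $\nu^{-1/6}$ prefactor gives $\ep_1^2\nu^{2\al-2/3}$. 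No special treatment of $u_0^1$ inside the product is needed at this stage (the sharp transport structure you describe is used in the paper to close Proposition \ref{lemm-1}, i.e.\ to prove Theorem \ref{theo3}, not to prove Theorem \ref{theo4}). Your remark about the role of $\al\ge 5/6$ ("so that $2\al-\f23\ge1\ge\al$") is also off the point: $\al\ge 5/6$ is what Theorem \ref{theo3} needs to close; the bound \eqref{Asy-u0-1'} just inherits whatever power of $\nu$ the energy gives, with no further constraint.

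The real gap is in your treatment of \eqref{Asy-unot'}. You propose to build a weighted Lyapunov functional $E_{\not=}(t)\sim\|(u^1_{\not=},u^3_{\not=})\|_{L^2}^2+(1+t)^2\|u^2_{\not=}\|_{L^2}^2$, prove $\frac{d}{dt}E_{\not=}+\delta_2\nu^{\f13}E_{\not=}\le(\text{nonlinear})$, and close with Gr\"onwall. That differential inequality cannot be derived by a direct energy computation on $\uu_{\not=}$: the naive viscous dissipation only gives a rate $\nu$, not $\nu^{1/3}$, and the $(1+t)^2$ weight would produce an uncontrolled $\frac{2}{1+t}E_{\not=}$ term. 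Enhanced dissipation at rate $\nu^{1/3}$ and the $(1+t)^{-1}$ inviscid damping of $u^2_{\not=}$ both hinge on the Fourier multipliers $\varphi, m_1, m_2, m_3$ built into $\A$, and in particular on the pointwise bound $\langle t\rangle^{-2}\varphi\le 3$ (Lemma \ref{sec2:lem-0}). The paper therefore does not construct a new Lyapunov functional at all; it extracts \eqref{Asy-unot'} directly from the already-established $\sup_t\|\A(Q_{\not=},W_{\not=})\|_{H^4}^2\lesssim\var_0^2$ (the definition of $\A$ contains the factor $e^{\delta_2\nu^{1/3}t}$) combined with the algebraic relations \eqref{def-u1not}--\eqref{def-u3not}, e.g.\ $\|U^2_{\not=}\|_{L^2}\lesssim e^{-\delta_2\nu^{1/3}t}\||\widetilde\nabla|^{-1}\A Q_{\not=}\|_{L^2}\lesssim(1+t)^{-1}e^{-\delta_2\nu^{1/3}t}\|\A Q_{\not=}\|_{H^1}$. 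In short, the exponential rate and the inviscid-damping factor are both hard-wired into the already-proven weighted energy estimate; re-deriving them via a new Gr\"onwall argument on the physical-space velocities would essentially amount to re-proving the core of Proposition \ref{prop-1}, and you have not explained how the nonlinear terms would close under the new $(1+t)^2$ weight.
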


\subsection{Obstacles \& strategies}	
\label{sec-strategy}
In this subsection, we present the general framework, highlight the main obstacles, and outline the strategies for closing the estimates with the smallest possible thresholds. Here, we only outline the main ideas for the case $\T\times\R^2$. The case $\T \times \R \times \T$ can be addressed by employing a similar method.

\subsubsection{Non-zero modes}

We consider the non-zero modes system. One of main analytical challenge in the study of non-zero modes stems from the nonlinear interaction terms between zero and non-zero modes. 
To formulate this precisely, we introduce the moving frame
\begin{align*}
t= t,\quad x\tri x_1-ty,\quad y= y,\quad z=z
\end{align*}
and denote $\U(t,x,y,z)=\uu(t,x+ty,y,z).$
We then consider the following coupled system 
\begin{align*} 
	\left\{\begin{array}{l}
		\pa_t Q +C_{\beta} \pa_{z} |\widetilde{\nabla}| ^{-1}W-\nu \widetilde{\Delta}Q
		=-\tilde{\pa}_y \widetilde{\Delta} P^{NL}-\widetilde{\Delta}(\U\cdot\widetilde{\nabla}U^2),
		\\[1ex]
		\pa_t W+C_{\beta} \pa_{z} |\widetilde{\nabla}| ^{-1}Q
		-\pa_x\tilde{\pa}_y |\widetilde{\nabla}| ^{-2}W
		-\nu \widetilde{\Delta}W
		=\sqrt{\f{\beta}{\beta-1}} |\widetilde{\nabla}| 
		\big(
		\pa_{z}(\U\cdot\widetilde{\nabla}U^1)-\pa_{x}(\U\cdot\widetilde{\nabla}U^3)
		\big),
	\end{array}\right.
\end{align*}
where $\tilde{\pa}_y =\pa_y-t\pa_x, \, \widetilde{\nabla}=(\pa_x, \tilde{\pa}_y, \pa_z)^t, \widetilde{\Delta}=\pa_x^2+\tilde{\pa}_y^2+\pa_z^2$ and 
\begin{align*}	
Q\tri \widetilde{\Delta} U^2,	\quad	W\tri \sqrt{\f{\beta}{\beta-1}} |\widetilde{\nabla} |(\pa_{z} U^1-\pa_{x} U^3).
\end{align*}
We observe that the equation for $W$ contains a stretching term. And the presence of rotation terms results in a coupled system of equations for $Q$ and $W$.
This coupling leads to an growth in $Q$, which consequently introduces new difficulties for lowering the transition threshold.

In  the process of propagating high Sobolev regularity norm (say $H^4$) of $(Q_{\neq}, W_{\neq})$, we need to deal with the following nonlinear term:
\begin{align} 
	&\Big|\int_0^t\int_{\T\times\R^2}	
    \nabla^4\PP_{\not=}\A\tilde{\pa}_y\widetilde{\dive}(U\cdot\widetilde{\nabla} U)
    \cdot
    \nabla^4\A Q_{\not=}\,dxdydz d\tau \Big|
	\nonumber\\
	\lesssim& \int_0^t\int_{\T\times\R^2}	|\nabla^4\A(\pa_z U_{0}^2\widetilde{\nabla}U_{\not=}^3)| |\nabla^4\A\widetilde{\nabla}Q_{\not=}|\,dxdydz d\tau
    \nonumber\\
    &
    +\Big|\int_0^t\int_{\T\times\R^2} \A\tilde{\pa}_y\nabla^4(\pa_{i}U_{\not=}^2\tilde{\pa}_yU_{\not=}^i)\, \nabla^4\A Q_{\not=}\,dxdydzd\tau \Big|
    +\cdots,
    \label{sec1:est-en-non}
\end{align}
here, $\A$ is a time dependent Fourier multiplier type weights that is defined in \eqref{def-Af}. And the notation $'\cdots'$ denotes terms that can be readily handled, whereas the first two terms (which are called as the reaction terms) on the right-hand side of \eqref{sec1:est-en-non} require special attention.
We now shift our analysis to these two reaction terms.

$\bullet$ The first term in the right-hand side of \eqref{sec1:est-en-non} can be bounded by
\begin{align} 
	\int_0^t\int_{\T\times\R^2}	|\nabla^4\A(\pa_z U_{0}^2\widetilde{\nabla}U_{\not=}^3)| |\nabla^4\A\widetilde{\nabla}Q_{\not=}|\,dxdydz d\tau
	\lesssim 	\|\pa_{z}U_{0}^2 \|_{L^p_tH^5}  \|\A \widetilde{\nabla}U_{\not=}^3\|_{L^{\f{2p}{p-2}}_tH^4} \|\A\widetilde{\nabla}Q_{\not=}\|_{L^2_tH^4},
	\label{intro-1}
\end{align}
where $p=2$ or $+\infty$.  

To close the energy estimates, we need to control zero modes $\|\pa_{z}U_{0}^2 \|_{L^p_tH^5}$. Similarly, when dealing with the term like 
\begin{align*}
\int_0^t\int_{\T\times\R^2}
|\nabla^4\A\pa_{4-j}(\U_{0}\cdot\widetilde{\nabla}  U^j_{\not=})| |\nabla^4\A\widetilde{\nabla}W_{\not=}|\,dxdydzd\tau \qquad \,(j=1,3),
\end{align*}
 we need to bound $\|\pa_{z}U_{0}^{1,3} \|_{L^p_tH^5}.$ 
A natural approach is to first work on the the zero modes of $(Q, W)$ and then recover the estimates of $\U_0$. 
This relies on analyzing the boundedness of the operators
$\pa_z^{-1} |\na_{y,z}|^{-1}$ and ${\pa_y\pa_{z}^{-1}|\na_{y,z}|^{-2}}.$ 
Such a strategy works well in the setting $(y,z)\in \R\times \T,$ with additional estimates on double zero (in $x$ and $z$) frequencies of $U_{0}^{1,3}.$ Moreover, one can obtain favorable estimates on $\pa_y U_{0}^{1,3}$ 
by imposing suitable assumptions on the initial data of the double zero frequencies of $U_{0}^{1,3}.$ Nevertheless, this strategy fails in the current setting $(y,z)\in\R^2$ or the case $\R\times \T$ without any assumptions on the double zero modes, since $\pa_z |\na_{y,z}|^{-2}$ and 
${\pa_y|\nabla_y|^{-1}\pa_{z}^{-1}}$ exhibit singularities at low $z$ frequencies and it is no longer appropriate to consider separately zero and non-zero frequencies in $z.$ 
To resolve the problem, instead of working on the simple zero (zero in $x$ but non-zero in $z$) modes of $(Q, W),$ we will try to propagate the regularity of the zero modes $\U_0$ (or $\uu_0$) directly by performing the energy estimates on the system \eqref{1-eq2}. 
This is doable due to the symmetric structure in the system 
\eqref{1-eq2}.  

Coming back to \eqref{intro-1}. A natural approach is to leverage the dissipation term to control $\|\pa_{z} \uu_{0} \|_{L_t^2H^5}$, which leads to 
$\alpha\geq 1$. To relax the constraint, 
we should instead put $p=+\infty.$ 
The crucial observation that allows us to do so is
\begin{align*} 
	&\|\pa_z\tilde{\pa}_yU_{\not=}^j\|_{L^\infty_tH^4} \leq \|(Q, W)\|_{L^\infty_tH^4},~j\in\{1,3\},
	\\
	&\|\widetilde{\Delta}U_{\not=}^2\|_{L^\infty_tH^4} \leq \|Q\|_{L^\infty_tH^4}
\end{align*} 
which suggests that one can propagate the $H^5$ norm of $\pa_z \uu_0,$ given the $H^4$ regularity of $(Q,W)^t.$ 
These motivate us to introduce the anisotropic energy functional for the zero modes $\uu_0,$ see \eqref{def: e-1}.
Thus, we get that
\begin{align*}
    &\int_0^t\int_{\T\times\R^2}	|\nabla^4\A(\pa_z U_{0}^2\widetilde{\nabla}U_{\not=}^3)| |\nabla^4\A\widetilde{\nabla}Q_{\not=}|\,dxdydz d\tau
    \\
    \lesssim &~	\|\pa_{z}U_{0}^2 \|_{L^\infty_tH^5}  
    \|\A \widetilde{\nabla}U_{\not=}^3\|_{L^{2}_tH^4} \|\A\widetilde{\nabla}Q_{\not=}\|_{L^2_tH^4}
    \\
    \lesssim&~ \nu^{-\f23} |\E_{0}(t)|^{\f12} \int_0^t\D_{\not=}(\tau)\,d\tau
   \lesssim  \ep_1^3\nu^{3\al-\f23}.
\end{align*}


$\bullet$ Next, we aim to deal with the second term in the right-hand side of \eqref{sec1:est-en-non}.
A direct computation shows that it can be controlled by 
\begin{align*}
    \Big|\int_0^t\int_{\T\times\R^2} \A\tilde{\pa}_y\nabla^4(\pa_{i}U_{\not=}^2\tilde{\pa}_yU_{\not=}^i)\, \nabla^4\A Q_{\not=}\,dxdydzd\tau \Big|
\lesssim& \|\pa_i U_{\neq}^2\|_{L_t^2H^4}\|\tilde{\pa}_yU_{\not=}^i\|_{L_t^{\infty}H^4}
\|\tilde{\pa}_y \A Q_{\neq}\|_{L_t^2H^4}
\\
\lesssim&~ \nu^{-\f56} |\E_{\not=}(t)|^{\f12} \int_0^t\D_{\not=}(\tau)\,d\tau
\lesssim  \ep_1^3\nu^{3\al-\f56},
\end{align*}
which limits the stability thresholds $\alpha$ to be no less than $\frac56$.
To further reduce the threshold, we have to take fully advantage of the inviscid damping of $U_{\neq}^2.$ 
For this purpose, we split the second term in the right-hand side of \eqref{sec1:est-en-non} into three terms
\begin{align*}
	&\int_0^t\int_{\T\times\R^2}	\nabla^4\A(\tilde{\pa}_y\pa_{i}U_{\not=}^2\tilde{\pa}_yU_{\not=}^i) 
	\nabla^4\A Q_{\not=}\,dxdydzd\tau
	\nonumber
	\\
	&+
	\int_0^t\int_{\T\times\R^2}
	\A([\nabla^4,\tilde{\pa}_y^2U_{\not=}^i]\pa_{i}U_{\not=}^2) 
	\nabla^4\A Q_{\not=}\,dxdydzd\tau
	\nonumber
	\\
	&+
	\int_0^t\int_{\T\times\R^2}
	\A(\nabla^4\pa_{i}U_{\not=}^2
	\tilde{\pa}_y^2U_{\not=}^i)
	\nabla^4\A Q_{\not=}\,dxdydzd\tau 
	\nonumber
\end{align*}
The first term can be handled easily by favorable estimates for $\tilde{\pa}_y\pa_{i}U_{\not=}^2.$ For the second term, we can make use of the inviscid damping estimate for $U_{\neq}^2:$
\begin{align}\label{U2-damp-intr}
    \|U_{\neq}^2(t)\|_{H^3}\lesssim \||\tilde{\na}|^{-1}\A Q_{\neq}\|_{H^3}\lesssim (1+t)^{-1}\|\A Q_{\neq}\|_{H^4}.
\end{align}
The last term is the most involved, since a similar estimate to \eqref{U2-damp-intr} cannot be applied due to the regularity issue. The idea to overcome this difficulty is to find a way to `transfer' half derivative $|\tilde{\na}|^{-\f12},$ which arises from the relation $U_{\neq}^2(t)\approx |\tilde{\na}|^{-1}\A Q_{\neq}$
into other quantities such as $\nabla^4\A Q_{\not=},$ thereby also gaining time integrability. To this end, and motivated by \cite{Knobel-2025}, we introduce the Fourier multiplier $m_3$  (defined in \eqref{def-m0}) to construct the time-dependent weight $\varUpsilon$ (given by \eqref{def-Af}).
This weight will bring us a new damping $\|
\varUpsilon\A (Q_{\not=},W_{\not=})(t)\|_{L^2(\T\times\R^2)}$,  
which enables us to control the last term as
\begin{align*}
& 
	\int_0^t\int_{\T\times\R^2}
	\A(\nabla^4\pa_{i}U_{\not=}^2
	\tilde{\pa}_y^2U_{\not=}^i)
	\nabla^4\A Q_{\not=}\,dxdydzd\tau \\
	&\lesssim 
	\nu^{-\f23}|\ln(\nu^{-1})|^{1+a_0} \|\A(Q_{\not=},W_{\not=})\|_{L^\infty_t H^4}
	\big\|\varUpsilon\A( Q_{\not=},W_{\not=})\big\|_{L^2_t H^4}^2+\cdots
	\\
	&\lesssim \nu^{-\f23}|\ln(\nu^{-1})|^{1+a_0}| |\E_{\not=}(t)|^{\f12}\int_0^t\D_{\not=}(\tau)\,d\tau
	\lesssim  \ep_1^3\nu^{3\al-\f23}|\ln(\nu^{-1})|^{1+a_0}
\end{align*}
for some $a_0\in (0,\sqrt{2}-1).$ To summary, in order to propagate high regularity norms for the non-zero modes, we need the threshold $\alpha>\frac23.$

Considering the previously obtained transition threshold of $\alpha_1\leq \f13$ for the 2-D Navier-Stokes equations \cite{Bedrossian-Wang-2018, MZ, WZ-2023} and the presence of a stretching term in the 3D system, it is natural to conjecture that $\alpha_1\leq \f23$ in the current context.

\subsubsection{Zero modes}
We consider the zero modes system.
\medskip


\noindent\textbf{Dispersive structure and estimates.}
We work in the stable regime 
$B_{\beta}>0$ in which the system \eqref{1-eq2} exhibits an oscillatory structure that gives rise to dispersive properties.
More precisely, as in the previous works \cite{Huang-Sun-Xu, Huang-Sun-Xu-1, Coti-Zotto-Widmayer-2025}, one can
 symmetrize the coupled system satisfied by 
$(u_0^1,u_0^2)$ in system \eqref{1-eq2}, 
and introduce the quantities 
\begin{align*} 
	W^{\pm}\tri u_0^2\pm i \sqrt{\f{\beta}{\beta-1}} \tilde{\mathcal{R}}_3 u_0^1\,, \quad \text{ with } \,
	 \tilde{\mathcal{R}}_3=\sqrt{-\mathcal{R}^2_3}= 
     \sqrt{\pa_z^2(\pa_y^2+\pa_z^2)^{-1}},
\end{align*}
which satisfies that
\begin{align*}
	\pt W^{\pm}=  	\tilde{\mathcal{L}}^{\pm}W^{\pm}+\textrm{nonlinear terms},
\end{align*}
where the operators $\tilde{\mathcal{L}}^{\pm}$ are defined by
$$\tilde{\mathcal{L}}^{\pm}\tri \pm i\mathrm{sgn}(\beta)\sqrt{B_{\beta}}\tilde{\mathcal{R}}_3 +\nu(\pa_y^2+\pa_z^2). 
$$
An important feature of the linear operators $\tilde{\mathcal{L}}^{\pm}$ is that they exhibit the dispersive property: 
\begin{align}\label{dispersive-intro}
	\|e^{t\tilde{\mathcal{L}}^{\pm}}f\|_{L^{q}(\R^2)}\lesssim (1+t)^{-\f12(1-\f2{q})} \|f\|_{W^{(2^{+})(1-\f2q),q'}(\R^2)}, \qquad \forall~ 2\leq q\leq +\infty.
\end{align}
The above estimate indicates that not only is the lift-up effect suppressed when
$B_{\beta}> 0,$ but the oscillatory nature also causes the amplitude of the zero modes to decay over time. This is the primary reason that one can expect the stability threshold to be lower than in the case without rotation, i.e., $B_{\beta}=0$.
Nevertheless, by working on $W^{\pm}$ and \eqref{dispersive-intro}, there are still
 some troubles in proving the stability of Couette flow with the smallest possible threshold. Indeed, from \eqref{dispersive-intro} and the definition of $W^{\pm},$ we are only able to 
 obtain the following anisotropic estimates for $\uu_0:$	
\begin{align} \label{nau0-intro}
	\|(\nabla u_0^2,\partial_z\uu_0 ) \|_{L^{\infty}(\R^2)}
	\lesssim (1+t)^{-\f12}\|\uu_{0in}\|_{
    W^{3^{+},1}(\R^2)}
    +\text{contribution of nonlinear terms.}
\end{align}	
However, estimates for $(1+\pa_y)(u_0^1, u_0^3)$ are not available. 
Whereas on the domain $(y,z)\in \R\times\T$, such estimates can be achieved by studying the simple zero (zero in $x_1$ but non-zero in $z$) modes of $(u_0^1, u_0^3)$ and using the divergence-free condition $\pa_z u_0^3=-\pa_y u_0^2$ to control $u_0^3$ by $u_0^2$, this method fails for $(y,z)\in \R^2$ due to the singularities in the low  frequencies.
With the unfavorable dispersive estimates \eqref{nau0-intro}, we are only able to obtain a mild stability threshold $\alpha>\f45$. 

To obtain a improved threshold, we need to identify a new dispersive structure.  For such a purpose, we introduce the new unknown
\begin{align*} 
\up_0
={\f{\beta-1}{\sqrt{B_\beta}}} (-\partial_y^2-\partial_z^2)^{-\f12} (\pa_z u_0^2-\pa_y u_0^3),
\end{align*}
which, together with $u_0^1,$ solves a coupled system
\begin{align*}
\left\{\begin{array}{l}
	\pt u_0^1+\sqrt{B_\beta}\mathcal{R}_3 \up_0-\nu\Delta u_0^1
	=N^1, 
	\\[1ex]
	\pt \up_0+\sqrt{B_\beta}\mathcal{R}_3u_0^1-\nu\Delta  \up_0
	=N^2,
    \end{array}\right.
\end{align*}
where $\cR_3=\partial_z(-\partial_y^2-\partial_z^2)^{-\f12}$, and
$N^{1}, N^{2}$ are some nonlinear terms.
Then the quantity $\W^{\pm}\tri u_0^1 \pm  \up_0$ 
solves
\begin{align*}
	\pt \W^{\pm}= 
	\mathcal{L}^{\pm}
	\W^{\pm}+(N^{1}\pm N^2) 
\end{align*}
where ${\mathcal{L}}^{\pm}\tri \pm \sqrt{B_{\beta}}\mathcal{R}_3 +\nu(\pa_y^2+\pa_z^2). $ We remark that 
$e^{t\mathcal{L}^{\pm}}$ enjoys same dispersive property as $e^{t\tilde{\mathcal{L}}^{\pm}}.$ 
The advantage of introducing
 $\W^{\pm}$ is that it allows one to derive dispersive estimates for all components of
 $\uu_0$ through the following relations: 
\begin{align*}
u_0^1 = \f12\big(\W^{+}+\W^{-}\big),
\quad
\up_0 = \f12\big(\W^{+}-\W^{-}\big),
\quad u_0^2={-\f{\beta}{\sqrt {B_\beta}}} \mathcal{R}_3\up_0,
\quad
u_0^3={\f{\beta}{\sqrt {B_\beta}}} \mathcal{R}_2\up_0\,,
\end{align*}
where $\cR_2=\partial_y(-\partial_y^2-\partial_z^2)^{-\f12}.$
From these identities, we readily get that 
\begin{align}\label{intro-dispes-2}
	\|\uu_0\|_{L^{\infty}(\R^2)}
	\lesssim (1+t)^{-\f12} \|\uu_{0in}\|_{W^{2^+,1}(\R^2)}+\text{contribution of nonlinear terms.}
\end{align}
Moreover, as proved in Lemma \ref{App-dis-2}, the following improved dispersive estimate holds
\begin{align*}
	\|e^{t{\mathcal{L}}^{\pm}}\pa_y f\|_{L^{\infty}(\R^2)}\lesssim (1+t)^{-1} \|f\|_{W^{3^{+},1}(\R^2)}.  
\end{align*}
As a result, we find that the quantities $u_0^3$ and $\pa_y\uu_0$, which we refer to as ``good components", enjoy the improved dispersive estimates: 
\begin{align}\label{disp-improved}
    \|(u_0^3, \pa_y \uu_0)\|_{L^{\infty}(\R^2)}
	\lesssim (1+t)^{-1} \|\uu_{0in}\|_{W^{3^+,1}(\R^2)}+\text{contribution of nonlinear terms.}
\end{align}
\noindent\textbf{Propagation of high Sobolev regularity for zero modes.} As mentioned in the previous section, in order to close the estimates for the nonzero modes, it is necessary to propagate the zero modes in an anisotropic Sobolev norm
$\|(1+\pa_z)\uu_0\|_{L_t^{\infty}H^5(\R^2)}.$ This can be achieved by establishing the following $L_t^2L^{\infty}(\R^2)$ estimates for $\uu_0:$
\begin{align}\label{L2linfty-zero}
    \|(1+\pa_z)\uu_0\|_{L_t^2L^{\infty}(\R^2)}\lesssim \ep_1 \nu^{\f12},
\end{align}
under the \textit{a priori} assumption 
\begin{align}\label{pre-asump}
    \|(1+\pa_z)\uu_0\|_{L_t^{\infty}H^5(\R^2)}+\nu^{\f12}\|(1+\pa_z)\na\uu_0\|_{L_t^{2}H^5(\R^2)}\lesssim \ep_1 \nu^{\alpha}.
\end{align}
To show \eqref{L2linfty-zero}, we establish some useful Strichartz type estimates in Lemma \ref{lemm-str}. Notably, it holds that 
\begin{align}\label{stri-intro}
	\|(1+\pa_z)\uu_0\|_{L^2_tL^\infty(\R^2)}
	\lesssim \nu^{-\delta}\|(1+\pa_z)\uu_{0in}\|_{W^{2,1}(\R^2)}+\nu^{-\f14}\|(1+\pa_z)\PP_0(\uu\cdot\nabla_{\x_1}  \uu)\|_{L^1_tH^{1}(\R^2)},
\end{align}
where the second term in the right hand side represents the contribution of nonlinear terms. Let us remark that if we instead using directly the dispersive estimate \eqref{intro-dispes-2}, the second term is replaced by 
$\nu^{-\delta_1}\|(1+\pa_z)\PP_0(\uu\cdot\nabla_{\x_1}\uu)\|_{L^1_tW^{3,1}(\R^2)},$ which, by \eqref{pre-asump},
can only be bounded by $\ep_1^2\nu^{2\alpha-1-\delta_1}.$ 
This requires the threshold $\alpha> \f34$ to guarantee that estimate \eqref{L2linfty-zero} remains valid.
The advantage of employing an $L_t^1H^1$ type norm for the nonlinear term is that it allows us to exploit both the good structure of the transport term:
$\uu_0\cdot\nabla\uu_0= u_0^2\pa_y \uu_0+u_0^3\pa_z\uu_0$
and the improved dispersive estimate \eqref{disp-improved}. Upon using \eqref{stri-intro}, we find that 
\begin{align*}
	&\nu^{\delta_1}\|(1+\pa_z)u_0^3\|_{L^2_tW^{1,\infty}(\R^2)}
	+\nu^{\f12+\delta_1}\|(1+\pa_z)\pa_y\uu_0\|_{L^1_tW^{1,\infty}(\R^2)}
	\nonumber
	\\
	\lesssim &~
	\|(1+\pa_z)\uu_{0in}\|_{H^{2}(\R^2)}+\|(1+\pa_z)\PP_0(\uu\cdot\nabla_{\x_1}  \uu)\|_{L^1_tH^{2}(\R^2)}
    \nonumber
    \\
    \lesssim&~\var_0+\ep_1\nu^{\al-\f12}\|(1+\pa_z)\uu_0\|_{L^2_tL^\infty(\R^2)}+\ep_1^2\nu^{2\al-\f13}.
\end{align*}
Employing the estimates above, one can bound 
\begin{align*}
    & \nu^{-\f14}\|(1+\pa_z)(\uu_0\cdot\nabla\uu_0)\|_{L^1_tH^{1}(\R^2)}
    \nonumber
    \\
    \lesssim 
    &~
    \nu^{-\f14}\big(\|(1+\pa_z)u^3_0\|_{L^2_tW^{1,\infty}(\R^2)}
    \|(1+\pa_z)\pa_z \uu_0\|_{L^2_tH^{1}(\R^2)}
    \nonumber
    \\
     & +\|(1+\pa_z)u^2_0\|_{L^\infty_tH^{1}(\R^2)}
    \|(1+\pa_z)\pa_y \uu_0\|_{L^1_tW^{1,\infty}(\R^2)}\big)
    \nonumber
    \\
    \lesssim
    &~\ep_1\nu^{\al-\f34-\delta_1} (\var_0+\ep_1\nu^{\al-\f12}\|(1+\pa_z)\uu_0\|_{L^2_tL^\infty(\R^2)}+\ep_1^2\nu^{2\al-\f13}),
\end{align*}
which is less than $\ep_1\nu^{\f12}+\ep_1^2 \|(1+\pa_z)\uu_0\|_{L^2_tL^\infty}$ as long as $\ep_1$ small enough and $\alpha> \f58.$ We refer to Lemma \ref{sec4:lemm-u0} for more details.


\bigskip

\noindent\textbf{Notation.}
\begin{itemize}
\item[(1)] Throughout this paper, all constants $C$ may be different in different lines, but universal and independent of $\nu$. 
A constant with subscript(s) illustrates the dependence on the parameters. For example, $C_{a}$ is a constant which depends on $a$. 
\item[(2)]For $a\lesssim b$, we mean that there exists a universal constant $C$, such that $a\leq Cb$. While $a\gtrsim b$, we mean that there exists a universal constant $C$, such that $a\geq Cb$. And $a\sim b$ means that $a\lesssim b\lesssim a$. 
\item[(3)]  For any real number $r\geq1$, $r'$ represents the conjugate exponent of $r$. Specifically, for $r=1$, then $r'=\infty$.
\item[(4)]  The symbol $|\nabla|\tri \sqrt{-\pa_x^2-\pa_y^2-\pa_z^2}$, and $|\nabla_{y,z}|\tri \sqrt{-\pa_y^2-\pa_z^2}$.

\item[(5)] The symbol $\langle a, b\rangle \tri \sqrt{1+a^2+b^2}$, and $\langle a\rangle\tri \sqrt{1+a^2}$.

\item[(6)]  We sometimes represent for simplicity $\pa_x$, $\pa_y$ and $\pa_z$ as $\pa_1$, $\pa_2$ and $\pa_3$, respectively.

\item[(7)]  
For any function $f(t,x,y,z)$, we define its Fourier transform on the domain $\T\times\R^2$ as:
\begin{align*}
\F[f](t,k,\xi,\eta)=\hat{f}^k(t,\xi,\eta)
\tri \f{1}{(2\pi)^3} \int_{\T\times\R^2} f(t,x,y,z)e^{-ikx-i\xi y-i\eta z}\,dx dy dz,\quad \forall ~k\in\Z,~(\xi,\eta)\in\R^2.
\end{align*} 
While the Fourier transform on the domain $\T\times\R\times\T$ as:
\begin{align*}
\F[f](t,k,\xi,l)=\hat{f}^k(t,\xi,l)
\tri \f{1}{(2\pi)^3} \int_{\T\times\R\times\T} f(t,x,y,z)e^{-ikx-i\xi y-il z}\,dx dy dz,\quad \forall ~(k,l)\in\Z^2,~\xi\in\R.
\end{align*} 

\item[(8)] For any $k\in\R$, we denote $k^{+}$ as a number larger but sufficiently close to $k.$

\end{itemize}

\noindent\textbf{Organization.}
Our analysis primarily addresses the setting where the domain is $\DD=\R^2$. Since the case $\DD=\R\times\T$ can be handled analogously, its proofs are deferred to the last section of the manuscript.
The organization of this paper is as follows. We begin in Section \ref{sec2:Pre} by introducing the good unknowns and Fourier multipliers. 
Energy functionals are constructed in Section \ref{sec3:energy fun}.  
The associated energy estimates for zero modes are rigorously established in Section \ref{sec4:energy est zero}, while that for non-zero modes are derived in Section \ref{sec4:energy est nonzero}. 
Based on these energy estimates, the proofs of Theorems \ref{theo1}--\ref{theo2} for the domain $\T\times\R^2$ are detailed in Section \ref{sec5:proof-TRR}. 
The corresponding proofs for the domain $\T\times\R\times\T$ are provided in Section \ref{sec5:proof-TRT} via a parallel argument. 
For the sake of self-containment, technical proofs regarding the derivation of dispersive and Strichartz estimates are presented in Appendix \ref{App-DS-est}.

\section{Preliminaries}
\label{sec2:Pre}
\subsection{Good unknowns and change of frame }

Motivated by the standard 3-D incompressible Navier-Stokes equations, we introduce the following two auxiliary unknowns  to analyze their dynamical evolution:
\begin{align}\label{def-q-om2}
q\tri \Delta_{\x_1} u^2,
\quad
\omega^2\tri \pa_{z} u^1-\pa_{x_1} u^3.
\end{align}
It is derived from the original system \eqref{1-eq1} that $(q,\omega^2)$ satisfies the following coupled system:
\begin{align}\label{3-eq1}
\left\{\begin{array}{l}
	\displaystyle
	\pa_t q +y\pa_{x_1} q+ \beta \pa_{z}\om^2-\nu \Delta_{\x_1}q=-\pa_{y}{\Delta} p^{NL}-\Delta_{\x_1}(\uu\cdot\na_{\x_1}u^2),
	\\[1ex]
	\displaystyle
	\pa_t \om^2 +y\pa_{x_1} \om^2+ (1-\beta)\pa_z\Delta_{\x_1}^{-1}q-\nu \Delta_{\x_1}\om^2=-\pa_{z}(\uu\cdot\na_{\x_1}u^1)+\pa_{x_1}(\uu\cdot\na_{\x_1}u^3).
\end{array}\right.
\end{align}
A key distinction from the standard 3-D Navier-Stokes equations lies in the rotational coupling between $(q,\om^2)$, where their governing equations are linearly coupled through the Coriolis terms. 
This inherent coupling suggests treating $(q,\om^2)$ as a complete coupled system, whose individual components cannot be estimated separately in the energy analysis.

The inherent shear effects of Couette flow naturally motivate our introduction of moving frame coordinates:
\begin{align}\label{def-t1-x1-y1}
t= t,\quad x\tri x_1-ty,\quad y= y,\quad z=z.
\end{align}
In this frame, we set
\begin{align}\label{def-Q}
Q(t,x,y,z)\tri q(t,x+ty,y,z),
\quad 
\Omega^2(t,x,y,z)\tri \om^2(t,x+ty,y,z).
\end{align}
More generally, for any function $f(t,x_1,y,z)$ (which may be $q,\om^2,\uu,p^L,p^{NL}$), we define the corresponding quantity $F(t,x,y,z)\tri f(t,x+ty,y,z)$ (which may be $Q,\Omega^2,\U,P^L,P^{NL}$) under which the differential operators transform as	
\begin{align*}
&\tilde{\pa}_y F \tri (\pa_{y}-t\pa_{x})F,
\quad
\nabla_{\x_1}f=
\widetilde{\nabla}F \tri (\pa_{x}F,\tilde{\pa}_y F,\pa_{z}F),
\\
&\dive_{\x_1}f =
\widetilde{\dive}F \tri \pa_{x} F^1+\tilde{\pa}_y F^2+\pa_z F^3,
\quad
\Delta_{\x_1}f =
\widetilde{\Delta}F \tri \pa_{x}^2F+\tilde{\pa}_y^2F+\pa_z^2 F.
\end{align*}
And we denote 
\begin{align*}
\nabla F\tri (\pa_{x}F,\pa_{y}F,\pa_{z}F),
\quad \dive F\tri \pa_{x} F^1+\pa_{y}F^2+\pa_{z}F^3,
\quad \Delta F\tri \pa_{x}^2F+\pa_{y}^2F+\pa_{z}^2 F.
\end{align*}
We note that the operator $\nabla$ is referred  to as ``good derivative", whereas the operator $\widetilde{\nabla}$ is referred to as ``bad derivative".

In the stable regime where $B_{\beta}>0$, similar to \cite{Coti-Zotto-Widmayer-2025},  to maintain consistent regularity for $Q$, we also define the following unknown:
\begin{align}\label{def-W-nonzero}
W\tri \sqrt{\f{\beta}{\beta-1}}|\widetilde{\nabla}| \Omega^2.
\end{align}
The system \eqref{3-eq1} is then reduced to
\begin{align}\label{3-eq2}
\left\{\begin{array}{l}
	\displaystyle
	\pa_t Q +C_{\beta} \pa_{z} |\widetilde{\nabla}| ^{-1}W-\nu \widetilde{\Delta}Q
	=-\tilde{\pa}_y \widetilde{\Delta} P^{NL}-\widetilde{\Delta}(\U\cdot\widetilde{\nabla}U^2),
	\\[1ex]
	\displaystyle
	\pa_t W+C_{\beta} \pa_{z} |\widetilde{\nabla}| ^{-1}Q
	-\pa_x\tilde{\pa}_y |\widetilde{\nabla}| ^{-2}W
	-\nu \widetilde{\Delta}W
	=\sqrt{\f{\beta}{\beta-1}} |\widetilde{\nabla}| 
	\big(
	\pa_{z}(\U\cdot\widetilde{\nabla}U^1)-\pa_{x}(\U\cdot\widetilde{\nabla}U^3)
	\big),
\end{array}\right.
\end{align}
where $C_{\beta} \tri\mathrm{sgn}(\beta)\sqrt{B_\beta}$, the nonlinear pressure term may be determined explicitly,
\begin{align*}
\widetilde{\Delta} P^{NL} =-\widetilde{\dive} (\U\cdot \widetilde{\nabla} \U).
\end{align*}
The analysis for estimating the non-zero frequencies relies on the pair $(Q,W)$, hereafter denoted as the  ``good unknowns".

\subsection{The Fourier multipliers}\label{sect-F-M}
In this subsection, we present some Fourier multipliers that will be frequently used in our analysis.

We begin with the first Fourier multiplier $\varphi$, which characterizes the interplay between stretching and dissipation effects.
We focus on a fundamental model originally introduced in \cite{Bedrossian-Germain-Masmoudi-2017}, described by the following equation:
\begin{align*}
\pa_t f+\pa_x(\pa_y-t\pa_x) \widetilde{\Delta}^{-1} f-\nu \widetilde{\Delta} f=0,
\end{align*}
where the operator $\widetilde{\Delta}\tri \pa_x^2+(\pa_y-t\pa_x)^2+\pa_z^2$.
Taking the Fourier transform, this model becomes
\begin{align}\label{eq-toy}
\partial_{t} \hat{f} =\f12\f{\pa_{t}p}{p}\hat{f}-\nu p \hat{f},
\end{align}
here the symbol $p\tri k^2+(\xi-kt)^2+\eta^2$.
Notably, the stretching  term $\f12\f{\pa_{t}p}{p}\hat{f}$ in the right-hand side of equation \eqref{eq-toy} is known to induce growth effects and 
 this growth cannot be fully balanced by the dissipation $\nu p\hat{f}$. 

For $t\leq \f{\xi}{k}$, we observe that $\pa_{t}p=-2k(\xi-kt)\leq 0$, implying that the first term on the right-hand side acts as a damping term for $f$.
However, when $t> \f{\xi}{k}$, the derivative becomes positive, i.e., $\pa_{t}p> 0$, and the corresponding term drives growth.
Near the critical time $t=\f{\xi}{k}$ the dissipation satisfies $\nu p\sim\nu (k^2+\eta^2)$, which is insufficient to uniformly control the growth in $\nu$.
Nevertheless, sufficiently far from the critical layer, dissipation dominates:  for any fixed $\beta_0>2$,  the inequality
$$\f{\pa_{t}p}{p}\leq \f{1}{4}\nu p,$$
holds whenever $|t-\f{\xi}{k}|\geq \beta_0 \nu^{-\f13}$.

To enable the dissipation term $\nu p \hat{f}$ to balance the growth caused by $\f12\f{\pa_{t}p}{p}\hat{f}$, we adopt the approach of \cite{Bedrossian-Germain-Masmoudi-2017,Liss-2020}, and introduce the Fourier multiplier $\varphi(t,k,\xi,\eta)$:
\begin{align}\label{def-phi}
\left\{\begin{array}{l}
	\partial_{t} \varphi = 
	\left\{\begin{array}{l}
		0,\quad t\not\in [\f{\xi}{k},\f{\xi}{k}+\beta_0\nu^{-\f13}],\\[1ex]
		\f{\pa_{t} p}{p}\varphi	,\quad t\in [\f{\xi}{k},\f{\xi}{k}+\beta_0\nu^{-\f13}],\\[1ex]
	\end{array}\right.\\
	\varphi(0,k,\xi,\eta)=1,
\end{array}\right.
\end{align}
where $\beta_0>2$ is a constant to be determined later. 
Note that $\varphi(t,0,\xi,\eta)=1$ by construction. 
In contrast to $\hat{f},$ 
the transformed quantity $\varphi^{-\f12}\hat{f}$ exhibits no growth as it is governed by the equation 
\begin{align*}
(\pa_t +\nu p )(\varphi^{-\f12}\hat{f})=0.
\end{align*}
Consequently, the decay of the multiplier $\varphi^{-\f12}$ precisely balance the growth that $\hat{f}$.
Remarkably, $\varphi$ admits a closed-form expression as:
\begin{itemize}
\item[(1)] If $k=0$, or if $k\not=0$ and $\f{\xi}{k}\leq -\beta_0 \nu^{-\f13}$: 
$$\varphi(t,0,\xi,\eta)=1;$$


\item[(2)] If $k\not=0$, $-\beta_0 \nu^{-\f13}<\f{\xi}{k}\leq 0$:
\begin{align*}
	\varphi(t,k,\xi,\eta)=
	\begin{dcases}
		\f{k^2+(\xi-kt)^2+\eta^2}{k^2+\xi^2+\eta^2},\quad &\text{if}~~0<t\leq \f{\xi}{k}+\beta_0 \nu^{-\f13},
		\\[1ex]
		\f{k^2+(\beta_0 k\nu^{-\f13})^2+\eta^2}{k^2+\xi^2+\eta^2},\quad &\text{if}~~t>\f{\xi}{k}+\beta_0 \nu^{-\f13};
	\end{dcases}
\end{align*}

\item[(3)] If $k\not=0$, $\f{\xi}{k}> 0$:
\begin{align*}
	\varphi(t,k,\xi,\eta)=
	\begin{dcases}
		1,\quad &\text{if}~~0<t\leq \f{\xi}{k},
		\\[1ex]
		\f{k^2+(\xi-kt)^2+\eta^2}{k^2+\eta^2},\quad &\text{if}~~\f{\xi}{k}<t\leq \f{\xi}{k}+\beta_0 \nu^{-\f13},
		\\[1ex]
		\f{k^2+(\beta_0 k\nu^{-\f13})^2+\eta^2}{k^2+\eta^2},\quad &\text{if}~~t>\f{\xi}{k}+\beta_0 \nu^{-\f13}.
	\end{dcases}
\end{align*}

\end{itemize}

This leads to the following lemma, which shows that the growth is exactly balanced by dissipation.

\begin{lemm}\label{sec2:lem-0}
Let $\varphi$ be the Fourier multiplier defined in \eqref{def-phi}.
For any $t\geq0$, $k\in\Z \backslash \{0\}$ and $(\xi,\eta) \in\R^2$, we have
\begin{align}\label{est-varphi-1}
	1\leq \varphi(t,k,\xi,\eta)\leq \beta_0^2 \nu^{-\f23},\quad \f{\varphi}{p}\leq \f{1}{k^2+\eta^2}.
\end{align}
Moreover, for any $(\xi',\eta')\in\R^2$, the following product estimates hold that
\begin{align}
	&\varphi(t,k,\xi,\eta)\leq C  \varphi(t,k,\xi',\eta')\langle \xi-\xi',\eta-\eta'\rangle^2,
	\label{est-comm}
	\\
	&\langle t\rangle^{-2} \varphi(t,k,\xi,\eta) \leq 3,
	\label{est-varphi-t}
\end{align}
where $\langle \xi-\xi',\eta-\eta'\rangle\tri \sqrt{1+ (\xi-\xi')^2+(\eta-\eta')^2}$ and $C>0$ is a constant.
\end{lemm}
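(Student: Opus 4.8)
The statement to prove is Lemma \ref{sec2:lem-0}, which collects several pointwise bounds on the Fourier multiplier $\varphi$ defined in \eqref{def-phi}. The plan is to work directly from the closed-form expression for $\varphi$ given in cases (1)--(3) right before the lemma, since each claimed inequality reduces to an elementary estimate on explicit rational functions of $(t,k,\xi,\eta)$.

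\textbf{Proof of \eqref{est-varphi-1}.} The lower bound $\varphi \geq 1$ is immediate: in case (1) $\varphi = 1$; in case (2), for $0 < t \leq \xi/k + \beta_0\nu^{-1/3}$ we have (since $-\beta_0\nu^{-1/3} < \xi/k \leq 0$, so $t$ ranges over a window where $(\xi-kt)^2$ is at least... ) — more carefully, I would argue that in each case $\partial_t\varphi \geq 0$ (the only nonzero formula for $\partial_t\varphi$ is $\frac{\partial_t p}{p}\varphi$ on the interval $[\xi/k, \xi/k+\beta_0\nu^{-1/3}]$, where $\partial_t p = -2k(\xi-kt) = 2k^2(t - \xi/k) \geq 0$), so $\varphi(t) \geq \varphi(0) = 1$ for all $t$. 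For the upper bound $\varphi \leq \beta_0^2\nu^{-2/3}$: on the growing interval $\varphi(t) = \frac{p(t)}{p(\xi/k)}$ (case 3) or $\frac{p(t)}{p(0)}$ (case 2), and since $\varphi$ is frozen afterward at $t = \xi/k + \beta_0\nu^{-1/3}$, it suffices to bound $\frac{k^2 + (\beta_0 k\nu^{-1/3})^2 + \eta^2}{k^2 + \eta^2} \leq 1 + \beta_0^2 k^2 \nu^{-2/3}/(k^2+\eta^2) \leq 1 + \beta_0^2\nu^{-2/3} \leq \beta_0^2\nu^{-2/3}$ using $\beta_0 > 2$ and $\nu \leq 1$ (so $\beta_0^2\nu^{-2/3} \geq 4 > 1$); the case (2) denominator $k^2+\xi^2+\eta^2 \geq k^2+\eta^2$ only helps. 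For the bound $\frac{\varphi}{p} \leq \frac{1}{k^2+\eta^2}$: again freezing at the endpoint, $\frac{\varphi(t)}{p(t)} \leq \frac{\varphi(t_\ast)}{p(t_\ast)}$ where the quotient is maximized, and in each case $\frac{\varphi}{p}$ equals either $\frac{1}{p(0)} \leq \frac{1}{k^2+\eta^2}$ or $\frac{1}{p(\xi/k)} = \frac{1}{k^2+\eta^2}$ on the growing window (since $\varphi = p(t)/p(t_0)$ there), and off the window $\varphi$ is constant while $p(t)$ can only... I would check monotonicity of $p$ near the critical layer to conclude $\varphi/p \leq 1/(k^2+\eta^2)$ throughout. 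Since $\varphi(t,k,\xi,\eta) \leq \langle t\rangle^2$-type bounds are needed too, I'll track these uniformly.

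\textbf{Proof of \eqref{est-comm} and \eqref{est-varphi-t}.} For \eqref{est-varphi-t}, on the growing window $\varphi(t) \leq \frac{k^2+(\xi-kt)^2+\eta^2}{k^2+\eta^2}$ (case 3) and $(\xi-kt)^2 = k^2(t-\xi/k)^2 \leq k^2 t^2 \leq (k^2+\eta^2)t^2$... wait, I need the window endpoint; actually since $\varphi$ freezes, bound $\varphi(t) \leq \frac{k^2 + k^2(\beta_0\nu^{-1/3})^2 + \eta^2}{k^2+\eta^2}$ — hmm this doesn't obviously give $\leq 3\langle t\rangle^2$. Better: on the window $t \in [\xi/k, \xi/k+\beta_0\nu^{-1/3}]$ one has $|t - \xi/k| \leq \beta_0\nu^{-1/3}$ but also the numerator is $\leq k^2 + k^2|t-\xi/k|^2 + \eta^2$; after freezing, the value stays at $t_\ast = \xi/k + \beta_0\nu^{-1/3}$, and for $t \geq t_\ast$ we have $t \geq \beta_0\nu^{-1/3}$ only if $\xi/k \geq 0$, giving $|t_\ast - \xi/k| = \beta_0\nu^{-1/3} \leq t_\ast \leq t$, hence numerator $\leq k^2 + k^2 t^2 + \eta^2 \leq (k^2+\eta^2)(1 + t^2) = (k^2+\eta^2)\langle t\rangle^2$, so $\varphi \leq \langle t\rangle^2$; case (2) is similar with the even larger denominator $k^2+\xi^2+\eta^2$, and one must also handle the sub-window $t \leq t_\ast$ directly where $\varphi(t) = p(t)/p(t_0) \leq$ numerator$/(k^2+\eta^2)$ and numerator $= k^2 + k^2(t-\xi/k)^2 + \eta^2$; here if $\xi/k > 0$ and $t \geq \xi/k$ then $(t-\xi/k)^2 \leq t^2$, done; the constant $3$ provides slack for the case $\xi/k \leq 0$ where $(t-\xi/k)^2 \leq 2t^2 + 2(\xi/k)^2$ and on the window $|\xi/k| < \beta_0\nu^{-1/3}$, combined with... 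I'd tune constants at the end. For the product (commutator-type) estimate \eqref{est-comm}, the key is that $\varphi$ is, on the growing window, essentially $\frac{\langle k, \xi-kt, \eta\rangle^2}{\langle k, a, \eta\rangle^2}$ for an appropriate $a \in \{0, \xi\}$, and shifting $(\xi,\eta) \to (\xi',\eta')$ changes $\xi - kt$ by $\xi - \xi'$ and $\eta$ by $\eta - \eta'$; using the elementary inequality $\langle A + B\rangle^2 \leq 2\langle A\rangle^2\langle B\rangle^2$ repeatedly (for the numerator) and $\langle a'\rangle^2 \leq \langle a\rangle^2 \langle a - a'\rangle^2$ (for the denominator, to lower-bound it), one gets $\varphi(t,k,\xi,\eta) \leq C\langle \xi-\xi',\eta-\eta'\rangle^2\,\varphi(t,k,\xi',\eta')$; one must also check the case mismatch (e.g. $(t,k,\xi,\eta)$ on the growing window but $(t,k,\xi',\eta')$ not, or in different sub-cases) — here the closed forms still give rational quotients that differ by at most $\langle\xi-\xi',\eta-\eta'\rangle^2$ factors, and since $\nu \leq 1$ forces $\beta_0\nu^{-1/3} \geq \beta_0 > 2$, the window structures for the two points overlap enough that a finite case check suffices.

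\textbf{Main obstacle.} The routine-but-delicate part is the product estimate \eqref{est-comm}: one must carefully enumerate the combinations of cases (1)--(3) for $(\xi,\eta)$ versus $(\xi',\eta')$ at the same $(t,k)$ — in particular the "mixed" situations where the two frequency points fall on opposite sides of the critical time $\xi/k$ versus $\xi'/k$, so that one multiplier is still in its "frozen at $1$" phase while the other is in the growing phase. In each such configuration one must show the ratio $\varphi(t,k,\xi,\eta)/\varphi(t,k,\xi',\eta')$ of the explicit formulas is controlled by $C\langle\xi-\xi',\eta-\eta'\rangle^2$; this is where most of the bookkeeping lives, though each individual case is just an application of $\langle A+B\rangle \lesssim \langle A\rangle\langle B\rangle$. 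The other estimates \eqref{est-varphi-1} and \eqref{est-varphi-t} follow quickly once one notes $\varphi$ is nondecreasing in $t$ and frozen outside the window $[\xi/k, \xi/k + \beta_0\nu^{-1/3}]$, and compares numerator and denominator of the explicit quotients.
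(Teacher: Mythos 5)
Your proposal is correct and takes essentially the same route as the paper on the one estimate the paper actually proves, namely \eqref{est-varphi-t}: both arguments split by the sign of $\xi/k$ and by whether $t$ is before, inside, or after the growing window $[\xi/k,\,\xi/k+\beta_0\nu^{-1/3}]$, then bound the explicit rational quotient using $(\xi-kt)^2 \le 2\xi^2 + 2k^2 t^2$ (absorbed against the denominator $k^2+\xi^2+\eta^2$ when $-\beta_0\nu^{-1/3} < \xi/k \le 0$) and $(t-\xi/k)^2 \le t^2$ (when $\xi/k > 0$). You trail off before fully closing the $\xi/k \le 0$ sub-case, but the idea you state is exactly the paper's computation and the constant $3$ in the lemma gives more than enough slack. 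For \eqref{est-varphi-1} and \eqref{est-comm}, the paper simply cites \cite{Bedrossian-Germain-Masmoudi-2017} and gives no proof, so there is no paper argument to compare against; your sketch of \eqref{est-varphi-1} (monotonicity of $\varphi$ in $t$, and $\varphi/p = 1/p(t_0)$ on the growing window with $p(t_0) \ge k^2+\eta^2$, freezing plus monotonicity of $p$ outside) is sound, while your sketch of \eqref{est-comm} correctly identifies the elementary inequality $\langle A+B\rangle \lesssim \langle A\rangle\langle B\rangle$ and the need to enumerate mixed-case configurations but leaves the bookkeeping undone — which is an honest, if unfinished, account of a routine but fiddly estimate.
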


\begin{proof}
	The proofs of estimates \eqref{est-varphi-1}-\eqref{est-comm} can be found in \cite{Bedrossian-Germain-Masmoudi-2017} and are omitted here.
	Now, we turn to prove \eqref{est-varphi-t}.
	If $k=0$, or if $k\not=0$ and $\f{\xi}{k}\leq -\beta_0\nu^{-\f13}$, then $\varphi=1$.
	In these cases, estimate \eqref{est-varphi-t} is trivially satisfied.
	For the case that $k\not=0$ and $-\beta_0\nu^{-\f13}<\f{\xi}{k}\leq 0$, if $0<t\leq\f{\xi}{k}+\beta_0 \nu^{-\f13}$, we have
	\begin{align*}
		\langle t\rangle^{-2} \varphi 
		\leq \f{k^2+2\xi^2+2k^2t^2+\eta^2}{(k^2+\xi^2+\eta^2)\langle t\rangle^2}
		\leq \f{2\xi^2+2k^2\langle t\rangle^2+\eta^2}{(k^2+\xi^2+\eta^2)\langle t\rangle^2}
\leq 2.
	\end{align*}
	If $t>\f{\xi}{k}+\beta_0 \nu^{-\f13}$, it follows that
	\begin{align*}
		\langle t\rangle^{-2} \varphi 
		\leq \f{k^2+2\xi^2+2(\xi+\beta_0 k\nu^{-\f13})^2+\eta^2}{(k^2+\xi^2+\eta^2)\langle \f{\xi}{k}+\beta_0 \nu^{-\f13}\rangle^2}
		\leq \f{2\xi^2+2k^2\langle \f{\xi}{k}+\beta_0 \nu^{-\f13}\rangle^2+\eta^2}{(k^2+\xi^2+\eta^2)\langle \f{\xi}{k}+\beta_0 \nu^{-\f13}\rangle^2}
\leq 2.
	\end{align*}
	As for the case that $k\not=0$ and $\f{\xi}{k}> 0$, if $0<t\leq\f{\xi}{k}$, we have $\varphi=1$, from which \eqref{est-varphi-t} follows trivially.
	If $\f{\xi}{k}<t\leq \f{\xi}{k}+\beta_0 \nu^{-\f13}$, we have
	\begin{align*}
		\langle t\rangle^{-2} \varphi 
		\leq \f{k^2+k^2t^2+\eta^2}{(k^2+\eta^2)\langle t\rangle^2}
		\leq 1.
	\end{align*}
	If $t>\f{\xi}{k}+\beta_0 \nu^{-\f13}$, some direct calculations yield that
	\begin{align*}
		\langle t\rangle^{-2} \varphi 
		\leq \f{k^2\langle \beta_0 \nu^{-\f13}\rangle^2+\eta^2}{(k^2+\eta^2)\langle \f{\xi}{k}+\beta_0 \nu^{-\f13}\rangle^2}
		\leq 1.
	\end{align*}
	The estimate \eqref{est-varphi-t} is thus proved.
\end{proof}

In our energy estimates, we employ three key Fourier multipliers to capture enhanced dissipation and inviscid damping effects. 
These multipliers have been widely employed in previous works \cite{Bedrossian-Germain-Masmoudi-2017,Bedrossian-Germain-Masmoudi-2019,Bedrossian-Masmoudi-Vicol-2016,Zillinger-2017}.
The first crucial multiplier, $m_1,$ addresses the transient weakening of enhanced dissipation near critical times. 
This multiplier is particularly significant as it enables us to: (1) quantify accelerated dissipation without regularity loss; 
(2) maintain regularity unlike methods requiring infinite regularity \cite{Bedrossian-Germain-Masmoudi-2020,Bedrossian-Masmoudi-Vicol-2016}.
In particular, $m_1$ is defined by
\begin{align}\label{def-m1}
\left\{\begin{array}{l}
	\displaystyle
	\pa_{t}m_1(t,k,\xi,\eta)
	=\f{2\nu^{\f13}}{1+\nu^{\f23}(\f{\xi}{k}-t)^2}m_1(t,k,\xi,\eta),
	\\[1ex]
	\displaystyle
	m_1(0,k,\xi,\eta)=1.
\end{array}\right.
\end{align}
It is easy to check that, for any $t\geq0$ and $(\xi,\eta)\in\R^2$, the multiplier $m_1$ admits the explicit solution:
\begin{align*}
&m_1(t,k,\xi,\eta)=
\begin{dcases}
	\exp\left\{2\arctan \left(\nu^{\f13}\left(t-\f{\xi}{k}\right)\right)+2\arctan \left(\nu^{\f13}\f{\xi}{k}\right)\right\},\quad \text{for}~~k\in\Z\backslash\{0\},\\
	1,\quad\text{for}~~k=0.
\end{dcases}
\end{align*}

We then introduce another multiplier $m_2$, designed to control growth arising from both linear pressure terms and leading order nonlinear terms.
In particular, $m_2$ is defined by
\begin{align}\label{def-m2}
\left\{\begin{array}{l}
	\displaystyle
	\pa_{t}m_2(t,k,\xi,\eta)
	=A\f{k^2}{p}m_2(t,k,\xi,\eta),
	\\[1ex]
	\displaystyle
	m_2(0,k,\xi,\eta)=1,
\end{array}\right.
\end{align}
where 
$A>0$ is a large, but fixed constant, which will be determined later.

The following Lemma \ref{sec2:lem-1} states key properties of the Fourier multipliers $\varphi$, $m_i$ $(i=1,2)$.
Its proof is standard and follows from \cite{Bedrossian-Germain-Masmoudi-2017}, hence we omit it.
\begin{lemm}\label{sec2:lem-1}
Let $\varphi$, $m_i$ $(i=1,2)$ be three Fourier multipliers defined in \eqref{def-phi}, \eqref{def-m1}  and \eqref{def-m2}, respectively.
Then the multipliers $m_i$ $(i=1,2)$ are bounded from above and below uniformly with respect to $0<\nu<1$ and $(t,k,\xi,\eta)\in\R^+\times \Z\times\R^2$. 
Specifically, they are bounded by
\begin{align*}
	1\leq m_1\leq e^{2\pi}, \quad
	1\leq m_2\leq e^{A\pi}.
\end{align*}
Moreover, for any $\max\{2\beta_0^{-1}(\beta_0^2-1)^{-1},4\beta_0^{-1}\}<\delta_{\beta_0}\leq 1$ with $\beta_0>2$, we also have
\begin{align}\label{est-varphi-2}
		\delta_{\beta_0}\left(\f{\pa_{t} m_1}{m_1}+\nu p\right)+\f12\left(\f{\pa_{t}\varphi}{\varphi}-\f{\pa_{t} p}{p}\right)\geq \delta_{\beta_0} \nu^{\f13}. 
\end{align}
\end{lemm}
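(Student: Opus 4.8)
The plan is to handle the uniform two-sided bounds on $m_1,m_2$ first, since these are immediate from the explicit formulas, and then to spend the real effort on the differential inequality \eqref{est-varphi-2}, which I would prove by the same region-by-region comparison used in \cite{Bedrossian-Germain-Masmoudi-2017}. For the bounds I would note that the right-hand sides of the defining ODEs \eqref{def-m1}, \eqref{def-m2} are nonnegative, so $m_1,m_2$ are nondecreasing with $m_i(0)=1$, giving $m_i\ge1$; when $k=0$ both are identically $1$. The upper bound on $m_1$ follows from its closed form, $\log m_1(t)=2\arctan\!\big(\nu^{1/3}(t-\tfrac{\xi}{k})\big)+2\arctan\!\big(\nu^{1/3}\tfrac{\xi}{k}\big)$, by bounding each $\arctan$ by $\tfrac{\pi}{2}$. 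For $m_2$ I would integrate \eqref{def-m2} and, after the substitution $r=(\xi-ks)/k$ and the pointwise bound $k^2+(\xi-ks)^2+\eta^2\ge k^2(1+r^2)$, estimate $\log m_2(t)\le\int_{\R}\f{A}{1+r^2}\,dr=A\pi$.

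For \eqref{est-varphi-2} I would restrict to $k\in\Z\setminus\{0\}$ --- the only case relevant to the weighted energy estimates, and the one that guarantees $p\ge k^2\ge1$ --- and write $s=t-\tfrac{\xi}{k}$, so that $p=k^2+k^2s^2+\eta^2\ge1+s^2$ and $\tfrac{\pa_t p}{p}=\tfrac{2k^2s}{p}$. The decomposition follows the definition of $\varphi$ in \eqref{def-phi}. On the ``active'' interval $t\in[\tfrac{\xi}{k},\tfrac{\xi}{k}+\beta_0\nu^{-1/3}]$ one has $\tfrac{\pa_t\varphi}{\varphi}=\tfrac{\pa_t p}{p}$ by construction, so the last bracket in \eqref{est-varphi-2} vanishes identically and it remains only to show $\tfrac{\pa_t m_1}{m_1}+\nu p\ge\nu^{1/3}$; setting $x=\nu^{2/3}s^2\ge0$ and using $p\ge1+s^2$, this reduces to the scalar inequality $\f{2}{1+x}+x\ge1$, whose minimum over $x\ge0$ equals $2\sqrt2-1>1$. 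Outside that interval $\pa_t\varphi=0$, and I would split by the sign of $\pa_t p$: if $t<\tfrac{\xi}{k}$ the term $-\tfrac12\tfrac{\pa_t p}{p}$ is nonnegative and one closes using $\tfrac{\pa_t m_1}{m_1}\ge\nu^{1/3}$ when $|s|\le\nu^{-1/3}$, or the bare dissipation $\nu p\ge\nu s^2\ge\nu^{1/3}$ when $|s|\ge\nu^{-1/3}$; if $t>\tfrac{\xi}{k}+\beta_0\nu^{-1/3}$ one has $0<\tfrac{\pa_t p}{p}\le\tfrac{2}{s}<\tfrac{2\nu^{1/3}}{\beta_0}$ together with $\nu p\ge\nu s^2>\beta_0^2\nu^{1/3}$ and $\tfrac{\pa_t m_1}{m_1}\ge0$, so the left-hand side of \eqref{est-varphi-2} exceeds $\big(\delta_{\beta_0}\beta_0^2-\beta_0^{-1}\big)\nu^{1/3}$, which is $\ge\delta_{\beta_0}\nu^{1/3}$ exactly because $\delta_{\beta_0}(\beta_0^2-1)>\beta_0^{-1}$.

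The only genuinely delicate regime is the last one, $t>\tfrac{\xi}{k}+\beta_0\nu^{-1/3}$: there the stretching ratio $\tfrac{\pa_t p}{p}$ has turned positive, so $-\tfrac12\tfrac{\pa_t p}{p}$ is a true loss, while the weight $\varphi$ has stopped growing and no longer compensates it; one must check that the enhanced-dissipation gain $\tfrac{\pa_t m_1}{m_1}$ together with the plain dissipation $\nu p$, after multiplication by $\delta_{\beta_0}$, still dominate. Quantifying this is precisely what forces the lower bound on $\delta_{\beta_0}$ assumed in the statement (the cutoff distance $\beta_0\nu^{-1/3}$ from the critical time $t=\tfrac{\xi}{k}$ being chosen so that $\nu p\gtrsim\beta_0^2\nu^{1/3}$ there); everything else is routine bookkeeping, and, as the authors note, the full argument can be quoted directly from \cite{Bedrossian-Germain-Masmoudi-2017}.
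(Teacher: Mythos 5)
Your proof is correct. The paper omits the argument entirely, deferring to Bedrossian--Germain--Masmoudi, and your region-by-region treatment is exactly the standard one that reference uses: boundedness of $m_1,m_2$ from the explicit formula resp.\ the change of variables $r=\xi/k-s$, then the three-case split in $s=t-\xi/k$ with the multiplier $\varphi$ exactly cancelling the stretching loss on the active interval, the loss term having the favorable sign for $s<0$, and the balance of $\nu p\gtrsim\beta_0^2\nu^{1/3}$ against $\tfrac12\partial_t p/p\lesssim\beta_0^{-1}\nu^{1/3}$ for $s>\beta_0\nu^{-1/3}$. Two small remarks: you correctly note (implicitly, by restricting to $k\neq0$) that \eqref{est-varphi-2} is only meaningful for nonzero $x$-frequency, which is the only place the paper uses it; and your computation in the last region actually closes under the weaker assumption $\delta_{\beta_0}>\beta_0^{-1}(\beta_0^2-1)^{-1}$, so the stated threshold $\max\{2\beta_0^{-1}(\beta_0^2-1)^{-1},4\beta_0^{-1}\}$ is more than sufficient for the inequality as written.
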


In order to bound the following reaction term appearing in the energy estimates of $(Q_{\neq}, W_{\neq})$
\begin{align*}
\int_0^t\int_{\T\times\R^2}
\A(\nabla^s\pa_{i}U_{\not=}^2\tilde{\pa}_yU_{\not=}^i)\nabla^s\A\widetilde{\nabla}Q_{\not=}\,dxdydzd\tau,
\end{align*}
 we introduce, motivated by \cite{Knobel-2025}, the Fourier multiplier $m_3$ as:
\begin{align}\label{def-m0}
m_{3}(t,k,\xi)\tri \exp\Big\{ \int_{-\infty}^{t}\sum_{n\in\Z\backslash\{0\}}f^{n}(\tau,k,\xi)\,d\tau \Big\}, \qquad  \forall \,t\geq0, ~~(k,\xi,\eta)\in\Z\times\R^2,
\end{align}
where the function sequences $f^{n}(\tau,k,\xi)$ are defined by
\begin{align*}
f^{n}(t,k,\xi)\tri \big\langle t-\f{\xi}{n}\big\rangle^{-1} \big[\ln\big(e+\big\langle t-\f{\xi}{n}\big\rangle\big)\big]^{-1-a_0}\langle k-n\rangle^{-1-a_0},\quad \text{for}~~0<a_0<1.
\end{align*}
The precise value of the parameter $a_0$ will be determined in subsequent sections.
By some direct calculations, we can verify that the Fourier multiplier $m_{3}(t,k,\xi)$ satisfies the following property.

\begin{lemm}\label{sec2:lem-0-0}
Let $m_{3}$ be the Fourier multiplier defined in \eqref{def-m0}. Then the multiplier $m_3$ is bounded from above and below uniformly with respect to $0<\nu<1$ and $(t,k,\xi,\eta)\in\R^+\times \Z\times\R^2$. 
Specifically, it is bounded by
\begin{align}\label{est-m0-0}
	1\leq m_3\leq e^{C a_0^{-2}}.
\end{align}
Moreover, for any $0<a_0<1$, $k\in \Z\backslash\{0\}$, $k'\in\Z$, $(\xi,\xi')\in\R^2$ and $t\geq0$, we also have 
\begin{align}
	&\big\langle t-\f{\xi}{k}\big\rangle^{-1}\big[\ln\big(e+\big\langle t-\f{\xi}{k}\big\rangle\big)\big]^{-1-a_0}
	\leq \f{\pa_tm_3}{m_3}(t,k,\xi),
	\label{est-m0-1}
	\\
	&\big\langle t-\f{\xi}{k}\big\rangle^{-1}\big[\ln\big(e+\big\langle t-\f{\xi}{k}\big\rangle\big)\big]^{-1-a_0}
	\leq Ca_0^{-1-a_0}\f{\pa_tm_3}{m_3}(t,k',\xi')\langle k-k',\xi-\xi'\rangle^{2+a_0(2+a_0)}.
	\label{est-m0-2}
\end{align}
And for any $0<a_0<1$, $(k,k')\in \Z^2$, $(\xi,\xi')\in\R^2$ and $t\geq0$, we have
\begin{align}
	\f{\pa_tm_3}{m_3}(t,k,\xi)\leq Ca_0^{-1-a_0}  \f{\pa_tm_3}{m_3}(t,k',\xi')\langle k-k',\xi-\xi'\rangle^{2+a_0(2+a_0)},
	\label{est-m0-3}
\end{align}
where $C>0$ is a constant independent of $a_0$. 
\end{lemm}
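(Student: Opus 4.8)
The plan is to reduce every assertion to elementary one-variable estimates for the building block $f^n$, after noting that (the $\tau$-integral defining $m_3$ being convergent, as the bound below shows) $m_3$ is $C^1$ in $t$ with
\[
\f{\pt m_3}{m_3}(t,k,\xi)=\sum_{n\in\Z\backslash\{0\}}f^n(t,k,\xi).
\]
The lower bound $m_3\geq 1$ in \eqref{est-m0-0} is then immediate since every $f^n$ is nonnegative. For the upper bound I would use Tonelli's theorem and the translation $s=\tau-\xi/n$ to write $\int_{-\infty}^{t}\sum_{n}f^n\,d\tau\leq\big(\sum_{n\neq 0}\langle k-n\rangle^{-1-a_0}\big)\big(\int_{\R}\langle s\rangle^{-1}[\ln(e+\langle s\rangle)]^{-1-a_0}\,ds\big)$. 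Using $\ln(e+s)\geq\ln s$ for $s>1$, the integral is bounded by $C+2\int_e^{\infty}\f{ds}{s(\ln s)^{1+a_0}}=C+\f{2}{a_0}\lesssim a_0^{-1}$, while $\sum_{n\neq 0}\langle k-n\rangle^{-1-a_0}\leq\sum_{j\in\Z}\langle j\rangle^{-1-a_0}\leq 3+2\zeta(1+a_0)\lesssim a_0^{-1}$; multiplying gives $\int_{-\infty}^{t}\sum_n f^n\,d\tau\lesssim a_0^{-2}$, hence $m_3\leq e^{Ca_0^{-2}}$ with $C$ absolute.

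Estimate \eqref{est-m0-1} is then immediate from the displayed identity: for $k\neq 0$ the index $n=k$ occurs in the sum and, since $\langle k-k\rangle^{-1-a_0}=1$, the single term $f^k(t,k,\xi)$ equals the left-hand side, so it is dominated by the full (nonnegative) sum. The core of the lemma is the approximate shift-invariance \eqref{est-m0-2}--\eqref{est-m0-3}, for which I would establish the pointwise ratio bound
\[
f^n(t,k,\xi)\leq C\,a_0^{-1-a_0}\,\langle k-k',\xi-\xi'\rangle^{2+a_0(2+a_0)}\,f^n(t,k',\xi'),\qquad n\in\Z\backslash\{0\},
\]
valid for all $(k,\xi),(k',\xi')$, with $C$ independent of $a_0$. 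Summing over $n\neq 0$ gives \eqref{est-m0-3}; keeping only the term $n=k$ on the right-hand side (legitimate since $k\neq 0$) and using $f^k(t,k',\xi')\leq\f{\pt m_3}{m_3}(t,k',\xi')$ together with the fact that $f^k(t,k,\xi)$ equals the left-hand side of \eqref{est-m0-2} gives \eqref{est-m0-2}.

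To prove the ratio bound I would compare the three factors of $f^n$ one at a time. Because $|n|\geq 1$, one has $\big|\,t-\f{\xi'}{n}-(t-\f{\xi}{n})\big|=\f{|\xi-\xi'|}{|n|}\leq|\xi-\xi'|$, so the inequality $\langle a+b\rangle\leq\langle a\rangle+\langle b\rangle$ gives $\langle t-\f{\xi'}{n}\rangle\leq\langle t-\f{\xi}{n}\rangle+\langle\xi-\xi'\rangle\leq 2\langle t-\f{\xi}{n}\rangle\langle\xi-\xi'\rangle$ (both factors being $\geq 1$), whence $\langle t-\f{\xi'}{n}\rangle/\langle t-\f{\xi}{n}\rangle\leq 2\langle\xi-\xi'\rangle$; likewise $\langle k'-n\rangle\leq 2\langle k-n\rangle\langle k-k'\rangle$ controls the $\langle k-\cdot\rangle^{-1-a_0}$ factor with a loss $2^{1+a_0}\langle k-k'\rangle^{1+a_0}\leq 4\langle k-k'\rangle^{1+a_0}$. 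For the logarithmic factor one writes $\ln(e+\langle t-\f{\xi'}{n}\rangle)\leq\ln(c\,\langle t-\f{\xi}{n}\rangle\langle\xi-\xi'\rangle)=\ln c+\ln\langle t-\f{\xi}{n}\rangle+\ln\langle\xi-\xi'\rangle\leq C(1+\ln\langle\xi-\xi'\rangle)\ln(e+\langle t-\f{\xi}{n}\rangle)$, using $\ln(e+\langle t-\f{\xi}{n}\rangle)\geq 1$; raising to the power $1+a_0$ and bounding $1+\ln\langle\xi-\xi'\rangle\leq 2a_0^{-1}\langle\xi-\xi'\rangle^{a_0}$ (from the calculus inequality $\ln X\leq a_0^{-1}X^{a_0}$ for $X\geq 1$, whose maximum is $\tfrac{1}{ea_0}$) produces the prefactor $a_0^{-1-a_0}$ and a loss $\langle\xi-\xi'\rangle^{a_0(1+a_0)}$. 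Multiplying the three losses and using $\langle\xi-\xi'\rangle,\langle k-k'\rangle\leq\langle k-k',\xi-\xi'\rangle$ gives the exponent $1+(1+a_0)+a_0(1+a_0)=2+a_0(2+a_0)$, as claimed.

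I expect no serious difficulty in this argument; the one point requiring care is the bookkeeping of $a_0$-dependence, namely verifying that each of the three sources of blow-up as $a_0\to 0^+$ — the tail integral $\int_e^{\infty}(s(\ln s)^{1+a_0})^{-1}\,ds$, the series $\zeta(1+a_0)$, and $\sup_{X\geq 1}X^{-a_0}\ln X$ — contributes exactly one power of $a_0^{-1}$ and nothing worse, and checking that the various $\langle\cdot\rangle$-triangle inequalities are used in the direction needed for an upper bound on the ratio. The content of the lemma is precisely that the innocuous-looking weight $m_3$ is uniformly bounded and that its logarithmic derivative is comparable across Fourier modes up to a fixed polynomial frequency loss — exactly the properties needed to treat $m_3$ as an auxiliary Fourier multiplier in the energy estimates for $(Q_{\neq},W_{\neq})$.
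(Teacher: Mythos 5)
Your proof is correct and follows essentially the same route as the paper's: the upper bound comes from $\int_{\R}\langle s\rangle^{-1}[\ln(e+\langle s\rangle)]^{-1-a_0}ds\lesssim a_0^{-1}$ together with $\sum_{n\neq 0}\langle k-n\rangle^{-1-a_0}\lesssim a_0^{-1}$; the lower bound and \eqref{est-m0-1} come from nonnegativity and keeping the $n=k$ term; and the commutator estimates \eqref{est-m0-2}–\eqref{est-m0-3} reduce to the key calculus inequality $\ln X\lesssim a_0^{-1}X^{a_0}$ combined with multiplicativity of Japanese brackets. The only structural difference is cosmetic: you isolate a single pointwise ratio bound $f^n(t,k,\xi)\lesssim a_0^{-1-a_0}\langle k-k',\xi-\xi'\rangle^{2+a_0(2+a_0)}f^n(t,k',\xi')$ uniform in $n$, from which \eqref{est-m0-3} follows by summation and \eqref{est-m0-2} by retaining the term $n=k$, whereas the paper proves \eqref{est-m0-2} directly (also via the $n=k$ term) and remarks that \eqref{est-m0-3} follows similarly; and you bound the $\langle\cdot\rangle$-translation multiplicatively where the paper uses $\ln(e+\langle x+y\rangle)\leq\ln(e+\langle x\rangle)+\ln(e+\langle y\rangle)$, but either variant delivers the same $a_0^{-1-a_0}$ prefactor and exponent bookkeeping.
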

\begin{proof}
    It is easy to observe that
	\begin{align*}
		f^n(t,k,\xi)\leq \langle k-n\rangle^{-1-a_0},\quad \int_{-\infty}^{t}\sum_{n\in\Z\backslash\{0\}}f^{n}(\tau,k,\xi)\,d\tau\leq Ca_0^{-1}\langle k-n\rangle^{-1-a_0}.
	\end{align*}
    Since $0<a_0<1$, we can obtain \eqref{est-m0-0} immediately.
    Next, we turn to prove \eqref{est-m0-1}.
	Since the function sequences $f^n(t,k,\xi)$ are non-negative, it follows that for $k\not=0$,
	\begin{align*}
		\f{\pa_t m_3}{m_3}(t,k,\xi) =\sum_{n\in\Z\backslash\{0\}} f^n(t,k,\xi) \geq f^k(t,k,\xi)
		=\big\langle t-\f{\xi}{k}\big\rangle^{-1} \big[\ln\big(e+\big\langle t-\f{\xi}{k}\big\rangle\big)\big]^{-1-a_0}.
	\end{align*}
    We now focus on proving \eqref{est-m0-2}. 
    To this end, we deduce that for $k\not=0$,
	\begin{align}\label{est-pam0-1}
		\f{\pa_t m_3}{m_3}(t,k',\xi')
		\geq f^k(t,k',\xi')
		=\big\langle t-\f{\xi'}{k}\big\rangle^{-1} \big[\ln\big(e+\big\langle t-\f{\xi'}{k}\big\rangle\big)\big]^{-1-a_0}\langle k'-k\rangle^{-1-a_0}.
	\end{align}
    Some simple calculations give that for any $0<a_0<1$, $(x,y)\in\R^2$,
	\begin{align*}
		\ln(e+\langle x\rangle)\lesssim a_0^{-1} \langle x\rangle^{a_0},
		\quad
		\ln(e+\langle x+y\rangle)\leq \ln(e+\langle x\rangle)+\ln(e+\langle y\rangle).
	\end{align*}
    From which, it then follows that
	\begin{align}\label{est-ln-1}
		\ln\big(e+\big\langle t-\f{\xi'}{k}\big\rangle\big)\big[\ln\big(e+\big\langle t-\f{\xi}{k}\big\rangle\big)\big]^{-1}
		\lesssim a_0^{-1}\langle \xi-\xi'\rangle^{a_0}.
	\end{align}
    We also recall that
	\begin{align*}
		\big\langle t-\f{\xi'}{k}\big\rangle \leq \big\langle t-\f{\xi}{k}\big\rangle \langle \xi-\xi'\rangle.
	\end{align*}
    This, combined with \eqref{est-ln-1}, implies that
	\begin{align*}
		\big\langle t-\f{\xi'}{k}\big\rangle\big[\ln\big(e+\big\langle t-\f{\xi'}{k}\big\rangle\big)\big]^{1+a_0}
		\lesssim
		a_0^{-(1+a_0)}\big\langle t-\f{\xi}{k}\big\rangle 
		\big[\ln\big(e+\big\langle t-\f{\xi}{k}\big\rangle\big)\big]^{1+a_0}  \langle \xi-\xi'\rangle^{1+a_0(1+a_0)}.
	\end{align*}
	We substitute the estimate above into \eqref{est-pam0-1} to obtain that
	\begin{align*}
		\f{\pa_t m_3}{m_3}(t,k',\xi')
		\gtrsim  a_0^{1+a_0}
		\big\langle t-\f{\xi}{k}\big\rangle^{-1} 
		\big[\ln\big(e+\big\langle t-\f{\xi}{k}\big\rangle\big)\big]^{-1-a_0}  \langle \xi-\xi'\rangle^{-1-a_0(1+a_0)}\langle k'-k\rangle^{-1-a_0},
	\end{align*}
	from which the estimate \eqref{est-m0-2} follows.
    Finally, following the similar derivations of \eqref{est-m0-1} and \eqref{est-m0-2}, one obtain \eqref{est-m0-3} directly.
\end{proof}

For simplicity, the combined multiplier is then defined as:
\begin{align*}
m\tri m_1m_2m_3.
\end{align*}

\section{Construction of the energy functionals}
\label{sec3:energy fun}
Our goal in this section is to 
construct the energy functionals and establish the \textit{a priori} estimates for the local smooth solutions of system \eqref{1-eq1}.
Firstly, we introduce some symbols for any $f(t,x,y,z)$ as:
\begin{align}
&\A f \tri
\F^{-1}[\varphi^{-\f12}m^{-1}e^{\delta_2\nu^{\f13}t}\hat{f}], 
\quad 
\varUpsilon\A f=  \F^{-1}\big[\sqrt{\f{\pa_tm_3}{m_3}}\varphi^{-\f12}m^{-1}e^{\delta_2\nu^{\f13}t}\hat{f}\big],
\label{def-Af}
\\
&\A_0 f_0\tri \F^{-1}[m_3^{-1}(t,0,\xi,\eta)\hat{f}_0],
\quad
\varUpsilon_0\A_0 f_0\tri \F^{-1}\big[\sqrt{\f{\pa_tm_3}{m_3}}m_3^{-1}(t,0,\xi,\eta)\hat{f}_0\big].
\label{def-A0f0}
\end{align}
Here we note that $\varphi^{-\f12}m^{-1}(t,0,\xi,\eta)=m_3^{-1}(t,0,\xi,\eta)$.
Since the dynamics of these solutions differ from those independent of the periodic $x$-variable, we introduce distinct energy functionals accordingly.

\medskip

\noindent\textbf{$\bullet$ For the case $\DD=\R^2$:} 
For the zero modes, we introduce the energy functionals for the solution to system \eqref{1-eq2} as
\begin{align}
\label{def: e-1}
&\E_0(t) \tri  \|\uu_0(t)\|_{H^4(\R^2)}^2+\|(1+\pa_z)\nabla^5\A_0\uu_0(t)\|_{L^2(\R^2)}^2,
\\
\label{def: d-1}
&\D_0(t)\tri  \nu \|\nabla \uu_0(t)\|_{H^4(\R^2)}^2
+\nu\|(1+\pa_z)\nabla^6 \A_0\uu_0(t)\|_{L^2(\R^2)}^2
+\|(1+\pa_z)\nabla^5 \varUpsilon_0\A_0\uu_0(t)\|_{L^2(\R^2)}^2.
\end{align}
The Fourier multiplier $m_3$, corresponding to the operators $\A_0$ and $\varUpsilon_0$, is introduced in the highest energy functionals to control the reaction term arising from the quadratic interactions where both unknowns are at non-zero modes.

We now turn to the analysis of non-zero modes for which we will work on the pair $(Q,W)$ defined in \eqref{def-Q} and \eqref{def-W-nonzero}.
In addition to $m_3$, we employ the Fourier multiplier $\varphi$ to take into account the stretching term in the second equation of system \eqref{3-eq2}.
Furthermore, the Fourier multipliers $m_i$ $(i=1,2)$ are used to achieve both enhanced dissipation and inviscid damping.
We thus define the energy functionals for non-zero frequencies for the solution to system \eqref{3-eq2} as:
\begin{align}
&\E_{\not=}(t)\tri   
\|\A (Q_{\not=},W_{\not=})(t)\|_{H^4(\T\times\R^2)}^2,
\label{defDnot-1}\\
&\D_{\not=}(t)\tri 
\nu \|\widetilde{\nabla}\A (Q_{\not=},W_{\not=})(t)\|_{H^4(\T\times\R^2)}^2
+\nu^{\f13} 
\|\A (Q_{\not=},W_{\not=})(t)\|_{H^4(\T\times\R^2)}^2
\nonumber\\
&\qquad\qquad
+\|\pa_x|\widetilde{\nabla}|^{-1}\A (Q_{\not=},W_{\not=})(t)\|_{H^4(\T\times\R^2)}^2
+\|\varUpsilon\A (Q_{\not=},W_{\not=})(t)\|_{H^4(\T\times\R^2)}^2.
\label{defDnot}
\end{align}

\medskip

\noindent\textbf{$\bullet$ For the case $\DD=\R\times\T$:} 
The energy functionals for the zero modes of the solution to system \eqref{1-eq2} are defined as
\begin{align}
\label{def: ed-T}
\E_0(t) \tri  \|(1+\pa_z)\uu_0(t)\|_{H^5(\R\times\T)}^2,
\quad
\D_0(t)\tri  \nu \|(1+\pa_z)\nabla \uu_0(t)\|_{H^5(\R\times\T)}^2.
\end{align}
While the corresponding energy functionals for the non-zero modes of the solution to system \eqref{3-eq2} are defined in \eqref{defDnot-1}-\eqref{defDnot} by replacing the domain $\T\times\R^2$ by $\T\times\R\times\T$.
\begin{rema}
    On the domain $\T\times\R\times\T$, the energy functionals defined in \eqref{def: ed-T} for the zero modes are defined without the Fourier multiplier $m_3$, while those for the non-zero modes retain the definitions in \eqref{defDnot-1}-\eqref{defDnot} from $\T\times\R^2$ (including $m_3$). 
    Strictly speaking, the multiplier $m_3$ is not essential for the domain $\T\times\R\times\T$, as the stability threshold for the streak solution is limited to $\alpha\leq \f56$, 
    which removes the need for a detailed treatment of the associated reaction terms, both for zero and non-zero modes.
    Nevertheless, to maintain a unified and concise presentation in the proof of non-zero modes, we employ the same energy functional definitions for both domains $\DD=\R^2$ or $\DD=\R\times\T$.
\end{rema}

We will use the continuity argument to prove Theorems \ref{theo1}-\ref{theo2}.
Thus, we give the following proposition concerning \textit{a priori} energy estimates.
\begin{prop}\label{prop-1}
Let $T>0.$
Assume that $\uu_0$ 
and $(Q_{\not=},W_{\not=})$ 
are the solutions for systems \eqref{1-eq2} and \eqref{3-eq2}, respectively,
such that for any $\ep_1>0$ and $t\in[0,T]$,
\begin{align}\label{priori assumption}
	\sup_{\tau \in[0,t]}\big(\E_0(\tau)+\E_{\not=}(\tau)\big)
	+\int_0^t\big(\D_0(\tau)+\D_{\not=}(\tau)\big)\,d\tau
	\leq \ep_1^2\nu^{2\alpha},
\end{align}
with any $\alpha>\f23$ on the domain $\T\times\R^2$, and with any $\alpha\geq \f56$ on the domain $\T\times\R\times\T$.
Then it holds that
\begin{align}\label{sec3:priori result-1}
	\sup_{\tau\in[0,t]}\big(\E_0(\tau)+\E_{\not=}(\tau)\big)
	+\int_0^t\big(\D_0(\tau)+\D_{\not=}(\tau)\big)\,d\tau
	\leq C\var_0^2+C\ep_1^3\nu^{2\alpha}.
\end{align}
\end{prop}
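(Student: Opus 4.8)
The plan is to run a bootstrap/continuity argument: assuming the \textit{a priori} bound \eqref{priori assumption} with small $\ep_1$, we improve all the energy and dissipation norms to \eqref{sec3:priori result-1}, which in turn (since $\var_0 \leq \ep_0 \nu^\alpha$ and $\ep_1$ will be chosen $\gg \ep_0$) allows the continuity argument to close on a maximal time interval that is in fact all of $[0,\infty)$. The proof of Proposition \ref{prop-1} itself is just the assembly of two large collections of estimates that are proved in later sections: the energy estimates for the zero modes (governed by \eqref{1-eq2}) and those for the non-zero modes (governed by \eqref{3-eq2}). So the body of the proof will consist of (i) stating the zero-mode estimate — essentially $\sup_\tau \E_0(\tau) + \int_0^t \D_0(\tau)\,d\tau \lesssim \var_0^2 + \ep_1^3 \nu^{2\alpha}$ — citing the relevant section/lemmas (the Strichartz estimate of Lemma \ref{lemm-str} and the dispersive estimates of Lemmas \ref{App-dis-2}, \ref{sec4:lemm-u0}), and (ii) stating the non-zero-mode estimate $\sup_\tau \E_{\not=}(\tau) + \int_0^t \D_{\not=}(\tau)\,d\tau \lesssim \var_0^2 + \ep_1^3 \nu^{2\alpha}$, citing the corresponding section, then summing.

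In more detail, for the zero modes I would first differentiate the energy functional \eqref{def: e-1}, use the structure of \eqref{1-eq2} — noting that the linear Coriolis terms are skew-symmetric after the symmetrization via $W^\pm$, $\W^\pm$, and the dissipation $\nu(\partial_y^2+\partial_z^2)$ produces the $\nu\|\nabla\uu_0\|_{H^4}^2$ part of $\D_0$ — and then control the nonlinear terms $\PP_0(\uu\cdot\nabla_{\x_1}\uu)$. The nonlinear terms split into $\uu_0\cdot\nabla\uu_0$ (the zero-zero interaction) and $\PP_0(\uu_{\not=}\cdot\nabla\uu_{\not=})$ (the nonzero-nonzero interaction). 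For the former one exploits the transport structure $\uu_0\cdot\nabla\uu_0 = u_0^2\partial_y\uu_0 + u_0^3\partial_z\uu_0$ together with the improved dispersive estimate \eqref{disp-improved} for the ``good components'' $u_0^3, \partial_y\uu_0$ and the Strichartz bound $\|(1+\partial_z)\uu_0\|_{L^2_tL^\infty}\lesssim\ep_1\nu^{1/2}$ of \eqref{L2linfty-zero}; for the latter, the $m_3$-weight in $\A_0$ and the extra damping $\|(1+\partial_z)\nabla^5\varUpsilon_0\A_0\uu_0\|_{L^2}$ in $\D_0$ absorb the reaction-type contributions from the non-zero modes, using the enhanced dissipation factor $\nu^{1/6}$ from $\D_{\not=}$. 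Every resulting bound must be checked to be $\lesssim \var_0^2 + \ep_1^3\nu^{2\alpha}$ for $\alpha>\f23$ (resp.\ $\alpha\geq\f56$), which is exactly where the threshold enters. The nonzero-mode part is analogous but with the weights $\varphi$ (for the stretching term in the $W$ equation), $m_1$ (enhanced dissipation), $m_2$ (linear/nonlinear pressure growth), and $m_3$, using Lemmas \ref{sec2:lem-0}--\ref{sec2:lem-0-0}; the key nonlinear terms are the reaction terms displayed in \eqref{sec1:est-en-non}, estimated as in the strategy section via the inviscid damping \eqref{U2-damp-intr}, the anisotropic zero-mode norm $\|(1+\partial_z)\uu_0\|_{L^\infty_t H^5}$, and the $\varUpsilon$-damping.

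Concretely the proof text would read roughly: ``The estimate \eqref{sec3:priori result-1} follows by adding the following two bounds, proved in Sections \ref{sec4:energy est zero} and \ref{sec4:energy est nonzero} respectively:
\begin{align*}
\sup_{\tau\in[0,t]}\E_0(\tau) + \int_0^t \D_0(\tau)\,d\tau &\leq C\var_0^2 + C\ep_1^3\nu^{2\alpha},\\
\sup_{\tau\in[0,t]}\E_{\not=}(\tau) + \int_0^t \D_{\not=}(\tau)\,d\tau &\leq C\var_0^2 + C\ep_1^3\nu^{2\alpha},
\end{align*}
where in each case the $\var_0^2$ term comes from the initial data via the definitions \eqref{initial data-assume}, \eqref{initial data-assume'}, and the $\ep_1^3\nu^{2\alpha}$ term collects all nonlinear contributions, each of which is estimated under \eqref{priori assumption} using the dispersive estimates \eqref{Asy-u0-1}--\eqref{Asy-u0-2}, the Strichartz estimate \eqref{L2linfty-zero}, the inviscid damping and enhanced dissipation of the non-zero modes, and the properties of the Fourier multipliers in Lemmas \ref{sec2:lem-0}, \ref{sec2:lem-1}, \ref{sec2:lem-0-0}.''

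The main obstacle — and the reason the proof is long and deferred to later sections rather than done here — is controlling, within the zero-mode energy estimate, the reaction-type contribution of the nonzero-nonzero interaction $\PP_0(\uu_{\not=}\cdot\nabla\uu_{\not=})$ at the highest anisotropic regularity level $\|(1+\partial_z)\nabla^5\A_0\uu_0\|_{L^2}$, since the enhanced dissipation rate $\nu^{1/6}$ available from $\D_{\not=}$ is weak and one must use the full strength of the $m_3$-weighted damping $\varUpsilon_0$ and the commutator estimates \eqref{est-m0-2}--\eqref{est-m0-3}; symmetrically, on the nonzero side, the most delicate term is the last reaction term in the splitting after \eqref{U2-damp-intr}, where there is a genuine regularity mismatch that forces the introduction of the half-derivative-transferring weight $\varUpsilon$ built from $m_3$. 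Getting the powers of $\nu$ in these two estimates to land at $\nu^{2\alpha}$ with $\alpha$ as small as $\f23$ (for $\DD=\R^2$) is the crux, and it relies on the anisotropic design of the energy functional \eqref{def: e-1} matching the regularity of $(Q_{\not=},W_{\not=})$ together with the improved dispersion \eqref{disp-improved} of the good components. Apart from this, the argument is a standard, if lengthy, continuation/bootstrap scheme and presents no conceptual difficulty beyond bookkeeping of the $\nu$-powers.
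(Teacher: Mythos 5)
Your proposal is correct and matches the paper's proof: Proposition \ref{prop-1} on $\T\times\R^2$ is proved precisely by adding the zero-mode bound of Proposition \ref{lemm-1} (which gives $\var_0^2+\ep_1^3\nu^{2\alpha}$) to the non-zero-mode bound of Proposition \ref{lemm-2} (which gives $\var_0^2+\ep_1^3\nu^{3\alpha-\f23-\delta_1}\leq \var_0^2+\ep_1^3\nu^{2\alpha}$ since $\alpha>\f23$), and the $\T\times\R\times\T$ case is handled analogously in Section \ref{sec5:proof-TRT}. Your sketch of the deferred ingredients (Strichartz estimates for $\uu_0$, $m_3$-weighted damping via $\varUpsilon,\varUpsilon_0$, anisotropic regularity $\|(1+\pa_z)\uu_0\|_{H^5}$, inviscid damping for $U_{\neq}^2$) correctly identifies what drives the later-section estimates.
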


To conclude this section, we establish key estimates for $\U_{\neq}$ in Lemma \ref{3lem-key}. 
These estimates are fundamental to the subsequent energy estimates in Sections \ref{sec4:energy est zero}–\ref{sec4:energy est nonzero} for proving Proposition \ref{prop-1} on $\T\times\R^2$, and in Section \ref{sec5:proof-TRT} for the case of $\T\times\R\times\T$.


\begin{lemm}\label{3lem-key}
Under the assumptions in Proposition \ref{prop-1}, there holds that
\begin{itemize}
	\item [(1)] for $U^j_{\not=}$ with $j=1,3$, it holds that
	\begin{align*}
		&\|\nabla_{x,z}^2U_{\not=}^{j}(t)\|_{H^4}^2
		+\nu^{\f23}\|\widetilde{\nabla}\nabla_{x,z}U_{\not=}^{j}(t)\|_{H^4}^2
		\lesssim \E_{\not=}(t),
		\\
		&\nu\|\widetilde{\nabla}\nabla_{x,z}^2U_{\not=}^{j}(t)\|_{H^4}^2
		+\nu^{\f53}\|\widetilde{\nabla}^2\nabla_{x,z}U_{\not=}^{j}(t)\|_{H^4}^2
		\lesssim \D_{\not=}(t),
		\\
		&\nu^{\f13}\|\nabla_{x,z}^2U_{\not=}^{j}(t)\|_{H^4}^2
		+\nu\|\widetilde{\nabla}\nabla_{x,z}U_{\not=}^{j}(t)\|_{H^4}^2
		\lesssim \D_{\not=}(t);
	\end{align*}
	\item [(2)] for $U^2_{\not=}$, it holds that
	\begin{align*}
		&\|\nabla_{x,z}^2U_{\not=}^{2}(t)\|_{H^4}^2
		+\|\widetilde{\nabla}\nabla_{x,z}U_{\not=}^{2}(t)\|_{H^4}^2
		+\nu^{\f23}\|\widetilde{\nabla}^2U_{\not=}^{2}(t)\|_{H^4}^2
		\lesssim \E_{\not=}(t),
		\\
		&
		\nu \|\widetilde{\nabla}\nabla_{x,z}^2U_{\not=}^{2}(t)\|_{H^4}^2
		+\nu\|\widetilde{\nabla}^2\nabla_{x,z}U_{\not=}^{2}(t)\|_{H^4}^2
		+\nu^{\f53}\|\widetilde{\nabla}^3U_{\not=}^{2}(t)\|_{H^4}^2
		\lesssim \D_{\not=}(t),
		\\
		&\|\nabla_{x,z}U_{\not=}^{2}(t)\|_{H^4}^2
		+
		\nu^{\f13}\|\nabla_{x,z}^2U_{\not=}^{2}(t)\|_{H^4}^2
		+\nu^{\f13}\|\widetilde{\nabla}\nabla_{x,z}U_{\not=}^{2}(t)\|_{H^4}^2
		+\nu\|\widetilde{\nabla}^2U_{\not=}^{2}(t)\|_{H^4}^2
		\lesssim \D_{\not=}(t).
	\end{align*}
\end{itemize}
\end{lemm}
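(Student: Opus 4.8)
The plan is to unwind the definitions of $(Q_{\not=},W_{\not=})$, $\E_{\not=}$ and $\D_{\not=}$ at the level of the Fourier transform on $\T\times\R^2$ (or $\T\times\R\times\T$), and to reduce every claimed inequality to an elementary pointwise multiplier bound. Recall $Q=\widetilde\Delta U^2$ and $W=\sqrt{\beta/(\beta-1)}\,|\widetilde\nabla|\,\Omega^2$ with $\Omega^2=\partial_z U^1-\partial_x U^3$; also the divergence-free condition $\partial_x U^1+\tilde\partial_y U^2+\partial_z U^3=0$. In Fourier variables write $\tilde p\tri k^2+(\xi-kt)^2+\eta^2$, so that $\widehat{\widetilde\Delta U^2}=-\tilde p\,\widehat{U^2}$ and $|\widetilde\nabla|$ has symbol $\sqrt{\tilde p}$. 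The first step is therefore to express $\widehat{U^1},\widehat{U^2},\widehat{U^3}$ in terms of $\widehat Q$ and $\widehat W$: from $Q$ one has $\widehat{U^2}=-\tilde p^{-1}\widehat Q$, and combining $\widehat{\Omega^2}=i\eta\widehat{U^1}-ik\widehat{U^3}$ with $ik\widehat{U^1}+i(\xi-kt)\widehat{U^2}+i\eta\widehat{U^3}=0$ one solves the $2\times2$ system for $\widehat{U^1},\widehat{U^3}$ in terms of $\widehat{U^2}$ and $\widehat{\Omega^2}$, i.e.\ in terms of $\widehat Q$ and $\widehat W$, with coefficients that are bounded multiples of $\tilde p^{-1}$ times symbols of the form $k,(\xi-kt),\eta$ divided by $(k^2+\eta^2)$ (note $k^2+\eta^2\geq1$ for nonzero modes, so no low-frequency singularity appears).

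The second step is to insert these formulas into the norms appearing on the left-hand sides. Each left-hand side quantity is, after taking Fourier transform and using Plancherel, an $L^2_{k,\xi,\eta}$ norm of $\widehat{U^j}$ multiplied by a product of symbols drawn from $\{k,\partial_z\mapsto\eta\}$ (for $\nabla_{x,z}$), from $\{k,\xi-kt,\eta\}$ (for $\widetilde\nabla$), and from extra powers of $\langle\xi-kt,\eta,k\rangle$ coming from the $H^4$ weight; the right-hand side is an $L^2$ norm of $\varphi^{-1/2}m^{-1}e^{\delta_2\nu^{1/3}t}\langle\cdot\rangle^4\widehat Q$ (or $\widehat W$), possibly multiplied by $\sqrt{\tilde p}$ (for the $\nu\|\widetilde\nabla\A(\cdot)\|_{H^4}^2$ term in $\D_{\not=}$), by $\nu^{1/3}$ or $1$, or by $|k|\tilde p^{-1/2}$ or $\sqrt{\partial_t m_3/m_3}$ (the $\varUpsilon$ term). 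Since the weights $\varphi^{-1/2},m^{-1},m_3$ are bounded above and below uniformly in $\nu$ by Lemmas~\ref{sec2:lem-1} and \ref{sec2:lem-0-0}, and since $e^{\delta_2\nu^{1/3}t}\geq1$, each claim reduces to verifying a pointwise inequality between two monomials in $k,(\xi-kt),\eta,\tilde p,\nu$. The core estimates to be used repeatedly are: $\nabla_{x,z}^2$ has symbol bounded by $k^2+\eta^2\leq\tilde p$ (so two good derivatives on $U^j$ are controlled by $Q,W$ themselves, modulo the $\tilde p^{-1}$ from the inversion, which cancels against one factor of $\tilde p$); $\widetilde\nabla\nabla_{x,z}$ brings $\sqrt{\tilde p}(k^2+\eta^2)^{1/2}$ and the extra $\nu^{1/3}$ (resp.\ $\nu^{2/3}$, $\nu$, $\nu^{5/3}$) factors are produced by the weighted interpolation $\tilde p^{1/3}\lesssim\nu^{-1/3}(\nu\tilde p)^{1/3}\cdot\tilde p^{0}$ matched to whichever term of $\D_{\not=}$ — namely $\nu^{1/3}\|\A(\cdot)\|_{L^2}^2$ versus $\nu\|\widetilde\nabla\A(\cdot)\|_{H^4}^2$ versus $\|\partial_x|\widetilde\nabla|^{-1}\A(\cdot)\|_{L^2}^2$ — one is aiming at. For part~(2) one additionally uses that $U^2$ has the \emph{single-good-derivative} improvement $\|\nabla_{x,z}U^2\|\lesssim\D_{\not=}^{1/2}$ because $\widehat{U^2}=-\tilde p^{-1}\widehat Q$ directly, so $(k^2+\eta^2)^{1/2}|\widehat{U^2}|=(k^2+\eta^2)^{1/2}\tilde p^{-1}|\widehat Q|\lesssim \tilde p^{-1/2}|\widehat Q|=|k|^{-1}\cdot|k|\tilde p^{-1/2}|\widehat Q|$, i.e.\ it is controlled by $\|\partial_x|\widetilde\nabla|^{-1}\A Q\|_{L^2}$ (using $|k|\geq1$); this is exactly the inviscid-damping term in $\D_{\not=}$ and explains why $U^2$ gains one extra derivative over $U^{1,3}$.

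The third step is purely bookkeeping: for each of the ten or so displayed inequalities, choose the matching term of $\E_{\not=}$ or $\D_{\not=}$, apply the relevant monomial bound from step two together with $1\lesssim k^2+\eta^2$ for nonzero modes (to absorb stray $\tilde p^{-1}$ factors), and sum/integrate. No commutator or nonlinear structure enters — this lemma is entirely linear-algebraic in Fourier space. The only mildly delicate point, and the one I expect to require the most care, is the correct matching of powers of $\nu$: one must be sure, for instance, that $\nu^{5/3}\|\widetilde\nabla^2\nabla_{x,z}U^j\|_{H^4}^2\lesssim\D_{\not=}$ is deduced from the dissipation term $\nu\|\widetilde\nabla\A(Q,W)\|_{H^4}^2$ via $\nu^{5/3}\tilde p^{2}(k^2+\eta^2)\lesssim \nu\tilde p\cdot(\nu^{2/3}\tilde p(k^2+\eta^2)\tilde p^{-1})\lesssim\nu\tilde p$ only after using $\nu^{2/3}\tilde p\lesssim$ nothing in general — so in fact this particular bound must instead be read off from $\nu^{1/3}\|\A(Q,W)\|_{L^2}^2$ combined with $\tilde p^{-1}$: $\nu^{5/3}\tilde p^{2}(k^2+\eta^2)|\widehat{U^j}|^2\sim\nu^{5/3}\tilde p^{2}(k^2+\eta^2)\tilde p^{-2}(k^2+\eta^2)^{-1}\cdot(\text{symbol of }W)^2\cdots$; getting these interpolation exponents exactly right, and in particular deciding for each term whether the controlling quantity is the $\nu^{1/3}L^2$ piece, the $\nu H^4$-dissipation piece, the inviscid-damping piece, or a combination obtained by $\tilde p^{\theta}$-interpolation, is the part of the argument that has to be done term by term with care rather than by a single clean formula. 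Once the dictionary of step two is written down, however, every case closes by inspection.
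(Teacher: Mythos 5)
Your overall strategy — invert the divergence‑free condition and the definitions of $(Q,W)$ to get explicit Fourier formulas for $\widehat{U^1},\widehat{U^2},\widehat{U^3}$ in terms of $\widehat{Q},\widehat{W}$, then reduce every claim to a pointwise symbol bound — is exactly what the paper does. However, there is a genuine gap in step two: you assert that the weights $\varphi^{-1/2},m^{-1},m_3$ are "bounded above and below uniformly in $\nu$". That is correct for $m^{-1}$ and $m_3$, but false for $\varphi^{-1/2}$: Lemma~\ref{sec2:lem-0} gives $1\leq\varphi\leq\beta_0^2\nu^{-2/3}$, so $\varphi^{-1/2}\in[\beta_0^{-1}\nu^{1/3},\,1]$ and is \emph{not} bounded from below uniformly in $\nu$. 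Consequently $\|(Q_{\not=},W_{\not=})\|_{H^4}$ is \emph{not} comparable to $\|\A(Q_{\not=},W_{\not=})\|_{H^4}=\E_{\not=}^{1/2}$; the crude chain $\|\nabla_{x,z}^2 U^j_{\not=}\|_{H^4}\lesssim\|(Q_{\not=},W_{\not=})\|_{H^4}\lesssim\E_{\not=}^{1/2}$ on which your step two rests loses a factor $\nu^{-1/3}$ and does not prove the $\nu$-free bounds such as $\|\nabla_{x,z}^2U_{\not=}^{j}\|_{H^4}^2\lesssim\E_{\not=}$ or $\|\widetilde\nabla\nabla_{x,z}U_{\not=}^{2}\|_{H^4}^2\lesssim\E_{\not=}$.

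What is missing is the second half of \eqref{est-varphi-1}, namely $\varphi/p\leq(k^2+\eta^2)^{-1}$ (here $p=k^2+(\xi-kt)^2+\eta^2$), which is precisely what lets the excess $\varphi^{1/2}$ produced by inverting $\A$ be absorbed by the "good" factor $\sqrt{(k^2+\eta^2)/p}$ that the symbols of $\nabla_{x,z}$ and the $(\pa_x^2+\pa_z^2)^{-1},\widetilde\Delta^{-1},|\widetilde\nabla|^{-1}$ inversions naturally supply. Concretely, for $U^2_{\not=}=\widetilde\Delta^{-1}Q_{\not=}$ the multiplier from $\F[\A Q_{\not=}]$ to $\widehat{\nabla_{x,z}^2U^2_{\not=}}$ is $(k^2+\eta^2)\,p^{-1}\varphi^{1/2}me^{-\delta_2\nu^{1/3}t}$, and one bounds $(k^2+\eta^2)p^{-1}\varphi^{1/2}=(k^2+\eta^2)p^{-1/2}\cdot\varphi^{1/2}p^{-1/2}\leq(k^2+\eta^2)^{1/2}p^{-1/2}\leq1$ using $\varphi/p\leq(k^2+\eta^2)^{-1}$. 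For the bounds that \emph{do} carry a positive power of $\nu$ (e.g.\ $\nu^{2/3}\|\widetilde\nabla\nabla_{x,z}U_{\not=}^{j}\|_{H^4}^2\lesssim\E_{\not=}$ or $\nu^{5/3}\|\widetilde\nabla^2\nabla_{x,z}U_{\not=}^{j}\|_{H^4}^2\lesssim\D_{\not=}$) one instead uses the other half, $\varphi\leq\beta_0^2\nu^{-2/3}$, to trade each stray $\varphi^{1/2}$ for a $\nu^{-1/3}$ — this is the role of $\varphi$ that your bookkeeping at the end was reaching for but did not connect to the weight. Your three‑step outline and the algebraic formulas are sound; the fix is simply to keep $\varphi$ in the "monomial dictionary" and invoke \eqref{est-varphi-1} term by term, rather than discarding it as uniformly bounded.
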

\begin{proof}
By using \eqref{def-q-om2} and the divergence free condition, the non-zero frequencies of the solution $\uu$ can be explicitly expressed as:
\begin{align}
	&U^1_{\not=} =(\pa_{x}^2+\pa_{z}^2)^{-1}\Big(\sqrt{\f{\beta-1}{\beta}}\pa_z |\widetilde{\nabla}|^{-1}W_{\not=}-\pa_{x}\tilde{\pa}_{y} \widetilde{\Delta}^{-1}Q_{\not=}\Big),
	\label{def-u1not}
	\\
	&U^2_{\not=}=\widetilde{\Delta}^{-1}Q_{\not=},
	\label{def-u2not}
	\\
	&U^3_{\not=} =-(\pa_{x}^2+\pa_{z}^2)^{-1}\Big(\sqrt{\f{\beta-1}{\beta}}\pa_{x} |\widetilde{\nabla}|^{-1}W_{\not=}+\tilde{\pa}_{y}\pa_{z} \widetilde{\Delta}^{-1}Q_{\not=}\Big).
	\label{def-u3not}
\end{align}
Lemma \ref{3lem-key} follows immediately from Lemma \ref{sec2:lem-0}, the representations \eqref{def-u1not}-\eqref{def-u3not}, and the \textit{a priori} estimate \eqref{priori assumption}, we thus omit its proof.
\end{proof}

\begin{rema}
This remark clarifies the consistency in the derivative orders of the energy functionals for zero and non-zero modes. 
Using the domain $\T\times\R^2$ as an example, we observe that the energy functional $\mathcal{E}_0(t)$ defined in \eqref{def: e-1} exhibits an anisotropic derivative structure.
Combined the incompressibility condition $\partial_y^6 u_0^2 = -\partial_z\partial_y^5 u_0^3$, a key feature in $\mathcal{E}_0(t)$ is the omission of control of $\|\pa_y^6u^j\|_{L^2}$ with $=1,3$.
A consistent structure is found for the non-zero modes:
Lemma \ref{3lem-key} shows that $\E_{\not=}(t)$ similarly lacks control of
$\|\widetilde{\nabla}^2U^j_{\not=}\|_{H^4}$ with $j=1,3$.
That we are able to close the energy estimates within this anisotropic Sobolev framework stems from this consistency. This motivates the definitions in \eqref{def: e-1}--\eqref{defDnot}.
\end{rema}

\section{Energy estimates for zero modes on \texorpdfstring{$\T\times\R^2$}{T x R2}}\label{sec4:energy est zero}

From this section, we intend to give the proof of Proposition \ref{prop-1} on $\T\times\R^2$.
In view of the structural differences between the energy functionals for zero and non-zero frequencies of the solution, we perform distinct analysis for each case.
In this section, we establish the \textit{a priori} energy estimates for zero modes of the solution which is stated in the following proposition. 
\begin{prop}\label{lemm-1}
Suppose that the assumptions in Proposition \ref{prop-1} are true on the domain $\T\times\R^2$.
Then it holds that for any $t\in[0,T]$,
\begin{align}
	\label{priori result-0}
	\sup_{\tau\in[0,t]}\E_0(\tau)
	+\int_0^t \D_0(\tau)\,d\tau
	\lesssim \var_0^2+ \ep_1^3\nu^{2\al}.
\end{align}
\end{prop}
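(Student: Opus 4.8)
The plan is to derive \eqref{priori result-0} by performing energy estimates on system \eqref{1-eq2} in the anisotropic Sobolev space dictated by $\E_0$, splitting the analysis into the basic $H^4$ energy for $\uu_0$ and the top-order weighted estimate for $(1+\pa_z)\nabla^5\A_0\uu_0$. For the basic level, I would test each equation in \eqref{1-eq2} against $\nabla^{\le 4}u_0^i$ (using the symmetrized variables $\W^\pm = u_0^1 \pm \up_0$ so that the linear Coriolis/lift-up terms become skew-symmetric and drop out of the energy identity, with the remaining rotation terms contributing zero net after summation because of the structure $\beta\cR_3^2, \beta\cR_2\cR_3$ acting on $u_0^1$); this produces $\f{d}{dt}\|\uu_0\|_{H^4}^2 + \nu\|\nabla\uu_0\|_{H^4}^2 \lesssim (\text{nonlinear terms})$. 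The incompressibility relation $\pa_z u_0^3 = -\pa_y u_0^2$ must be invoked to absorb the missing $\pa_y^6 u_0^{1,3}$ control, exactly as flagged in the remark after Lemma \ref{3lem-key}. For the top-order estimate, I would apply $\A_0$ (i.e. the $m_3^{-1}$ multiplier at $k=0$) to the $\nabla^5$-differentiated system, test against $(1+\pa_z)^2\nabla^5\A_0\uu_0$, and use the commutator estimates \eqref{est-m0-1}--\eqref{est-m0-3} to generate the good damping term $\|(1+\pa_z)\nabla^5\varUpsilon_0\A_0\uu_0\|_{L^2}^2$ on the left, completing the functional $\D_0$ in \eqref{def: d-1}.

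Next I would organize the right-hand side nonlinear contributions into three groups: (i) zero--zero interactions $\PP_0(\uu_0\cdot\nabla\uu_0)$; (ii) non-zero--non-zero interactions $\PP_0(\uu_{\neq}\cdot\nabla\uu_{\neq})$; and (iii) the nonlinear pressure terms $\PP_0\pa_y p^{NL}$, $\PP_0\pa_z p^{NL}$, which by $\Delta p^{NL} = -\diver(\uu\cdot\nabla\uu)$ and the two-dimensionality of $\PP_0$ (so $\PP_0\Delta = (\pa_y^2+\pa_z^2)$ is invertible on the zero modes modulo a constant) reduce to Riesz-type operators applied to products and are controlled by the same bilinear bounds. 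For group (i) the key is the conservative structure $\uu_0\cdot\nabla\uu_0 = u_0^2\pa_y\uu_0 + u_0^3\pa_z\uu_0$ together with the improved dispersive estimate \eqref{disp-improved}: the ``good components'' $u_0^3$ and $\pa_y\uu_0$ carry a $(1+t)^{-1}$ decay, so placing them in $L^2_tL^\infty$ (via the Strichartz estimate \eqref{stri-intro}, Lemma \ref{lemm-str}) and the remaining factor $\pa_z\uu_0$ in the dissipation/energy norm closes these terms with the budget $\ep_1^2\nu^{2\alpha}$ provided $\alpha > 5/8$, following the computation sketched in Section \ref{sec-strategy} and Lemma \ref{sec4:lemm-u0}. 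For group (ii), I would use Lemma \ref{3lem-key} to convert $(Q_{\neq},W_{\neq})$ bounds into the needed $\U_{\neq}$ bounds, exploiting enhanced dissipation (the $\nu^{1/3}$ and $\nu^{1/6}$ factors in $\D_{\neq}$) to pay for the loss; the worst term is of the schematic form $\|\pa_z U_{\neq}^i\|_{L^2_tH^5}\|\widetilde{\nabla}U_{\neq}^i\|_{L^\infty_tH^\cdot}$ tested against the energy of $\uu_0$, and after using $\|\pa_z\tilde\pa_y U_{\neq}^{1,3}\|_{L^\infty_tH^4}\lesssim \|(Q,W)\|_{L^\infty_tH^4}$ one lands on $\ep_1^3\nu^{3\alpha - 2/3}\lesssim \ep_1^3\nu^{2\alpha}$ for $\alpha>2/3$.

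Finally I would collect the estimates, run Grönwall (the multiplier bounds \eqref{est-m0-0} ensure $\A_0$ and $\varUpsilon_0$ do not distort the norms), and absorb the dissipation term on the left using \eqref{est-m0-1} for the top order; integrating in time over $[0,t]$ and using the initial-data bound \eqref{initial data-assume} for $\var_0$ yields \eqref{priori result-0}. I expect the main obstacle to be group (ii), the zero--nonzero interaction: naively the factor $\|\pa_z\uu_0\|_{L^2_tH^5}$ only comes with dissipation $\nu^{1/2}$ which would force $\alpha\ge 1$, and the resolution requires exactly the observation that one should place this factor in $L^\infty_tH^5$ using the a priori bound \eqref{pre-asump} together with the Strichartz bound \eqref{L2linfty-zero} for the $L^2_tL^\infty$ part of $\uu_0$, while simultaneously keeping track of the anisotropy (the $(1+\pa_z)$ weight and the absence of $\pa_y^6 u_0^{1,3}$ control) so that all terms genuinely fit inside the functional $\D_0$ as defined in \eqref{def: d-1}; verifying this compatibility in every bilinear term, especially after commuting $\A_0$ through products via \eqref{est-m0-2}--\eqref{est-m0-3}, is the delicate bookkeeping that makes the proof work at the threshold $\alpha>2/3$.
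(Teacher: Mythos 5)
Your overall architecture matches the paper's: split into the basic $H^4$ energy and the top-order $(1+\pa_z)\nabla^5\A_0$ estimate; use $\W^\pm = u_0^1 \pm \up_0$ (or equivalently the $\f{\beta}{\beta-1}$-weighted inner product, as the paper does) to kill the linear terms; control the zero--zero interactions with the conservative structure $\uu_0\cdot\nabla\uu_0 = u_0^2\pa_y\uu_0 + u_0^3\pa_z\uu_0$ and the improved dispersive/Strichartz bounds from Lemmas~\ref{lemm-str}--\ref{sec4:lemm-u0}; and control the nonzero--nonzero interactions via Lemma~\ref{3lem-key} and enhanced dissipation. However, there is a genuine gap in the treatment of the nonzero--nonzero terms. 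You generate the damping $\|(1+\pa_z)\nabla^5\varUpsilon_0\A_0\uu_0\|_{L^2}^2$ in paragraph one, but then never actually spend it: the schematic bound you propose for group~(ii), of the form $\|\pa_z U_{\neq}^i\|_{L^2_tH^5}\|\widetilde{\nabla}U_{\neq}^i\|_{L^\infty_t H^4}\|\nabla^5\uu_0\|_{L^2_t L^2}$, handles terms like $G_{51\neq}$--$G_{53\neq}$ in the paper's decomposition, but misses the reaction term $G_{54\neq}$ (the one with $\nabla^4 U^2_{\neq}\,\tilde\pa_y\widetilde{\nabla}\U^k_{\neq}$ on one factor). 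For $G_{54\neq}$, the naive $L^2_t\times L^\infty_t\times L^2_t$ H\"older gives only $\ep_1^3\nu^{3\alpha-5/6}$, which would force $\alpha\ge 5/6$; reaching $\alpha>2/3$ requires trading the $\langle\tau\rangle$ growth against the $m_3$-induced damping (via \eqref{est-tau-1} and \eqref{est-m0-2}), which is exactly where the $\varUpsilon_0$ piece of $\D_0$ and the $\varUpsilon$ piece of $\D_{\neq}$ get used. As written, your proposal claims the right output $\ep_1^3\nu^{3\alpha-2/3}$ for group~(ii) without the mechanism that achieves it.

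There is also a conceptual misdirection in your final paragraph. You write that the main obstacle is the ``zero--nonzero interaction'' and that the factor $\|\pa_z\uu_0\|_{L^2_tH^5}$ costs $\nu^{1/2}$, ``forcing $\alpha\ge 1$,'' with the resolution being to place this factor in $L^\infty_tH^5$. That obstacle and its resolution belong to Proposition~\ref{lemm-2} (the non-zero mode energy), where $\pa_z U_0^2$ enters as a coefficient multiplying $\widetilde{\nabla}U_{\neq}^3$ in the reaction term \eqref{intro-1}; it is not present in the zero-mode estimate you are proving here. For Proposition~\ref{lemm-1}, only $(0,0)$ and $(\neq,\neq)$ interactions can force zero modes, so there is no zero--nonzero interaction of that type, and the genuine difficulties are (a) obtaining the $L^2_tL^\infty$ bound on $\uu_0$ at threshold $\alpha>5/8$, which you handle correctly, and (b) the reaction term $G_{54\neq}$ above, which you do not.
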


\subsection{The dispersive and Strichartz estimates for zero modes of velocity} \label{sec-zeromode-1}
Before proceeding to prove Proposition \ref{lemm-1}, we first establish the dispersive and Strichartz estimates for the zero frequencies of the solution $\uu_0$ to system \eqref{1-eq2}, a key ingredient in the proof.
This requires identifying the specific unknown functions that admit such estimates, leading us to introduce the following:
\begin{align}\label{def-cu}
\up_0
\tri \f{\beta-1}{\sqrt{B_{\beta}}} |\nabla_{y,z}|^{-1} (\pa_z u_0^2-\pa_y u_0^3),\quad B_{\beta}=\beta(\beta-1).
\end{align}
In view of system \eqref{1-eq2}, we see that $(u_{0}^1, \up_0)$ satisfies the equations: 
\begin{align*}
\left\{\begin{array}{l}
	\pt u_0^1+\sqrt{B_\beta}\mathcal{R}_3 \up_0-\nu\Delta u_0^1
	=N^1, 
	\\[1ex]
	\pt \up_0+\sqrt{B_\beta}\mathcal{R}_3u_0^1-\nu\Delta  \up_0
	=N^2,
	\\[1ex]
	(u_0^1,\up_0)|_{t=0}=(u_{0in}^1,\up_{0in})=(u_{0in}^1,\f{\beta-1}{\sqrt{B_{\beta}}} |\nabla|^{-1} (\pa_z u_{0in}^2-\pa_y u_{0in}^3)),
\end{array}\right.
\end{align*}
where the nonlinear terms $N^1$ and $N^2$ are defined by
\begin{align*}
N^1 \tri -\PP_0(\uu\cdot\nabla_{\x_1}  u^1),
\quad
N^2 \tri -\f{\beta-1}{\sqrt{B_{\beta}}}\big(\mathcal{R}_3\PP_0(\uu\cdot\nabla_{\x_1}  u^2) -\mathcal{R}_2\PP_0(\uu\cdot\nabla_{\x_1}  u^3)\big).
\end{align*}
We then define 
\begin{align}\label{def-W-1}
\W^{\pm}
\tri u_0^1 \pm  \up_0.
\end{align}
Direct computations show that $\W^{\pm}$ solve the equations
\begin{align*}
\left\{\begin{array}{l}
	\pt \W^{\pm}= 
	\mathcal{L}^{\pm}
	\W^{\pm}+N^{\pm},
	\\[1ex]
	\W^{\pm}|_{t=0}=\W^{\pm}_ {in}= u_{0in}^1 \pm \upsilon_{0in},
\end{array}\right.
\end{align*}
where the linear operators 
$$\mathcal{L}^{\pm}\tri \mp \sqrt{B_{\beta}} \mathcal{R}_3+\nu\Delta ,$$
and the nonlinear terms 
\begin{align}\label{def-Npm}
N^{\pm}\tri N^1\pm  
N^2.
\end{align}
Thus, the solution $\W^{\pm}$ can be given explicitly as
\begin{align}\label{est-Wpm}
\W^{\pm} = e^{t\mathcal{L}^{\pm}} \W^{\pm}_ {in}+\int_0^t e^{(t-\tau)\mathcal{L}^{\pm}} N^{\pm} (\tau)\, d\tau .
\end{align}
It is worth noting that various estimates for $\uu_0$ can be deduced from those of $\W^{+}$ and $\W^{-}$. Indeed, 
by the definition \eqref{def-W-1}, $u_0^1$ and $\up_0$ can be represented as:
\begin{align}\label{rep-u01-w01}
u_0^1 = \f12(\W^{+}+\W^{-}),
\quad
\up_0 = \f12(\W^{+}-\W^{-})\,.
\end{align}
Moreover, thanks to the divergence free condition,  it holds that
\begin{align}\label{rep-u023}
u_0^2=-{\f{\beta}{\sqrt{B_{\beta}}}} \mathcal{R}_3\up_0,
\quad
u_0^3=\f{\beta}{\sqrt{B_{\beta}}} \mathcal{R}_2\up_0\,.
\end{align}

The striking property for the unknowns $\W^{\pm}$ 
is that the linear evolution 
$e^{t\mathcal{L}^{\pm}}$
admits both dispersive and dissipation effects from respectively $e^{t\sqrt{B_{\beta}}\cR_3}$  and 
$e^{\nu t\Delta},$ which allow us to establish the suitable Strichartz estimates, even at endpoint case. 
We refer to
Lemmas \ref{lem-str-1} and \ref{lem-str-2} for the Strichartz estimates for $e^{t\mathcal{L}^{\pm}},$ which are based on the following dispersive estimates: 
\begin{lemm}\label{lemm-disp}
It holds that (here $\delta_1\in(0,1/2) $)
\begin{align*}	
\|e^{t\sqrt{B_{\beta}}\cR_3}f\|_{L^{\infty}(\R^2)}	\lesssim (1+\sqrt{B_{\beta}}|t|)^{-\f12} \|f\|_{W^{2^+,1}(\R^2)},\\
\|e^{t\sqrt{B_{\beta}}\cR_3}(\pa_yf,\cR_2f)\|_{L^{\infty}(\R^2)}	\lesssim (1+\sqrt{B_{\beta}}|t|)^{-1} \|f\|_{W^{3^+,1}(\R^2)},
\end{align*}
and
\begin{align*}	
\|e^{t\mathcal{L}^{\pm}}f\|_{L^{\infty}(\R^2)}	\lesssim (1+\sqrt{B_{\beta}}|t|)^{-\f12}|\nu t|^{-\delta_1} \|f\|_{W^{2,1}(\R^2)},\\
\|e^{t\mathcal{L}^{\pm}}(\pa_yf,\cR_2f)\|_{L^{\infty}(\R^2)}	\lesssim (1+\sqrt{B_{\beta}}|t|)^{-1}|\nu t|^{-\delta_1} \|f\|_{W^{3,1}(\R^2)}.
\end{align*}
\end{lemm}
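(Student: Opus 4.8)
The plan is to reduce all four bounds to a single scalar oscillatory‑integral estimate, then dress it with a Littlewood--Paley decomposition and, for $e^{t\mathcal{L}^{\pm}}$, with heat smoothing. Since the zero modes depend only on $(y,z)$, the operator $e^{t\sqrt{B_\beta}\mathcal{R}_3}$ is, on $\R^2_{y,z}$ with Fourier variables $(\xi,\eta)$, the Fourier multiplier with symbol $e^{i\tau\eta/|(\xi,\eta)|}$, where $\tau\tri\sqrt{B_\beta}\,t$; the phase $\eta/|(\xi,\eta)|$ is homogeneous of degree $0$. I would localize to frequencies $|(\xi,\eta)|\sim 2^{j}$ and rescale $(\xi,\eta)=2^{j}(\xi',\eta')$; because the phase is $0$-homogeneous this leaves it invariant, so the $j$-th Littlewood--Paley piece $K_t^{(j)}$ of the kernel of $e^{t\sqrt{B_\beta}\mathcal{R}_3}$ equals $2^{2j}\,\widetilde K_\tau(2^{j}y,2^{j}z)$ with
\[
\widetilde K_\tau(a,b)\tri\int_{\R^2}e^{i(\xi a+\eta b)+i\tau\eta/|(\xi,\eta)|}\,\chi_0(\xi,\eta)\,d\xi\,d\eta,
\]
$\chi_0$ a smooth bump on the unit annulus. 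Passing to polar coordinates $\xi=r\cos\theta$, $\eta=r\sin\theta$ turns the phase into $ra\cos\theta+(rb+\tau)\sin\theta$ and collapses the $\theta$-integral to a Bessel function; using $|J_0(R)|\lesssim(1+R)^{-\frac12}$ --- equivalently, van der Corput applied in $\theta$, whose phase $\sin\theta$ has only nondegenerate critical points (at $\theta=\pm\tfrac\pi2$) --- and integrating the remaining bounded radial variable yields the uniform bound $\|\widetilde K_\tau\|_{L^\infty}\lesssim(1+|\tau|)^{-\frac12}$.

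For the ``good components'' $\partial_y f$ and $\mathcal{R}_2 f$, the multiplier symbols acquire the extra factor $i\xi$, respectively $i\xi/|(\xi,\eta)|=i\cos\theta$, which vanishes exactly on $\{\xi=0\}$, i.e. at the critical angles $\theta=\pm\tfrac\pi2$. Feeding this vanishing amplitude through the polar-coordinate computation (it produces $J_1$ in place of $J_0$, together with an amplitude that vanishes at the critical angle) upgrades the stationary-phase bound by one further half-power: $\|\widetilde K_\tau^{\partial_y}\|_{L^\infty},\ \|\widetilde K_\tau^{\mathcal{R}_2}\|_{L^\infty}\lesssim(1+|\tau|)^{-1}$, with the $\partial_y$-piece of the kernel carrying in addition a factor $2^{3j}$ (instead of $2^{2j}$) from the rescaling since $\partial_y$ is $1$-homogeneous.

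Next I would sum: write $e^{t\sqrt{B_\beta}\mathcal{R}_3}f=\sum_{j}K_t^{(j)}*P_jf$ and bound each term by $\|K_t^{(j)}\|_{L^\infty}\|P_jf\|_{L^1}$. The blocks with $j<0$ sum geometrically against $\|f\|_{L^1}$ thanks to the factors $2^{2j}$ (resp. $2^{3j}$); for $j\ge0$ I use $\|P_jf\|_{L^1}\lesssim 2^{-sj}\|f\|_{W^{s,1}}$ and choose $s>2$ (resp. $s>3$, which covers both good components) so that $\sum_{j\ge0}2^{2j-sj}$ (resp. $\sum_{j\ge0}2^{3j-sj}$) converges. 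This is precisely where the slightly-more-than-$W^{2,1}$, resp. $W^{3,1}$, regularity is spent, and it gives the first two estimates, uniformly in $\beta$. For $e^{t\mathcal{L}^{\pm}}=e^{\mp t\sqrt{B_\beta}\mathcal{R}_3}\,e^{\nu t\Delta}$ (the two factors commuting), I apply the first two estimates to $e^{\nu t\Delta}f$ in place of $f$ and absorb the extra ``$+$'' of regularity through the heat bound $\||\nabla|^{2\delta_1}e^{\nu t\Delta}g\|_{L^1}\lesssim(\nu|t|)^{-\delta_1}\|g\|_{L^1}$, valid for every $t>0$ by scaling the Gaussian kernel, taking the ``$+$'' to be $2\delta_1$ with $\delta_1\in(0,\tfrac12)$; this produces the last two estimates.

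The main obstacle is the kernel analysis, and especially the improved $(1+|\tau|)^{-1}$ bound in the second step: because $\eta/|(\xi,\eta)|$ is $0$-homogeneous its Hessian is everywhere degenerate (the radial direction is non-oscillatory), so no off-the-shelf nondegenerate stationary-phase estimate applies, and one must carefully track that the only genuine oscillation is the angular one and that, for $\partial_y$ and $\mathcal{R}_2$, the amplitude vanishes precisely along the degenerate directions $\{\xi=0\}$ --- this cancellation is what manufactures the extra half-power of decay. By comparison the Littlewood--Paley bookkeeping and the heat-semigroup smoothing are routine.
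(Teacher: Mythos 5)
Your overall architecture is the same as the paper's (dyadic decomposition, rescaling to a unit-annulus amplitude, polar coordinates with $\mathcal{R}_3\mapsto i\sin\theta$, stationary phase, then heat smoothing for $e^{t\mathcal{L}^{\pm}}$), but the way you organize the oscillatory integral contains a genuine gap that breaks the improved $(1+|\tau|)^{-1}$ bound.

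After rescaling, the kernel is $\int_0^{\infty}\!\!\int_{-\pi}^{\pi} e^{ir\rho\cos(\theta-\varphi)+i\tau\sin\theta}\,A(r)\,r\,dr\,d\theta$ (plus a factor $\cos\theta$ for the good components). You fold the two exponentials together into a single phase $ra\cos\theta+(rb+\tau)\sin\theta=R(r)\cos(\theta-\phi(r))$ and then integrate in $\theta$ to ``collapse to a Bessel function.'' This already makes the basic $(1+|\tau|)^{-\f12}$ bound less clean than you indicate, because $R(r)$ does not dominate $|\tau|$ uniformly in $(a,b)$: for $a$ small and $b\approx-\tau$ the modulus $R(r)$ vanishes for some $r$ on the support of $A$, and $|J_0(R)|\lesssim(1+R)^{-\f12}$ alone does not close the estimate; one needs an additional case split in $r$ near $r_{*}=-\tau/b$. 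That part is repairable. The more serious problem is the improved decay. You claim the extra factor $\cos\theta$ upgrades the stationary-phase bound because it vanishes at the critical angles. But the critical angles of the folded phase $R\cos(\theta-\phi)$ are at $\theta=\phi(r)$ and $\theta=\phi(r)+\pi$, which depend on $(a,b,\tau,r)$ and generically are not $\pm\tfrac{\pi}{2}$, so $\cos\theta$ does not vanish there. The parenthetical ``produces $J_1$'' is thus inconsistent with the rest of the sentence: the vanishing of the amplitude at the critical angles holds only if you keep the two phases separate, i.e.\ regard $\tau\sin\theta$ as the oscillatory phase and $e^{ir\rho\cos(\theta-\varphi)}A(r)r$ as part of the amplitude; but then you are no longer producing a Bessel function of $R(r)$.

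The paper's proof avoids this by doing the $r$-integral first, which carries no $\tau$-oscillation at all: it defines $\tilde B=\mathcal F^{-1}(A(r)r^{2-a})\in\mathcal S(\R)$, so that the kernel becomes $\int_{-\pi}^{\pi}e^{i\tau\sin\theta}\cos\theta\,\tilde B(\rho\cos(\theta-\varphi))\,d\theta$. The decisive observation is the exact identity $e^{i\tau\sin\theta}\cos\theta=(i\tau)^{-1}\partial_\theta e^{i\tau\sin\theta}$, which after one integration by parts gives $|\tau|^{-1}\int_{-\pi}^{\pi}|\partial_\theta\tilde B(\rho\cos(\theta-\varphi))|\,d\theta$, and this $L^1_\theta$ norm is bounded uniformly in $\rho,\varphi$ because it equals $2\int_{-\rho}^{\rho}|\tilde B'(z)|\,dz\leq 2\|\tilde B'\|_{L^1(\R)}$. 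That uniform-in-$\rho$ bound is also not addressed in your write-up and is not automatic. You should replace the Bessel-function step with this factorized argument: first integrate $r$, then integrate by parts in $\theta$ via the exact derivative identity; the rest of your proposal (the dyadic bookkeeping, the choice $s>2$ resp.\ $s>3$, and the heat-semigroup absorption of the ``$+$'' into $|\nu t|^{-\delta_1}$) matches the paper and is correct as written.
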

\begin{proof}
The first  two  inequalities follows from Lemmas \ref{App-dis-1} and \ref{App-dis-2}, respectively. 
As $\mathcal{L}^{\pm}=\mp \sqrt{B_{\beta}} \mathcal{R}_3+\nu\Delta ,$ we apply successively the estimates 
(using \eqref{dispes-pj} and \eqref{dispes-pj-pay})
\begin{align*}
&  \|  e^{\nu t\Delta}P_k h\|_{L^{\infty}(\R^2)} \lesssim e^{-c\nu 2^{2k}t}\|h\|_{L^{\infty}(\R^2)}\lesssim |\nu 2^{2k} t|^{-\delta_1}\|h\|_{L^{\infty}(\R^2)}, \, \quad \forall~ t>0,\\
& \left\|e^{\sqrt{B_{\beta}}t \mathcal{R}_3}  {P_k} h\right\|_{L^{\infty}(\R^2)} \lesssim (1+B_{\beta}|t|)^{-\f12}  2^{2 k}\|h\|_{L^{1}(\R^2)},\\
&\left\|e^{\sqrt{B_{\beta}}t \mathcal{R}_3} \partial_y {P_k} h\right\|_{L^{\infty}(\R^2)} \lesssim (1+B_{\beta}|t|)^{-1}  2^{3 k}\|h\|_{L^{1}(\R^2)},\\
&\left\|e^{\sqrt{B_{\beta}}t \mathcal{R}_3}\mathcal{R}_2 {P_k} h\right\|_{L^{\infty}(\R^2)} \lesssim (1+B_{\beta}|t|)^{-1}  2^{2 k}\|h\|_{L^{1}(\R^2)},
\end{align*}
to find that for any $t> 0,$
\begin{align*} 
& \left\|e^{t\mathcal{L}^{\pm}}  {P_k} h\right\|_{L^{\infty}(\R^2)} \lesssim (1+B_{\beta}|t|)^{-\f12} |\nu  t|^{-\delta_1} 2^{(2-2\delta_1) k}\|h\|_{L^{1}(\R^2)},\\
&\left\|e^{t\mathcal{L}^{\pm}} \partial_y {P_k} h\right\|_{L^{\infty}(\R^2)} \lesssim (1+B_{\beta}|t|)^{-1}  |\nu  t|^{-\delta_1} 2^{(3-2\delta_1) k}\|h\|_{L^{1}(\R^2)},\\
&\left\|e^{t\mathcal{L}^{\pm}}\mathcal{R}_2 {P_k} h\right\|_{L^{\infty}(\R^2)} \lesssim (1+B_{\beta}|t|)^{-1}  |\nu  t|^{-\delta_1} 2^{(2-2\delta_1) k}\|h\|_{L^{1}(\R^2)}.
\end{align*}
Summing up in $k$, we conclude the third and fourth inequalities.
\end{proof}
Note that since $\cR_2=\pa_y|\nabla_{y,z}|^{-1},$ 
we observe from the above lemma  that $u_0^3$ and $\pa_y (u_0^1, u_0^2)$ enjoy better dispersive decay than the other quantities such as $(u_0^1, u_0^2).$
By applying 
the Strichartz estimates established in Lemmas \ref{lem-str-1} and \ref{lem-str-2}, we deduce that the following properties:
\begin{lemm}\label{lemm-str}
It holds that
\begin{align}
	&\nu^{\f14}\|(1+\pa_z)\uu_0\|_{L^2_tL^\infty(\R^2)}
	+\nu^{\f34}\|(1+\pa_z)\nabla\uu_0\|_{L^1_tL^\infty(\R^2)}
    \nonumber\\
	\lesssim&~ \nu^{\f{1}{4}-\delta_1}\|(1+\pa_z)\uu_{0in}\|_{W^{2,1}(\R^2)}+\|(1+\pa_z)\PP_0(\uu\cdot\nabla_{\x_1}  \uu)\|_{L^1_tH^{(\f12)^+}(\R^2)}.
	\label{Lipdecay-1}
\end{align}
For the ``good components" $u_0^3$ and $\pa_y\uu_0$, it holds that
\begin{align}
	&\nu^{\delta_1}\|(1+\pa_z)u_0^3\|_{L^2_tW^{1,\infty}(\R^2)}
	+\nu^{\f12+\delta_1}\|(1+\pa_z)\nabla u_0^3\|_{L^1_tW^{1,\infty}(\R^2)}
	\nonumber
	\\
	\lesssim &~
	\|(1+\pa_z)\uu_{0in}\|_{H^{2}(\R^2)}+\|(1+\pa_z)\PP_0(\uu\cdot\nabla_{\x_1}  \uu)\|_{L^1_t H^{2}(\R^2)},
	\label{Lipdecay-2}
    \end{align}
    and
    \begin{align}
   & \nu^{\f12+\delta_1}\|(1+\pa_z)\pa_y \uu_0\|_{L^1_t W^{1,\infty}(\R^2)} \lesssim 
	\|(1+\pa_z)\uu_{0in}\|_{H^{{2}}(\R^2)}+\|(1+\pa_z)\PP_0(\uu\cdot\nabla_{\x_1}  \uu)\|_{L^1_t H^{2}(\R^2)}. \label{Lipdecay-3}
\end{align}

\end{lemm}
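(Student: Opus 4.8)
The plan is to deduce Lemma~\ref{lemm-str} from the Strichartz estimates for the semigroups $e^{t\mathcal{L}^{\pm}}$ established in Lemmas~\ref{lem-str-1} and~\ref{lem-str-2}, applied to the Duhamel representation \eqref{est-Wpm} of $\W^{\pm}$ and combined with the identities \eqref{rep-u01-w01}--\eqref{rep-u023} that express $\uu_0$ through $\W^{\pm}$. The first step is to rewrite the three families of quantities to be controlled in a form adapted to the dispersive bounds of Lemma~\ref{lemm-disp}. Using \eqref{rep-u01-w01}--\eqref{rep-u023} and the commutation relations $\partial_y\mathcal{R}_3=\mathcal{R}_2\partial_z$, $\partial_y\mathcal{R}_2=\mathcal{R}_2\partial_y$, one finds $u_0^1=\f12(\W^{+}+\W^{-})$, $u_0^2=-\f{\beta}{2\sqrt{B_{\beta}}}\mathcal{R}_3(\W^{+}-\W^{-})$, $u_0^3=\f{\beta}{2\sqrt{B_{\beta}}}\mathcal{R}_2(\W^{+}-\W^{-})$, together with $\partial_y u_0^1=\f12\partial_y(\W^{+}+\W^{-})$, $\partial_y u_0^2=-\f{\beta}{2\sqrt{B_{\beta}}}\mathcal{R}_2\partial_z(\W^{+}-\W^{-})$ and $\partial_y u_0^3=\f{\beta}{2\sqrt{B_{\beta}}}\mathcal{R}_2\partial_y(\W^{+}-\W^{-})$. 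Thus each component of $\uu_0$ is a zeroth-order Fourier multiplier applied to $\W^{\pm}$, with the ``good component'' $u_0^3$ carrying an extra factor $\mathcal{R}_2$, and \emph{every} component of $\partial_y\uu_0$ carrying an extra factor $\partial_y$ or $\mathcal{R}_2$ --- precisely the operators for which Lemma~\ref{lemm-disp} upgrades the decay from $(1+\sqrt{B_{\beta}}t)^{-\f12}$ to $(1+\sqrt{B_{\beta}}t)^{-1}$.

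Since $\partial_z$, $\partial_y$, $\mathcal{R}_2$, $\mathcal{R}_3$ and the weight $1+\partial_z$ all commute with $e^{t\mathcal{L}^{\pm}}$, and since $\mathcal{R}_2,\mathcal{R}_3$ are Fourier multipliers of modulus $\le1$ --- hence bounded on every $H^s$, and, once kept attached to a frequency-localized piece of the semigroup rather than isolated, harmless also for the $W^{s,1}\to L^{\infty}$ dispersive bounds --- the linear data map $\uu_{0in}\mapsto\W^{\pm}_{in}$ and the nonlinear map sending the components $\PP_0(\uu\cdot\nabla_{\x_1}u^i)$ to $N^{\pm}$ (recall \eqref{def-Npm}) are bounded on all the function spaces occurring on the right-hand sides of \eqref{Lipdecay-1}--\eqref{Lipdecay-3}; it therefore suffices to control $\|(1+\partial_z)\W^{\pm}_{in}\|$ and $\|(1+\partial_z)\PP_0(\uu\cdot\nabla_{\x_1}\uu)\|$ in the relevant norms. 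For \eqref{Lipdecay-1} I would then apply the ``generic'' Strichartz estimate of Lemma~\ref{lem-str-1}, built from the bound $\|e^{t\mathcal{L}^{\pm}}f\|_{L^{\infty}}\lesssim(1+\sqrt{B_{\beta}}t)^{-\f12}|\nu t|^{-\delta_1}\|f\|_{W^{2,1}}$: the linear term is handled by inserting $f=(1+\partial_z)\W^{\pm}_{in}$ into this bound and integrating in time --- here the auxiliary factor $|\nu t|^{-\delta_1}$ is exactly what renders the endpoint integrals $\int_0^{\infty}(1+t)^{-1}|\nu t|^{-2\delta_1}\,dt$ (for $L^2_tL^{\infty}$) and the corresponding heat integral (for $\nabla L^1_tL^{\infty}$, where the dissipation $e^{\nu t\Delta}$ absorbs the extra derivative) convergent --- while the Duhamel term is handled by Minkowski's inequality in time together with the homogeneous estimate for $L^2$-based data; multiplying through by $\nu^{\f14}$ and $\nu^{\f34}$ yields \eqref{Lipdecay-1}.

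For \eqref{Lipdecay-2} and \eqref{Lipdecay-3} I would instead invoke the improved Strichartz estimate of Lemma~\ref{lem-str-2}, built from $\|e^{t\mathcal{L}^{\pm}}(\partial_y f,\mathcal{R}_2 f)\|_{L^{\infty}}\lesssim(1+\sqrt{B_{\beta}}t)^{-1}|\nu t|^{-\delta_1}\|f\|_{W^{3,1}}$. Applied to $u_0^3=\f{\beta}{2\sqrt{B_{\beta}}}\mathcal{R}_2(\W^{+}-\W^{-})$ (absorbing one further $\nabla$ for the $W^{1,\infty}$ norm into the heat factor), it produces the bounds for $\|(1+\partial_z)u_0^3\|_{L^2_tW^{1,\infty}}$ and $\|(1+\partial_z)\nabla u_0^3\|_{L^1_tW^{1,\infty}}$; applied to each of $\partial_y(1+\partial_z)\W^{\pm}$, $\mathcal{R}_2\partial_z(1+\partial_z)\W^{\pm}$ and $\mathcal{R}_2\partial_y(1+\partial_z)\W^{\pm}$, which all fall in the ``good derivative'' case, it produces the bound for $\|(1+\partial_z)\partial_y\uu_0\|_{L^1_tW^{1,\infty}}$. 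In each case the Sobolev embedding $H^{2^+}(\R^2)\hookrightarrow W^{1,\infty}(\R^2)$, together with the count of derivatives, converts the dispersive/$W^{s,1}$ input into the $H^2$ norms of the initial data and of the nonlinear source appearing in \eqref{Lipdecay-2}--\eqref{Lipdecay-3}, with the stated powers of $\nu$.

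The main point requiring care is not any individual estimate but the organization: one must split $\uu_0$, $\nabla\uu_0$ and $\partial_y\uu_0$ into the pieces governed by the generic $(1+t)^{-\f12}$ decay and those that qualify for the improved $(1+t)^{-1}$ decay, and then verify that the number of derivatives together with the Sobolev and Bernstein inequalities on $\R^2$ reproduce \emph{exactly} the regularity exponents ($H^{(\f12)^+}$ in \eqref{Lipdecay-1}, $H^2$ in \eqref{Lipdecay-2}--\eqref{Lipdecay-3}) and the powers of $\nu$ in the statement --- any slack there would force a larger threshold $\alpha$ downstream (cf.\ the discussion in Section~\ref{sec-strategy} and Lemma~\ref{sec4:lemm-u0}). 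The genuinely heavy analytic input --- the Strichartz estimates carrying these $\nu$-weights, in particular the endpoint $L^2_tL^{\infty}$ bound, which is not available from the $(1+t)^{-\f12}$ decay alone but relies on combining it with the dissipation $e^{\nu t\Delta}$ and the factor $|\nu t|^{-\delta_1}$ --- is the content of Lemmas~\ref{lem-str-1}--\ref{lem-str-2}, established in Appendix~\ref{App-DS-est}, and is here taken as given.
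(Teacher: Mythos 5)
Your proposal is correct and follows essentially the same route as the paper: pass to $\W^{\pm}$ via the Duhamel formula \eqref{est-Wpm}, use the identities \eqref{rep-u01-w01}--\eqref{rep-u023} to pull out the factors $\mathcal{R}_2$, $\partial_y$ that trigger the improved decay of Lemma~\ref{lemm-disp}, and then invoke the Strichartz bounds of Lemmas~\ref{lem-str-1}--\ref{lem-str-2} (the homogeneous piece via the dispersive estimate \eqref{dispes-pj}, the Duhamel piece via Minkowski together with \eqref{str-2}, \eqref{str-4}, \eqref{str-5}). The only small imprecision is that the $L^1_t L^\infty$ bound for $\nabla\uu_0$ in \eqref{Lipdecay-1} does not come from inserting the stated $|\nu t|^{-\delta_1}$ dispersive estimate into the $L^1_t$ integral directly (that integral diverges at infinity), but requires the frequency-localized version where the extra derivative and the heat factor $e^{-c\nu2^{2j}t}$ are balanced on each dyadic block as in the paper's computation before \eqref{est-win-2}; you signal awareness of this via the ``attached to a frequency-localized piece'' remark, so the plan is sound.
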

\begin{proof}
In view of the dispersive estimate \eqref{dispes-pj}, it holds that (for $\delta_1>0 $ small enough and $b=0,1$)
\begin{align*}
	\|e^{t\mathcal{L}^{\pm}} \chi_j(D)\W^{\pm}_{in}\|_{L^{\f{2}{1+b}}_tL^\infty}&\lesssim 2^{2j}\|e^{-c2^{2j}\nu t} (1+t)^{-\f12}\|_{L^\f{2}{1+b}(\R_{+})} \|\chi_j(D)\W^{\pm}_{in}\|_{L^{1}}\\
    &\lesssim \nu^{-\delta_1-b/2}\, 2^{2j(1-\delta_1)-bj}\|\chi_j(D)\uu_{0in}\|_{L^{1}},
\end{align*}
By summing the above estimates over $j\in\Z$, we  obtain that
\begin{align}\label{est-win-2}
	\|e^{t\mathcal{L}^{\pm}} \W^{\pm}_{in}\|_{L^2_tL^\infty}+ \nu^{\f12}\|e^{t\mathcal{L}^{\pm}} \W^{\pm}_{in}\|_{L^1_tL^\infty} \lesssim    \nu^{-\delta_1}
	\|\uu_{0in}\|_{W^{2,1}}.
\end{align}
Applying Minkowski inequality, Fubini's Theorem and \eqref{str-2}, we have, for $b=0,1$
\begin{align*}
    \big\|\int_0^t  |\nabla|^b e^{(t-\tau)\mathcal{L}^{\pm}} N^{\pm} (\tau)\, d\tau \big\|_{L^{\f{2}{1+b}}_t(\R_{+};L^\infty)}
   & \lesssim 
    \int_{\R_{+}}\|\mathbb{I}_{\{t>\tau \}} e^{(t-\tau)\mathcal{L}^{\pm}} N^{\pm} (\tau)\|_{L^{\f{2}{1+b}}_t(\R_{+};L^\infty)} \, d\tau \\
&\lesssim \nu^{-(\f14+\f{b}{2})} \|N^{\pm}\|_{L^1_tH^{(\f12)^+}}.
\end{align*}
In view of \eqref{est-Wpm}, the above two estimates yield that
\begin{align*}
\|\W^{\pm}\|_{L_t^2L^{\infty}}+\nu^{\f12}\|\nabla \W^{\pm}\|_{L_t^1L^{\infty}}
    \lesssim 
    \nu^{-\delta_1}
	\|\uu_{0in}\|_{W^{2,1}}
    +\nu^{-\f14} \|N^{\pm}\|_{L^1_tH^{(\f12)^+}}.
\end{align*}
It then follows from \eqref{rep-u01-w01}, the definition of $N^{\pm}$ in \eqref{def-Npm}
that
\begin{align}\label{est-u01w01-1}
	\nu^{\f14}
	\|(u_0^1,\up_0)\|_{L^2_tL^\infty} +\nu^{\f34}\|\nabla (u_0^1,\up_0)\|_{L_t^1L^{\infty}}\lesssim  \nu^{\f14-\delta_1}\|\uu_{0in}\|_{W^{2,1}}+\|\PP_0(\uu\cdot\nabla_{\x_1}  \uu)\|_{L^1_tH^{(\f12)^+}}.
\end{align}
The estimate of $\pa_z(u_0^1,\up_0)$ can be proved along the same line, we thus finish the proof of \eqref{Lipdecay-1}. 

Thanks to the fact $u_0^3=\f{\beta}{\sqrt{B_{\beta}}} \pa_y|\nabla_{y,z}|^{-1}v_0,$ we can apply \eqref{str-4}, \eqref{str-5} and Minkowski inequality
to obtain \eqref{Lipdecay-2} and \eqref{Lipdecay-3}.


\end{proof}

\subsection{Proof of Proposition \ref{lemm-1}}
This subsection is devoted to the proof of Proposition \ref{lemm-1}. 
Building on Strichartz type estimates for the velocity established in Lemma \ref{lemm-str}, 
we first  establish the following lemma concerning the $L^2_tL^{\infty}$-norm of  ``good components" $u_{0}^3$ and $\pa_y\uu_0:$

\begin{lemm}\label{sec4:lemm-u03}
    Under the assumptions in Proposition \ref{lemm-1}, it holds that
    \begin{align}\label{est-u3-0}
        &\nu^{\delta_1}\|(1+\pa_z)u_0^3\|_{L^2_tW^{1,\infty}}
        +\nu^{\f12+\delta_1}\big(\|(1+\pa_z)\nabla u_0^3\|_{L^1_tW^{1,\infty}}
        +\|(1+\pa_z)\pa_y \uu_0\|_{L^1_tW^{1,\infty}}\big)
        \nonumber
        \\
        \lesssim &~\var_0+\ep_1\nu^{\alpha-\f12} \|(1+\pa_z)\uu_0\|_{L^2_tL^{\infty}}+\ep_1^2\nu^{2\al-\f13}.
    \end{align}
\end{lemm}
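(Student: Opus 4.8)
The plan is to feed a nonlinear estimate into the Strichartz bounds \eqref{Lipdecay-2}--\eqref{Lipdecay-3} of Lemma \ref{lemm-str}. Denoting by $\Lambda$ the left-hand side of \eqref{est-u3-0} and using $\|(1+\pa_z)\uu_{0in}\|_{H^2(\R^2)}\leq\var_0$ from \eqref{initial data-assume}, it suffices to prove
\begin{align*}
\|(1+\pa_z)\PP_0(\uu\cdot\nabla_{\x_1}\uu)\|_{L^1_tH^2(\R^2)}
\lesssim \var_0+\ep_1\nu^{\al-\f12}\|(1+\pa_z)\uu_0\|_{L^2_tL^\infty}
+\ep_1^2\nu^{2\al-\f13}+\ep_1\nu^{\gamma}\Lambda
\end{align*}
for some $\gamma>0$, the last term being absorbed into the left-hand side of \eqref{est-u3-0} since $\ep_1$ is small and $\nu^{\gamma}\leq1$. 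First I would note that the zero/non-zero cross interactions vanish under $\PP_0$, because $\PP_0\pa_{x_1}=0$ and $\PP_0$ commutes with $\pa_y,\pa_z$; hence $\PP_0(\uu\cdot\nabla_{\x_1}\uu)=u_0^2\pa_y\uu_0+u_0^3\pa_z\uu_0+\PP_0(\uu_{\not=}\cdot\nabla_{\x_1}\uu_{\not=})$. The weight $1+\pa_z$ and the two derivatives of the $H^2$-norm are then distributed by the Leibniz rule.

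For the zero--zero transport term $u_0^2\pa_y\uu_0+u_0^3\pa_z\uu_0$ the decisive structural point is that zero modes carry no $x_1$-derivative, so the transport derivative always lands on $\uu_0$ in the $y$- or $z$-direction. In each product produced by the Leibniz rule I pair the factor with fewer derivatives against the one with more: the former is controlled through a dispersive norm from Lemma \ref{lemm-str} --- namely $\|(1+\pa_z)\uu_0\|_{L^2_tL^\infty}$ or the ``good-component'' norms $\|(1+\pa_z)u_0^3\|_{L^2_tW^{1,\infty}}$ and $\|(1+\pa_z)\pa_y\uu_0\|_{L^1_tW^{1,\infty}}$, whose $(1+t)^{-1/2}$ and $(1+t)^{-1}$ decay (combined with the $\nu$-dependent gains in Lemma \ref{lemm-str}) supply the required $L^2_t$-, resp. $L^1_t$-integrability --- while the latter is controlled through the energy/dissipation norms $\|(1+\pa_z)\uu_0\|_{L^\infty_tH^4}\lesssim\ep_1\nu^{\al}$ and $\|(1+\pa_z)\nabla\uu_0\|_{L^2_tH^4}\lesssim\ep_1\nu^{\al-\f12}$ from \eqref{priori assumption}. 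This produces exactly three kinds of terms: $\ep_1\nu^{\al-\f12}\|(1+\pa_z)\uu_0\|_{L^2_tL^\infty}$ when $\|(1+\pa_z)\uu_0\|_{L^2_tL^\infty}$ is the dispersive factor; terms of the form $\ep_1\nu^{\gamma}\Lambda$ with $\gamma=\al-\f12-\delta_1>0$ whenever a good-component norm appears (to be absorbed); and lower-order contributions bounded by $\ep_1^2\nu^{2\al-\f12}\ll\ep_1^2\nu^{2\al-\f13}$. Where a factor such as $\pa^2u_0^3$ or $\pa^2\uu_0$ would force a $W^{2,\infty}$-bound that is unavailable, I instead move a derivative onto the companion factor and use that $\pa_y^2\uu_0$ and $\pa_y\pa_z\uu_0$ are $\pa_y$ (resp.\ $\pa_z$) of good components, hence still in $L^1_tL^\infty$.

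For the non-zero--non-zero term, since $\PP_0=\frac{1}{2\pi}\int_{\T}$, Jensen's inequality gives $\|\PP_0 g\|_{H^2(\R^2)}\leq\|g\|_{H^2(\T\times\R^2)}$, so it suffices to bound $\|(1+\pa_z)(\uu_{\not=}\cdot\nabla_{\x_1}\uu_{\not=})\|_{L^1_tH^2(\T\times\R^2)}$. Splitting derivatives and using the three-dimensional Sobolev product law, this is $\lesssim\|(1+\pa_z)\uu_{\not=}\|_{L^\infty_tH^3}\|(1+\pa_z)\uu_{\not=}\|_{L^1_tH^3}$; the enhanced-dissipation factor $e^{-\delta_2\nu^{1/3}t}$ carried by $\A$ turns the $L^1_t$-norm into $\nu^{-1/3}$ times the $L^\infty_t$-norm, and Lemma \ref{3lem-key} with \eqref{priori assumption} (together with the frequency-support bound $\|\uu_{\not=}\|\lesssim\|\pa_{x_1}\uu_{\not=}\|$ on $\T\times\R^2$) yields $\|(1+\pa_z)\uu_{\not=}\|_{L^\infty_tH^3}\lesssim\ep_1\nu^{\al}$. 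Hence this piece is $\lesssim\nu^{-1/3}(\ep_1\nu^{\al})^2=\ep_1^2\nu^{2\al-\f13}$. Collecting the three contributions, inserting into \eqref{Lipdecay-2}--\eqref{Lipdecay-3} and absorbing $\ep_1\nu^{\gamma}\Lambda$ finishes the proof.

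The step I expect to be the main obstacle is the bookkeeping for the zero--zero term: one must check, product by product, that the limited regularity supplied by Lemma \ref{lemm-str} (one $L^\infty$-derivative, and the improved $(1+t)^{-1}$ rate only for $u_0^3$ and $\pa_y\uu_0$) is always matched by an energy- or dissipation-controlled factor on the complementary side, that the resulting time-integrability genuinely closes in $L^1_t$, and that the anisotropic weight $1+\pa_z$ is routed so that every leftover term carries a positive power of $\nu$ and can be absorbed. The non-zero part is comparatively routine once Lemma \ref{3lem-key} is available.
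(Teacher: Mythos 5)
Your overall scheme --- feeding a nonlinear $L^1_tH^2$-estimate into \eqref{Lipdecay-2}--\eqref{Lipdecay-3} after splitting $\PP_0(\uu\cdot\nabla_{\x_1}\uu)$ into $(0,0)$- and $(\not=,\not=)$-interactions --- matches the paper's. For the $(0,0)$-piece, however, you overcomplicate matters: the paper simply pairs $\|(1+\pa_z)\uu_0\|_{L^2_tL^\infty}\|(1+\pa_z)\nabla\uu_0\|_{L^2_tH^2}\lesssim\ep_1\nu^{\alpha-\frac12}\|(1+\pa_z)\uu_0\|_{L^2_tL^\infty}$, which already is the claimed right-hand side with no good-component norms and no absorption of $\Lambda$. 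Your Leibniz-and-absorb scheme, using $\|(1+\pa_z)u_0^3\|_{L^2_tW^{1,\infty}}$ and $\|(1+\pa_z)\pa_y\uu_0\|_{L^1_tW^{1,\infty}}$ against energy factors and absorbing $\ep_1\nu^{\alpha-\frac12-\delta_1}\Lambda$, is precisely the argument the paper performs later in Lemma~\ref{sec4:lemm-u0}, where it is forced by the weaker $H^1$-right-hand side of \eqref{Lipdecay-1}; in the present lemma it is unnecessary (though not incorrect).

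The genuine gap is in the $(\not=,\not=)$-piece, where you write $\|(1+\pa_z)(\uu_{\not=}\cdot\nabla_{\x_1}\uu_{\not=})\|_{L^1_tH^2}\lesssim\|(1+\pa_z)\uu_{\not=}\|_{L^\infty_tH^3}\|(1+\pa_z)\uu_{\not=}\|_{L^1_tH^3}$ and assert $\|(1+\pa_z)\uu_{\not=}\|_{L^\infty_tH^3}\lesssim\ep_1\nu^\alpha$. That assertion is not available from the \textit{a priori} assumption. What the energy controls, via $\E_{\not=}$ and Lemma~\ref{3lem-key}, are good-derivative ($\HH^s$) norms, i.e.\ ordinary Sobolev norms of $\U_{\not=}$ in the moving frame. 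The ordinary $H^3(\T\times\R^2)$-norm of $\uu_{\not=}(t)$ in the original variables corresponds to $(\pa_x,\tilde{\pa}_y,\pa_z)$-derivatives of $\U_{\not=}$, and $\tilde{\pa}_y=\pa_y-t\pa_x$ makes that quantity grow polynomially in $t$; even after fighting $e^{-\delta_2\nu^{1/3}t}$ the cost is a negative power of $\nu$ beyond $\ep_1\nu^\alpha$, which ruins your final bound. If you instead mean the good-derivative $\HH^3$-norm, then the product law fails: in the moving frame $\nabla_{\x_1}=\widetilde{\nabla}$ contains the bad derivative $\tilde{\pa}_y$, so $\|(1+\pa_z)\nabla_{\x_1}\uu_{\not=}\|_{\HH^2}$ is not bounded by $\|(1+\pa_z)\uu_{\not=}\|_{\HH^3}$. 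The paper avoids this by decomposing $\U_{\not=}\cdot\widetilde{\nabla}\U_{\not=}=\sum_{j=1,3}U^j_{\not=}\pa_j\U_{\not=}+U^2_{\not=}\tilde{\pa}_y\U_{\not=}$ so that exactly one bad derivative appears, then neutralizing it with the inviscid damping bound $|\mathcal{F}[(1+\pa_z)U^2_{\not=}]|\lesssim\langle t\rangle^{-1}|\mathcal{F}[\nabla\A Q_{\not=}]|$ (cf.~\eqref{est-U2-1} and \eqref{est-nonq-3}). That cancellation is essential to land on $\ep_1^2\nu^{2\alpha-\frac13}$; the crude product law misses it.
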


\begin{proof}
  Since only $(0,0)$ and $(\not=,\not=)$ interactions can force zero modes, 
  we can deduce from the 
    estimates \eqref{Lipdecay-2}-\eqref{Lipdecay-3} that
\begin{align}\label{est-u3-1}
	&\nu^{\delta_1}\|(1+\pa_z)u_0^3\|_{L^2_t W^{1,\infty}}
	+\nu^{\f12+\delta_1}
    \|(1+\pa_z)(\nabla u_0^3,\pa_y \uu_0)\|_{L^1_t W^{1,\infty}}
	\nonumber
	\\
	\lesssim &
	\|(1+\pa_z)\uu_{0in}\|_{H^{2}}
    +\|(1+\pa_z)(\uu_0\cdot\nabla\uu_0)\|_{L^1_tH^{2}}
    +\|(1+\pa_z)\PP_0(\uu_{\not=}\cdot\nabla_{\x_1}\uu_{\not=})\|_{L^1_tH^{2}}.
\end{align}
Thanks to the
\textit{a priori} assumption \eqref{priori assumption} and the product estimate
\begin{align*}
   \| \nabla^{\gamma_1}f\,\nabla^{\gamma_2}g\|_{L^2(\R^2)}\lesssim \|f\|_{L^{\infty}(\R^2)}\|g\|_{\dot{H}^{|\gamma_1|+|\gamma_2|}}
   +\|g\|_{L^{\infty}(\R^2)}\|f\|_{\dot{H}^{|\gamma_1|+|\gamma_2|}(\R^2)}, \quad (\gamma_1,\gamma_2\in \mathbb{N}^2),
\end{align*}
 it holds that
\begin{align}\label{est-non0-1}
    \|(1+\pa_z)(\uu_0\cdot\nabla\uu_0)\|_{L^1_tH^{2}}
    \lesssim \|(1+\pa_z)\uu_0\|_{L^2_tL^{\infty}}
     \|(1+\pa_z)\nabla\uu_0\|_{L^2_tH^{2}}
    \lesssim \ep_1\nu^{\alpha-\f12} \|(1+\pa_z)\uu_0\|_{L^2_tL^{\infty}}.
\end{align}
For the other nonlinear term, a direct calculation gives that
\begin{align}\label{est-nonq-1}
    \|(1+\pa_z)\PP_0(\uu_{\not=}\cdot\nabla\uu_{\not=})\|_{L^1_tH^{2}}
    \lesssim 
    \sum_{j=1,3}\|(1+\pa_z)(U_{\not=}^j\pa_j\U_{\not=})\|_{L^1_tH^{2}}
    +\|(1+\pa_z)(U_{\not=}^2\tilde{\pa}_y\U_{\not=})\|_{L^1_tH^{2}}.
\end{align}
By using Lemma \ref{3lem-key}, we obtain that
\begin{align}\label{est-nonq-2}
    \sum_{j=1,3}\|(1+\pa_z)(U_{\not=}^j\pa_j\U_{\not=})\|_{L^1_tH^{2}}
    &\lesssim 
    \sum_{j=1,3}\|(1+\pa_z)U_{\not=}^j\|_{L^2_tH^{2}}
    \|(1+\pa_z)\pa_j\U_{\not=}\|_{L^2_tH^{2}}
    \nonumber
    \\
    &\lesssim 
    \nu^{-\f13} \int_0^t\D_{\not=}(\tau)\,d\tau
	\lesssim  \ep_1^2\nu^{2\al-\f13}.
\end{align}
We compute that
\begin{align}\label{est-U2-1}
	|\mathcal{F}[(1+\pa_z)U^2_{\not=}]|
	\lesssim |\mathcal{F}[|\widetilde{\nabla}|^{-1}\A Q_{\not=}]|
	\lesssim \langle t\rangle^{-1}  |\mathcal{F}[\nabla\A Q_{\not=}]|.
\end{align}
This, along with 
\eqref{est-varphi-t} and Lemma \ref{3lem-key}, yields that
\begin{align}\label{est-nonq-3}
    \|(1+\pa_z)(U_{\not=}^2\tilde{\pa}_y\U_{\not=})\|_{L^1_tH^{2}}
    &\lesssim
    \int_0^t\|(1+\pa_z)U_{\not=}^2(\tau)\|_{H^{2}}
    \|(1+\pa_z)\tilde{\pa}_y\U_{\not=}(\tau)\|_{H^{2}}\,d\tau
    \nonumber
    \\
    &\lesssim 
    \int_0^t\langle \tau \rangle^{-1}\|\A Q_{\not=}(\tau)\|_{H^{3}}
    \|(Q_{\not=},W_{\not=})(\tau)\|_{H^{2}}\,d\tau
    \nonumber
    \\
    &\lesssim \|\A Q_{\not=}\|_{L^2_tH^{3}}
    \|\A(Q_{\not=},W_{\not=})\|_{L^2_tH^{2}}
    \lesssim \ep_1^2 \nu^{2\al-\f13}.
\end{align}
Substituting \eqref{est-nonq-2} and \eqref{est-nonq-3} into \eqref{est-nonq-1}, one obtains that
\begin{align}\label{est-nonq-4}
    \|(1+\pa_z)\PP_0(\uu_{\not=}\cdot\nabla\uu_{\not=})\|_{L^1_tH^{3}}
    \lesssim \ep_1^2 \nu^{2\al-\f13}.
\end{align}
Inserting \eqref{est-non0-1} and \eqref{est-nonq-4} into \eqref{est-u3-1}, we deduce \eqref{est-u3-0} immediately.
\end{proof}

Once the $L^2_tW^{1,\infty}$ estimate for the good components has been established in Lemma \ref{sec4:lemm-u03}, we establish the $L^2_tL^\infty$ estimates for the velocity $\uu_0$.
\begin{lemm}\label{sec4:lemm-u0}
    Under the assumptions in Proposition \ref{lemm-1}, it holds that
    \begin{align}\label{est-u-0}
        \|(1+\pa_z)\uu_0\|_{L^2_tL^{\infty}}
        +\nu^{\f12}\|(1+\pa_z)\nabla \uu_0\|_{L^1_tL^\infty}
        \lesssim &~\nu^{-\f16}\var_0+\ep_1^2 \nu^{\f12}.
    \end{align}
\end{lemm}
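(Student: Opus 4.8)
The plan is to combine the Strichartz estimate \eqref{Lipdecay-1} from Lemma \ref{lemm-str} with the improved $L^2_tW^{1,\infty}$ bounds on the ``good components'' $u_0^3$ and $\pa_y\uu_0$ from Lemma \ref{sec4:lemm-u03}, together with a bootstrap/absorption argument in $\|(1+\pa_z)\uu_0\|_{L^2_tL^\infty}$. First I would apply \eqref{Lipdecay-1}: since only $(0,0)$ and $(\neq,\neq)$ interactions force the zero modes, it gives
\begin{align*}
\nu^{\f14}\|(1+\pa_z)\uu_0\|_{L^2_tL^\infty}+\nu^{\f34}\|(1+\pa_z)\nabla\uu_0\|_{L^1_tL^\infty}
\lesssim\ \nu^{\f14-\delta_1}\var_0+\|(1+\pa_z)(\uu_0\cdot\nabla\uu_0)\|_{L^1_tH^{(\f12)^+}}+\|(1+\pa_z)\PP_0(\uu_{\neq}\cdot\nabla_{\x_1}\uu_{\neq})\|_{L^1_tH^{(\f12)^+}}.
\end{align*}
The nonzero-mode contribution is handled exactly as in \eqref{est-nonq-1}–\eqref{est-nonq-4} (using Lemma \ref{3lem-key}, the inviscid-damping bound \eqref{est-U2-1} for $U_{\neq}^2$, and \eqref{est-varphi-t}), yielding a bound of order $\ep_1^2\nu^{2\al-\f13}$.

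The crucial point is the transport term $\uu_0\cdot\nabla\uu_0=u_0^2\pa_y\uu_0+u_0^3\pa_z\uu_0$, which I would \emph{not} estimate by putting everything in $L^2_tL^\infty\times L^2_tH^{\cdot}$ (that forces $\al\ge 1$), but rather by exploiting the structure: for the piece $u_0^3\pa_z\uu_0$ I put $u_0^3$ in $L^2_tW^{1,\infty}$ (controlled by Lemma \ref{sec4:lemm-u03}) and $\pa_z\uu_0$ in $L^2_tH^{1^+}\subset L^2_tH^5$ (controlled by $\nu^{-\f12}$ times the square root of $\int_0^t\D_0$, hence by $\ep_1\nu^{\al-\f12}$); for the piece $u_0^2\pa_y\uu_0$ I put $\pa_y\uu_0$ in $L^1_tW^{1,\infty}$ (again Lemma \ref{sec4:lemm-u03}) and $u_0^2$ in $L^\infty_tH^{1^+}$ (controlled by $\ep_1\nu^\al$). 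Using the product estimate from Lemma \ref{sec4:lemm-u03}, this gives
\begin{align*}
\|(1+\pa_z)(\uu_0\cdot\nabla\uu_0)\|_{L^1_tH^{(\f12)^+}}
\lesssim \nu^{-\delta_1}\big(\|(1+\pa_z)u_0^3\|_{L^2_tW^{1,\infty}}\,\ep_1\nu^{\al-\f12}+\ep_1\nu^{\al}\,\|(1+\pa_z)\pa_y\uu_0\|_{L^1_tW^{1,\infty}}\big),
\end{align*}
and then substituting the bound \eqref{est-u3-0} (which itself contains the term $\ep_1\nu^{\al-\f12}\|(1+\pa_z)\uu_0\|_{L^2_tL^\infty}$) produces, after multiplying through by $\nu^{-\f14}$,
\begin{align*}
\|(1+\pa_z)\uu_0\|_{L^2_tL^\infty}+\nu^{\f12}\|(1+\pa_z)\nabla\uu_0\|_{L^1_tL^\infty}
\lesssim \nu^{-\f16}\var_0+\nu^{-\f14}\ep_1\nu^{\al-\f12-\delta_1}\big(\var_0+\ep_1\nu^{\al-\f12}\|(1+\pa_z)\uu_0\|_{L^2_tL^\infty}+\ep_1^2\nu^{2\al-\f13}\big)+\ep_1^2\nu^{\f12}.
\end{align*}

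Finally I would close the estimate by absorption: the coefficient of $\|(1+\pa_z)\uu_0\|_{L^2_tL^\infty}$ on the right is $\lesssim \ep_1^2\nu^{2\al-\f54-\delta_1}$, which for $\al>\f58$ and $\delta_1$ small is $\lesssim \ep_1^2\ll 1$, so that term is absorbed into the left side. The remaining terms are $\lesssim\nu^{-\f16}\var_0+\ep_1^2\nu^{\al-\f34-\delta_1}\var_0+\ep_1^3\nu^{3\al-\f{19}{12}-\delta_1}+\ep_1^2\nu^{\f12}$; using $\var_0\le\ep_0\nu^\al$, $\al>\f23$, and $\ep_1,\ep_0$ small, every term is dominated by $\nu^{-\f16}\var_0+\ep_1^2\nu^{\f12}$, giving \eqref{est-u-0}. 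The main obstacle is the self-referential structure of the estimate: $\|(1+\pa_z)\uu_0\|_{L^2_tL^\infty}$ appears on both sides (directly via the transport term and indirectly through Lemma \ref{sec4:lemm-u03}), so one must carefully track the power of $\nu$ in front of it and verify it is a genuinely small quantity in the regime $\al>\f23$ before absorbing; a secondary subtlety is choosing the intermediate Sobolev exponent $(\f12)^+$ (and the corresponding $H^2$ in Lemma \ref{sec4:lemm-u03}) so that all product estimates and the a priori bound \eqref{priori assumption} on $\E_0,\D_0$ at regularity $H^5$ close consistently.
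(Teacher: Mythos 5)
Your proposal follows exactly the paper's route: apply the Strichartz estimate \eqref{Lipdecay-1}, split the zero-mode transport term as $u_0^2\pa_y\uu_0+u_0^3\pa_z\uu_0$, plug in the good-component bounds from Lemma \ref{sec4:lemm-u03}, estimate the $(\neq,\neq)$ contribution via \eqref{est-nonq-4}, and close by absorbing the self-referential $\|(1+\pa_z)\uu_0\|_{L^2_tL^\infty}$ term (the paper uses the intermediate space $H^1$ in \eqref{est-u3-2} rather than $H^{(1/2)^+}$, but that is inconsequential). Note a minor arithmetic slip in the power count: the cubic term's exponent should be $3\al-\tfrac{13}{12}-\delta_1$, not $3\al-\tfrac{19}{12}-\delta_1$ (and the $\var_0$-term coefficient is $\ep_1$, not $\ep_1^2$); with your numbers the final bound $\lesssim\ep_1^2\nu^{1/2}$ would only close for $\al\geq\tfrac{25}{36}$, whereas the correct exponent closes for the full range $\al>\tfrac23$.
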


\begin{proof}
By the estimate \eqref{Lipdecay-1}, it holds that
\begin{align}\label{est-u3-2}
	&\nu^{\f14}\|(1+\pa_z)\uu_0\|_{L^2_tL^\infty}
	+\nu^{\f34}\|(1+\pa_z)\nabla\uu_0\|_{L^1_tL^\infty}
    \nonumber
    \\
	\lesssim&~ \nu^{\f{1}{4}-\delta_1}\|(1+\pa_z)\uu_{0in}\|_{W^{2,1}}
    +\|(1+\pa_z)(\uu_0\cdot\nabla\uu_0)\|_{L^1_tH^{1}}
    +\|(1+\pa_z)\PP_0(\uu_{\not=}\cdot\nabla\uu_{\not=})\|_{L^1_tH^{1}}.
\end{align}
To obtain a refined estimate, we make use 
of the exact expression of the transport terms and the improved estimates for 
``good components''.
More precisely, we can deduce from the \textit{a priori} assumption \eqref{priori assumption} and \eqref{est-u3-0} that
\begin{align*}
    &\|(1+\pa_z)(\uu_0\cdot\nabla\uu_0)\|_{L^1_tH^{1}}
    \nonumber
    \\
    \lesssim 
    &~
    \|(1+\pa_z)u^3_0\|_{L^2_tW^{1,\infty}}
    \|(1+\pa_z)\pa_z \uu_0\|_{L^2_tH^{1}}
      +\|(1+\pa_z)u^2_0\|_{L^\infty_tH^{1}}
    \|(1+\pa_z)\pa_y \uu_0\|_{L^1_tW^{1,\infty}}
    \nonumber
    \\
    \lesssim
    &~
    \ep_1\nu^{\al-\f12-\delta_1}\big(\var_0+\ep_1\nu^{\alpha-\f12} \|(1+\pa_z)\uu_0\|_{L^2_tL^{\infty}}+\ep_1^2\nu^{2\al-\f13}\big).
\end{align*}
Substituting this estimate above into \eqref{est-u3-2} and using \eqref{est-nonq-4}, we conclude that
\begin{align*}
    &\nu^{\f14}\|(1+\pa_z)\uu_0\|_{L^2_tL^\infty}
	+\nu^{\f34}\|(1+\pa_z)\nabla\uu_0\|_{L^1_tL^\infty}
    \nonumber
    \\
	\lesssim&~ \nu^{\f{1}{4}-\delta_1}\var_0
    +\ep_1\nu^{\al-\f12-\delta_1}\big(\var_0+\ep_1\nu^{\alpha-\f12} \|(1+\pa_z)\uu_0\|_{L^2_tL^{\infty}}+\ep_1^2\nu^{2\al-\f13}\big)
    +\ep_1^2 \nu^{2\al-\f13}.
\end{align*}
Since $\al>\f23$ on the domain $\T\times\R^2$, one gets, by choosing $\ep_1,\delta_1$ suitably small, that \eqref{est-u-0} holds true.

\end{proof}

With the $L^2_tL^{\infty}$ estimate for $\uu_0$ at hands, we are now in position to prove Proposition \ref{lemm-1}.

\begin{proof}
[\underline{\textbf{Proof of Proposition \ref{lemm-1}}}] 
We will divide this proof into two steps.

\medskip

\noindent\textbf{Step 1:} 
For the lower order derivatives of the velocity, we aim to show that
    \begin{align}\label{est-u0-H4}
        \|\uu_0\|_{L^\infty_tH^{4}}^2
        +\nu \|\nabla \uu_0\|_{L^2_tH^{4}}^2
        \lesssim \var_0^2 +\ep_1^3\nu^{2\al}+\ep_1^3\nu^{3\al-\f12}.
\end{align}
Applying standard energy estimate to system \eqref{1-eq2}, we find that for any integer $0\leq s\leq 4$,
\begin{align}
	&\f12\f{d}{dt}
	\Big(
	\f{\beta}{\beta-1}\|\nabla^su_0^1\|_{L^2}^2+\|\nabla^s(u_0^2,u_0^3)\|_{L^2}^2
	\Big)
	+
	\nu\Big(\f{\beta}{\beta-1}\|\nabla^{s+1}u_0^1\|_{L^2}^2+\|\nabla^{s+1}(u_0^2,u_0^3)\|_{L^2}^2
	\Big)
	\nonumber\\
	=&-\int_{\R^2}	\Big(\f{\beta}{\beta-1}\PP_0\nabla^s(\uu\cdot\nabla_{\x_1} u^1)\cdot \nabla^su_0^1+\sum_{j=2,3}\PP_0\nabla^s(\uu\cdot\nabla_{\x_1} u^j)\cdot \nabla^su_0^j 
	\Big)\,dydz
	\tri  G_s.
	\label{energy-0-1}
\end{align}
For $s=0$, by using integration by parts and the divergence free condition, we have
\begin{align*}
	|G_0|\lesssim 
    \| \PP_0(\uu_{\not=}\cdot\nabla_{\x_1}\uu_{\not=})\|_{L^1_tL^2}\|\uu_0\|_{L^\infty_tL^2}.
\end{align*}
The method used to derive \eqref{est-nonq-4} likewise yields that
\begin{align*}
	\int_0^t |G_0|(\tau)\,d\tau 
	\lesssim \nu^{-\f13} |\E_0(t)|^{\f12}\int_0^t\D_{\not=}(\tau) \,d\tau\lesssim \ep_1^3\nu^{3\alpha-\f13}.
\end{align*}
Substituting the estimate above into \eqref{energy-0-1} with $s=0$, we find that
\begin{align*}
	\|\uu_0(t)\|_{L^2}^2
    +\nu \int_0^t \|\nabla \uu_0(\tau)\|_{L^2}^2 \,d\tau
	\lesssim \var_0^2+\ep_1^3\nu^{3\alpha-\f13}.
\end{align*}
By interpolation, to prove \eqref{est-u0-H4}, it suffices to show that
\begin{align}\label{claim-1}
	\|\nabla^4\uu_0(t)\|_{L^2}^2
    +\nu\int_0^t \|\nabla^5\uu_0(\tau)\|_{L^2}^2 \,d\tau
	\lesssim  \var_0^2
    +\ep_1^3\nu^{2\alpha}
    +\ep_1^3\nu^{3\alpha-\f12}.
\end{align}
A very crude estimate gives
\begin{align}\label{est-nabla4}
	\int_0^t|G_4|(\tau)\,d\tau
	\lesssim \big(\| \PP_0\nabla^4(\uu_{0}\cdot\nabla\uu_{0})\|_{L^1_tL^2}+
    \| \PP_0\nabla^4(\uu_{\not=}\cdot\nabla_{\x_1}\uu_{\not=})\|_{L^1_tL^2}\big)\|\nabla^4\uu_0\|_{L^\infty_tL^2}.
\end{align}
It is easy to observe from the \textit{a priori} assumption \eqref{priori assumption}, and \eqref{est-u-0} that, for $\alpha>\f23,$
\begin{align}
\label{est-G4-1}    
    \| \PP_0\nabla^4(\uu_{0}\cdot\nabla\uu_{0})\|_{L^1_tL^2}
    \lesssim   \|\uu_{0}\|_{L^2_tL^\infty}\|\na \uu_{0}\|_{L^2_tH^4}
    \lesssim 
    \ep_1(\var_0+\ep_1^2\nu^{\al}),
\end{align}
due to $\alpha>\f23$.
Furthermore, for any integer $s\geq0$,
\begin{align}\label{est-inviscid}
	\|\nabla_{x,z}U^2_{\not=}\|_{H^s}
	\lesssim
	\||\widetilde{\nabla}|^{-1}\A Q_{\not=}\|_{H^s},
\end{align}
which, combined with Lemma \ref{3lem-key}, yields that
\begin{align}
\label{est-G4-2}  
    \| \PP_0\nabla^4(\uu_{\not=}\cdot\nabla\uu_{\not=})\|_{L^1_tL^2}
    \lesssim &
    \sum_{j=1,3}\|\nabla^4(U_{\not=}^j\pa_j\U_{\not=})\|_{L^1_tL^{2}}
    +\|\nabla^4(U_{\not=}^2\tilde{\pa}_y\U_{\not=})\|_{L^1_tL^{2}}
    \nonumber
    \\
    \lesssim&
    \sum_{j=1,3}
    \|U_{\not=}^j\|_{L^2_tH^{4}}\|\pa_j\U_{\not=}\|_{L^2_tH^{4}}
    +\|U_{\not=}^2\|_{L^2_tH^{4}}\|\tilde\pa_y\U_{\not=}\|_{L^2_tH^{4}}
    \nonumber
    \\
    \lesssim&~
    \nu^{-\f12}\int_0^t \D_{\not=}(\tau)\,d\tau \lesssim \ep_1^2\nu^{2\al-\f12}.
\end{align}
Substituting \eqref{est-G4-1} and \eqref{est-G4-2} into \eqref{est-nabla4}, we obtain that
\begin{align*}
	\int_0^t|G_4|(\tau)\,d\tau
	\lesssim \var_0^2+\ep_1^3\nu^{2\al}+\ep_1^3\nu^{3\al-\f12},
\end{align*}
which, in view of
\eqref{energy-0-1}, leads to \eqref{claim-1} .
\medskip

\noindent\textbf{Step 2:}
Our objective in this step is to establish the energy estimate for the $L^\infty_t\dot{H}^5$-norm of $\uu_0$. Specifically, we seek to establish the following bound: for $\alpha>\f23,$
\begin{align}\label{est-step1}
      \|\nabla^5\A_0\uu_0\|_{L^\infty_tL^{2}}^2
        +\nu \|\nabla^6\A_0\uu_0\|_{L^2_tL^{2}}^2
        +\|\nabla^5\varUpsilon_0\A_0\uu_0\|_{L^2_tL^{2}}^2
        \lesssim \var_0^2+\ep_1^3\nu^{2\al}.
\end{align}

When establishing the energy estimate in highest order derivative of the zero mode of the velocity, we introduce the Fourier multiplier $m_3$.
Recalling the definition of the operator $\A_0$, and performing the standard energy estimate, we achieve that
\begin{align}
	&\f12\f{d}{dt}
	\Big(
	\f{\beta}{\beta-1}\|\nabla^5\A_0u_0^1\|_{L^2}^2
    +\|\nabla^5\A_0(u_0^2,u_0^3)\|_{L^2}^2
	\Big)
		\nonumber\\
      &
      +\nu\Big(\f{\beta}{\beta-1}\|\nabla^{6}\A_0u_0^1\|_{L^2}^2
    +\|\nabla^{6}\A_0(u_0^2,u_0^3)\|_{L^2}^2
	\Big) + \Big(\f{\beta}{\beta-1}\|\varUpsilon_0\nabla^{5}\A_0u_0^1\|_{L^2}^2 +\|\nabla^{5}\varUpsilon_0\A_0(u_0^2,u_0^3)\|_{L^2}^2
	\Big) 	\nonumber\\
	=&-\int_{\R^2}	\f{\beta}{\beta-1}\PP_0\A_0\nabla^5(\uu\cdot\nabla_{\x_1} u^1)\cdot \nabla^5\A_0u_0^1\,dydz
    \nonumber\\  
    &
    -\sum_{j=2,3}\int_{\R^2}\PP_0\nabla^5\A_0\big(\uu\cdot\nabla_{\x_1} u^j-\pa_j \Delta_{\x_1}^{-1}\dive_{\x_1}(\uu\cdot\nabla_{\x_1} \uu)\big)\cdot \nabla^5\A_0 u_0^j \,dydz
	\tri  G_5.
	\label{energy-0-2}
\end{align}
Using \eqref{est-m0-0}, it follows directly that
\begin{align}\label{est-G5}
    \Big|\int_0^t G_5(\tau)\,d\tau\Big|
    \lesssim&~ \|\nabla^5(\uu_0\cdot\nabla\uu_0)\|_{L^1_tL^2}
    \|\nabla^5 \uu_0\|_{L^\infty_tL^2}
    \nonumber
    \\
    &
    +\sum_{j=1}^3\Big|\int_0^t \int_{\R^2}
    \nabla^5\A_0\PP_0(\U_{\not=}\cdot\widetilde{\nabla}\U_{\not=}^j)\cdot\nabla^5 \A_0 \uu_0^j \,dydzd\tau \Big|
    \nonumber
    \\
    \tri&~ G_{50}+G_{5\not=}.
\end{align}
With the help of \textit{a priori} assumption \eqref{priori assumption}, and \eqref{est-u-0}, one obtains that
\begin{align*}
    \| \nabla^5(\uu_{0}\cdot\nabla\uu_{0})\|_{L^1_tL^2}
    \lesssim 
\|\uu_{0}\|_{L^2_tL^\infty}\|\na\uu_{0}\|_{L^2_tH^5}
    \lesssim 
    \ep_1(\var_0+\ep_1^2\nu^{\al}),
\end{align*}
since $\alpha>\f23$.
This implies  that
\begin{align}\label{est-G50}
    G_{50}
    \lesssim \var_0^2+\ep_1^3\nu^{2\al}.
\end{align}
We turn to estimate $G_{5\not=}$. Some direct computations give that
\begin{align}\label{est-G5noq}
    G_{5\not=}
    \lesssim &
    \sum_{j=1,3}\sum_{k=1}^3\Big|\int_0^t \int_{\R^2}
    \nabla^5\PP_0\A_0(U^j_{\not=}\pa_j\U_{\not=}^k)\cdot\nabla^5 \A_0 \uu_0^k \,dydzd\tau \Big|
    \nonumber
    \\
    &+\sum_{k=1}^3\Big|\int_0^t \int_{\R^2}
    \nabla^4\A_0\PP_0(\widetilde{\nabla}U^2_{\not=}\tilde\pa_y\U_{\not=}^k)\cdot\nabla^5 \A_0 \uu_0^k \,dydzd\tau \Big|
    \nonumber
    \\
    &
    +\sum_{k=1}^3\Big|\int_0^t \int_{\R^2}
    \A_0\PP_0([\nabla^4,\tilde\pa_y\widetilde{\nabla}\U_{\not=}^k]U^2_{\not=})\cdot\nabla^5 \A_0 \uu_0^k \,dydzd\tau \Big|
    \nonumber
    \\
    &
    +\sum_{k=1}^3\Big|\int_0^t \int_{\R^2}
    \A_0\PP_0(\nabla^4U^2_{\not=}\tilde\pa_y\widetilde{\nabla}\U_{\not=}^k)\cdot\nabla^5 \A_0 \uu_0^k \,dydzd\tau \Big|
    \nonumber
    \\
    \tri&~G_{51\not=}+G_{52\not=}+G_{53\not=}+G_{54\not=}.
\end{align}
By integrating by parts and applying the \textit{a priori} assumption \eqref{priori assumption} and Lemma \ref{3lem-key}, we have
\begin{align}\label{est-G51noq}
    G_{51\not=}
    \lesssim &
    \sum_{j=1,3}\|\nabla^4\A_0\PP_0(U^j_{\not=}\pa_j\U_{\not=})\|_{L^2_tL^2}
    \|\nabla^6 \uu_0\|_{L^2_tL^2}
    \nonumber
    \\
    \lesssim &
    \sum_{j=1,3}\|U^j_{\not=}\|_{L^2_tH^4}
    \|\pa_j\U_{\not=}\|_{L^\infty_tH^4}
    \|\nabla^6 \uu_0\|_{L^2_tL^2}
    \nonumber
    \\
    \lesssim &~\nu^{-\f23} |\E_{\not=}(t)|^{\f12}\int_0^t(\D_{0}+\D_{\not=})(\tau)\,d\tau 
    \lesssim \ep_1^3 \nu^{3\al-\f23}.
\end{align}
Again thanks to the \textit{a priori} assumption \eqref{priori assumption} and Lemma \ref{3lem-key}, we can bound $G_{52\not=}$ as follows:
\begin{align}\label{est-G52noq}
    G_{52\not=}
    \lesssim &
    \|\nabla^4\A_0\PP_0(\widetilde{\nabla}U^2_{\not=}\tilde\pa_y\U_{\not=})\|_{L^1_tL^2}
    \|\nabla^5 \uu_0\|_{L^\infty_tL^2}
    \nonumber
    \\
    \lesssim &
    \|\widetilde{\nabla}U^2_{\not=}\|_{L^2_tH^4}
    \|\tilde\pa_y\U_{\not=}\|_{L^2_tH^4}
    \|\nabla^5 \uu_0\|_{L^\infty_tL^2}
    \nonumber
    \\
    \lesssim &~\nu^{-\f23} |\E_{0}(t)|^{\f12}\int_0^t \D_{\not=}(\tau)\,d\tau 
    \lesssim \ep_1^3 \nu^{3\al-\f23}.
\end{align}
In view of \eqref{est-varphi-t}, \eqref{est-varphi-2} and \eqref{est-U2-1}, we obtain that
\begin{align}\label{est-G53noq}
    G_{53\not=}
    \lesssim &
    \int_0^t
    \|{|\widetilde\na|}^{-1} \A Q_{\not=}\|_{H^3}
    \|\tilde\pa_y (Q_{\not=},W_{\not=})\|_{H^4}
    \|\nabla^5 \uu_0\|_{L^2}\,d\tau
    \nonumber
    \\
    \lesssim &
    \int_0^t \langle \tau \rangle^{-1} \|\A Q_{\not=}\|_{H^4}
    \|\tilde\pa_y (Q_{\not=},W_{\not=})\|_{H^4}
    \|\nabla^5 \uu_0\|_{L^2}\,d\tau
    \nonumber
    \\
    \lesssim& ~
    \|\A Q_{\not=}\|_{L^2_tH^4}
    \|\tilde\pa_y \A(Q_{\not=},W_{\not=})\|_{L^2_tH^4}
    \|\nabla^5 \uu_0\|_{L^\infty_tL^2}
    \lesssim \ep_1^3 \nu^{3\al-\f23}.
\end{align}
 Before estimating the last term $G_{54\not=}$, we compute that, for any $k'\in \Z\backslash\{0\}$, $\xi'\in\R$ and $\tau\geq 0$,
\begin{align}\label{est-tau-1}
	\langle \tau-\f{\xi'}{k'}\rangle^{-1} 
	\lesssim &~\langle \tau-\f{\xi'}{k'}\rangle^{-1} |\ln(\nu^{-1})|^{1+a_0}
	|\ln(1+\langle \tau-\f{\xi'}{k'} \rangle)|^{-1-a_0}
	+(\nu^{-1}-1)^{-1}
	\nonumber
	\\
	\lesssim &~|\ln(\nu^{-1})|^{1+a_0}\langle \tau-\f{\xi'}{k'}\rangle^{-1} |\ln(1+\langle \tau-\f{\xi'}{k'} \rangle)|^{-1-a_0}+ \nu,
\end{align}
where the implicit constant is independent of $\nu.$
It then follows from \eqref{est-inviscid}, \eqref{est-tau-1} and Lemmas \ref{sec2:lem-0-0}, \ref{3lem-key} that
\begin{align}\label{est-G54noq}
	G_{54\not=}
	\lesssim &
	\int_0^t \langle \tau \rangle  
	\sum_{k'\in\Z\backslash\{0\}}\int_{\R^2}\int_{\R^2} 
	|(k', \xi-\xi')||(\hat{Q}_{\not=},\hat{W}_{\not=})(\tau,-k',\xi-\xi',\eta-\eta')|
    \nonumber
	\\
	&\times \langle \tau-\f{\xi'}{k'}\rangle^{-1} 
	|(k',\xi',\eta')|^{4}
	|\F[\A Q_{\not=}](\tau,k',\xi',\eta')|
	\cdot|(\xi,\eta)|^{5}
	|\hat{\uu}_0(\tau,\xi,\eta)|\,d\xi' d\eta' d\xi d\eta d\tau
    \nonumber
	\\
	\lesssim& |\ln(\nu^{-1})|^{1+a_0}
	\int_0^t \langle\tau\rangle e^{-\delta_2\nu^{\f13}\tau}
	\sum_{k'\in\Z\backslash\{0\}}\int_{\R^2}\int_{\R^2} 
	|(-k',\xi-\xi')|^{2+\f{a_0(2+a_0)}{2}}
    \nonumber
	\\
	&\times
	|(\hat{Q}_{\not=},\hat{W}_{\not=})(\tau,-k',\xi-\xi',\eta-\eta')|\cdot
	|(k',\xi',\eta')|^{4}
	\big|\sqrt{\f{\pa_t m_3}{m_3}}\F[\A Q_{\not=}](\tau,k',\xi',\eta')\big|
    \nonumber
	\\
	&\times 
	|(k,\xi,\eta)|^{5}
	\big|\sqrt{\f{\pa_t m_3}{m_3}}\hat{\uu}_0(\tau,\xi,\eta)\big|\,d\xi' d\eta' d\xi d\eta d\tau
    \nonumber
	\\
	&+
	\nu \int_0^t \langle\tau\rangle e^{-\delta_2\nu^{\f13}\tau}
	\sum_{k'\in\Z\backslash\{0\}}\int_{\R^2}\int_{\R^2} 
	|(-k',\xi-\xi')||(\hat{Q}_{\not=},\hat{W}_{\not=})(\tau,-k',\xi-\xi',\eta-\eta')|
    \nonumber
	\\
	&\times 
	|(k',\xi',\eta')|^{4}
	|\F[\A Q_{\not=}](\tau,k',\xi',\eta')|
	\cdot|(\xi,\eta)|^{5}	|\hat{\uu}_0(\tau,\xi,\eta)|\,d\xi' d\eta' d\xi d\eta d\tau
    \nonumber
	\\
	\lesssim&~
	\nu^{-\f23}
    |\ln(\nu^{-1})|^{1+a_0} \|\A(Q_{\not=},W_{\not=})\|_{L^\infty_t H^4}
	\|\varUpsilon\A Q_{\not=}\|_{L^2_t H^4}
    \|\varUpsilon_0\A_0 \uu_{0}\|_{L^2_t H^5}
    \nonumber
	\\	&+\nu^{\f23}
    \|\A(Q_{\not=},W_{\not=})\|_{L^2_t H^4}
	\|\A Q_{\not=}\|_{L^2_t H^4}\|\uu_0\|_{L^\infty_tH^5}
    \nonumber
	\\
	\lesssim&~\nu^{-\f23} |\ln(\nu^{-1})|^{1+a_0}
    \Big(|\E_{0}(t)|^{\f12}+|\E_{\not=}(t)|^{\f12}\Big)
    \int_0^t(\D_{0}+\D_{\not=})(\tau)\,d\tau
	\lesssim  \ep_1^3\nu^{3\al-\f23}|\ln(\nu^{-1})|^{1+a_0},
\end{align}
by taking $0<a_0<\sqrt{2}-1$.
Substituting \eqref{est-G51noq}-\eqref{est-G53noq}, \eqref{est-G54noq} into \eqref{est-G5noq}, we obtain that
\begin{align*}
    G_{5\not=}\lesssim \ep_1^3\nu^{3\al-\f23}|\ln(\nu^{-1})|^{1+a_0}.
\end{align*}
Collecting the estimate above, \eqref{est-G50} and \eqref{est-G5}, we find that
\begin{align*}
    \Big|\int_0^t G_{5}(\tau)\,d\tau\Big| 
    \lesssim \var_0^2+\ep_1^3 \nu^{2\al}
    +\ep_1^3\nu^{3\al-\f23}|\ln(\nu^{-1})|^{1+a_0}.
\end{align*}
Substituting the estimate above into \eqref{energy-0-2}, we conclude that \eqref{est-step1} holds true.

\medskip
\noindent
\textbf{Step 3:} In this step, we aim to derive that
\begin{align}\label{est-step2}
      \|\pa_z\nabla^5\A_0\uu_0\|_{L^\infty_tL^{2}}^2
        +\nu \|\pa_z\nabla^6\A_0\uu_0\|_{L^2_tL^{2}}^2
        +\|\pa_z\nabla^5\varUpsilon_0\A_0\uu_0\|_{L^2_tL^{2}}^2
        \lesssim \var_0^2+\ep_1^3\nu^{2\al}+\ep_1^3\nu^{3\al-\f23-\delta_1},
\end{align}
where $\delta_1>0$ is a small constant.

Applying the standard estimate to system \eqref{1-eq2}, we obtain that
\begin{align}
	&\f12\f{d}{dt}
	\Big(
	\f{\beta}{\beta-1}\|\pa_z\nabla^5\A_0u_0^1\|_{L^2}^2
    +\|\pa_z\nabla^5\A_0(u_0^2,u_0^3)\|_{L^2}^2
	\Big)
	+
	\nu\Big(\f{\beta}{\beta-1}\|\pa_z\nabla^{6}\A_0u_0^1\|_{L^2}^2
    +\|\pa_z\nabla^{6}\A_0(u_0^2,u_0^3)\|_{L^2}^2
	\Big)
	\nonumber\\
	=&-\int_{\R^2}	\f{\beta}{\beta-1}\PP_0\A_0\pa_z\nabla^5(\uu\cdot\nabla_{\x_1} u^1)\cdot \pa_z\nabla^5\A_0u_0^1\,dydz
    \nonumber\\
    &
    -\sum_{j=2,3}\int_{\R^2}\PP_0\pa_z\nabla^5\A_0\big(\uu\cdot\nabla_{\x_1} u^j-\pa_j \Delta_{\x_1}^{-1}\dive_{\x_1}(\uu\cdot\nabla_{\x_1} \uu)\big)\cdot \pa_z\nabla^5\A_0 u_0^j \,dydz
	\tri  G_6.
	\label{energy-0-3}
\end{align}
Using \eqref{est-m0-0}, it follows directly that
\begin{align}\label{est-G6}
    \Big|\int_0^t G_6(\tau)\,d\tau\Big|
    \lesssim& \|\pa_z\nabla^5(\uu_0\cdot\nabla\uu_0)\|_{L^1_tL^2}
    \|\pa_z\nabla^5 \uu_0\|_{L^\infty_tL^2}
    \nonumber
    \\
    &+\sum_{j=1}^3\Big|\int_0^t \int_{\R^2}
    \pa_z\nabla^5\A_0\PP_0(\U_{\not=}\cdot\widetilde{\nabla}\U_{\not=}^j)\cdot \pa_z\nabla^5 \A_0 \uu_0^j \,dydzd\tau \Big|
    \nonumber
    \\
    \tri&~ G_{60}+G_{6\not=}.
\end{align}
The application of the \textit{a priori} assumption \eqref{priori assumption}, and \eqref{est-u-0}, yields that
\begin{align*}
    \| \pa_z\nabla^5(\uu_{0}\cdot\nabla\uu_{0})\|_{L^1_tL^2}
    \lesssim 
    \|(1+\pa_z)\uu_{0}\|_{L^2_tL^\infty}\|(1+\pa_z)\na\uu_{0}\|_{L^2_tH^5}
    \lesssim 
    \ep_1(\var_0+\ep_1^2\nu^{\al}),
\end{align*}
due to the fact that $\alpha>\f23$.
This implies directly that
\begin{align*}
    G_{60}
    \lesssim \ep_1^3\nu^{2\al}.
\end{align*}
As for $G_{6\not=}$, we use a similar derivation as \eqref{est-G5noq}-\eqref{est-G54noq} to find that
\begin{align*}
    G_{6\not=}\lesssim \ep_1^3\nu^{3\al-\f23}|\ln(\nu^{-1})|^{1+a_0}.
\end{align*}
Substituting these two estimates into \eqref{est-G6}, one gets that
\begin{align*}
    \Big|\int_0^t G_6(\tau)\,d\tau\Big| 
    \lesssim \var_0^2+
    \ep_1^3 \nu^{2\al}
    +\ep_1^3\nu^{3\al-\f23}|\ln(\nu^{-1})|^{1+a_0}.
\end{align*}
We then conclude that \eqref{est-step2} holds by substituting the above  estimate into \eqref{energy-0-3}.
\end{proof}

\section{Energy estimates for non-zero modes on \texorpdfstring{$\T\times\R^2$}{T x R2}}
\label{sec4:energy est nonzero}

In this section, we establish the \textit{a priori} energy estimates for the non-zero frequencies of the solution to system \eqref{3-eq3}. 
We thus begin by considering the Fourier-transformed version of system \eqref{3-eq2}. 
To this end, we obtain the following system:
\begin{align}\label{3-eq3}
\left\{\begin{array}{l}
	\displaystyle
	\pa_t \hat{Q}^k -C_{\beta} \f{i\eta}{p^{\f12}}\hat{W}^k+\nu p\hat{Q}^k
	=-\F[\tilde{\pa}_y\widetilde{\Delta} P^{NL}]-\F[\widetilde{\Delta}(\U\cdot\widetilde{\nabla}U^2)],
	\\[1ex]
	\displaystyle
	\pa_t \hat{W}^k-C_{\beta}  \f{i\eta}{p^{\f12}}\hat{Q}^k
	-\f12\f{\pa_{t}p}{p}\hat{W}^k
	+\nu p\hat{W}^k
	=\sqrt{\f{\beta}{\beta-1}} 
	\F\big[|\widetilde{\nabla}| 
	\big(
	\pa_{z}(\U\cdot\widetilde{\nabla}U^1)-\pa_{x}(\U \cdot\widetilde{\nabla}U^3)
	\big)\big],
\end{array}\right.
\end{align}
where $t\in\R^+$, $k\in\Z\backslash\{0\}$, and $(\xi,\eta)\in\R^2$.

Our main result is the following uniform \textit{a priori} estimate:
\begin{prop}\label{lemm-2}
Under the assumptions in Proposition \ref{prop-1},  we have for any $t\in[0,T]$,
\begin{align}\label{priori result-1}
	\sup_{\tau\in[0,t]}\E_{\not=}(\tau)
	+\int_0^t\D_{\not=}(\tau)\,d\tau
	\lesssim \var_0^2+\ep_1^3\nu^{3\alpha-\f23-\delta_1},
\end{align}
where $\delta_1>0$ is a small constant.
\end{prop}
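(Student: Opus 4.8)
\noindent\emph{Plan.} The idea is to run weighted $L^2$ energy estimates on the Fourier-side system \eqref{3-eq3}: for each $0\le s\le 4$ I would apply $\nabla^s\A$, pair with $\nabla^s\A(Q_{\not=},W_{\not=})$, and sum. Since the moving-frame system has constant coefficients, $\nabla^s$ and $\A$ are Fourier multipliers and commute with the linear part, so no commutator is produced there. Three algebraic facts make the linear terms favorable. First, the Coriolis coupling $-C_\beta i\eta p^{-1/2}$, which is purely imaginary and acts symmetrically between $\hat Q$ and $\hat W$, is skew-Hermitian and hence drops out of the real energy identity --- this is precisely the point of the normalization \eqref{def-W-nonzero}. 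Second, the stretching term $\tfrac12\tfrac{\pa_t p}{p}\hat W$ is handled by the factor $\varphi^{-1/2}$ inside $\A$: on the critical interval $[\xi/k,\xi/k+\beta_0\nu^{-1/3}]$ it is cancelled identically by $-\tfrac12\tfrac{\pa_t\varphi}{\varphi}$, and off that interval $\pa_t\varphi=0$ while $\tfrac{\pa_t p}{p}\le\tfrac14\nu p$ (Lemma \ref{sec2:lem-0}), so it is absorbed by the dissipation $\nu p$; moreover, since $\A$ carries $\varphi^{-1/2}$ on $\hat Q$ as well, the $Q$-equation gains the nonnegative damping $\tfrac12\tfrac{\pa_t\varphi}{\varphi}$. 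Third, differentiating $m_1,m_2,m_3$ produces, after moving to the left-hand side, the positive Cauchy--Kovalevskaya terms $\tfrac{\pa_t m_1}{m_1}|\cdot|^2$, $A\tfrac{k^2}{p}|\cdot|^2$, $\tfrac{\pa_t m_3}{m_3}|\cdot|^2$ which, together with $\nu p|\cdot|^2$, assemble into $\D_{\not=}$ as in \eqref{defDnot}: the enhanced-dissipation rate $\nu^{1/3}$ comes from \eqref{est-varphi-2} of Lemma \ref{sec2:lem-1} (which at the same time absorbs the off-critical stretching and, on taking $\delta_2\le\delta_{\beta_0}$, the growth $e^{\delta_2\nu^{1/3}t}$), the $m_2$-term gives $\|\pa_x|\widetilde{\nabla}|^{-1}\A(Q_{\not=},W_{\not=})\|^2$, the $m_3$-term gives $\|\varUpsilon\A(Q_{\not=},W_{\not=})\|^2$, and $\nu p$ gives $\nu\|\widetilde{\nabla}\A(Q_{\not=},W_{\not=})\|^2$. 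This produces a differential inequality
$$\tfrac12\tfrac{d}{dt}\E_{\not=}(t)+c_0\D_{\not=}(t)\le|\NN(t)|,$$
where $\NN(t)$ collects the quadratic contributions obtained by commuting $\nabla^s\A$ with the right-hand sides of \eqref{3-eq3}, including the nonlinear pressure rewritten through $\widetilde{\Delta}P^{NL}=-\widetilde{\dive}(\U\cdot\widetilde{\nabla}\U)$.

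For $\NN(t)$ I would split the quadratic interactions by frequency support into $(\not=,\not=)\!\to\!\not=$ remainder terms, $(\not=,0)$ transport-type terms, and $(0,\not=)$ reaction terms (the $(0,0)$ block does not force non-zero modes). Remainder and transport terms are routine: one uses the representations \eqref{def-u1not}--\eqref{def-u3not} and Lemma \ref{3lem-key} to express norms of $U^j_{\not=}$ via $\E_{\not=}$, $\D_{\not=}$, the inviscid-damping bound $\|U^2_{\not=}(t)\|_{H^3}\lesssim\langle t\rangle^{-1}\|\A Q_{\not=}\|_{H^4}$ of \eqref{U2-damp-intr} whenever a $U^2_{\not=}$ factor occurs at non-top regularity, and the time integrability of $\D_{\not=}$ together with the \emph{a priori} bound \eqref{priori assumption}; for $\alpha>\tfrac23$ these are all $\lesssim\ep_1^3\nu^{3\alpha-\f23-\delta_1}$ or better. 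The two delicate contributions are the reaction terms in \eqref{sec1:est-en-non}. For the first, $\nabla^4\A(\pa_z U^2_0\,\widetilde{\nabla}U^3_{\not=})\cdot\nabla^4\A\widetilde{\nabla}Q_{\not=}$ (and the analogues with $\pa_z U^{1,3}_0$), after a commutator reduction via \eqref{est-comm} I would pull out $\|\pa_z U^2_0\|_{L^\infty_tH^5}\lesssim|\E_0(t)|^{\f12}$ --- available precisely because the zero-mode energy \eqref{def: e-1} is the anisotropic one carrying $(1+\pa_z)\nabla^5\A_0\uu_0$ --- and pair the remaining two non-zero factors against $\D_{\not=}$, obtaining $\lesssim\nu^{-\f23}|\E_0(t)|^{\f12}\int_0^t\D_{\not=}\,d\tau\lesssim\ep_1^3\nu^{3\alpha-\f23}$. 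For the second, $\A\tilde{\pa}_y\nabla^4(\pa_i U^2_{\not=}\,\tilde{\pa}_yU^i_{\not=})\cdot\nabla^4\A Q_{\not=}$, I would follow the split in Section \ref{sec-strategy} into a ``commute the damping'' piece and a piece controlled directly by \eqref{U2-damp-intr}, leaving the worst piece $\A(\nabla^4\pa_i U^2_{\not=}\,\tilde{\pa}_y^2U^i_{\not=})\cdot\nabla^4\A Q_{\not=}$, in which $U^2_{\not=}\approx|\widetilde{\nabla}|^{-1}\A Q_{\not=}$ sits at top regularity so the $\langle t\rangle^{-1}$ gain is not directly usable. Here the multiplier $m_3$ is essential: combining \eqref{est-m0-1}--\eqref{est-m0-3} with the elementary splitting \eqref{est-tau-1} (which replaces $\langle\tau-\xi/k\rangle^{-1}$ by the $\varUpsilon$-symbol plus an $O(\nu)$ error) transfers a half-derivative onto $\nabla^4\A Q_{\not=}$ and gains time integrability, so this term is $\lesssim\nu^{-\f23}|\ln\nu^{-1}|^{1+a_0}\,\big(|\E_0(t)|^{\f12}+|\E_{\not=}(t)|^{\f12}\big)\int_0^t(\D_0+\D_{\not=})\,d\tau\lesssim\ep_1^3\nu^{3\alpha-\f23}|\ln\nu^{-1}|^{1+a_0}$ for any $0<a_0<\sqrt2-1$, the logarithm being absorbed into $\nu^{-\delta_1}$.

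Collecting these bounds gives $\int_0^t|\NN|\lesssim\var_0^2+\ep_1^3\nu^{3\alpha-\f23-\delta_1}$, and integrating the differential inequality together with $\E_{\not=}(0)\lesssim\var_0^2$ (at $t=0$ the multipliers in $\A$ equal $1$, and $(Q_{\not=},W_{\not=})(0)$ is a second-order expression in $\uu_{in}$, controlled using the divergence-free condition) yields \eqref{priori result-1}. The main obstacle is the last reaction piece: the inviscid-damping factor $\langle t\rangle^{-1}$ for $U^2_{\not=}$ costs one derivative and so is unusable at the top order $H^4$, and it is exactly the role of the Knobel-type weight $m_3$ --- and of the extra dissipation $\|\varUpsilon\A(Q_{\not=},W_{\not=})\|_{L^2}$ it creates --- to reallocate the missing half-derivative while simultaneously recovering time integrability; without this device one is stuck at $\alpha\ge\tfrac56$. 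A secondary care point is that the whole scheme must be carried out in the anisotropic $H^4$ framework of \eqref{def: e-1}--\eqref{defDnot}, consistently with the fact (noted after Lemma \ref{3lem-key}) that neither $\E_0$ nor $\E_{\not=}$ controls the fully-bad derivatives $\widetilde{\nabla}^2U^j_{\not=}$, $j=1,3$; one must verify that every commutator and every use of the incompressibility relations (such as $\pa_y^6u^2=-\pa_z\pa_y^5u^3$) avoids calling for a derivative that is not available.
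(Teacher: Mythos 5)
Your proposal is correct and follows essentially the same route as the paper's proof of this proposition: a weighted $H^4$ energy estimate on the Fourier-side system \eqref{3-eq3} in which the Coriolis coupling drops out by skew-symmetry, the stretching term and the $e^{\delta_2\nu^{1/3}t}$ growth are absorbed via $\varphi$ and \eqref{est-varphi-2}, the derivatives of $m_1,m_2,m_3$ assemble into the pieces of $\D_{\not=}$, and the nonlinearity is split exactly as in Lemmas \ref{lemm-2-1}--\ref{lemm-2-3} with the two reaction terms of \eqref{sec1:est-en-non} handled, respectively, by the anisotropic zero-mode energy and by the Knobel-type weight $m_3$ together with the splitting \eqref{est-tau-1}. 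Your identification of the worst piece $\A(\nabla^4\pa_iU_{\not=}^2\,\tilde{\pa}_y^2U_{\not=}^i)$ and of the role of $\varUpsilon$ in regaining the missing half-derivative and time integrability matches the paper's $H_{23}$ computation precisely, including the restriction $0<a_0<\sqrt2-1$.
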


\begin{proof}
Recalling the definitions of $\A$ and $\varUpsilon$ in \eqref{def-Af}, performing the standard energy estimate to system \eqref{3-eq3} in $H^4$-framework, and then applying Lemma \ref{sec2:lem-1}, we finally obtain that
\begin{align}	
	\label{energy-not=-1}
	&\f{d}{dt}\|\A(Q_{\not=},W_{\not=})\|_{H^4}^2
	+\nu \|\widetilde{\nabla}\A(Q_{\not=},W_{\not=})\|_{H^4}^2
	+\nu^{\f13} \|\A(Q_{\not=},W_{\not=})\|_{H^4}^2
	\nonumber
	\\
	&+\|\pa_{x}|\widetilde{\nabla}|^{-1}\A(Q_{\not=},W_{\not=})\|_{H^4}^2
	+\|\varUpsilon \A( Q_{\not=}, W_{\not=})\|_{H^4}^2
	\nonumber
	\\
	\lesssim&~
    \delta_1\nu^{\f13}\|\A(Q_{\not=},W_{\not=})\|_{H^4}^2+
	\Big|\sum_{s=0}^{4}
	\int_{\T\times\R^2} \nabla^s\PP_{\not=}\A \tilde{\pa}_y\widetilde{\Delta} P^{NL}
	\cdot
	\nabla^s\A Q_{\not=} \,dxdydz\Big|
	\nonumber
	\\
	&
	+
	\Big|\sum_{s=0}^{4}
	\int_{\T\times\R^2} \nabla^s\PP_{\not=}\A\widetilde{\Delta}(\U\cdot\widetilde{\nabla}U^2)
	\cdot
	\nabla^s\A Q_{\not=}
	\,dxdydz\Big|
	\nonumber
	\\
	&
	+\Big|\sum_{s=0}^{4}
	\int_{\T\times\R^2}
	\nabla^s\PP_{\not=}\A|\widetilde{\nabla}|\big(
	\pa_{z}(\U\cdot\widetilde{\nabla}U^1)-\pa_{x}(\U\cdot\widetilde{\nabla}U^3)
	\big)
	\cdot
	\nabla^s\A W_{\not=}
	\,dxdydz\Big|.
\end{align}
The nonlinear terms in the right-hand side of \eqref{energy-not=-1} will be
handled in the following three lemmas.
\begin{lemm}\label{lemm-2-1}
	Under the assumptions in Lemma \ref{lemm-2}, it holds that for any small constant $\delta_1>0$,
	\begin{align}\label{4est-1}
		\Big|\sum_{s=0}^{4}
		\int_0^t\int_{\T\times\R^2}\nabla^s\PP_{\not=}\A \tilde{\pa}_y\widetilde{\Delta} P^{NL}
		\cdot
		\nabla^s\A Q_{\not=} \,dxdydzd\tau \big|
		\lesssim \ep_1^3\nu^{3\al-\f23-\delta_1}.
	\end{align}
\end{lemm}
\begin{lemm}\label{lemm-2-2}
	Under the assumptions in Lemma \ref{lemm-2}, it holds that for any small constant $\delta_1>0$,
	\begin{align}\label{4est-2}
		\big|\sum_{s=0}^{4}
		\int_0^t
		\int_{\T\times\R^2} \nabla^s\PP_{\not=}\A\widetilde{\Delta}(\U\cdot\widetilde{\nabla}U^2)
		\cdot
		\nabla^s\A Q_{\not=} \,dxdydzd\tau \Big|
		\lesssim \ep_1^3\nu^{3\al-\f23-\delta_1}.
	\end{align}
\end{lemm}
\begin{lemm}\label{lemm-2-3}
	Under the assumptions in Lemma \ref{lemm-2}, it holds that for any small constant $\delta_1>0$,
	\begin{align}\label{4est-3}
		\Big| \sum_{s=0}^{4}
		\int_0^t
		\int_{\T\times\R^2}
		\nabla^s\PP_{\not=}\A|\widetilde{\nabla}|\big(
		\pa_{z}(\U\cdot\widetilde{\nabla}U^1)-\pa_{x}(\U\cdot\widetilde{\nabla}U^3)
		\big)
		\cdot
		\nabla^s\A W_{\not=}
		\,dxdydzd\tau \Big|
		\lesssim \ep_1^3\nu^{3\al-\f23-\delta_1}.
	\end{align}
\end{lemm}

We proceed under the temporary assumption of Lemmas \ref{lemm-2-1}-\ref{lemm-2-3}, deferring their proofs to the end of this subsection, and continue our proof of Lemma \ref{lemm-2}. 
Thus, integrating inequality \eqref{energy-not=-1} over $[0,t]$, and then substituting estimates \eqref{4est-1}-\eqref{4est-3} into the resulting inequality, we obtain the desired estimate \eqref{priori result-1}.
This completes the proof of Proposition \ref{lemm-2}.
\end{proof}

We now turn to the proofs of Lemmas \ref{lemm-2-1}–\ref{lemm-2-3}, thereby completing the proof of Proposition \ref{lemm-2}.

\begin{proof}[\underline{\textbf{Proof of Lemma \ref{lemm-2-1}}}]
We divide the nonlinear pressure term $P^{NL}$ as:
\begin{align*}
	\nabla^s\PP_{\not=}\A \widetilde{\Delta} P^{NL}
	\sim \nabla^s\PP_{\not=}\A (\pa_{i}U^j\pa_{j}U^i)
	+ \nabla^s\PP_{\not=}\A (\pa_{i}U^2\tilde{\pa}_yU^i)
	+\nabla^s\PP_{\not=}\A (\tilde{\pa}_yU^2)^2,
	\quad \text{for}~~i,j\in\{1,3\}.
\end{align*}
As the interactions $(0,0)$ cannot force non-zero modes, the relevant contributions arise from the $(0,\not=)$ and $(\not=\,\not=)$ interactions, which we proceed to bound.
We then claim that for $i,j\in\{1,3\}$,
\begin{align}
	&H_1=\sum_{s=0}^{4}
	\int_0^t\int_{\T\times\R^2}
	\big( |\nabla^s\A( \pa_{i}U^j_{\not=}\pa_{j}U^i_{\not=})| 
	+|\nabla^s\A (\tilde{\pa}_yU^2_{\not=})^2| 
	\big)
	|\nabla^s\A\tilde{\partial}_yQ_{\not=}|\,dxdydzd\tau
	\lesssim \ep_1^3\nu^{3\al-\f23};
	\label{est-4.5-1}
	\\
	&H_2=\Big|\sum_{s=0}^{4}
	\int_0^t\int_{\T\times\R^2}
	\nabla^s\A\tilde{\pa}_y(\pa_{i}U_{\not=}^2\tilde{\pa}_yU_{\not=}^i)
	\nabla^s\A Q_{\not=}\,dxdydzd\tau \Big|
	\lesssim \ep_1^3\nu^{3\al-\f23-\delta_1};
	\label{est-4.5-2}
	\\
	&H_3=\sum_{s=0}^{4}
	\int_0^t\int_{\T\times\R^2}
	|\nabla^s\A (\pa_yU^i_{0}\pa_{i}U_{\not=}^2)| 
|\nabla^s\A\tilde{\pa}_yQ_{\not=}|\,dxdydzd\tau
	\lesssim \ep_1^3\nu^{3\al-\f12};
	\label{est-4.5-3}
	\\
	&H_4=\sum_{s=0}^{4}
	\int_0^t\int_{\T\times\R^2}
	\big( |\nabla^s\A( \pa_{z}U^j_{0}\pa_{j}U^3_{\not=})| 
	+|\nabla^s\A (\pa_yU^2_{0}\tilde{\pa}_yU_{\not=}^2)| 
	\big)	|\nabla^s\A\widetilde{\nabla}Q_{\not=}|\,dxdydzd\tau
	\lesssim \ep_1^3\nu^{3\al-\f23};
	\label{est-4.5-4}
	\\
	&H_5=\sum_{s=0}^{4}
	\int_0^t\int_{\T\times\R^2}
	|\nabla^s\A( \pa_{z}U^2_{0}\tilde{\pa}_yU^3_{\not=})| 
	|\nabla^s\A\widetilde{\nabla}Q_{\not=}|\,dxdydzd\tau
	\lesssim \ep_1^3\nu^{3\al-\f23}.
	\label{est-4.5-5}
\end{align}
Combining \eqref{est-4.5-1}-\eqref{est-4.5-5}, we can obtain \eqref{4est-1} directly.

It remains to prove \eqref{est-4.5-1}-\eqref{est-4.5-5} in turn.
Firstly, we use the fact that the multiplier $\mathcal{A}$ is uniformly bounded and 
Lemma \ref{3lem-key} to estimate $H_1$ as
\begin{align*}
	H_1
	\lesssim &
	\big(\|\pa_{j}U^i_{\not=}\|_{L^\infty_tH^4}
	\|\pa_{i}U^j_{\not=}\|_{L^2_tH^4}
	+\|\tilde{\pa}_yU^2_{\not=}\|_{L^\infty_tH^4}\|\tilde{\pa}_yU^2_{\not=}\|_{L^2_tH^4}\big)
	\|\A\widetilde{\nabla}Q_{\not=}\|_{L^2_tH^4}
	\\
	\lesssim &~\nu^{-\f23} |\E_{\not=}(t)|^{\f12}\int_0^t\D_{\not=}(\tau)\,d\tau
	\lesssim
	\ep_1^3\nu^{3\al-\f23}.
\end{align*}
The estimate of $H_2$ is  more involved, and we bound it by the following three terms:
\begin{align}\label{est-H2-1}
	H_2
	\leq &\sum_{s=0}^{4}
	\int_0^t\int_{\T\times\R^2}	|\nabla^s\A(\tilde{\pa}_y\pa_{i}U_{\not=}^2\tilde{\pa}_yU_{\not=}^i)| 
	|\nabla^s\A Q_{\not=}|\,dxdydzd\tau
	\nonumber
	\\
	&+\sum_{s=0}^{4}
	\int_0^t\int_{\T\times\R^2}
	|\A([\nabla^s,\tilde{\pa}_y^2U_{\not=}^i]\pa_{i}U_{\not=}^2)| 
	|\nabla^s\A Q_{\not=}|\,dxdydzd\tau
	\nonumber
	\\
	&+\Big|\sum_{s=0}^{4}
	\int_0^t\int_{\T\times\R^2}
	\A(\nabla^s\pa_{i}U_{\not=}^2
	\tilde{\pa}_y^2U_{\not=}^i)
	\nabla^s\A Q_{\not=}\,dxdydzd\tau \Big|
	\nonumber
	\\
	\tri&~ H_{21}+H_{22}+H_{23}.
\end{align}
Applying Lemma \ref{3lem-key}, we find that
\begin{align*}
	H_{21}
	\lesssim 
	\|\tilde{\pa}_{y}U^i_{\not=}\|_{L^2_tH^4}
	\|\tilde{\pa}_{y}\pa_{i}U^2_{\not=}\|_{L^2_tH^4}
	\|\A Q_{\not=}\|_{L^{\infty}_tH^4}
	\lesssim \nu^{-\f23} |\E_{\not=}(t)|^{\f12} \int_0^t\D_{\not=}(\tau)\,d\tau
	\lesssim  \ep_1^3\nu^{3\al-\f23}.
\end{align*}
As for $H_{22}$, we use the fact 
\eqref{est-varphi-t}, the
estimate 
\begin{align}\label{est-u2not=-1}
	|\mathcal{F}[\pa_{i}U^2_{\not=}]|
	\lesssim |\mathcal{F}[|\widetilde{\nabla}|^{-1}\A Q_{\not=}]|
	\lesssim \langle t\rangle^{-1}  |\mathcal{F}[\nabla\A Q_{\not=}]|,
\end{align}
and Lemma \ref{3lem-key} to find
\begin{align*}
	H_{22}
	\lesssim& 
	\int_0^t\|\tilde{\pa}_{y} (Q_{\not=},W_{\not=})\|_{H^4}
	\||\widetilde{\nabla}|^{-1} \A Q_{\not=}\|_{H^3}
	\|\A Q_{\not=}\|_{H^4}\,d\tau
	\\
	\lesssim&~
	\|\tilde{\pa}_{y} \A(Q_{\not=},W_{\not=})\|_{L^2_tH^4}
	\|\A Q_{\not=}\|_{L^\infty_tH^4}
	\|\A Q_{\not=}\|_{L^2_tH^4}
	\\
	\lesssim&~ \nu^{-\f23} |\E_{\not=}(t)|^{\f12} \int_0^t\D_{\not=}(\tau)\,d\tau
	\lesssim  \ep_1^3\nu^{3\al-\f23}.
\end{align*}
Now, we deal with $H_{23}$, it follows from \eqref{est-inviscid}, \eqref{est-tau-1}, and Lemmas \ref{sec2:lem-0-0}, \ref{3lem-key} that
\begin{align*}
	H_{23}
	\lesssim &\sum_{s=0}^{4}
	\int_0^t \langle\tau\rangle
	\sum_{k,k'\in\Z\backslash\{0\}}\int_{\R^2}\int_{\R^2} 
	|(k-k',\xi-\xi')||(\hat{Q}_{\not=},\hat{W}_{\not=})(\tau,k-k',\xi-\xi',\eta-\eta')|
	\\
	&\times \langle \tau-\f{\xi'}{k'}\rangle^{-1} 
	|(k',\xi',\eta')|^{s}
	|\F[\A Q_{\not=}](\tau,k',\xi',\eta')|
	\cdot|(k,\xi,\eta)|^{s}
	|\F[\A Q_{\not=}](\tau,k,\xi,\eta)|\,d\xi' d\eta' d\xi d\eta d\tau
	\\
	\lesssim&~ |\ln(\nu^{-1})|^{1+a_0}
	\sum_{s=0}^{4}
	\int_0^t \langle\tau\rangle
	\sum_{k,k'\in\Z\backslash\{0\}}\int_{\R^2}\int_{\R^2} 
	|(k-k',\xi-\xi')|^{2+\f{a_0(2+a_0)}{2}}
	\\
	&\times
	|(\hat{Q}_{\not=},\hat{W}_{\not=})(\tau,k-k',\xi-\xi',\eta-\eta')|
	|(k',\xi',\eta')|^{s}
	\bigg|\sqrt{\f{\pa_t m_3}{m_3}}\F[\A Q_{\not=}](\tau,k',\xi',\eta')\bigg|
	\\
	&\times 
	|(k,\xi,\eta)|^{s}
	\bigg|\sqrt{\f{\pa_t m_3}{m_3}}\F[\A Q_{\not=}](\tau,k,\xi,\eta)\bigg|\,d\xi' d\eta' d\xi d\eta d\tau
	\\
	&+
	\nu \sum_{s=0}^{4} \int_0^t \langle\tau\rangle
	\sum_{k,k'\in\Z\backslash\{0\}}\int_{\R^2}\int_{\R^2} 
	|(k-k',\xi-\xi')||(\hat{Q}_{\not=},\hat{W}_{\not=})(\tau,k-k',\xi-\xi',\eta-\eta')|
	\\
	&\times 
	|(k',\xi',\eta')|^{s}
	|\F[\A Q_{\not=}](\tau,k',\xi',\eta')|
	\cdot|(k,\xi,\eta)|^{s}
	|\F[\A Q_{\not=}](\tau,k,\xi,\eta)|\,d\xi' d\eta' d\xi d\eta d\tau
	\\
	\lesssim&~
	\nu^{-\f23}|\ln(\nu^{-1})|^{1+a_0} \|\A(Q_{\not=},W_{\not=})\|_{L^\infty_t H^4}
	\|\varUpsilon\A( Q_{\not=},W_{\not=})\|_{L^2_t H^4}^2
	\\	&+\nu^{\f23}\|\A(Q_{\not=},W_{\not=})\|_{L^\infty_t H^4}
	\|\A Q_{\not=}\|_{L^2_t H^4}^2
	\\
	\lesssim&~\nu^{-\f23}|\ln(\nu^{-1})|^{1+a_0} |\E_{\not=}(t)|^{\f12}\int_0^t\D_{\not=}(\tau)\,d\tau
	\lesssim  \ep_1^3\nu^{3\al-\f23}|\ln(\nu^{-1})|^{1+a_0},
\end{align*}
by taking $0<a_0<\sqrt{2}-1$. Note that in the second last inequality, we have used the definition of $\A$ so that 
$\|(Q_{\neq},W_{\neq})(t)\|_{H^4}\lesssim e^{-\delta_2\nu^{\f13}t}\nu^{-\f13}\|\A(Q_{\neq},W_{\neq})(t)\|_{H^4}.$
Substituting the estimates of $H_{2i}$ $(i=1,2,3)$ into \eqref{est-H2-1}, we obtain that for any small constant $\delta_1>0$,
\begin{align}\label{est-H2-2}
	H_2\lesssim \ep_1^3\nu^{3\al-\f23-\delta_1}.
\end{align}
As for $H_3$, we can deduce from \eqref{est-inviscid} that
\begin{align*}
	H_3
	\lesssim 
	\|\pa_{y}U^i_{0}\|_{L^\infty_tH^4}
	\|\pa_{i}U^2_{\not=}\|_{L^2_tH^4}
	\|\A\widetilde{\nabla}Q_{\not=}\|_{L^2_tH^4}
	\lesssim \nu^{-\f12} |\E_{0}(t)|^{\f12}\int_0^t\D_{\not=}(\tau)\,d\tau
	\lesssim  \ep_1^3\nu^{3\al-\f12}.
\end{align*}
Again thanks to Lemma \ref{3lem-key}, one gets that
\begin{align*}
	H_4
	\lesssim 
	\|\nabla \U_{0}\|_{L^\infty_tH^4}
	\big(
	\|\tilde{\pa}_{y}U^2_{\not=}\|_{L^2_tH^4}
	+\|\pa_jU^3_{\not=}\|_{L^2_tH^4}\big)
	\|\A\widetilde{\nabla}Q_{\not=}\|_{L^2_tH^4}
	\lesssim \nu^{-\f23} |\E_{0}(t)|^{\f12} \int_0^t\D_{\not=}(\tau)\,d\tau
	\lesssim  \ep_1^3\nu^{3\al-\f23}.
\end{align*}
In view of \eqref{est-comm}, the definition of $\D_{\neq}$ in \eqref{defDnot}, and \eqref{def-u3not}, we find that
\begin{align*}
	H_5
	\lesssim 
	\|\pa_z U_{0}^2\|_{L^\infty_tH^5}
	\|\A \tilde{\pa}_{y} U^3_{\not=}\|_{L^2_tH^4}
	\|\A\widetilde{\nabla}Q_{\not=}\|_{L^2_tH^4}
	\lesssim \nu^{-\f23} |\E_{0}(t)|^{\f12}\int_0^t\D_{\not=}(\tau)\,d\tau
	\lesssim  \ep_1^3\nu^{3\al-\f23}.
\end{align*}
The proof of this lemma is complete.
\end{proof}

\begin{proof}[\underline{\textbf{Proof of Lemma \ref{lemm-2-2}}}]
Since the interaction $(0,\not=)$ and $(\not=,\not=)$ can force non-zero modes, the transport term $\PP_{\not=}\A\widetilde{\nabla}(\U\cdot\widetilde{\nabla}U^2)$ can be rewritten as: for any $j\in\{1,3\}$,
\begin{align*}
	\PP_{\not=}\A\widetilde{\nabla}(\U\cdot\widetilde{\nabla}U^2)
	=&~	\PP_{\not=}\A\big(U_{\not=}^j\pa_{j}\widetilde{\nabla}U_{\not=}^2
	+\widetilde{\nabla}U_{\not=}^2\widetilde{\pa}_y U_{\not=}^2\big)
	+\PP_{\not=}\A\big(\widetilde{\nabla}U_{\not=}^j\pa_{j}U_{\not=}^2
	+U_{\not=}^2\widetilde{\pa}_y \widetilde{\nabla}U_{\not=}^2\big)
	\\
	&+\A\big(\nabla \U_0\cdot \widetilde{\nabla}U_{\not=}^2\big)
	+\A\big(\U_0\cdot \widetilde{\nabla}^2U_{\not=}^2\big)
	+\A\big(U_{\not=}^2\pa_y \nabla U_{0}^2\big)
	\\
	&+\A\big(\widetilde{\nabla}U_{\not=}^2\pa_y U_{0}^2\big)
	+\A\big(U_{\not=}^3\pa_z \nabla U_{0}^2\big)
	+\A\big(\widetilde{\nabla}U_{\not=}^3\pa_z U_{0}^2\big).
\end{align*}
We claim that for $j\in\{1,3\}$,
\begin{align}
	&I_1=\sum_{s=0}^{4}
	\int_0^t\int_{\T\times\R^2}
	\big( |\nabla^s\A(U_{\not=}^j\pa_{j}\widetilde{\nabla}U_{\not=}^2)| 
	+|\nabla^s\A(\widetilde{\nabla}U_{\not=}^2\widetilde{\pa}_y U_{\not=}^2)| 
	\big)
	|\nabla^s\A\widetilde{\nabla}Q_{\not=}|\,dxdydzd\tau
	\lesssim \ep_1^3\nu^{3\al-\f23};
	\label{est-4.6-1}
	\\
	&I_2=\Big|\sum_{s=0}^{4}
	\int_0^t\int_{\T\times\R^2}
	\nabla^s\A\widetilde{\dive}(\widetilde{\nabla}U_{\not=}^j\pa_{j}U_{\not=}^2) 
	\nabla^s\A Q_{\not=} \,dxdydzd\tau \Big|
	\lesssim \ep_1^3\nu^{3\al-\f23-\delta_1};
	\label{est-4.6-2}
	\\
	&I_3=\Big|\sum_{s=0}^{4}
	\int_0^t\int_{\T\times\R^2}
	\nabla^s\PP_{\not=}\A\widetilde{\dive}( U_{\not=}^2\widetilde{\pa}_y\widetilde{\nabla}U_{\not=}^2)
	\nabla^s\A Q_{\not=}\,dxdydzd\tau \Big|
	\lesssim \ep_1^3\nu^{3\al-\f23-\delta_1};
	\label{est-4.6-2-1}
	\\
	&I_4=\sum_{s=0}^{4}
	\int_0^t\int_{\T\times\R^2}
	\big( |\nabla^s\A(\nabla \U_0\cdot \widetilde{\nabla}U_{\not=}^2)| 
	+|\nabla^s\A(\widetilde{\nabla}U_{\not=}^2\pa_y U_{0}^2)| 
	\big)
	|\nabla^s\A\widetilde{\nabla}Q_{\not=}|\,dxdydzd\tau
	\lesssim \ep_1^3\nu^{3\al-\f23};
	\label{est-4.6-3}
	\\
	&I_5=\sum_{s=0}^{4}
	\int_0^t\int_{\T\times\R^2}
	|\nabla^s\A(U_{\not=}^3\pa_z \nabla U_{0}^2)| 
	|\nabla^s\A\widetilde{\nabla}Q_{\not=}|\,dxdydzd\tau
	\lesssim \ep_1^3\nu^{3\al-\f23};
	\label{est-4.6-4}
	\\
	&I_6=\sum_{s=0}^{4}
	\int_0^t\int_{\T\times\R^2}
	\big( |\nabla^s\A(\U_0\cdot \widetilde{\nabla}^2U_{\not=}^2)| 
	+|\nabla^s\A(\widetilde{\nabla}U_{\not=}^3\pa_z U_{0}^2)| 
	\big)
	|\nabla^s\A\widetilde{\nabla}Q_{\not=}|\,dxdydzd\tau
	\lesssim \ep_1^3\nu^{3\al-\f23};
	\label{est-4.6-5}
	\\
	&I_7=\sum_{s=0}^{4}
	\int_0^t\int_{\T\times\R^2}
	|\nabla^s\A(U_{\not=}^2\pa_{y}\nabla U_{0}^2)| 
	|\nabla^s\A\widetilde{\nabla}Q_{\not=}|\,dxdydzd\tau
	\lesssim \ep_1^3\nu^{3\al-\f12}.
	\label{est-4.6-6}
\end{align}	
The combination of \eqref{est-4.6-1}-\eqref{est-4.6-6} yields \eqref{4est-2} immediately.

To complete the proof of this lemma, we intend to prove \eqref{est-4.6-1}-\eqref{est-4.6-6}.
We employ Lemma \ref{3lem-key} to discover that
\begin{align*}
	I_1
	\lesssim&
	\big(\|U_{\not=}^j\|_{L^\infty_tH^4}\|\nabla_{x,z}\widetilde{\nabla}U_{\not=}^2\|_{L^2_tH^4}
	+
	\|\widetilde{\nabla}U_{\not=}^2\|_{L^\infty_tH^4}\|\widetilde{\nabla}U_{\not=}^2\|_{L^2_tH^4}
	\big)
	\|\A\widetilde{\nabla}Q_{\not=}\|_{L^2_tH^4}
	\\
	\lesssim& ~\nu^{-\f23} |\E_{\not=}(t)|^{\f12}\int_0^t\D_{\not=}(\tau)\,d\tau
	\lesssim  \ep_1^3\nu^{3\al-\f23}.
\end{align*}
While for $I_2$, by an argument analogous to that used for $H_2$, we find that
\begin{align*}
	I_{2} \lesssim \ep_1^3\nu^{3\al-\f23-\delta_1}.
\end{align*}
We now estimate $I_3$ via the following decomposition:
\begin{align*}
	I_3
	\lesssim &
	\sum_{s=0}^{4}
	\int_0^t\int_{\T\times\R^2}
	|\nabla^s\PP_{\not=}\A( \widetilde{\nabla}U_{\not=}^2  \widetilde{\partial}_y\widetilde{\nabla}U_{\not=}^2)|
	|\nabla^s\A Q_{\not=}|\,dxdydzd\tau
	\\
	&+\sum_{s=0}^{4}
	\int_0^t\int_{\T\times\R^2}
	|\PP_{\not=}\A( [\nabla^s,\widetilde{\pa}_yQ_{\not=}] U_{\not=}^2)|
	|\nabla^s\A Q_{\not=}|\,dxdydzd\tau
	\\
	&\sum_{s=0}^{4}
	\Big|\int_0^t\int_{\T\times\R^2}
	\PP_{\not=}\A( \nabla^sU_{\not=}^2\widetilde{\pa}_yQ_{\not=})\nabla^s\A Q_{\not=}\,dxdydzd\tau \Big|
	\\
	\tri &~I_{31}+I_{32}+I_{33}.
\end{align*}
For $(i=1,2,3)$, the term $I_{3i}$ can be estimated similarly to $H_{2i}$.
Accordingly, we have
\begin{align*}
	I_{3} \lesssim \ep_1^3\nu^{3\al-\f23-\delta_1}.
\end{align*}
In view of Lemma \ref{3lem-key}, we obtain that
\begin{align*}
	I_4
	&\lesssim
	\|\nabla \U_{0}\|_{L^\infty_tH^4}
	\|\widetilde{\nabla}U_{\not=}^2\|_{L^2_tH^4}
	\|\A\widetilde{\nabla}Q_{\not=}\|_{L^2_tH^4}
	\lesssim \nu^{-\f23} |\E_{0}(t)|^{\f12} \int_0^t\D_{\not=}(\tau)\,d\tau
	\lesssim  \ep_1^3\nu^{3\al-\f23},
	\\
	I_5
	&\lesssim
	\|\pa_{z}U_{0}^2\|_{L^\infty_tH^5}\|U_{\not=}^3\|_{L^2_tH^4}
	\|\A\widetilde{\nabla}Q_{\not=}\|_{L^2_tH^4}
	\lesssim \nu^{-\f23} |\E_{0}(t)|^{\f12} \int_0^t\D_{\not=}(\tau)\,d\tau
	\lesssim  \ep_1^3\nu^{3\al-\f23}.
\end{align*}
As for $I_6$, recalling \eqref{est-comm}, the definition of $\D_{\neq}$ in \eqref{defDnot}, \eqref{def-u2not} and \eqref{def-u3not}, we find that
\begin{align*}
	I_6
	\lesssim&~
	\|(1+\pa_{z})\U_{0}\|_{L^\infty_tH^5}
	\big(\|\A \widetilde{\nabla}^2U_{\not=}^2\|_{L^2_tH^4}
	+\|\A \widetilde{\nabla}U_{\not=}^3\|_{L^2_tH^4}\big)
	\|\A\widetilde{\nabla}Q_{\not=}\|_{L^2_tH^4}
	\\
	\lesssim&~ \nu^{-\f23} |\E_{0}(t)|^{\f12} \int_0^t\D_{\not=}(\tau)\,d\tau
	\lesssim  \ep_1^3\nu^{3\al-\f23}.
\end{align*}
For the last term, we use the 
divergence free condition, the inequality \eqref{est-inviscid} and Lemma \ref{3lem-key} to get that
\begin{align*}
	I_7
	\lesssim
	\|\pa_{z}\nabla U_{0}^3\|_{L^\infty_tH^4}
	\| U_{\not=}^2\|_{L^2_tH^4}
	\|\A\widetilde{\nabla}Q_{\not=}\|_{L^2_tH^4}
	\lesssim \nu^{-\f12} |\E_{0}(t)|^{\f12} \int_0^t\D_{\not=}(\tau)\,d\tau
	\lesssim  \ep_1^3\nu^{3\al-\f12}.
\end{align*}
This finishes the proof of Lemma \ref{lemm-2-2}.

\end{proof}

\begin{proof}[\underline{\textbf{Proof of Lemma \ref{lemm-2-3}}}]
Recalling that only the interactions $(0,\not=)$ and $(\not=,\not=)$ are capable of generating non-zero modes, we proceed to rewrite
$\pa_{z}(\U\cdot\widetilde{\nabla}U^1)-\pa_{x}(\U\cdot\widetilde{\nabla}U^3)$ in the form:
\begin{align*}
	&|\PP_{\not=}\A(\pa_{z}(\U\cdot\widetilde{\nabla}U^1)-\pa_{x}(\U\cdot\widetilde{\nabla}U^3))|
	\\
	\lesssim &~
	|\PP_{\not=}\A\pa_{4-i}(U^j_{\not=}\pa_{j}U^i_{\not=})|
	+|\PP_{\not=}\A\pa_{4-i}(U^2_{\not=}\tilde{\pa}_{y}U^i_{\not=})|
	+|\A\pa_{4-i}(\U_{0}\cdot\widetilde{\na}U^{i}_{\not=})|
	\\
	&
    +|\A\pa_{4-i}(\U_{\not=}\cdot\nabla U^{i}_{0})|,
	\quad\text{for}~~i,j\in\{1,3\}.
\end{align*}
We claim that for $i,j\in\{1,3\}$,
\begin{align}
	&J_1=\sum_{s=0}^{4}
	\int_0^t\int_{\T\times\R^2}
	|\nabla^s\PP_{\not=}\A\pa_{4-i}(U^j_{\not=}\pa_{j}U^i_{\not=})|
	|\nabla^s\A\widetilde{\nabla}W_{\not=}|\,dxdydzd\tau
	\lesssim \ep_1^3\nu^{3\al-\f23};
	\label{est-4.7-1}
	\\
	&J_2=\Big| \sum_{s=0}^{4}
	\int_0^t\int_{\T\times\R^2}
	\nabla^s\PP_{\not=}\A|\widetilde{\nabla}|\pa_{4-i}(U^2_{\not=}\tilde{\pa}_{y}U^i_{\not=})
	\nabla^s\A W_{\not=} \,dxdydzd\tau \Big|
	\lesssim \ep_1^3\nu^{3\al-\f23-\delta_1};
	\label{est-4.7-2}
	\\
	&J_3=\sum_{s=0}^{4}
	\int_0^t\int_{\T\times\R^2}
	|\nabla^s \A\pa_{4-i}(\U_{0}\cdot\widetilde{\na}U^{i}_{\not=})|
|\nabla^s\A\widetilde{\nabla}W_{\not=}|\,dxdydzd\tau
	\lesssim \ep_1^3\nu^{3\al-\f23};
	\label{est-4.7-3}
	\\
	&J_4=\sum_{s=0}^{4}
	\int_0^t\int_{\T\times\R^2}
	|\nabla^s\A\pa_{4-i}(\U_{\not=}\cdot\nabla U^{i}_{0})|
	|\nabla^s\A\widetilde{\nabla}W_{\not=}|\,dxdydzd\tau
	\lesssim \ep_1^3\nu^{3\al-\f23}.
	\label{est-4.7-4}
\end{align}
Collecting \eqref{est-4.7-1}-\eqref{est-4.7-4}, we can obtain \eqref{4est-3} directly.

In order to complete the proof of this lemma, we shall focus on the proofs of
\eqref{est-4.7-1}-\eqref{est-4.7-4}.
By using Lemma \ref{3lem-key}, it follows that
\begin{align*}
	J_1
	\lesssim& 
	\big(\|\nabla_{x,z}U^j_{\not=}\|_{L^\infty_tH^4}\|\nabla_{x,z}U^i_{\not=}\|_{L^2_tH^4}
	+
	\|U^j_{\not=}\|_{L^\infty_tH^4}\|\nabla_{x,z}^2U^i_{\not=}\|_{L^2_tH^4}\big)
	\|\A\widetilde{\nabla}W_{\not=}\|_{L^2_tH^4}
	\\
	\lesssim& ~\nu^{-\f23}|\E_{\not=}(t)|^{\f12}\int_0^t\D_{\not=}(\tau)\,d\tau
	\lesssim \ep_1^3\nu^{3\al-\f23}.
\end{align*}
It is easy to observe that
\begin{align*}
    J_2=\Big| \sum_{s=0}^{4}
	\int_0^t\int_{\T\times\R^2}
	\nabla^s\PP_{\not=}\A\widetilde{\nabla}\pa_{4-i}(U^2_{\not=}\tilde{\pa}_{y}U^i_{\not=})\cdot
	\widetilde{\nabla}|\widetilde{\nabla}|^{-1}\nabla^s\A W_{\not=} \,dxdydzd\tau \Big|.
\end{align*}
Hence, we can bound $J_2$ as :
\begin{align*}
	J_2\lesssim &\sum_{s=0}^{4}
	\int_0^t\int_{\T\times\R^2}
	|\nabla^s\PP_{\not=}\A\pa_{4-i}(\widetilde{\nabla}U^2_{\not=}\tilde{\pa}_{y}U^i_{\not=})|
	|\nabla^s\A W_{\not=}|\,dxdydzd\tau
	\\
	&+\sum_{s=0}^{4}
	\int_0^t\int_{\T\times\R^2}
	|\PP_{\not=}\A\pa_{4-i}([\nabla^s,\tilde{\pa}_{y}\widetilde{\nabla}U^i_{\not=}]U^2_{\not=})|
	|\nabla^s\A W_{\not=}|\,dxdydzd\tau\\&+\sum_{s=0}^{4}\Big|
	\int_0^t\int_{\T\times\R^2}
	\PP_{\not=}\A\pa_{4-i}(\nabla^sU^2_{\not=}\tilde{\pa}_{y}\widetilde{\nabla}U^i_{\not=})
    \cdot
	\widetilde{\nabla}|\widetilde{\nabla}|^{-1}\nabla^s\A W_{\not=}\,dxdydzd\tau\Big|
\end{align*}
It can be estimated following the same method as for $H_2$ that
\begin{align*}
	J_2 \lesssim \ep_1^3\nu^{3\al-\f23-\delta_1}.
\end{align*}
In view of \eqref{est-comm}, the definition of $\D_{\neq}$ in \eqref{defDnot}, \eqref{def-u1not}, and \eqref{def-u3not}, we get that
\begin{align*}
	J_3
	\lesssim& ~
	\big(\|(1+\pa_z)U^2_{0}\|_{L^\infty_tH^5}\|\A\nabla_{x,z}\tilde{\pa}_yU^i_{\not=}\|_{L^2_tH^4}
	\\
	&
	+
	\|(1+\pa_z)(U_0^1,U^3_{0})\|_{L^\infty_tH^5}\|\A\nabla_{x,z}^2U^i_{\not=}\|_{L^2_tH^4}\big)
	\|\A\widetilde{\nabla}W_{\not=}\|_{L^2_tH^4}
	\\
	\lesssim&~ \nu^{-\f23}|\E_{0}(t)|^{\f12}\int_0^t\D_{\not=}(\tau)\,d\tau
	\lesssim \ep_1^3\nu^{3\al-\f23}.
\end{align*}
Applying \eqref{est-inviscid} and Lemma \ref{3lem-key} again, one can deduce that
\begin{align*}
	J_4
	\lesssim 
	\|\nabla_{x,z}\U_{\not=}\|_{L^2_tH^4}\|(1+\pa_{z}) \U_0\|_{L^\infty_tH^5}\|\A\widetilde{\nabla}W_{\not=}\|_{L^2_tH^4}
	\lesssim \nu^{-\f23}|\E_{0}(t)|^{\f12}\int_0^t\D_{\not=}(\tau)\,d\tau
	\lesssim \ep_1^3\nu^{3\al-\f23}.
\end{align*}
This concludes the proof of Lemma \ref{lemm-2-3}.

\end{proof}

\section{Proof of  Theorems \ref{theo1} and Theorem \ref{theo2}}\label{sec5:proof-TRR}



We now prove Theorem \ref{theo1} following the standard three-step approach for global existence theory for nonlinear PDEs:
\begin{itemize}
\item[(1)] constructing approximate solutions;
\item[(2)] establishing the uniform  \textit{a priori} energy estimates;
\item[(3)] passing to the limit.
\end{itemize}
While steps (1) and (3) can be obtained through standard arguments, the essential difficulty lies in step (2), namely, establishing \textit{a priori} energy estimates for smooth solutions of system \eqref{1-eq1}.
The energy estimates rigorously derived in Proposition \ref{prop-1}, combined with standard continuity arguments, lead to Theorem \ref{theo1}.
Accordingly, we focus on the proof of Proposition \ref{prop-1}.

\begin{proof}[\textbf{\underline{Proof of Proposition \ref{prop-1} on $\T\times\R^2$}}]
Combining Propositions \ref{lemm-1}, \ref{lemm-2}, we obtain that for any $\alpha>\f23$,
\begin{align*}
	\sup_{\tau\in[0,t]}\big(\E_0(\tau)+\E_{\not=}(\tau)\big)
	+\int_0^t\big(\D_0(\tau)+\D_{\not=}(\tau)\big)\,d\tau
	\leq  C\var_0^2+C\ep_1^3\nu^{2\al}+C\ep_1^3\nu^{3\alpha-\f23-\delta_1}
    \leq C\var_0^2+C\ep_1^3\nu^{2\al}.
\end{align*}
These two estimates yield \eqref{sec3:priori result-1} immediately.
We thereby finish the proof of Proposition \ref{prop-1} on $\T\times\R^2$.
\end{proof}

At the end of this section, we give the proof of Theorem \ref{theo2}.
\begin{proof}[\textbf{\underline{Proof of Theorem \ref{theo2}}}]
It then follows from Lemma \ref{lemm-disp} and \eqref{sec3:priori result-1} that
\begin{align}\label{est-u01-decay}
    &\|(u_0^1,\cR_2\up_0,\cR_3\up_0)(t)\|_{L^\infty}
    \nonumber\\
    \lesssim & ~(1+t)^{-\f12} \|\uu_{0in}\|_{W^{2^+,1}}+\nu^{-\delta_1}\|\mathbb{P}_0(\uu\cdot\na_{\x_1}\uu)\|_{L^2_tW^{2,1}}
    \nonumber\\
    \lesssim & ~(1+t)^{-\f12} \|\uu_{0in}\|_{W^{2^+,1}}+\ep_1^2\nu^{2\al-\f12-\delta_1},
\end{align}
and
    \begin{align}\label{est-payu01-decay}
&\|\pa_y(u_0^1,\cR_2\up_0,\cR_3\up_0,|\nabla|^{-1}\up_0)(t)\|_{L^\infty}
\nonumber\\
   \lesssim & ~(1+t)^{-1} \|\uu_{0in}\|_{W^{3^+,1}}+\nu^{-\delta_1}\|\mathbb{P}_0(\uu\cdot\na_{\x_1}\uu)\|_{L_t^{\infty}W^{3,1}}
   \nonumber\\
    \lesssim & ~(1+t)^{-1} \|\uu_{0in}\|_{W^{3^+,1}}+\ep_1^2\nu^{2\al-\delta_1}.
\end{align}
These two estimates, along with \eqref{rep-u023}, imply
\eqref{Asy-u0-1} and \eqref{Asy-u0-2}. Here in \eqref{est-u01-decay}, we used that
\begin{align*}
    \|\mathbb{P}_0(\uu\cdot\na_{\x_1}\uu)\|_{L^2_tW^{2,1}}
    \leq &\|\mathbb{P}_0(\uu_0\cdot\na_{\x_1}\uu_0)\|_{L^2_tW^{2,1}}+\|\mathbb{P}_0(\uu_{\neq}\cdot\na_{\x_1}\uu_{\neq})\|_{L^2_tW^{2,1}}\\
    =&\|\mathbb{P}_0(\uu_0\cdot\na\uu_0)\|_{L^2_tW^{2,1}}+\|\mathbb{P}_0(U_{\neq}\cdot\widetilde{\na}U_{\neq})\|_{L^2_tW^{2,1}}\\
    \leq&\|\uu_0\|_{L_t^{\infty}H^2}\|\na\uu_0\|_{L^2_{t}H^2}+\|U_{\neq}\|_{L_t^{\infty}H^2}\|\widetilde{\na}U_{\neq}\|_{L^2_{t}H^2}
    \lesssim\ep_1^2\nu^{2\al-\f12}.
\end{align*}
And in the derivation of \eqref{est-payu01-decay}, we have used the fact that
\begin{align*}
    &\|\mathbb{P}_0(\uu\cdot\na_{\x_1}\uu)\|_{L^{\infty}_tW^{3,1}}\leq \|\mathbb{P}_0(\uu_0\cdot\na\uu_0)\|_{L^{\infty}_tW^{3,1}}+\|\mathbb{P}_0(\uu_{\neq}\cdot\na_{\x_1}\uu_{\neq})\|_{L^{\infty}_tW^{3,1}},\\
    &\|\mathbb{P}_0(\uu_0\cdot\na\uu_0)\|_{L^{\infty}_tW^{3,1}}\lesssim\|\uu_0\|_{L^{\infty}_tH^3}\|\uu_0\|_{L^{\infty}_tH^4}
    \lesssim\ep_1^2\nu^{2\al},
    \end{align*}
and
    \begin{align*}
    \|\mathbb{P}_0(\uu_{\neq}\cdot\na_{\x_1}\uu_{\neq})\|_{L^{\infty}_tW^{3,1}}=&
    \|\mathbb{P}_0(U_{\neq}\cdot\widetilde{\na}U_{\neq})\|_{L^{\infty}_tW^{3,1}}\\ 
    \lesssim&\sum_{j=1,3}
    \|U_{\neq}^j\|_{L^{\infty}_{t}H^3}\|\partial_jU_{\neq}\|_{L^{\infty}_tH^{3}}+
    \|U_{\neq}^2\|_{L^{\infty}_{t}H^3}\|\tilde{\partial}_yU_{\neq}\|_{L^{\infty}_tH^{3}}
  \\ \lesssim&~ \ep_1^2\nu^{2\al}+\||\widetilde{\na}|^{-1} \A Q_{\not=}\|_{L^{\infty}_{t}H^3}
    \| (Q_{\not=},W_{\not=})\|_{L^{\infty}_{t}H^3}\\ \lesssim& ~\ep_1^2\nu^{2\al}+\langle t\rangle^{-1}\| \A Q_{\not=}\|_{L^{\infty}_{t}H^4}
    \| (Q_{\not=},W_{\not=})\|_{L^{\infty}_{t}H^3}\\ \lesssim& ~\ep_1^2\nu^{2\al}+\| \A Q_{\not=}\|_{L^{\infty}_{t}H^4}
    \| \A(Q_{\not=},W_{\not=})\|_{L^{\infty}_{t}H^3}\lesssim \ep_1^2\nu^{2\al}.
\end{align*}
We now focus on proving \eqref{Asy-unot}.
It follows from Proposition \ref{prop-1} and continuity arguments that there is $\delta_2>0$ such that
\begin{align}\label{est-en-diss}
\sup_{t\geq 0}\|\A(Q_{\not=},W_{\not=})\|_{H^4}^2+\delta_2\nu^{\f13}\|\A(Q_{\not=},W_{\not=})\|_{L_t^2H^4}^2\lesssim \varepsilon_0^2\,.
\end{align}
By applying \eqref{est-varphi-1}, \eqref{def-u2not} and \eqref{est-en-diss}, we find that
\begin{align}\label{est-u2}
	\|u^2_{\not=}\|_{L^2}
	=\|U^2_{\not=}\|_{L^2}
	\lesssim e^{-\delta_2\nu^{\f13}t}\||\widetilde{\nabla}|^{-1}\A Q_{\not=}\|_{L^2}
	\lesssim (1+t)^{-1}e^{-\delta_2\nu^{\f13}t} \|\A Q_{\not=}\|_{H^1}
	\lesssim (1+t)^{-1} e^{-\delta_2\nu^{\f13}t}\var_0.
\end{align}
As for $u^1_{\not=}$ and $u^3_{\not=}$, we can use \eqref{def-u1not}, \eqref{def-u3not} and \eqref{est-en-diss} to derive that
\begin{align}\label{est-u1u3}
	\|(u^1_{\not=},u^3_{\not=})\|_{L^2}
	=&\|(U^1_{\not=},U^3_{\not=})\|_{L^2}
	\lesssim  e^{-\delta_2\nu^{\f13}t} \|\nabla_{x,z}(\pa_x^2+\pa_z^2)^{-1}\A (Q_{\not=}, W_{\not=})\|_{L^2}\lesssim e^{-\delta_2\nu^{\f13}t}\var_0\,. 
\end{align}
Combining \eqref{est-u2} and \eqref{est-u1u3}, we conclude that \eqref{Asy-unot} holds true.
\end{proof}


\section{Proof of  Theorems \ref{theo3} and Theorem \ref{theo4}}  \label{sec5:proof-TRT}


The objective of this section is to prove Theorems \ref{theo3} and \ref{theo4} for the domain $\T\times\R\times \T$. As with the case of $\T\times\R^2$, Theorem \ref{theo3} can be established via a continuity argument combined with the \textit{a priori} estimates given in Proposition \ref{prop-1}. For Theorem \ref{theo4}, \eqref{Asy-u0-1'} follows from Lemma \ref{lem-disp-T} (and a similar argument as in Theorem \ref{theo2}), and the proof of \eqref{Asy-unot'} is similar to Theorem \ref{theo2}.
We therefore present only those parts of the proof that differ from the previous case.

Before proceeding to establish the energy estimates for the zero modes, we first give the dispersive and Strichartz estimates. For any $f(y,z)$, we denote 
    \begin{align*}
      f=\sum_{l\in\Z} f^l(y) e^{ilz},\quad \text{with}~~
       f^l(y)\tri\f{1}{2\pi}\int_{\T}f(y,z)e^{-ilz}\,dz.
    \end{align*}
    We also define the corresponding operators: 
    \begin{align*}
        \mathcal{R}_3^l\tri -il(l^2-\pa_y^2)^{-\f12},
        \quad
        \mathcal{L}^{\pm,l}\tri \pm\sqrt{B_{\beta}}\mathcal{R}_3^l+\nu(\pa_y^2-l^2).
    \end{align*}
    Recall the definition of $\W^{\pm}$
   in \eqref{def-W-1}. 
   Taking the Fourier transform  in $z,$ it is found that 
     $\W^{\pm,l}$ is given by
    \begin{align}\label{est-Wpm-T}
       \W^{\pm,l} = e^{t\mathcal{L}^{\pm,l}} \W^{\pm,l}_{in}+\int_0^t e^{(t-\tau)\mathcal{L}^{\pm,l}} N^{\pm,l} (\tau)\, d\tau.
    \end{align}
    We first derive the following lemma from Lemma \ref{lem:disp-T} and the Minkowski inequality, the proof of which is omitted.
\begin{lemm}\label{lem-disp-T}
    For any $l\not=0$, it holds that
    \begin{align*}
        \|(u_0^{1,l},\up_0^l)(t)\|_{L^\infty_y(\R)} 
        \lesssim &
        (1+t)^{-\f13}
        \big(|l|\|\uu_{0in}^l\|_{L^1_y(\R)}+|l|^{-\f12}\|\uu_{0in}^l\|_{W^{(\f32)^+,1}_y(\R)}\big)
        \\
        &+\int_0^t(1+\tau)^{-\f13}e^{-\nu l^2\tau}
        \big(|l|\|\PP_0(\uu\cdot\nabla_{\x_1}  \uu)^l\|_{L^1_y(\R)}+|l|^{-\f12}\|\PP_0(\uu\cdot\nabla_{\x_1}  \uu)^l\|_{W^{(\f32)^+,1}_y(\R)}\big)\,d\tau,\\
        \|\partial_z(u_0,\mathcal{R}_2\up_0,\mathcal{R}_3\up_0&,|\nabla|^{-1}\up_0)(t)\|_{L^\infty(\R\times\T)}\lesssim 
        (1+t)^{-\f13}
       \|\uu_{0in}\|_{W^{3^+,1}(\R\times\T)}+\nu^{-\f16}\|\PP_0(\uu\cdot\nabla_{\x_1}  \uu)\|_{L^2_tW^{3,1}(\R\times\T)}.
    \end{align*}
\end{lemm}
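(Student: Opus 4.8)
The plan is to deduce Lemma \ref{lem-disp-T} directly from the Duhamel representation \eqref{est-Wpm-T} and the fixed-frequency (in $z$) dispersive bounds collected in Lemma \ref{lem:disp-T}, then sum in the $z$-frequency $l$ and translate back to $\uu_0$ via the algebraic relations \eqref{rep-u01-w01}, \eqref{rep-u023}. First I would recall from Lemma \ref{lem:disp-T} the pointwise-in-$y$ dispersive estimates for the one-dimensional propagators $e^{t\mathcal{L}^{\pm,l}}$: a bound of the form $\|e^{t\mathcal{L}^{\pm,l}}g\|_{L^\infty_y}\lesssim (1+t)^{-1/3}e^{-c\nu l^2 t}(|l|\,\|g\|_{L^1_y}+|l|^{-1/2}\|g\|_{W^{(3/2)^+,1}_y})$, together with the companion estimate gaining an extra power of $(1+t)$ when one factor of $\pa_y$ (equivalently $\mathcal{R}_2^l$ or $|\nabla|^{-1}$) is present. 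These are exactly the one-dimensional analogues, at fixed $z$-frequency $l\neq 0$, of the estimates in Lemma \ref{lemm-disp} used in the $\T\times\R^2$ case, and the $(1+t)^{-1/3}$ decay is the one-dimensional dispersive rate for the phase $\eta\mapsto |l|(\xi^2+l^2)^{-1/2}$ (stationary phase of Airy type).

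Next, applying the triangle inequality to \eqref{est-Wpm-T} and the above bounds with $g=\W^{\pm,l}_{in}$ for the linear term and $g=N^{\pm,l}(\tau)$ for the Duhamel term, and using $\W^{\pm,l}_{in}=u_{0in}^{1,l}\pm\up_{0in}^l$ together with the definition \eqref{def-Npm} of $N^{\pm}$ — whose $L^1_y$ and $W^{s,1}_y$ norms are controlled by the corresponding norms of $\PP_0(\uu\cdot\nabla_{\x_1}\uu)$ since $\mathcal{R}_2,\mathcal{R}_3$ are bounded on these spaces at fixed $l\neq 0$ — yields the first displayed estimate of the lemma for $(u_0^{1,l},\up_0^l)$ by \eqref{rep-u01-w01}. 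For the second estimate, I would exploit that $\pa_z$ acting on a function supported at frequency $l$ is a bounded multiplier of size $|l|$, so after summing in $l$ it absorbs one power of $|l|$ and thereby converts the $|l|\|\cdot\|_{L^1_y}$ term into $\||l|^2\,\widehat{\cdot}^l\|_{\ell^1_l L^1_y}\lesssim \|\cdot\|_{W^{2,1}}$ (and similarly upgrades the lower-order term), so that summing $\sum_{l\neq 0}\|\pa_z(\cdots)^l\|_{L^\infty_y}$ against the $\ell^1_l$-summable factor $\langle l\rangle^{-1-}$ gives the $W^{3^+,1}$ norm; the relations $u_0^2=-\tfrac{\beta}{\sqrt{B_\beta}}\mathcal{R}_3\up_0$, $u_0^3=\tfrac{\beta}{\sqrt{B_\beta}}\mathcal{R}_2\up_0$ from \eqref{rep-u023} then distribute the bound to all components of $\pa_z\uu_0$, and the Duhamel contribution is handled by pulling the $L^2_t$ norm inside via Minkowski's inequality and using that $\int_0^t (1+t-\tau)^{-1/3}e^{-c\nu l^2(t-\tau)}\,d\tau$ combined with the $\ell^2_l$-Schur-type estimate (absorbing the $\nu$-loss into $\nu^{-1/6}$ by the standard interpolation between the $t^{-1/3}$ decay and the $\nu^{-1/3}$ dissipative scale) produces the stated $\nu^{-1/6}\|\PP_0(\uu\cdot\nabla_{\x_1}\uu)\|_{L^2_t W^{3,1}}$ bound.

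The main obstacle I anticipate is the careful bookkeeping of the $l$-summation: one must keep the two separate weights $|l|$ and $|l|^{-1/2}$ from Lemma \ref{lem:disp-T} (the first coming from the two derivatives lost in the dispersive estimate at frequency $l$, the second reflecting that the dispersive decay degrades for small $|l|$ where the phase becomes degenerate), and verify that after the derivative gain from $\pa_z$ and the Sobolev embedding $\ell^1_l$ over the surplus $\langle l\rangle^{-1-}$ the net regularity demanded is exactly $W^{3^+,1}$ and no more. A secondary technical point is justifying the Minkowski interchange for the Duhamel term uniformly in $l$ and summing the resulting convolution-in-time bounds against the low-frequency-singular weight $|l|^{-1/2}$ without losing more than the advertised $\nu^{-1/6}$; this is routine but must be done in the $\ell^2_l$ (energy) framework rather than $\ell^1_l$ to avoid a divergent sum near $l=0$, which is precisely why the nonlinear term is measured in $L^2_t$ rather than $L^1_t$. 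Since this is entirely parallel to the $\T\times\R^2$ argument of Lemma \ref{lemm-disp} and \ref{lemm-str} with the role of the two-dimensional frequency $(\xi,\eta)$ replaced by the one-dimensional $\xi$ at each fixed $l$, the proof can legitimately be omitted, as the statement already indicates.
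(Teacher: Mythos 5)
Your overall strategy is exactly what the paper intends: apply the fixed-$l$ dispersive bound \eqref{disp-RT-0} of Lemma \ref{lem:disp-T} to the Duhamel formula \eqref{est-Wpm-T}, translate back to $\uu_0$ via \eqref{rep-u01-w01}--\eqref{rep-u023}, and sum in $l$ after extracting the $\partial_z=il$ factor, using Minkowski/Cauchy--Schwarz in time for the forcing. Your final regularity count $W^{3^+,1}$ and the loss $\nu^{-1/6}$ are both correct, and this matches the paper's one-line hint. That said, several of your intermediate justifications are wrong even though the conclusion stands. First, the claim $\||l|^2\widehat{\cdot}^l\|_{\ell^1_l L^1_y}\lesssim\|\cdot\|_{W^{2,1}}$ is false: $\ell^1_l$ of Fourier coefficients in $z$ is strictly stronger than $L^1_z$, and one genuinely needs the extra $\langle l\rangle^{-1-}$ surplus for the $l$-sum (which you invoke in the next sentence, correctly arriving at $W^{3^+,1}$); the $W^{2,1}$ clause should simply be deleted. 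Second, the rationale that the $L^2_t$ norm on the nonlinearity is needed "to avoid a divergent sum near $l=0$" is a misunderstanding: the sum runs over $l\in\Z\setminus\{0\}$, so there is no $l=0$ and no low-frequency singularity; the $L^2_t$ appears because one applies Cauchy--Schwarz in the time convolution against the kernel $(1+s)^{-1/3}e^{-c\nu l^2 s}$, whose $L^2_s$ norm is $\lesssim(\nu l^2)^{-1/6}$ -- this, not an ``$\ell^2_l$-Schur estimate,'' is the source of the $\nu^{-1/6}$ factor. Third, Lemma \ref{lem:disp-T} on $\R\times\T$ does \emph{not} contain a companion estimate with an extra power of $(1+t)$ when a $\partial_y$ or $\mathcal{R}_2$ factor is present (that is Lemma \ref{App-dis-2} on $\R^2$); no such improvement is stated or required here, since both displayed bounds carry only the $(1+t)^{-1/3}$ rate.
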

Based on the dispersive estimate shown in Lemma \ref{lem-disp-T}, we can deduce from Lemma \ref{lem:disp-T} and the similar derivation of Lemma \ref{lemm-str} that
\begin{lemm}\label{lemm-str-T}
It holds that
\begin{align}\label{est-str-u02-T}
&	\nu^{\f13}\|(1+\pa_z)u_0^2\|_{L^2_tL^\infty_yL^2_z}
	+\nu^{\f56}\|(1+\pa_z)\pa_z\uu_0\|_{L^1_t L^\infty_yL^2_z}\nonumber\\
	\lesssim& \|(1+\pa_z)\uu_{0in}\|_{L^{2}}
    +\|(1+\pa_z)\PP_0(\uu\cdot\nabla_{\x_1}  \uu)\|_{L^1_t{L^2}}.
\end{align}

\end{lemm}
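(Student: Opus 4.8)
The plan is to adapt the proof of Lemma~\ref{lemm-str}, now working mode by mode in the periodic variable $z$. Expand $f=\sum_{l\in\Z}f^{l}(y)e^{ilz}$ and recall the Duhamel representation \eqref{est-Wpm-T} for $\W^{\pm,l}$. The starting observation is that every component of $\uu_0$ is obtained from $\W^{+},\W^{-}$ by Fourier multipliers bounded on $L^{2}_{y}$ uniformly in $l$: one has $u_0^1=\tfrac12(\W^{+}+\W^{-})$, $\up_0=\tfrac12(\W^{+}-\W^{-})$, $u_0^2=-\tfrac{\beta}{\sqrt{B_{\beta}}}\mathcal{R}_3\up_0$, $u_0^3=\tfrac{\beta}{\sqrt{B_{\beta}}}\mathcal{R}_2\up_0$, with $|\mathcal{R}_2^{l}|,|\mathcal{R}_3^{l}|\le 1$; likewise $\W^{\pm}_{in}$ and $N^{\pm}=N^1\pm N^2$ are, componentwise, bounded multipliers applied respectively to $\uu_{0in}$ and to $\PP_0(\uu\cdot\nabla_{\x_1}\uu)$. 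Moreover the $l=0$ mode drops out of the left-hand side of \eqref{est-str-u02-T}: the incompressibility relation $\pa_y u_0^2+\pa_z u_0^3=0$ forces $u_0^{2,0}$ to be $y$-independent, hence $\equiv0$ since $u_0^2\in L^2(\R_y)$, while $\pa_z$ kills the $l=0$ part of $\pa_z\uu_0$. By Plancherel in $z$ it therefore suffices to control $\sum_{l\neq0}\|(1+il)\W^{\pm,l}\|_{L^2_tL^\infty_y}^2$ and $\sum_{l\neq0}\|(1+il)\pa_z\W^{\pm,l}\|_{L^1_tL^\infty_y}^2$.

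For each $l\neq0$ the semigroup $e^{t\mathcal{L}^{\pm,l}}$ combines the one-dimensional dispersive decay $(1+|t|)^{-1/3}$ of $e^{\pm\sqrt{B_{\beta}}t\mathcal{R}_3^{l}}$ with the dissipation $e^{-c\nu l^2 t}$ of $e^{\nu t(\pa_y^2-l^2)}$; both are quantified in Lemma~\ref{lem:disp-T}. Exactly as Lemmas~\ref{lem-str-1}--\ref{lem-str-2} are deduced in the $\R^2$ case, one first turns the dispersive bound into the Strichartz estimate $\|e^{t\mathcal{L}^{\pm,l}}h\|_{L^6_tL^\infty_y}\lesssim\|h\|_{L^2_y}$ (the exponent $6$ being the endpoint at $L^\infty_y$ for a $t^{-1/3}$ decay rate) together with its inhomogeneous analogue; then one trades the loss of time integrability in passing from $L^6_t$ to $L^2_t$ (and, at greater cost, to $L^1_t$) against the exponential dissipation, via H\"older in time against $e^{-c'\nu l^2t}$ with $0<c'<c$, which produces negative powers of $\nu l^2$. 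Applying Minkowski's inequality to the Duhamel term of \eqref{est-Wpm-T}, the resulting per-mode bounds read schematically
\[
\nu^{\frac13}\|(1+il)\W^{\pm,l}\|_{L^2_tL^\infty_y}+\nu^{\frac56}\|(1+il)\pa_z\W^{\pm,l}\|_{L^1_tL^\infty_y}\lesssim a_l\Big(\|(1+il)\W^{\pm,l}_{in}\|_{L^2_y}+\|(1+il)N^{\pm,l}\|_{L^1_tL^2_y}\Big),
\]
where the weights $a_l$ carry the residual powers of $|l|$ left over after the $\nu$-powers cancel.

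Squaring and summing over $l\neq0$, using $\sum_{l\neq0}\|(1+il)\W^{\pm,l}_{in}\|_{L^2_y}^2\lesssim\|(1+\pa_z)\uu_{0in}\|_{L^2}^2$ and (by Minkowski's integral inequality in $t$ followed by $\ell^2_l$) $\sum_{l\neq0}\|(1+il)N^{\pm,l}\|_{L^1_tL^2_y}^2\lesssim\|(1+\pa_z)\PP_0(\uu\cdot\nabla_{\x_1}\uu)\|_{L^1_tL^2}^2$, and finally translating $\W^{\pm}$ back to $u_0^2$ and $\pa_z\uu_0$ through the bounded multipliers above, yields \eqref{est-str-u02-T} provided $(a_l)\in\ell^\infty$ for the $L^2_t$ term and $(a_l)\in\ell^2$ (to be paired by Cauchy--Schwarz with the single power of $\langle\pa_z\rangle$ available on the data) for the $L^1_t$ term. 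Verifying this summability is the delicate point of the argument and the main obstacle: the prefactors $\nu^{1/3}$ and $\nu^{5/6}$ are precisely those for which the powers of $\nu$ generated by the dispersive--dissipative interpolation cancel and the leftover $|l|$-weights decay fast enough to be summed; everything else is a routine transcription of the proof of Lemma~\ref{lemm-str} to the setting one-dimensional in $y$ and Fourier in $z$.
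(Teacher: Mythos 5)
Your high-level framework --- Fourier decomposition in $z$, the Duhamel representation \eqref{est-Wpm-T} mode by mode, Minkowski in time for the inhomogeneous term, and Plancherel to sum in $\ell^2_l$ --- matches the paper's, and the observations that $u_0^2$ has no $l=0$ mode and that $\pa_z$ annihilates the $l=0$ mode are both correct (and essential, since the $l=0$ mode has no dispersion). The gap is in the per-mode step: the asserted uniform Strichartz estimate $\|e^{t\mathcal{L}^{\pm,l}}h\|_{L^6_tL^\infty_y}\lesssim\|h\|_{L^2_y}$ is false. The one-dimensional propagator $e^{\pm\sqrt{B_\beta}t\cR_3^l}$ does not have a clean $(1+t)^{-1/3}$ rate uniform over $y$-frequencies: the phase $\pm\sqrt{B_\beta}\,l/\sqrt{\xi^2+l^2}$ has a degenerate stationary point near $|\xi|\sim|l|$ (where the rate is $t^{-1/3}$), decays like $t^{-1/2}$ at high $|\xi|$ with a large, $\xi$-dependent amplitude factor, and hardly decays at low $|\xi|$. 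This is exactly what the proof of Lemma~\ref{lem:disp-T} encodes: the $L^6_tL^\infty_y$ bound for a Littlewood--Paley block $P_j$ carries the prefactor $|l|^{1/2}(2^j|l|^{-1})^{1/6}$ (which is of size $|l|^{1/2}$ near $2^j\sim|l|$), and for $j>j_0+4$ only an $L^4_tL^\infty_y$ bound with prefactor $2^{3j/4}|l|^{-1/4}$ is available. There is no frequency-independent $L^6_t$ endpoint.

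The second, related, problem is that H\"older against $e^{-c'\nu l^2 t}$ alone cannot absorb these prefactors: at high $y$-frequency $2^j\gg|l|$ the residual $2^{3j/4}$ is unsummable in $j$, and one must instead use the $y$-direction dissipation $e^{-c\nu 2^{2j}t}$ from $e^{\nu t\pa_y^2}$. That is precisely the content of the proof of \eqref{disp-RT}: pair the $L^6_t$ Strichartz with $\|e^{-\nu l^2 t}\|_{L^3_t}$ only for $j\leq j_0+4$, use $\|e^{-c\nu 2^{2j}t}\|_{L^4_t}$ and a Bernstein$+$heat bound for larger $j$, then optimize the sum in $j$ to extract the prefactor $\nu^{-1/3}|l|^{-1/6}$. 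Once \eqref{disp-RT} is taken as the per-mode workhorse, your argument reduces to the paper's in just a few lines: apply it (with the extra $|l|$ from $\pa_z$), drop the harmless $|l|^{-1/6}$, sum via Plancherel, Minkowski for Duhamel. So the fix is to quote \eqref{disp-RT} rather than try to rederive it from a flat $L^6_tL^\infty_y$ Strichartz bound; as written, the ``routine transcription'' step is exactly where the argument fails. (Also, the summability condition is $(a_l)\in\ell^\infty$ for both norms, not $\ell^2$ for the $L^1_t$ piece --- once the $\langle l\rangle$ weight matching $(1+\pa_z)$ is placed on both sides, Plancherel in $z$ needs only a uniform per-mode constant.)
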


\begin{proof}
    
     Taking $ \gamma=\sqrt{B_{\beta}}$ in \eqref{disp-RT}, we find that for $l\neq 0,$
    \begin{align*}
        \| e^{t\mathcal{L}^{\pm,l}  } \cR_3^l\W^{\pm,l}_{in} \|_{L^2_t L^{\infty}_y}
        \lesssim \nu^{-\f13}|l|^{-\f16}\|\W^{\pm,l}_{in}\|_{L^2_y}
    \end{align*}
 We thus obtain that
    \begin{align*}
        \| e^{t \mathcal{L}^{\pm}} \mathcal{R}_3\W^{\pm}_{in} \|_{L^2_t L^{\infty}_yL^2_z}
        \lesssim  \Big(\sum_{l\in  \Z \backslash \{0\}}
        \| e^{t \mathcal{L}^{\pm,l}} \mathcal{R}_3^l\W_{in}^{\pm,l} \|_{L^2_t L^{\infty}_y}^2\Big)^{\f12}\lesssim 
        \nu^{-\f13}\|\W^{\pm,l}_{in}\|_{L^2}.
    \end{align*}
  In view of \eqref{est-Wpm-T}, we apply the Minkowski inequality in time to derive that
    \begin{align}\label{est-R3-RT1}
        \| e^{t \mathcal{L}^{\pm}} \mathcal{R}_3\W^{\pm} \|_{L^2_t L^{\infty}_yL^2_z}
        \lesssim
        \nu^{-\f13}\|\W^{\pm}_{in}\|_{L^2}+\nu^{-\f13}\|N^{\pm}\|_{L^1_tL^2}.
    \end{align}
    Following the similar derivation as above, we have
    \begin{align*}
       |l| \| e^{t \mathcal{L}^{\pm,l}} (1,\cR_2^l,\cR_3^l)\W^{\pm,l} \|_{L^1_t L^{\infty}_y}
        \lesssim &~
        \nu^{-\f56}\|\W^{\pm,l}_{in}\|_{L^2_{y}}+\nu^{-\f56}\|N^{\pm,l}\|_{L^1_tL^2_{y}},
    \end{align*}
 and thus
    \begin{align}\label{est-R3-RT2}
        \| e^{t \mathcal{L}^{\pm}} \pa_z(1,\cR_2,\cR_3)\W^{\pm} \|_{L^1_t L^{\infty}_yL^2_z}     
        \lesssim &~
        \nu^{-\f56}\|\W^{\pm}_{in}\|_{L^2}+\nu^{-\f56}\|N^{\pm}\|_{L^1_tL^2}.
    \end{align}
    Recalling \eqref{rep-u01-w01} and \eqref{rep-u023}, combined with \eqref{est-R3-RT1} and \eqref{est-R3-RT2}, it holds that
         \begin{align*}
        \nu^{\f13}\|u_0^2\|_{L^2_t L^{\infty}_yL^2_z}
        +\nu^{\f56}\|\pa_z\uu_0\|_{L^1_t L^{\infty}_yL^2_z}
        \lesssim    \|\uu_{0in}\|_{L^2}+\|\PP_0(\uu\cdot\nabla_{\x_1}  \uu)\|_{L^1_tL^2}.
    \end{align*}
    By a similar derivation, we can find that
    \begin{align*}
        \nu^{\f13}\|\pa_zu_0^2\|_{L^2_t L^{\infty}_yL^2_z}
        +\nu^{\f56}\|\pa_z^2\uu_0\|_{L^1_t L^{\infty}_yL^2_z}
        \lesssim 
        \|\pa_z\uu_{0in}\|_{L^2}+\|\pa_z\PP_0(\uu\cdot\nabla_{\x_1}  \uu)\|_{L^1_tL^2}.
    \end{align*}
    The proof is therefore complete.
\end{proof}

With Lemma \ref{lemm-str-T} at hand, it therefore suffices to prove Proposition \ref{prop-1}.
\begin{proof}[\underline{\textbf{Proof of Proposition \ref{prop-1} on $\T\times\R\times \T$}}]
    For the non-zero modes, the estimates analogous to the domain $\T\times\R^2$ in Section \ref{sec4:energy est zero} yield the following bound:
    \begin{align}\label{sec3:priori result-1-T}
	   \sup_{\tau\in[0,t]}\E_{\not=}(\tau)
	   +\int_0^t\D_{\not=}(\tau)\,d\tau
	   \lesssim \var_0^2+\ep_1^3\nu^{3\alpha-\f23-\delta_1}
       \lesssim \var_0^2+\ep_1^3\nu^{2\alpha},
    \end{align}
    where we have used the fact that $\alpha\geq \f56$.
    
    \noindent While for the zero modes, we divide the proof into two steps.

    \medskip

    \noindent\textbf{Step 1:} we aim to show that
    \begin{align}\label{est-energy-1-T}
        \sum_{s=0}^5\big(\nu^{\f13}\|\nabla^s(1+\pa_z)u_0^2\|_{L^2_tL^\infty_yL^2_z}
	    +\nu^{\f56}\|\nabla^s(1+\pa_z)\pa_z\uu_0\|_{L^1_t L^\infty_yL^2_z}\big)
        \lesssim \var_0+ \ep_1^2 \nu^{\al}.
    \end{align}
    The direct application of \eqref{est-str-u02-T} yields that
    \begin{align}\label{est-energy-T-1}
	&\sum_{s=0}^5\big(\nu^{\f13}\|\nabla^s(1+\pa_z)u_0^2\|_{L^2_tL^\infty_yL^2_z}
	+\nu^{\f56}\|\nabla^s(1+\pa_z)\pa_z\uu_0\|_{L^1_t L^\infty_yL^2_z}\big)
    \nonumber
    \\
	\lesssim &\|(1+\pa_z)\uu_{0in}\|_{H^5}
    +\|(1+\pa_z)\PP_0(\uu\cdot\nabla_{\x_1}\uu)\|_{L^1_t{H^5}}.
\end{align}
   We now deal with the nonlinear term on the right-hand side of \eqref{est-energy-T-1}:
   \begin{align}\label{est-F-T}
       \|(1+\pa_z)\PP_0(\uu\cdot\nabla_{\x_1}  \uu)\|_{L^1_t{H^5}}
       \lesssim 
       \|(1+\pa_z)(\uu_0\cdot\nabla \uu_0)\|_{L^1_t{H^5}}
       +\|(1+\pa_z)\PP_0(\uu_{\not=}\cdot\nabla_{\x_1} \uu_{\not=})\|_{L^1_t{H^5}}.
    \end{align}
    Applying the Sobolev embedding inequality
    $H^1(\T)\hookrightarrow L^{\infty}(\T)$
    and the \textit{a priori} assumption \eqref{priori assumption}, we find that
   \begin{align}\label{est-F1-T}  
       &\|(1+\pa_z)(\uu_0\cdot\nabla \uu_0)\|_{L^1_t{H^5}}
       \nonumber
       \\
       \lesssim & ~
       \|(1+\pa_z)u_0^2\|_{L^2_tW^{4,\infty}} \|(1+\pa_z)\pa_y\uu_0\|_{L^2_tH^5}
       +\|(1+\pa_z)\nabla^5 u_0^2\|_{L^2_tL^\infty_{y}L^2_z} \|(1+\pa_z)\pa_y\uu_0\|_{L^2_tL^2_yL^\infty_{z}}
       \nonumber
       \\
       &+\|(1+\pa_z)u_0^3\|_{L^\infty_tL^2_yL^\infty_{z}} \|(1+\pa_z)\na^5\pa_z\uu_0\|_{L^1_tL^\infty_yL^2_{z}}
       +\|(1+\pa_z) u_0^3\|_{L^\infty_tH^5} \|(1+\pa_z)\pa_z\uu_0\|_{L^1_tW^{4,\infty}}
       \nonumber
       \\  
       \lesssim&~ \ep_1 \nu^{\al-\f56}\sum_{s=0}^5\big(\nu^{\f13}\|\nabla^s(1+\pa_z)u_0^2\|_{L^2_tL^\infty_yL^2_z}
+\nu^{\f56}\|\nabla^s(1+\pa_z)\pa_z\uu_0\|_{L^1_t L^\infty_yL^2_z}\big).
   \end{align}
   We employ \eqref{est-inviscid} and Lemma \ref{3lem-key}, to obtain that
   \begin{align}\label{est-F2-T}
       &\|(1+\pa_z)\PP_0(\uu_{\not=}\cdot\nabla_{\x_1} \uu_{\not=})\|_{L^1_t{H^5_{y,z}}}
       \nonumber
       \\
       \lesssim &~
       \|(1+\pa_z)\PP_0(1,\nabla_{\x_1})(\uu_{\not=}\cdot\nabla_{\x_1} \uu_{\not=})\|_{L^1_t{H^4_{y,z}}}
       \nonumber
       \\
       \lesssim &
       \sum_{j=1,3}
       \|(1+\pa_z)(1,\widetilde{\nabla})(U_{\not=}^j\pa_j \U_{\not=})\|_{L^1_t{H^4}}
       +\|(1+\pa_z)((1,\widetilde{\nabla})U_{\not=}^2\tilde{\pa}_y \U_{\not=})\|_{L^2_t{H^4}}
       +\|(1+\pa_z)(U_{\not=}^2\tilde{\pa}_y \widetilde{\nabla}\U_{\not=})\|_{L^2_t{H^4}}
       \nonumber
       \\
       \lesssim&
       \sum_{j=1,3}
       \big(\|(1+\pa_z)(1,\widetilde{\nabla})U_{\not=}^j\|_{L^2_tH^4}\|(1+\pa_z)\pa_j \U_{\not=}\|_{L^2_t{H^4}}
       +\|(1+\pa_z)U_{\not=}^j\|_{L^2_tH^4}\|(1+\pa_z)\widetilde{\nabla}\pa_j \U_{\not=}\|_{L^2_t{H^4}}\big)
       \nonumber
       \\
       &+\|(1+\pa_z)(1,\widetilde{\nabla})U_{\not=}^2\|_{L^2_t{H^4}}
       \|(1+\pa_z)\tilde{\pa}_y \U_{\not=}\|_{L^2_t{H^4}}
       +\|(1+\pa_z)U_{\not=}^2\|_{L^2_t{H^4}} \|(1+\pa_z)\tilde{\pa}_y \widetilde{\nabla}\U_{\not=}\|_{L^2_t{H^4}}
       \nonumber
       \\
       \lesssim& ~\nu^{-\f56}\int_0^t\D_{\not=}(\tau)\,d\tau
       \lesssim
       \ep_1^2\nu^{2\al-\f56}.
   \end{align}
   Since $\al\geq \f56$, we choose $\ep_1$ suitably small and deduce from \eqref{est-energy-T-1}, \eqref{est-F1-T} and \eqref{est-F2-T} that \eqref{est-energy-1-T} holds.

    \medskip

    \noindent\textbf{Step 2:} we aim to show that
    \begin{align*}
        \|(1+\pa_z)\uu_0\|_{L^\infty_t H^5}^2
	    +\nu\|(1+\pa_z)\nabla\uu_0\|_{L^2_t H^5}^2
        \lesssim \var_0^2+ \ep_1^3 \nu^{2\al}.
    \end{align*}
    Applying standard energy estimates to system \eqref{1-eq2} as \eqref{energy-0-1}, and using the estimates \eqref{est-energy-1-T}, \eqref{est-F1-T} and \eqref{est-F2-T}, we can obtain that
    \begin{align*}
        &\|(1+\pa_z)\uu_0\|_{L^\infty_t H^5}^2
	    +\nu\|(1+\pa_z)\nabla\uu_0\|_{L^2_t H^5}^2
        \nonumber
        \\
        \lesssim& \|(1+\pa_z)\uu_{0in}\|_{H^5}^2+\big(\|(1+\pa_z)(\uu_0\cdot\nabla \uu_0)\|_{L^1_tH^5}
      +  \|(1+\pa_z)\PP_0(\uu_{\not=}\cdot\nabla_{\x_1} \uu_{\not=})\|_{L^1_t{H^5}}\big)
        \|(1+\pa_z)\uu_0\|_{L^\infty H^5}
        \nonumber
        \\
        \lesssim &~ \var_0^2+\ep_1^2\nu^{2\al-\f56}\big(
        \var_0+\ep_1^2\nu^\al\big)+\ep_1^3\nu^{3\al-\f56}
        \nonumber
        \\
        \lesssim&~\var_0^2+\ep_1^3\nu^{2\al},
    \end{align*}
    where we have used the fact that $\al\geq \f56$.
    We thus finish the proof of Proposition \ref{prop-1} on $\T\times\R\times \T$.
    \end{proof}



\begin{appendices}


\section{Dispersive and Strichartz estimates}
\label{App-DS-est}

This appendix provides the proof of the dispersive and Strichartz estimates the zero modes. Here, $k^{+}$ is a number greater than but arbitrarily close to $k$.


\subsection{Dispersive estimates on \texorpdfstring{$\R^2$}{R2}}
\begin{lemm}\label{App-dis-1}
	Let $\mathcal{R}_3=\pa_z|\nabla_{y,z}|^{-1}$.
	There exists $C>0$ such that for any $\gamma>0, f\in \mathcal{S}(\R^2),$
	\beq\label{dispersive}
	\left\| e^{\gamma t\mathcal{R}_3(D)}f \right\|_{L^\infty(\mathbb{R}^2)} \leq C (1+|\gamma t|)^{-\frac{1}{2}} \|f\|_{W^{2^{+},1}(\mathbb{R}^2)}\,.
	\eeq
\end{lemm}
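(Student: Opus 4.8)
**Proof plan for the dispersive estimate $\|e^{\gamma t\mathcal{R}_3(D)}f\|_{L^\infty(\mathbb{R}^2)} \lesssim (1+|\gamma t|)^{-1/2}\|f\|_{W^{2^+,1}(\mathbb{R}^2)}$.**

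The operator $e^{\gamma t\mathcal{R}_3(D)}$ is a Fourier multiplier with symbol $e^{i\gamma t\,\eta/|(\xi,\eta)|}$, so the plan is the standard oscillatory-integral / stationary phase route.

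Key steps:
- Write $e^{\gamma t\mathcal{R}_3}f = K_{\gamma t}* f$ where $K_T$ has Fourier transform $e^{iT\eta/|(\xi,\eta)|}$. By Young's inequality $\|K_T*f\|_{L^\infty}\le \|K_T\|_{L^\infty}\|f\|_{L^1}$ would be too crude (the kernel is not integrable/bounded uniformly), so instead I would use a Littlewood--Paley decomposition in frequency. For the piece at frequency $\sim 2^j$, rescale $(\xi,\eta)=2^j(\xi',\eta')$ so that the symbol becomes $e^{iT\eta'/|(\xi',\eta')|}$ on an annulus $|(\xi',\eta')|\sim 1$ — note the phase is homogeneous of degree $0$, hence scale-invariant, which is why all the $j$-pieces have the same dispersive decay and one only loses derivatives through the $\chi_j(D)$ localization (this is the source of the $W^{2^+,1}$ rather than $W^{0,1}$ norm).
- On the unit annulus, estimate the oscillatory integral $\int e^{i(x\cdot\zeta + T\,\eta/|\zeta|)}\chi(\zeta)\,d\zeta$ by non-stationary phase when $|x|$ is large relative to $T$, and by stationary phase otherwise. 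The phase $\phi(\zeta) = T\eta/|\zeta|$ has $\nabla_\zeta\phi = T\big(-\xi\eta/|\zeta|^3,\,1/|\zeta| - \eta^2/|\zeta|^3\big) = \frac{T}{|\zeta|^3}(-\xi\eta,\,\xi^2)$, which vanishes only when $\xi=0$; the Hessian degenerates in one direction there (the curvature of the level set $\{\eta/|\zeta| = \text{const}\}$ is nonzero in one direction), so stationary phase in two variables gives a gain of $|T|^{-1/2}$ — this is exactly the $(1+|\gamma t|)^{-1/2}$ decay, with $T=\gamma t$.
- Sum the Littlewood--Paley pieces: the high-frequency pieces $j\ge 0$ are summed by paying $2^{-2^+ j}$ worth of derivatives (absorbing the $\chi_j(D)$ bump into $\|\chi_j(D)f\|_{L^1}\lesssim 2^{(2^+)j}\|\chi_j(D)f\|_{W^{-2^+,1}}$-type bounds, using $L^1\to L^1$ boundedness of the localizations and Bernstein); the low-frequency pieces $j<0$ are summed geometrically using only the crude $L^1\to L^\infty$ bound with $\|K_T\|_{L^\infty}\lesssim 2^{2j}$ on a single annulus (no decay needed there, but also no loss since $t$-independent piece is handled by the $1$ in $(1+|\gamma t|)$). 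Combining with $\|e^{\gamma t\mathcal{R}_3}\|_{L^1\to L^1}\lesssim 1$ for the trivial bound at small times interpolates to give the stated uniform-in-$t$ estimate.

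The main obstacle is the \emph{degeneracy of the Hessian of the phase} at $\xi=0$: the map $\zeta\mapsto \eta/|\zeta|$ has a critical manifold rather than isolated critical points, and the stationary phase estimate must be done carefully (e.g. by a further dyadic decomposition in the distance $|\xi|$ to the critical set, or by viewing it as a one-dimensional stationary phase after integrating out the nondegenerate variable) to extract precisely the $|\gamma t|^{-1/2}$ power and no worse. Getting the \emph{sharp} exponent $\tfrac12$ (rather than $\tfrac13$ or something smaller) is the crux; it reflects that the level sets of the phase have exactly one nonvanishing principal curvature.
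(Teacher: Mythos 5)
Your plan follows essentially the same route as the paper: Littlewood--Paley decomposition, rescaling of each dyadic piece to the unit annulus using that the symbol is homogeneous of degree zero (so one pays $2^{2j}$ going from $L^1$ on the $j$-th piece to $L^\infty$), a unit-scale estimate of the form $\|e^{t\mathcal{R}_3}\chi_0(D)f\|_{L^\infty}\lesssim |t|^{-1/2}\|\chi_0(D)f\|_{L^1}$, and finally summation in $j$ together with Sobolev embedding $W^{2^+,1}(\R^2)\hookrightarrow L^\infty$ to cover $|t|\le 1$. The paper proves the unit-scale estimate exactly via the second of your two suggested routes: pass to polar coordinates, so the phase becomes $t\sin\theta + r\rho\cos(\theta-\varphi)$, integrate out $r$ against the bump (producing a Schwartz function $B$), and then do an explicit one-dimensional stationary-phase split around $\theta=\pi/2$ (trivial bound $\delta$ near the critical angle, integration-by-parts bound $(\delta t)^{-1}$ away from it, optimize $\delta\sim t^{-1/2}$).

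Two points in your outline are imprecise though not fatal. First, the level sets $\{\eta/|\zeta|=\text{const}\}$ of a degree-zero homogeneous symbol in $\R^2$ are straight rays through the origin, so they have zero curvature; the $|T|^{-1/2}$ gain is not a ``one nonvanishing principal curvature of the level set'' phenomenon but rather a genuine 1D nondegenerate stationary phase in the angular variable ($\phi(\theta)=\sin\theta$ has $\phi''(\pi/2)=-1\neq 0$) after the radial variable is integrated out; the full 2D Hessian has rank one on the annulus, with the degenerate direction radial. Second, your remark that ``no decay is needed'' for the low-frequency pieces and that the $1$ in $(1+|\gamma t|)$ takes care of them is misleading: a purely $t$-independent bound on the low-frequency block would yield a term $\sim\|f\|_{L^1}$ with no decay, breaking the estimate for large $t$. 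In fact the scaling argument already hands you the $|t|^{-1/2}$ factor uniformly in $j$ (the paper's display after \eqref{dispes-pj}); the low-frequency sum converges absolutely simply because of the positive power $2^{2j}$, and the $+$ in $2^{+}$ is only needed to dominate the high-frequency tail by $\|f\|_{W^{2^+,1}}$.
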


\begin{proof}
	It suffices to prove the case of $\gamma=1.$
	In fact, to show \eqref{dispersive},  we will prove that
	\beq\label{dispersive-1}
	\left\| e^{t\mathcal{R}_3}f \right\|_{L^\infty(\mathbb{R}^2)} \leq |t|^{-\frac{1}{2}} \|f\|_{W^{2^{+},1}(\mathbb{R}^2)}, \quad \forall~ |t|\geq 1,
	\eeq
	which, combined with the Sobolev embeddings, gives \eqref{dispersive}.
	
	Let $\chi_j=\chi_j(\xi, \eta)=\chi_0(2^{-j}(\xi, \eta))\, (j\in \mathbb{Z})$ be radial, dyadic cut-off functions that are supported on $\{2^{j-1}\leq |(\xi,\eta)|\leq 2^{j+1}\}$ and satisfies 
	\begin{align*}
		\sum_{j\in \mathbb{Z}}\chi_j(\xi,\eta)=1, \quad \text{for}~(\xi,\eta)\neq 0\,.
	\end{align*}
	
	Let us first claim that it suffices to show that 
	\begin{align}\label{disp-show}
		\left\| e^{t\mathcal{R}_3}\chi_0(D)f \right\|_{L^\infty(\mathbb{R}^2)} \lesssim |t|^{-\frac{1}{2}} \|\chi_0(D)f\|_{L^1(\mathbb{R}^2)}.
	\end{align}
	Indeed, it holds that 
	\begin{align*}
		\cF\big[e^{t\mathcal{R}_3}\chi_j(D)f\big](\xi,\eta)&=e^{t \mathcal{R}_3(\xi,\eta)}\chi_0\big(2^{-j} (\xi,\eta)\big) \cF[f](\xi,\eta) \\
		&= e^{t \mathcal{R}_3(\tilde{\xi},\tilde{\eta})}\chi_0(\tilde{\xi},\tilde{\eta}) \cF[f](2^{j}(\tilde{\xi},\tilde{\eta}))
        \\
        &=e^{t \mathcal{R}_3(\tilde{\xi},\tilde{\eta})}\chi_0(\tilde{\xi},\tilde{\eta}) \cF[g](\tilde{\xi},\tilde{\eta}),
	\end{align*}
	where we denoted $(\tilde{\xi}, \tilde{\eta})=2^{-j}(\xi,\eta)$ and $\cF[g](\tilde{\xi},\tilde{\eta})\tri \cF[f](2^{j}(\tilde{\xi},\tilde{\eta}))$.
	Consequently, it follows from \eqref{disp-show} that
	\begin{align}\label{dispes-pj}
		\left\| e^{t\mathcal{R}_3}\chi_j(D)f \right\|_{L^\infty(\mathbb{R}^2)} 
        &=2^{2j}\left\| e^{t\mathcal{R}_3}\chi_0(D)g \right\|_{L^\infty(\mathbb{R}^2)} 
        \lesssim 2^{2j}|t|^{-\frac{1}{2}} \left\| \chi_0(D)g \right\|_{L^1(\mathbb{R}^2)} 
        \nonumber
        \\
        &=|t|^{-\frac{1}{2}} \left\| \chi_j(D)f(2^{-j}(x,y))\right\|_{L^1(\mathbb{R}^2)} 
        =
        2^{2j}|t|^{-\frac{1}{2}} \|\chi_j(D)f\|_{L^1(\mathbb{R}^2)}, \quad \forall~ j\in \mathbb{Z}\, ,
	\end{align}
    where we have used the fact that
    \begin{align*}
        (\chi_0(D)g) (x,y)=2^{-2j}(\chi_j(D) f)(2^{-j}(x,y)).
    \end{align*}
	Summarizing $j$ over $\mathbb{Z},$ we obtain 
	\eqref{dispersive-1}.
	
	We now focus on proving \eqref{disp-show}. By Inverse Fourier transform, it suffices to show that
	\[
	K(t, y,z) = \int_{\mathbb{R}^2} e^{i  (y\,\xi + z\,\eta) + t \mathcal{R}_3(\xi, \eta)} \tilde{\chi}_0(\xi, \eta) \, d\xi \, d\eta
	\]
	satisfies (here $ \tilde{\chi}_0={\chi}_0+{\chi}_1+{\chi}_{-1}$)
	\[
	\| K(t,y,z) \|_{L^\infty_{y,z}} \lesssim t^{-\frac{1}{2}}, \quad \forall~ t\geq 1.
	\]
	It is convenient to write $K$ in the polar coordinate
	\[
	\xi = r \cos \theta, \quad \eta = r \sin \theta \Rightarrow \mathcal{R}_3(\xi, \eta) = i\,\sin \theta\,\,.
	\]
	Let us also set \( y = \rho \cos \varphi, \, z = \rho \sin \varphi \), then $K$ can be written
	\begin{align*}
		K(t,y,z)&=K(t,\rho,\varphi) 
		= \int_0^\infty \int_{-\pi}^{\pi} e^{i t \sin \theta} \, e^{i r  
			\rho \cos (\theta-\varphi)} A(r) \, r \, dr \, d\theta\\
		&=\int_0^\infty \int_{-\pi}^{0} e^{i t \sin \theta} \, e^{i r  \rho \cos (\theta-\varphi)} A(r) \, r \, dr \, d\theta+\int_0^\infty \int_{0}^{\pi} e^{i t \sin \theta} \, e^{i r  \rho \cos (\theta-\varphi)} A(r) \, r \, dr \, d\theta\\
		&\tri  K_{1}(t, \rho, \varphi)+K_{2}(t,\rho, \varphi),
	\end{align*}
	where 
	\beqs
	\tilde{\chi}_0(\xi,\eta)= A(r) \in  C_c^{\infty}(\mathbb{R}).
	\eeqs
	Define
	\[
	B(z) \tri \int_0^\infty e^{i r  z} A(r) r \, dr\,=\mathcal{F}^{-1}(A(r) r)(z).
	\]
	We will prove that, 
    for any $(\rho,\varphi)\in (0,+\infty)\times [0,2\pi]$
	\beq\label{es-K2}
	|K_2(t, \rho,\varphi)|=\bigg|\int_{0}^{\pi} e^{it \sin\theta}B(\rho \cos (\theta-\varphi))\, d\theta\bigg| \lesssim |t|^{-1/2},
	\eeq
	the proof of $K_1$ is similar. Note that $B=\mathcal{F}^{-1}(A(r) r)$ is a Schwartz function, in particular $B\in W^{1,1}(\R)$,
\begin{align*}
		\int_{-\pi}^{\pi} |\partial_{\theta}B(\rho \cos (\theta-\varphi))|\, d\theta=\int_{-\pi}^{\pi} |\partial_{\theta}B(\rho \cos \theta)|=2\int_{0}^{\pi} |\partial_{\theta}B(\rho \cos \theta)|\, d\theta=2\int_{-\rho}^{\rho}|B'(z)|dz\leq C, 
	\end{align*}
 \if0   To do so, we need to consider different situations depending on the sizes of $\rho$ and $t.$
	
	\textbf{Case 1.} When \( 0\leq \rho \leq 10 \).

	In this case, 
	\( B_{\rho}(\theta), \partial_\theta B_{\rho}(\theta) \)  are  bounded functions in \( \theta \), uniformly in \( \rho \in [0,10], \varphi\in [0,2\pi]\): 
	\beq \label{unies-B}
	\sup_{\rho\in[0,10]}\sup_{\varphi\in  [0,2\pi]}\sup_{\theta\in[0,\pi]} \Big(|B_{\rho}(\theta-\varphi)|+|\pa_{\theta} B_{\rho}(\theta-\varphi)|\Big)
	<+\infty\,.
	\eeq\fi
	We can apply the 1-d stationary phase arguments to show \eqref{es-K2}.
	Indeed, $(\sin\theta)'=\cos\theta$ vanishes at 
	$\theta_0=\pi/2,$ but $(\sin\theta)''=-\sin\theta$ does not vanish near $\theta=\theta_0.$ 
	We thus split $K_2=K_{21}+K_{22},$ where 
	\begin{align*}
		K_{21}(t,\rho,\varphi)=\int 
		_{I_{\delta}}\, e^{it \sin\theta} B(\rho \cos (\theta-\varphi)) \, d\theta,
        ~~\text{and}
		~~  
        K_{22}(t,\rho,\varphi)=
		\int_{[0,\pi]\backslash I_{\delta}}\, e^{it \sin\theta} B(\rho \cos (\theta-\varphi)) \,d\theta, 
	\end{align*}
	where $I_{\delta}=[\frac{\pi}{2}-\delta,\, \frac{\pi}{2}+\delta]$ with
	$0<\delta<\pi/2$ being small and to be chosen. 
	On one hand, thanks to $B\in L^{\infty}$, 
	$K_{21}$ can be controlled simply by 
	\begin{align}\label{es-K21}
		|K_{21}(t,\cdot,\cdot)| \lesssim \delta, 
	\end{align}
	On the other hand, it holds that on the interval $[0,\pi]\backslash I_{\delta},$ 
	\beqs 
	|\cos\theta|\geq \sin\delta\geq \frac{\delta}{2}.
	\eeqs
	Consequently, using the identity $e^{it\sin\theta}=(\frac{\partial_{\theta}}{i t \cos\theta})e^{it\sin\theta}$ and integrating by parts in $\theta,$ we obtain that 
	\beq\label{K22}
	\sup_{\varphi\in  [0,2\pi]}  |K_{21}(t,\rho,\varphi)| \lesssim  \frac{1}{\delta t} \sup_{\varphi\in  [0,2\pi]}  
	\big(\|B(\rho \cos (\theta-\varphi))\|_{L_{\theta}^{\infty}}+\|\pa_{\theta}B(\rho \cos (\theta-\varphi))\|_{L_{\theta}^{1}}\big)
	\lesssim  \frac{1}{\delta t}.
	\eeq
	The estimate \eqref{es-K2} follows from 
	\eqref{es-K21} and \eqref{K22} by choosing $\delta=Ct^{-\frac12}$. 
\end{proof}

\begin{rema}
    By density arguments, it is easy to see that \eqref{dispersive} holds as long as 
    $f$ is such that the right-hand side is bounded.
\end{rema}
\begin{lemm}\label{App-dis-2}
	Let $\mathcal{R}_3=\pa_z|\nabla_{y,z}|^{-1}$.
  It holds that for  $f\in \mathcal{S}(\R^2)$ and
    for $a=0,1,$
	\begin{align}\label{dispersive-improved}
		\left\| e^{\gamma t\mathcal{R}_3}\pa_y |\nabla_{y,z}|^{-a} f \right\|_{L^\infty(\mathbb{R}^2)} \leq C (1+|\gamma t|)^{-1} \|f\|_{W^{(3-a)^{+},1}(\mathbb{R}^2)}\,.
	\end{align}
\end{lemm}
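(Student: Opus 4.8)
\textbf{Proof proposal for Lemma \ref{App-dis-2}.} The plan is to reduce \eqref{dispersive-improved} to the endpoint (large-time) estimate
\begin{align*}
\left\| e^{t\mathcal{R}_3}\pa_y |\nabla_{y,z}|^{-a} f \right\|_{L^\infty(\mathbb{R}^2)} \lesssim |t|^{-1} \|f\|_{W^{(3-a)^{+},1}(\mathbb{R}^2)}, \qquad |t|\geq 1,
\end{align*}
since for $|t|\leq 1$ the bound is immediate from Bernstein/Sobolev embedding and the uniform boundedness of $e^{t\mathcal{R}_3}$ on $L^2$-based spaces; the case $\gamma\neq 1$ is a rescaling in $t$ exactly as in Lemma \ref{App-dis-1}. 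First I would run the same Littlewood--Paley decomposition $f=\sum_{j}\chi_j(D)f$ used in the proof of Lemma \ref{App-dis-1}, reducing matters to a single frequency-localized piece. By the scaling identity there ($\cF[g](\tilde\xi,\tilde\eta)=\cF[f](2^j(\tilde\xi,\tilde\eta))$ with $(\tilde\xi,\tilde\eta)=2^{-j}(\xi,\eta)$), the multiplier $\eta/|(\xi,\eta)|^{1+a}=2^{-ja}\,\tilde\eta/|(\tilde\xi,\tilde\eta)|^{1+a}$ picks up a factor $2^{-ja}$, so one gets $\| e^{t\mathcal{R}_3}\pa_y |\nabla_{y,z}|^{-a}\chi_j(D)f\|_{L^\infty}\lesssim 2^{(3-a)j}\,|t|^{-1}\|\chi_j(D)f\|_{L^1}$ provided the unit-frequency kernel
\begin{align*}
K_a(t,y,z)=\int_{\mathbb{R}^2} e^{i(y\xi+z\eta)+t\mathcal{R}_3(\xi,\eta)}\,\frac{i\eta}{|(\xi,\eta)|^{a}}\,\tilde\chi_0(\xi,\eta)\,d\xi\,d\eta
\end{align*}
satisfies $\|K_a(t,\cdot,\cdot)\|_{L^\infty_{y,z}}\lesssim |t|^{-1}$ for $t\geq 1$; summing in $j$ then yields the claimed estimate with the $(3-a)^+$ loss absorbed by the summation of the geometric tails (using $a\le 1<2$ so the low frequencies still converge, and Sobolev to sum the high ones).

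The core is therefore the improved decay $|K_a(t)|\lesssim t^{-1}$ for the kernel carrying the extra $\eta$ factor. I would write $K_a$ in polar coordinates $\xi=r\cos\theta$, $\eta=r\sin\theta$ as in Lemma \ref{App-dis-1}, so that $\mathcal{R}_3(\xi,\eta)=i\sin\theta$ and the $\eta$-factor becomes $r^{1-a}\sin\theta$. With $y=\rho\cos\varphi$, $z=\rho\sin\varphi$, the $r$-integral produces a Schwartz profile $\widetilde B_a(z)=\mathcal{F}^{-1}(A(r)r^{2-a})(z)$ (note $A\in C_c^\infty$ away from $0$, so the negative power $r^{-a}$ on the support of $\tilde\chi_0$ is harmless and $\widetilde B_a$ is still Schwartz), and one is left with the one-dimensional oscillatory integral
\begin{align*}
K_{a,2}(t,\rho,\varphi)=\int_0^\pi e^{it\sin\theta}\,\sin\theta\,\widetilde B_a(\rho\cos(\theta-\varphi))\,d\theta
\end{align*}
(plus the analogous $\theta\in[-\pi,0]$ piece). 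The new and decisive point, compared with Lemma \ref{App-dis-1}, is the amplitude factor $\sin\theta$: at the stationary point $\theta_0=\pi/2$ of the phase $\sin\theta$ one has $\sin\theta_0=1\neq 0$, but the amplitude has a \emph{simple zero of its derivative} there, $\partial_\theta(\sin\theta)|_{\theta_0}=\cos\theta_0=0$; equivalently $\sin\theta=1-\tfrac12(\theta-\theta_0)^2+O((\theta-\theta_0)^4)$ matches the Hessian of the phase, which is exactly the situation where stationary phase gives one extra power of $t^{-1/2}$. Concretely I would again split $K_{a,2}=K_{a,21}+K_{a,22}$ with $I_\delta=[\pi/2-\delta,\pi/2+\delta]$: on $I_\delta$ the amplitude obeys $|\sin\theta-1|\lesssim\delta^2$, so estimating $e^{it\sin\theta}\sin\theta=e^{it\sin\theta}+e^{it\sin\theta}(\sin\theta-1)$ and integrating the first summand by parts against $\partial_\theta e^{it\sin\theta}=it\cos\theta\,e^{it\sin\theta}$ (the single factor $\cos\theta$ in the integration-by-parts weight is now supplied by $\partial_\theta\sin\theta$ itself, and it cancels the $1/\cos\theta$ from the differential operator) gives $|K_{a,21}|\lesssim t^{-1}+\delta^3$; off $I_\delta$ one integrates by parts twice as before to get $|K_{a,22}|\lesssim \delta^{-2}t^{-1}$ (one loses $\delta^{-1}$ per integration by parts but gains $t^{-1}$, and two integrations suffice because $\widetilde B_a\in W^{2,1}$). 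Choosing $\delta=t^{-1/3}$ balances these to $|K_{a,2}|\lesssim t^{-1}$. The $K_{a,1}$ piece is handled identically, and the boundary and $W^{2,1}$-norm estimates on $\theta\mapsto\widetilde B_a(\rho\cos(\theta-\varphi))$ are uniform in $\rho,\varphi$ exactly as in Lemma \ref{App-dis-1} by the substitution $z=\rho\cos\theta$.

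The main obstacle is precisely this bookkeeping in the stationary-phase step: one must track that the amplitude's vanishing derivative genuinely produces the extra $t^{-1/2}$ decay rather than merely the $t^{-1/2}$ already available, and verify that after extracting the cancellation $\sin\theta - 1$ the remaining integrals converge with the stated number of derivatives on $\widetilde B_a$; in particular one should double-check that the $\delta^{-2}$ growth in $K_{a,22}$ (two integrations by parts, as opposed to one in Lemma \ref{App-dis-1}) is indeed affordable, which it is since $t^{-1}\delta^{-2}=t^{-1/3}$ with $\delta=t^{-1/3}$. Everything else—the rescaling $\gamma t$, the Littlewood--Paley summation with the $2^{(3-a)j}$ weights, and the passage to general Schwartz (or merely finite-norm) $f$ by density—is routine and parallels the proof of Lemma \ref{App-dis-1} verbatim.
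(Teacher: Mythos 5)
There is a genuine error in your kernel, and it propagates to break the decay estimate. With the paper's Fourier conventions ($\xi$ dual to $y$, $\eta$ dual to $z$), the symbol of $\partial_y$ is $i\xi$, not $i\eta$. In the polar coordinates $\xi=r\cos\theta$, $\eta=r\sin\theta$ this means the operator $\partial_y|\nabla_{y,z}|^{-a}$ contributes the amplitude factor $r^{1-a}\cos\theta$, \emph{not} $r^{1-a}\sin\theta$ as you wrote. The distinction is decisive: $\cos\theta$ vanishes at the stationary point $\theta_0=\pi/2$ of the phase $\sin\theta$, and in fact $e^{it\sin\theta}\cos\theta=(it)^{-1}\partial_\theta\bigl(e^{it\sin\theta}\bigr)$ is a perfect derivative. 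The paper's proof simply performs one integration by parts in $\theta$ and bounds the result by $t^{-1}\int_{-\rho}^{\rho}|\tilde B'(z)|\,dz\lesssim t^{-1}$; no splitting near the critical point or choice of $\delta$ is needed.

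Your kernel with $\sin\theta$ does not decay at rate $t^{-1}$. Since $\sin(\pi/2)=1\neq 0$, the amplitude is nondegenerate at the stationary point and the standard stationary-phase estimate gives only $t^{-1/2}$ (the vanishing of $\partial_\theta(\sin\theta)$ at $\theta_0$ is irrelevant to the decay; what matters is the vanishing of the amplitude itself). Your attempted fix does not repair this: after splitting $\sin\theta=1+(\sin\theta-1)$ on $I_\delta$, the first piece $\int_{I_\delta}e^{it\sin\theta}\tilde B_a\,d\theta$ cannot be integrated by parts against $\partial_\theta(e^{it\sin\theta})=it\cos\theta\,e^{it\sin\theta}$, because the resulting $1/\cos\theta$ blows up inside $I_\delta$; that piece is only $O(\delta)$, exactly as in Lemma \ref{App-dis-1}. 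Optimizing the resulting $\delta+\delta^3+\delta^{-2}t^{-1}$ gives at best $t^{-1/3}$, not $t^{-1}$. The moment you correct the symbol to $\cos\theta$, the total-derivative observation makes everything else in your proposal (the splitting, the $\delta$-optimization, the two integrations by parts off $I_\delta$) unnecessary. The Littlewood--Paley rescaling, the small-time bound, and the $\gamma$-scaling in your write-up are fine and match the paper.
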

\begin{proof}
	The proof follows from similar and easier arguments as in the previous lemma. 
	It suffices to show 
    \begin{align}\label{dispes-pj-pay}
		\left\| e^{t\mathcal{R}_3}\pa_y |\nabla_{y,z}|^{-a} \chi_j(D)f \right\|_{L^\infty(\mathbb{R}^2)} \leq 2^{(3-a)j}|t|^{-1} \|\chi_j(D)f\|_{L^1(\mathbb{R}^2)}, \quad \forall~ j\in \mathbb{Z}\, ,
	\end{align}
which relies on the boundedness of 
	\begin{align*}
		M(t,\rho,\varphi)=  \int_{-\pi}^{\pi} e^{i t \sin \theta}\cos (\theta) \tilde{B}({\rho}\cos (\theta-\varphi)) \, d\theta 
	\end{align*}
	where 
	\beqs
	\tilde{B}(z)\tri \int_0^\infty e^{i r 
		z} A(r) \, r^{2-a} \, dr\,=\mathcal{F}^{-1}(A(r) r^{2-a})(z)\in W^{1,1}(\R).
	\eeqs
	
	
	Thanks to the identity $e^{i t \sin \theta}\cos (\theta) =\frac{1}{it}\pa_{\theta} (e^{i t \sin \theta}),$ we integrate by parts in $\theta$ to get
	\begin{align*}
		|M(t,\rho,\varphi)|&=\left|t^{-1}\int_{-\pi}^{\pi} e^{i t \sin \theta}\pa_{\theta}\tilde{B}({\rho}\cos (\theta-\varphi)) \, d\theta\right|\leq t^{-1}\int_{-\pi}^{\pi} |\pa_{\theta}\tilde{B}({\rho}\cos (\theta-\varphi))| \, d\theta\\&= t^{-1}\int_{-\pi}^{\pi} |\pa_{\theta}\tilde{B}({\rho}\cos \theta)| \, d\theta=2t^{-1}\int_{0}^{\pi} |\pa_{\theta}\tilde{B}({\rho}\cos \theta)| \, d\theta=2t^{-1}\int_{-{\rho}}^{{\rho}} |\tilde{B}'(z)| \, dz\lesssim t^{-1}.
	\end{align*}\if0
	Note that  $t^{-1}$ is essentially from bounding the 
	boundary term when integrating by parts, the other term involving the integral of $\theta$ can be better by integration by parts again.
	
	$\bullet$ Case $\rho\geq 1.$ 
	
	\quad $\bullet$ Case 2.1. $|\rho\cos(\theta-\varphi)|\geq 10.$ 
	We integrate by parts in $r$ in the integral of $\tilde{B}_{\rho}(\theta-\varphi)$ to show that $ \pa_{\theta} \tilde{B}_{\rho}(\theta-\varphi)$ is uniformly bounded in $\rho,\theta.$ Then we integrate by parts in $\theta$ in the integral of $M_2$ to get the desired result.

	\quad $\bullet$ Case 2.2. $|\rho\cos(\theta-\varphi)|\leq 10.$ In this case $\theta$ varies on the interval with length $\rho^{-1},$ denoted as $I_{\theta}.$ Using the fact 
	\begin{align*}
		\int_{I_{\theta}} \pa_{\theta} |\tilde{B}_{\rho}(\theta-\varphi)| d\theta \lesssim 1 \quad \text{uniformly in $\rho,$}
	\end{align*}
	we get readily the desired result by integrating by parts in $\theta.$\fi
	
\end{proof}

\subsection{Strichartz estimates on \texorpdfstring{$\R^2$}{R2}}
Based on Lemmas \ref{App-dis-1}-\ref{App-dis-2}, our goal in this appendix is to prove the Strichartz estimates.

\begin{lemm}\label{lem-str-1}
	Let $\mathcal{R}_3=\pa_z|\nabla_{y,z}|^{-1}$.
	It holds that for any $f\in \mathcal{S}(\R^2),$
    any $\gamma>0, r\in[2,+\infty]$
	\begin{align}\label{str-1}
		\left\| e^{\gamma t\mathcal{R}_3}f \right\|_{L^p_tL^r(\mathbb{R}^2)} \leq C \|f\|_{H^{(1-\f2r)^+}(\mathbb{R}^2)}\,,~~\text{with}~~\f1p+\f{1}{2r}= \f14.
	\end{align}
    Moreover, for $b=0,1$
	\begin{align}\label{str-2}
		\left\| |\nabla_{y,z}|^b e^{\gamma t\mathcal{R}_3+\nu t\Delta}f \right\|_{L^{\f{2}{1+b}}_t L^\infty(\mathbb{R}^2)} \leq C\nu^{-(\f14+\f{b}{2})}\|f\|_{H^{(\f12)^+}(\mathbb{R}^2)}.
	\end{align}
\end{lemm}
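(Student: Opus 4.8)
\medskip

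The plan is to deduce both Strichartz estimates from the dispersive bounds of Lemma~\ref{App-dis-1} (and its rescaled dyadic form \eqref{dispes-pj}) together with the conservation of the $L^2$ norm under $e^{\gamma t\mathcal{R}_3}$, via the standard $TT^*$ argument, carried out Littlewood--Paley piece by piece so as to track the exact power of the frequency that is lost. First I would fix a dyadic block $\chi_j(D)f$ and observe that on the Fourier side $e^{\gamma t\mathcal{R}_3}$ is a unitary multiplier of modulus one, so $\|e^{\gamma t\mathcal{R}_3}\chi_j(D)f\|_{L^2}=\|\chi_j(D)f\|_{L^2}$; this is the ``$L^2\to L^2$, $t$-independent'' endpoint. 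The ``$L^1\to L^\infty$'' endpoint is precisely \eqref{dispes-pj}: $\|e^{t\mathcal{R}_3}\chi_j(D)g\|_{L^\infty}\lesssim 2^{2j}|t|^{-1/2}\|\chi_j(D)g\|_{L^1}$, with the analogous statement for general $\gamma>0$ obtained by the rescaling $t\mapsto\gamma t$ already used in Lemma~\ref{App-dis-1}. Interpolating between these two gives, for $2\le r\le\infty$,
\begin{align*}
\big\|e^{\gamma t\mathcal{R}_3}\chi_j(D)f\big\|_{L^r}\lesssim 2^{2j(1-\f2r)}(\gamma|t|)^{-\f12(1-\f2r)}\big\|\chi_j(D)f\big\|_{L^{r'}}.
\end{align*}
Feeding this into the Keel--Tao machinery (or, more elementarily, into the Hardy--Littlewood--Sobolev / Christ--Kiselev argument since the exponent $\sigma(r)=\tfrac12(1-\tfrac2r)$ satisfies $0\le\sigma(r)<1$ and we may take the non-endpoint version), the admissibility relation $\tfrac1p+\tfrac1{2r}=\tfrac14$ is exactly $\tfrac2p=\sigma(r)$, and one obtains
\begin{align*}
\big\|e^{\gamma t\mathcal{R}_3}\chi_j(D)f\big\|_{L^p_tL^r(\R^2)}\lesssim 2^{j(1-\f2r)}\big\|\chi_j(D)f\big\|_{L^2(\R^2)}.
\end{align*}
Summing in $j$ by Minkowski (using $p\ge 2$) and Cauchy--Schwarz against the $\ell^2$ Littlewood--Paley square function, the loss of $2^{j(1-\f2r)}$ is absorbed into $H^{(1-\f2r)^+}$; the small extra ``$+$'' is needed only to make the geometric-type summation in $j$ converge, since $e^{\gamma t\mathcal{R}_3}$ does not regularize and low frequencies are controlled by the $L^2$ part while high frequencies need the extra $\epsilon$ of regularity. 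This proves \eqref{str-1}.

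\medskip

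For \eqref{str-2} the key new point is that the dissipative factor $e^{\nu t\Delta}$ supplies the time decay needed to reach the endpoint exponent $L^{2/(1+b)}_t L^\infty$. On a dyadic block one has simultaneously the dispersive decay $(\gamma|t|)^{-1/2}$ from $e^{\gamma t\mathcal{R}_3}$ and the parabolic decay $e^{-c\nu 2^{2j}t}$ from $e^{\nu t\Delta}$, hence, combining with \eqref{dispes-pj} and then differentiating (an extra $|\nabla_{y,z}|^b$ costs $2^{jb}$),
\begin{align*}
\big\||\nabla_{y,z}|^b e^{\gamma t\mathcal{R}_3+\nu t\Delta}\chi_j(D)f\big\|_{L^\infty}\lesssim 2^{(2+b)j}\,(\gamma|t|)^{-\f12}\,e^{-c\nu 2^{2j}t}\,\|\chi_j(D)f\|_{L^1}.
\end{align*}
Running the $TT^*$ argument with this kernel: the bilinear form contains $\int\!\!\int (\gamma|t-s|)^{-1/2}e^{-c\nu 2^{2j}(t+s)}\,dt\,ds$-type integrals; the $t$-decay coming from dissipation makes the time integral converge, and a direct computation (splitting $|t-s|\lesssim 1$ versus $|t-s|\gtrsim1$, or using that $L^{2/(1+b)}_t$ is the natural dual scaling) yields the $L^{2/(1+b)}_t L^\infty$ norm bounded by $\nu^{-(1/4+b/2)}2^{(b/2)^+ j}2^{-j/2}\|\chi_j(D)f\|_{L^2}$ after converting $L^1$ back to $L^2$ on the dyadic block at the cost of $2^{j}$ in two dimensions; one then sums in $j$ using the parabolic gain for $j$ large and the $L^2$-boundedness plus the $\epsilon$-regularity $H^{(1/2)^+}$ for the summation. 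Equivalently, and more cleanly, one may use \eqref{str-1} with the admissible pair $(p,r)=(4,\infty)$-type endpoint replaced by its non-endpoint neighbour, apply the smoothing of $e^{\nu t\Delta}$ to upgrade to the $L^{2/(1+b)}_t$ endpoint, and read off the power of $\nu$ by scaling ($\nu t\Delta$ forces the natural time scale $t\sim \nu^{-1}2^{-2j}$). The precise bookkeeping of the $\nu$-power is the part to do carefully: one wants exactly $\nu^{-(1/4+b/2)}$, which matches the heuristic $\|\cdot\|_{L^{2/(1+b)}_t}\sim (\text{time scale})^{(1+b)/2}\times(\text{amplitude})$ with time scale $\sim\nu^{-1}$ on the frequency-$1$ block.

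\medskip

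I expect the main obstacle to be the $TT^*$ bookkeeping in two places. First, in \eqref{str-1} the exponent pair sits right at (or near) the endpoint $\sigma=1$ when $r=\infty$, so strictly one must either invoke the Keel--Tao endpoint Strichartz theorem for the abstract decay estimate (which applies since $\sigma(\infty)=\tfrac12<1$ — so in fact we are comfortably below the endpoint and the classical Strichartz argument suffices) or, if the intended $r$ reaches $\infty$, handle the summation in $j$ with a little extra care because $L^\infty$ is not an interpolation-friendly endpoint; this is exactly why the statement asks for $H^{(1-2/r)^+}$ rather than $H^{1-2/r}$. Second, in \eqref{str-2} one must make sure the dissipative factor is used only to produce the $L^{2/(1+b)}_t$ integrability in time and to make the $j$-sum converge, \emph{without} being double-counted against the dispersive decay; the safe route is to prove the fixed-time block bound displayed above, integrate it directly in $t$ (no $TT^*$ needed for the inhomogeneous-free estimate here since we only need $\|e^{tL}\chi_j f\|_{L^{2/(1+b)}_tL^\infty}$), and then sum. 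Everything else is routine: unitarity on $L^2$, the dyadic dispersive bounds from Lemmas~\ref{App-dis-1}--\ref{App-dis-2}, Bernstein's inequality to pass between $L^1$ and $L^2$ on a block in $\R^2$, and Minkowski's inequality in $j$.
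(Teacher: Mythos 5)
Your handling of \eqref{str-1} is identical in spirit to the paper's: dyadic blocks, the $L^2$-unitarity and $L^1\to L^\infty$ dispersive endpoints \eqref{dispes-pj}, $TT^{*}$ (or Hardy--Littlewood--Sobolev) to produce the block-level bound $\|e^{\gamma t\mathcal{R}_3}P_jf\|_{L^p_tL^r}\lesssim 2^{j(1-2/r)}\|\tilde P_jf\|_{L^2}$, and then Cauchy--Schwarz in $j$ absorbing the loss into $H^{(1-2/r)^+}$. The paper states this block estimate as \eqref{str-mode} and omits the derivation, so there is nothing to distinguish.

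For \eqref{str-2} your first route differs genuinely from the paper's. The paper applies the heat-kernel Bernstein estimate on the annulus (Lemma~2.4 of Bahouri--Chemin--Danchin) to peel off $2^{jb}e^{-c\nu 2^{2j}t}$, keeps the dispersive part in $L^\infty$, and then uses H\"older in time to split $L^{2/(1+b)}_t$ into an $L^s_t$ norm of the heat decay (giving $(\nu 2^{2j})^{-1/s}$) and the $L^4_tL^\infty$ Strichartz norm coming from \eqref{str-mode} with $(p,r)=(4,\infty)$. You instead combine both the $|t|^{-1/2}$ dispersion and the $e^{-c\nu 2^{2j}t}$ parabolic decay at the $L^1\to L^\infty$ kernel level and run $TT^{*}$ on the combined kernel, using $t+s\geq|t-s|$ so that the $(t-s)$-convolution kernel $|t-s|^{-1/2}e^{-c\nu 2^{2j}|t-s|}$ is in $L^1_t$ with norm $\sim(\nu 2^{2j})^{-1/2}$ (for $b=0$), or in $L^1_{ds}L^1_{dt}$ (for $b=1$). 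This is a more self-contained route (no need to invoke the separate $L^4_tL^\infty$ estimate), and it does give the same final answer. You also observe the paper's route as an alternative, so both strategies are visible in your writeup. One arithmetic slip: your stated block-level bound $\nu^{-(1/4+b/2)}2^{(b/2)^+j}2^{-j/2}\|\chi_j(D)f\|_{L^2}$ has the wrong sign on the $j$-power; the correct block bound (and what the $TT^{*}$ computation actually yields, and what the paper's \eqref{str-2-modes} records) is $\nu^{-(1/4+b/2)}2^{j/2}\|\tilde P_jf\|_{L^2}$, which is also the power consistent with the $H^{(1/2)^+}$ regularity you (correctly) end up claiming. Finally, a caution on wording: there is no Bernstein-type inequality allowing you to ``convert $L^1$ back to $L^2$ on a dyadic block'' in unbounded $\R^2$ — that direction of Bernstein fails; what actually gets you from $L^1$-kernel bounds to an $L^2$-data estimate is precisely the duality built into $TT^{*}$, not a Bernstein step, so it is worth phrasing that step in terms of the bilinear form rather than a space conversion.
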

\begin{proof}
 Let $P_j, \tilde{P}_j$ be the Fourier multipliers with symbol supported on $\{2^{j-1}\leq |(\xi,\eta)|\leq 2^{j+1}\}$ and $\{2^{j-2}\leq |(\xi,\eta)|\leq 2^{j+2}\}.$
 To prove \eqref{str-1}, it suffices to show that
 \begin{align}\label{str-mode}
     \left\| e^{\gamma t\mathcal{R}_3}P_j f \right\|_{L^p_t L^r(\mathbb{R}^2)} \lesssim  2^{j(1-\f2r)}\|\tilde{P}_j f\|_{L^2} \,,~~\text{if}~~\f1p+\f{1}{2r}= \f14.
 \end{align}
 Such an estimate follows from \eqref{dispes-pj} and the standard  
$TT^\ast$ argument, we omit its proof for simplicity. 
We now turn to prove \eqref{str-2}. Since the  Fourier transform of $ e^{\gamma t\mathcal{R}_3}P_j f$ is supported of the annulus 
 $\{2^{j-1}\leq |(\xi,\eta)|\leq 2^{j+1}\},$ we can apply Lemma 2.4 of \cite{Bahouri-Chemin-Danchin} to obtain that 
  \begin{align*}
     \left\| |\nabla_{y,z}|^b e^{\nu t\Delta}e^{\gamma t\mathcal{R}_3}P_j f \right\|_{L^{\infty}(\mathbb{R}^2)} \lesssim e^{-c 2^{2j}\nu t} \, 2^{j(1+b)}\|\tilde{P}_j f\|_{L^2},
 \end{align*}
where the constant $c>0$ is independent of $\nu.$ 
Taking the $L_t^{\f{2}{1+b}}$ norm, application of 
Hölder inequality and 
\eqref{str-mode} with $(p,r)=(4,+\infty)$ gives that 
\begin{align}\label{str-2-modes}
     \left\| |\nabla_{y,z}|^b e^{\nu t\Delta} e^{\gamma t\mathcal{R}_3}P_j f \right\|_{L^{\f{2}{1+b}}_t L^{\infty}(\mathbb{R}^2)} \lesssim \nu^{-(\f14+\f{b}{2})} \,2^{\f{j}{2}}\|\tilde{P}_j f\|_{L^2}. 
    \end{align}
The estimate \eqref{str-2} then follows by summing over $j\in\mathbb{Z}.$

\end{proof}

	

\begin{lemm}\label{lem-str-2}
	Let $\mathcal{R}_3=\pa_z|\nabla|^{-1}$.
	It holds that for any  $f\in \mathcal{S}(\R^2),$ any $\gamma>0,$ 
	\begin{align}\label{str-3}
		\left\| e^{\gamma t\mathcal{R}_3}\pa_yf \right\|_{L^p_t L^r(\mathbb{R}^2)} \leq C \|f\|_{H^{(2-\f2r)^+}(\mathbb{R}^2)}\,,~~\text{with}~~\f1p+\f{1}{r}= \f12 ~~\text{and}~~ (p,r)\not=(2,\infty).
	\end{align}
		\noindent For $(p,r)=(2,\infty)$, and $a=0,1,$
    it holds that for any small constant $\delta_1>0$
	\begin{align}\label{str-4}	\left\| e^{\gamma t\mathcal{R}_3+\nu t\Delta}\pa_y|\nabla_{y,z}|^{-a}f \right\|_{L^2_t L^\infty(\mathbb{R}^2)} \leq C\nu^{-\delta_1} \|f\|_{H^{(2-a)}(\mathbb{R}^2)}.\end{align}
    Moreover, 
    \begin{align}\label{str-5}	
    \left\| e^{\gamma t\mathcal{R}_3+\nu t\Delta}\pa_yf \right\|_{L^1_t L^\infty(\mathbb{R}^2)} \leq C\nu^{-(\f12+\delta_1)} \|f\|_{H^{1}(\mathbb{R}^2)}.
    \end{align}
\end{lemm}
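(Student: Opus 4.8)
The plan is to reuse the scheme of the proof of Lemma~\ref{lem-str-1}: a Littlewood--Paley decomposition reduces each of \eqref{str-3}--\eqref{str-5} to a dyadic frequency-localized statement, the localized statement follows from the dispersive bounds of Lemmas~\ref{App-dis-1}--\ref{App-dis-2} together with the classical non-endpoint $TT^{*}$/Hardy--Littlewood--Sobolev argument, and the pieces are reassembled. The only real novelty compared with Lemma~\ref{lem-str-1} is that the endpoint $(p,r)=(2,\infty)$ is genuinely inaccessible for the free flow $e^{\gamma t\mathcal{R}_{3}}$ (the $L^{2}_{t}L^{\infty}_{x}$ Strichartz estimate fails, just as for the two-dimensional Schr\"odinger group), so in \eqref{str-4}--\eqref{str-5} it must be recovered by exploiting the parabolic smoothing $e^{\nu t\Delta}$ at the cost of a power of $\nu$.

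For \eqref{str-3}, I would first note that $e^{\gamma t\mathcal{R}_{3}}$ is a Fourier multiplier of modulus one, hence unitary on $L^{2}(\mathbb{R}^{2})$, while the frequency-localized form of \eqref{dispersive-improved} with $a=0$ (before summing in $j$; see \eqref{dispes-pj-pay}) gives $\|e^{\gamma t\mathcal{R}_{3}}\pa_{y}P_{j}f\|_{L^{\infty}}\lesssim 2^{3j}(1+|\gamma t|)^{-1}\|\tilde P_{j}f\|_{L^{1}}$. Interpolating the $L^{r'}\!\to L^{r}$ operator norm of the bilinear kernel $e^{\gamma(t-s)\mathcal{R}_{3}}\pa_{y}^{2}P_{j}^{2}$ between the resulting $L^{1}\!\to L^{\infty}$ bound and the trivial $L^{2}\!\to L^{2}$ bound, and running the standard $TT^{*}$ argument (which is licit exactly because $\theta=1-\tfrac2r<1$, i.e.\ $(p,r)\neq(2,\infty)$), one obtains the localized Strichartz estimate $\|e^{\gamma t\mathcal{R}_{3}}\pa_{y}P_{j}f\|_{L^{p}_{t}L^{r}}\lesssim 2^{(2-2/r)j}\|\tilde P_{j}f\|_{L^{2}}$ for every admissible pair. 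Summing in $j$ via the Littlewood--Paley square function (legitimate since $p,r\geq2$), converging the low-frequency tail thanks to $2-\tfrac2r>0$, and spending an arbitrarily small amount of extra regularity to move from $\ell^{2}_{j}$ to $\ell^{1}_{j}$ summation at high frequencies, yields $\|f\|_{H^{(2-2/r)^{+}}}$, which is \eqref{str-3}. As in Lemma~\ref{lem-str-1}, I would simply cite the $TT^{*}$ machinery rather than reproduce it.

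For \eqref{str-4} and \eqref{str-5}, I would write $e^{\gamma t\mathcal{R}_{3}+\nu t\Delta}=e^{\nu t\Delta}\circ e^{\gamma t\mathcal{R}_{3}}$ and use the heat bound $\|e^{\nu t\Delta}P_{j}g\|_{L^{\infty}}\lesssim e^{-c2^{2j}\nu t}\|g\|_{L^{\infty}}$ (Lemma~2.4 of \cite{Bahouri-Chemin-Danchin}), so that for $q_{0}\in\{1,2\}$, any $p>2$, and $\tfrac1q=\tfrac1{q_{0}}-\tfrac1p$,
\[
\big\|e^{\gamma t\mathcal{R}_{3}+\nu t\Delta}\pa_{y}|\nabla_{y,z}|^{-a}P_{j}f\big\|_{L^{q_{0}}_{t}L^{\infty}_{x}}\ \lesssim\ \big\|e^{-c2^{2j}\nu t}\big\|_{L^{q}_{t}}\ \big\|e^{\gamma t\mathcal{R}_{3}}\pa_{y}|\nabla_{y,z}|^{-a}P_{j}f\big\|_{L^{p}_{t}L^{\infty}_{x}}.
\]
The last factor I would control by combining the frequency-localized form of \eqref{str-3} (which absorbs $|\nabla_{y,z}|^{-a}$ into $2^{-aj}$) with the two-dimensional Bernstein inequality $\|P_{j}h\|_{L^{\infty}}\lesssim 2^{2j/r}\|P_{j}h\|_{L^{r}}$; the $r$-dependent powers cancel, leaving the clean weight $2^{(2-a)j}\|\tilde P_{j}f\|_{L^{2}}$ uniformly over admissible pairs, hence available with $p$ arbitrarily close to $2$. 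Since $\|e^{-c2^{2j}\nu t}\|_{L^{q}_{t}}\sim(2^{2j}\nu)^{-1/q}$, the localized estimate becomes $\nu^{-1/q}\,2^{(2-a-2/q)j}\|\tilde P_{j}f\|_{L^{2}}$; summing over $j\in\mathbb{Z}$, the exponent $2-a-\tfrac2q$ is positive for $\delta_{1}$ small, so the low-frequency sum converges and the high-frequency sum is bounded by $\|f\|_{H^{(2-a-2/q)^{+}}}$. Taking $q_{0}=2$, $\tfrac1q=\delta_{1}$ (so $p=\tfrac{2}{1-2\delta_{1}}$) yields \eqref{str-4} with loss $\nu^{-\delta_{1}}$ and right-hand side $\|f\|_{H^{2-a}}$, while $q_{0}=1$, $a=0$, $\tfrac1q=\tfrac12+\delta_{1}$ yields \eqref{str-5} with loss $\nu^{-(\frac12+\delta_{1})}$ and, since $2-\tfrac2q=1-2\delta_{1}$, right-hand side $\|f\|_{H^{1}}$.

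I expect the main obstacle to be precisely the optimization in this last step: the endpoint estimates \eqref{str-4}--\eqref{str-5} are false at $\nu=0$, so a $\nu$-loss is unavoidable, and obtaining the sharp exponents $\nu^{-\delta_{1}}$, $\nu^{-(\frac12+\delta_{1})}$ --- rather than the cruder powers a plain heat/Bernstein argument would give --- forces one to let the auxiliary time exponent $p\downarrow2$ and to verify that this limit remains compatible both with the convergence of the low-frequency series ($2-a-\tfrac2q>0$) and with the Sobolev regularity claimed on the right. The remaining bookkeeping, i.e.\ the interpolation exponents in the $TT^{*}$ step and the cancellation of the powers of $r$ in the Bernstein step, is routine but has to be done exactly so that the localized Strichartz weight comes out as $2^{(2-a)j}$ independently of $r$.
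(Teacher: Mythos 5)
Your proof is correct, and for the endpoint estimates \eqref{str-4}--\eqref{str-5} it follows a genuinely different route from the paper's. For \eqref{str-3} both you and the paper run the standard $TT^{*}$/HLS argument on the free flow $e^{\gamma t\mathcal{R}_3}\pa_y P_j$ using the localized dispersive bound \eqref{dispes-pj-pay}; the paper simply omits the details. The divergence is in \eqref{str-4}: the paper forms the $TT^{*}$ operator for the \emph{damped} propagator $e^{\gamma t\mathcal{R}_3}e^{\nu t\Delta}\pa_y P_k$ directly, so that its kernel carries the combined weight $|\nu 2^{2k}(t+s)|^{-2\delta}(1+|t-s|)^{-1}$, and it uses the pointwise inequality $t+s\geq |t-s|$ (valid for $t,s\geq 0$) to upgrade the time kernel to $|t-s|^{-1-2\delta}$, which lies in $L^1_{t-s}$; Young's inequality then closes the $L^2_tL^\infty$ estimate. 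You instead first establish the non-endpoint $L^p_tL^\infty$ Strichartz estimate for the \emph{free} flow with $p>2$ (via $TT^{*}$ at $L^p_tL^r$ with $1/p+1/r=1/2$, $r<\infty$, followed by Bernstein $L^r\to L^\infty$, under which the $r$-dependence cancels to leave the clean weight $2^{(2-a)j}$), and only then compose with the heat semigroup and use H\"older in time. Both yield the same localized estimate $\nu^{-\delta_1}2^{(2-a-2\delta_1)j}\|\tilde P_j f\|_{L^2}$ and the same sum, so the final bounds coincide; your route has the advantage that \eqref{str-5} comes out by the same computation with $q_0=1$, whereas the paper deduces \eqref{str-5} from \eqref{str-4} via an extra application of the ``\eqref{str-2}-type'' H\"older argument. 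The paper's route is more self-contained once one commits to the $TT^{*}$ machinery (no Bernstein step), but requires the slightly ad hoc observation $t+s\geq|t-s|$; your route is more modular (free Strichartz $+$ heat decay) and mirrors the paper's own treatment of \eqref{str-2}. Both constants necessarily depend on $\delta_1$ through the borderline Cauchy--Schwarz in $j$, which is consistent with the claimed $\nu^{-\delta_1}$, $\nu^{-(1/2+\delta_1)}$ losses.
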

\begin{proof}
The proof of \eqref{str-3} follows from 
the dispersive estimate \eqref{dispes-pj-pay}
and the standard $TT^{*}$ arguments. The estimate 
\eqref{str-5} follows from \eqref{str-4} 
and arguments similar to those used in the proof of \eqref{str-2}; hence, we omit the details.
We now focus on the critical case $(p,r)=(2,\infty)$ and give the proof of \eqref{str-4} 
for $a=0,$ the case $a=1$ follows along the same lines. 
Define the operator $\tilde{T}_k$ from $L^2(\R^2)$ to $L_t^2(\R_{+}; L^{\infty}(\R^2)):$
$$\tilde{T}_k:=e^{\gamma t \mathcal{R}_3} e^{\nu t\Delta}\partial_y P_k,$$
where $P_k$ is the Fourier multiplier with symbol supported on $\{2^{j-1}\leq |(\xi,\eta)|\leq 2^{j+1}\}.$
The corresponding adjoint operator $\tilde{T}_k^{*}:  L_t^{2}(\R_{+}; L^{1}(\R^2))\rightarrow L^2 (\R^2)\ $ is defined as
\begin{align*} 
\tilde{T}_k^{*}= -\int_{\mathbb{R}_{+}} e^{-\gamma s \mathcal{R}_3 }e^{\nu s\Delta}\partial_y P_k \, d s
\end{align*}
and thus $\tilde{T}_k\tilde{T}_k^{*}: \,  L_t^{2}(\R_{+}; L^{1}(\R^2))\rightarrow   L_t^{2}(\R_{+}; L^{\infty}(\R^2))$ reads
\begin{align*} 
\tilde{T}_k\tilde{T}_k^{*} = - \mathbb{I}_{\{t\geq 0\}} \int_{\mathbb{R}} \mathbb{I}_{\{s\geq 0\}} \,e^{\gamma(t- s) \mathcal{R}_3 }e^{\nu (t+s)\Delta}\partial_y^2 P_k^2 \, d s
\end{align*}
We apply successively the estimates 
\begin{align*}
&  \| \pa_y e^{\nu(t+s)\Delta}P_k h\|_{L^{\infty}(\R^2)} \lesssim e^{-c\nu 2^{2k}(t+s)}2^k\|h\|_{L^{\infty}(\R^2)}\lesssim |\nu 2^{2k}(t+s)|^{-2\delta}2^k\|h\|_{L^{\infty}(\R^2)}, \, \quad \forall~ \delta>0,~ t+s>0,\\
& \left\|e^{\gamma( t-s) \mathcal{R}_3} \partial_y {P_k} h\right\|_{L^{\infty}(\R^2)} \lesssim (1+|t-s|)^{-1}  2^{3 k}\|h\|_{L^{1}(\R^2)},
\end{align*}
to find that for any $t\geq 0,$
\begin{align*} 
\|\tilde{T}_k\tilde{T}_k^{*} g(t)\|_{L^{\infty}(\R^2)}\lesssim 2^{4k}  \int_{\mathbb{R}} \mathbb{I}_{\{s\geq 0\}} |\nu 2^{2k}(t+s)|^{-2\delta}(1+|t-s|)^{-1} \|g(s)\|_{L^1(\R^2)}\, d s .
\end{align*}
Since for any $t,s\geq 0$, $t+s\geq |t-s|,$ we use the Young's inequality (after extending $h(s)$ to $s<0$ by zero extension) to find that
 \begin{align*} 
\|\tilde{T}_k\tilde{T}_k^{*} g(t)\|_{L_t^2(\R_{+};L^{\infty}(\R^2))}\lesssim \nu^{-2\delta} 2^{(4-4\delta)k} \|g\|_{L_t^2(\R_{+};L^{1}(\R^2))},
\end{align*}
from which \eqref{str-4} easily follows.
\end{proof}

\subsection{Dispersive and Strichartz estimates on \texorpdfstring{$\R\times \T$}{R x T}}
In this short appendix, we state some useful dispersive and Strichartz estimates when $\R^2$ is replaced by $\R\times \T.$
For any $h(y,z)$, we denote
\begin{align*}
   h=\sum_{l\in\Z} h^l(y) e^{ilz},\quad \text{for}~~
    h^l(y)\tri\f{1}{2\pi}\int_{T}h(y,z)e^{-ilz}\,dz.
\end{align*}
We also define the corresponding operator $\mathcal{R}_3^l\tri -il(l^2-\pa_y^2)^{-\f12}$.
The semigroup generated by $\mathcal{R}^l_3$ exhibits the following decay estimate:
\begin{lemm}\label{lem:disp-T}
    For any $l\not=0$, it holds that 
    \begin{align}\label{disp-RT-0}
        &\|e^{\gamma t \mathcal{R}_3^l} h^l\|_{L^\infty_y(\R)} 
        \leq C 
        (\gamma t)^{-\f13}
        |l|\|h^l\|_{L^1_y(\R)}+|l|^{-\f12}(\gamma t)^{-\f12}\|h^l\|_{W^{(\f32)^+,1}_y(\R)},\\ \label{disp-RT}
        &\|e^{\gamma t \mathcal{R}_3^l+\nu t(\partial_y^2-l^2)}h^l\|_{L_t^2(L_y^{\infty}(\R))}+\nu^{\f12}|l|\|e^{\gamma t \mathcal{R}_3^l+\nu t(\partial_y^2-l^2)}h^l\|_{L_t^1(L_y^{\infty}(\R))}\leq C \nu^{-\f13} |l|^{-\f16}\|h^l\|_{L^{2}(\R)}.
    \end{align}
\end{lemm}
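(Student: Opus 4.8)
\textbf{Proof plan for Lemma \ref{lem:disp-T}.}

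The plan is to reduce everything to one-dimensional stationary phase analysis, treating $l$ as a parameter and rescaling in $y$. For \eqref{disp-RT-0}, write the kernel of $e^{\gamma t\mathcal{R}_3^l}$ acting on functions of $y$ as
\begin{align*}
K^l(t,y)=\frac{1}{2\pi}\int_{\R}e^{iy\xi}\,\exp\Big(-i\gamma t\,\frac{l}{\sqrt{l^2+\xi^2}}\Big)\,d\xi .
\end{align*}
The phase is $\phi(\xi)=y\xi-\gamma t\, l(l^2+\xi^2)^{-1/2}$; its derivative $\phi'(\xi)=y+\gamma t\, l\xi(l^2+\xi^2)^{-3/2}$ and $\phi''(\xi)=\gamma t\, l(l^2-2\xi^2)(l^2+\xi^2)^{-5/2}$. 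The function $g(\xi)=l\xi(l^2+\xi^2)^{-3/2}$ has a critical point at $\xi=\pm l/\sqrt2$ where $g'=0$ but $g''\neq0$, so near those frequencies one only gets a $t^{-1/3}$ gain (degenerate stationary phase, Airy-type), whereas away from them $\phi''$ is bounded below and one gets $t^{-1/2}$. First I would perform a Littlewood--Paley decomposition in $\xi$ and a dyadic splitting near $\xi=\pm l/\sqrt2$; rescaling $\xi=l\zeta$ reduces to a normalized phase $l\big(\zeta\,(y/l)-(\gamma t/l)(1+\zeta^2)^{-1/2}\big)$, so the effective time is $\gamma t/l$ and the extra $|l|$ prefactors in \eqref{disp-RT-0} come from the Jacobian $d\xi=|l|\,d\zeta$ together with the $\|h^l\|_{L^1_y}\le$ bound on the amplitude. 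The $t^{-1/3}$ piece (low-frequency/degenerate region) costs no derivatives and carries the single factor $|l|$; the $t^{-1/2}$ piece (non-degenerate region, including high frequencies where we pay $|\xi|^{(3/2)^+}$ for convergence of the integral) carries $|l|^{-1/2}$ and the $W^{(3/2)^+,1}_y$ norm.

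For \eqref{disp-RT}, the plan is a $TT^*$ argument exactly parallel to the proof of \eqref{str-2} and \eqref{str-4} in Lemmas \ref{lem-str-1}--\ref{lem-str-2}, but now at the level of a single mode $l\neq0$. Combine the one-mode dispersive bound $\|e^{\gamma t\mathcal{R}_3^l}P_k h^l\|_{L^\infty_y}\lesssim \min\{(\gamma t/|l|)^{-1/3},(\gamma t/|l|)^{-1/2}\}\,2^{\sigma k}\|h^l\|_{L^1_y}$ coming from \eqref{disp-RT-0} with the heat smoothing $\|e^{\nu t(\partial_y^2-l^2)}P_k h\|_{L^\infty_y}\lesssim e^{-c\nu(2^{2k}+l^2)t}\|h\|_{L^\infty_y}$ and the trivial decay factor $e^{-\nu l^2 t}$. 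Forming $\tilde T_k\tilde T_k^*$ one gets a kernel in $(t,s)$ bounded by a product of $(\gamma|t-s|/|l|)^{-\sigma}$ (from the dispersive part, $\sigma\in\{1/3,1/2\}$ depending on the frequency region) and $e^{-c\nu l^2(t+s)}$; using $t+s\ge|t-s|$ and Young's inequality in time, the integral $\int_0^\infty(\text{dispersive})\,e^{-c\nu l^2 s}\,ds$ produces the $\nu^{-1/3}|l|^{-1/6}$ factor (the worst case is the $t^{-1/3}$ branch: $\int_0^\infty s^{-1/3}e^{-c\nu l^2 s}\,ds\sim(\nu l^2)^{-2/3}$, and one more $\nu^{-\text{power}}$ is absorbed by the $2^{2k}$-dissipation and summation over $k$, as in \eqref{str-2-modes}). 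The $L^1_t$ half of \eqref{disp-RT} with the extra $\nu^{1/2}|l|$ is obtained the same way with the $(p,r)=(2,\infty)\to(1,\infty)$ adjustment used to pass from \eqref{str-2} to its $L^1_t$ companion, the factor $|l|$ coming from the $\partial_z$-type derivative implicit in $\mathcal{R}_3^l$ being of size $|l|/\sqrt{l^2+\xi^2}\le 1$ but the heat factor $e^{-\nu l^2 t}$ supplying the needed $\nu^{1/2}|l|$ after time integration.

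The main obstacle is the degenerate stationary point at $\xi=\pm l/\sqrt2$: unlike the pure $\R^2$ case treated in Lemma \ref{App-dis-1}, where after passing to polar coordinates the relevant phase $\sin\theta$ has only non-degenerate critical points, here the one-dimensional symbol $l/\sqrt{l^2+\xi^2}$ genuinely has an inflection-type degeneracy, which is precisely why the rate drops from $t^{-1/2}$ to $t^{-1/3}$ and why the estimate \eqref{disp-RT-0} must carry two different norms with two different powers of $|l|$. Handling this requires the van der Corput lemma with the third-derivative bound on the dyadic block containing $\xi=\pm l/\sqrt2$ (giving the $t^{-1/3}$ factor), and carefully tracking how the rescaling $\xi=l\zeta$ distributes the powers of $|l|$ between the two terms; once that bookkeeping is done, the $TT^*$ step for \eqref{disp-RT} is routine and mirrors the already-proven Strichartz lemmas verbatim.
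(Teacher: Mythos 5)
Your overall strategy — one–dimensional stationary phase in $\xi$ to get the dispersive rates, followed by a $TT^*$ Strichartz argument combined with heat decay — is the right one, and it is broadly the route the paper takes (with the dispersive part \eqref{disp-RT-0} itself being cited from Coti Zelati–Del Zotto–Widmayer rather than re-derived). However, there are two substantive issues.

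First, the one-mode dispersive bound you write, $\|e^{\gamma t\mathcal{R}_3^l}P_k h^l\|_{L^\infty_y}\lesssim \min\{(\gamma t/|l|)^{-1/3},(\gamma t/|l|)^{-1/2}\}2^{\sigma k}\|h^l\|_{L^1_y}$, carries the wrong powers of $|l|$. After the change of variables $\xi=l\zeta$ the phase becomes $y l\zeta-\gamma t\,\mathrm{sgn}(l)(1+\zeta^2)^{-1/2}$, so the effective time parameter in the stationary phase integral is $\gamma t$, not $\gamma t/|l|$, and the Jacobian $d\xi=|l|\,d\zeta$ produces an overall prefactor of $|l|$. The correct per-block bounds are $|l|(\gamma t)^{-1/3}\|h^l\|_{L^1}$ in the degenerate region $2^j\sim|l|$, $|l|\min\{2^j/|l|,(\gamma t)^{-1/2}\}\|h^l\|_{L^1}$ for $2^j\ll|l|$, and $|l|^{-1/2}(\gamma t)^{-1/2}2^{3j/2}\|h^l\|_{L^1}$ for $2^j\gg|l|$. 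If you plug $(\gamma t/|l|)^{-1/3}$ into your $TT^*$ computation literally, the final exponent of $|l|$ does not come out to $-1/6$; the bookkeeping only closes with the correct $|l|$-prefactors.

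Second, and more importantly, your treatment of \eqref{disp-RT} stops at the level of a single Littlewood–Paley block: you bound $\int_0^\infty s^{-1/3}e^{-c\nu l^2 s}ds\sim(\nu l^2)^{-2/3}$ (which is fine for $2^j\sim|l|$, and indeed gives $\nu^{-1/3}|l|^{-1/6}$ after taking the square root and multiplying by $|l|^{1/2}$), but then you wave at the summation over frequency with "one more $\nu^{-\text{power}}$ is absorbed by the $2^{2k}$-dissipation". The summation over $j$ is precisely where the content of the lemma lies: for $2^j\gg|l|$ the per-block Strichartz constant grows like $2^{j/4}|l|^{-1/4}\nu^{-1/4}$ and you must interpolate against the Bernstein/energy bound $\nu^{-1/2}2^{-j/2}$, with the crossover at $2^j\sim|l|^{1/3}\nu^{-1/3}$ producing exactly $\nu^{-1/3}|l|^{-1/6}$; for $2^j\ll|l|$ you must use the $\min\{2^j,|l|(\gamma t)^{-1/2}\}$ structure to make the dyadic sum converge. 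Without carrying out this three-regime analysis your argument would not actually yield the stated bound. The paper organizes the same information slightly differently — it runs $TT^*$ on the undamped semigroup $e^{\gamma t\mathcal{R}_3^l}$ to obtain $L^6_tL^\infty_y$ (low/mid frequencies) and $L^4_tL^\infty_y$ (high frequencies) Strichartz, then uses H\"older in time against $e^{-\nu l^2 t}\in L^3_t$ or $e^{-c\nu 2^{2j}t}\in L^4_t$ to land in $L^2_t$, and sums — which avoids having to handle the $\min$-structure inside the $TT^*$ kernel and makes the $j$-summation transparent. Your $TT^*$-on-the-damped-semigroup alternative (as in Lemma \ref{lem-str-2} for \eqref{str-4}) would also work, but you would still need to spell out the per-block constants and the summation; as written, that step is a gap.
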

\begin{proof}
    We will use the notations and some results in  the proof of [Prop. 3.1] in \cite{Coti-Zotto-Widmayer}. The first inequality was proved in \cite{Coti-Zotto-Widmayer}. Let $P_jh^l(y)$ be the projections
 associated to a standard Littlewood-Paley decomposition, $h^l=\sum_{j\in\Z}P_jh^l$, $\mathcal{F}(P_jh^l)(\eta)=\varphi_j(\eta)\hat{h^l}(\eta)$, $\varphi_j(\eta)=\varphi(2^{-j}\eta)$. 
 Then by Young's convolution inequality, we have
\begin{align}\label{h1}
        \|e^{\gamma t \mathcal{R}_3^l}P_jh^l\|_{L^{\infty}}=\|\mathcal{F}^{-1}(e^{-it\gamma l/|\eta,l|}\varphi(2^{-j}\eta))*h^l\|_{L^{\infty}}\leq\|\mathcal{F}^{-1}(e^{-it\gamma l/|\eta,l|}\varphi(2^{-j}\eta))\|_{L^{\infty}}\|h^l\|_{L^{1}}.
    \end{align}  
Without loss of generality, we assume $l>0$, the case $l<0$ can be proved by taking conjugation. 
By the change of variables $ \eta=l\xi$, it follows that
\begin{align*}
        &\mathcal{F}^{-1}(e^{-it\gamma l/|\eta,l|}\varphi(2^{-j}\eta))(y)=\int_{\R}e^{iy\eta-it\gamma l/|\eta,l|}\varphi(2^{-j}\eta)d\eta
        =|l|\int_{\R}e^{iyl\xi-it\gamma /|\xi,l|}\varphi(2^{-j}l\xi)d\xi\eqqcolon|l|I(t\gamma,l,j),
    \end{align*}
    It was proved in \cite{Coti-Zotto-Widmayer} that (here $j_0:=\log_2l-5/2$)
   \begin{align*}
        &|I(t\gamma,l,j)|\lesssim\left\{\begin{array}{ll}
                                          \min\{2^j|l|^{-1},(\gamma t)^{-\f12}\}&j<j_0, \\
                                          (\gamma t)^{-\f13}&j_0\leq j\leq j_0+4, \\
                                          (\gamma t)^{-\f12}2^{\f32j}|l|^{-\f32}&j>j_0+4.
                                        \end{array}
        \right.
    \end{align*} 
    Inserting this into \eqref{h1}, we find that
    \begin{align*}
        &\|e^{\gamma t \mathcal{R}_3^l}P_jh^l\|_{L^{\infty}}\lesssim |l|\min\{2^j|l|^{-1},(\gamma t)^{-\f12}\}\|h^l\|_{L^{1}}\lesssim |l|(2^j|l|^{-1})^{\f13}(\gamma t)^{-\f13}\|h^l\|_{L^{1}},\quad j<j_0,\\
        &\|e^{\gamma t \mathcal{R}_3^l}P_jh^l\|_{L^{\infty}} \lesssim |l|(\gamma t)^{-\f13} \lesssim |l|(2^j|l|^{-1})^{\f13}(\gamma t)^{-\f13}\|h^l\|_{L^{1}},\quad j_0\leq j\leq j_0+4,\\
        &\|e^{\gamma t \mathcal{R}_3^l}P_jh^l\|_{L^{\infty}} \lesssim |l|(\gamma t)^{-\f12}2^{\f32j}|l|^{-\f32}\|h^l\|_{L^{1}},\quad j >j_0+4.
    \end{align*}
   Then by the standard $TT^\ast$ method, one can obtain that
   \begin{align*}
        &\|e^{\gamma t \mathcal{R}_3^l}P_jh^l\|_{L_t^6L_y^{\infty}} 
        \lesssim |l|^{\f12}(2^j|l|^{-1})^{\f16}\|h^l\|_{L^{2}(\R)},\quad j\leq j_0+4,\\
        &\|e^{\gamma t \mathcal{R}_3^l}P_jh^l\|_{L_t^4L_y^{\infty}}
        \lesssim
        2^{\f34j}|l|^{-\f14}\|h^l\|_{L^{2}(\R)},\quad j >j_0+4.
    \end{align*}
    These estimates lead to that\begin{align*}
        &\|e^{\gamma t \mathcal{R}_3^l+\nu t(\partial_y^2-l^2)}P_jh^l\|_{L_t^2L_y^{\infty}}\leq \|e^{-\nu tl^2}\|_{L_t^3}\|e^{\gamma t \mathcal{R}_3^l}P_jh^l\|_{L_t^6L_y^{\infty}}\lesssim |\nu l^2|^{-\f13}|l|^{\f12}(2^j|l|^{-1})^{\f16}\|h^l\|_{L^{2}(\R)},\quad j\leq j_0+4,\\
        &\|e^{\gamma t \mathcal{R}_3^l+\nu t(\partial_y^2-l^2)}P_jh^l\|_{L_t^2L_y^{\infty}}\lesssim\|e^{-c\nu t2^{2j}}\|_{L_t^4}\|e^{\gamma t \mathcal{R}_3^l}P_jh^l\|_{L_t^4L_y^{\infty}}\lesssim|\nu 2^{2j}|^{-\f14}2^{\f34j}|l|^{-\f14}\|h^l\|_{L^{2}(\R)},\quad j >j_0+4.
    \end{align*}
    On the other hand, we have
    \begin{align*}
        &\|e^{\gamma t \mathcal{R}_3^l+\nu t(\partial_y^2-l^2)}P_jh^l\|_{L_y^{\infty}}
        \lesssim
        2^{j/2}\|e^{\gamma t \mathcal{R}_3^l+\nu t(\partial_y^2-l^2)}P_jh^l\|_{L_y^{2}}
        \lesssim 
        2^{j/2}e^{-c\nu t2^{2j}}\|h^l\|_{L^{2}(\R)},\\
        &\|e^{\gamma t \mathcal{R}_3^l+\nu t(\partial_y^2-l^2)}P_jh^l\|_{L_t^2L_y^{\infty}}
        \lesssim
        \|2^{j/2}e^{-c\nu t2^{2j}}\|_{L_t^2}\|h^l\|_{L^{2}(\R)}
        \lesssim
        \nu^{-\f12}2^{-\f j2}\|h^l\|_{L^{2}(\R)}.
    \end{align*}
    In summary, we achieve that
    \begin{align*}
        &\|e^{\gamma t \mathcal{R}_3^l+\nu t(\partial_y^2-l^2)}P_jh^l\|_{L_t^2L_y^{\infty}}
        \lesssim
        \nu^{-\f13} |l|^{-\f16}(2^j|l|^{-1})^{\f16}\|h^l\|_{L^{2}(\R)},\quad j\leq j_0+4,\\
        &\|e^{\gamma t \mathcal{R}_3^l+\nu t(\partial_y^2-l^2)}P_jh^l\|_{L_t^2L_y^{\infty}}
        \lesssim
        \min\{\nu^{-\f14}
        2^{\f j4}|l|^{-\f14},\nu^{-\f12}2^{-\f j2}\}\|h^l\|_{L^{2}(\R)},\quad j >j_0+4.
    \end{align*}
    These two estimates above yield that
    \begin{align*}
        &\|e^{\gamma t \mathcal{R}_3^l+\nu t(\partial_y^2-l^2)}h^l\|_{L_t^2L_y^{\infty}}\leq  \sum_{j\in\Z}\|e^{\gamma t \mathcal{R}_3^l+\nu t(\partial_y^2-l^2)}P_jh^l\|_{L_t^2L_y^{\infty}}\\ 
        \lesssim& \sum_{j\leq j_0+4}\nu^{-\f13} |l|^{-\f16}(2^j|l|^{-1})^{\f16}\|h^l\|_{L^{2}(\R)}
        +\sum_{j\in\Z}\min\{\nu^{-\f14}
        2^{\f j4}|l|^{-\f14},\nu^{-\f12}2^{-\f j2}\}\|h^l\|_{L^{2}(\R)}\\ 
        \lesssim&~ \nu^{-\f13} |l|^{-\f16}\|h^l\|_{L^{2}(\R)}
        +\sum_{2^j\leq|l/\nu|^{1/3}}\nu^{-\f14}
        2^{\f j4}|l|^{-\f14}\|h^l\|_{L^{2}(\R)}
        +\sum_{2^j\geq|l/\nu|^{1/3}}
      \nu^{-\f12}2^{-\f j2}\|h^l\|_{L^{2}(\R)}\\ 
      \lesssim& ~\nu^{-\f13} |l|^{-\f16}\|h^l\|_{L^{2}(\R)}.
    \end{align*}
    Using the above inequality with $\nu$ replaced by $\nu/2$, it is easy to observe that
    \begin{align*}
        &\|e^{\gamma t \mathcal{R}_3^l+\nu t(\partial_y^2-l^2)}h^l\|_{L_t^1L_y^{\infty}}
        \leq 
        \|e^{-\nu t l^2/2}\|_{L_t^2}\|e^{\gamma t \mathcal{R}_3^l+\nu t(\partial_y^2-l^2)/2}h^l\|_{L_t^2L_y^{\infty}} 
        \lesssim
        |\nu l^2|^{-\f12}\nu^{-\f13} |l|^{-\f16}\|h^l\|_{L^{2}(\R)},
    \end{align*} 
 which completes the proof.
 \end{proof}

\end{appendices}

\section*{Acknowledgments}
\normalfont
M. Li is supported by Postdoctoral Fellowship Program of CPSF under Grant No. GZB20240024 and China Postdoctoral Science Foundation under Grant No. 2024M760057 and No. 2025T180840. 
C. Wang is supported by NSF of China under Grant No. 12471189. 
Z. Zhang is supported by NSF of China under Grant No. 12288101.	C. Sun is supported by the ANR project ANR-24-CE40-3260, he would like to thank Peking University for its warm hospitality during his visit.

\end{document}